\newtheorem{thm}{Theorem}[section]
\newtheorem{cor}[thm]{Corollary}
\newtheorem{lem}[thm]{Lemma}
\newtheorem{prop}[thm]{Proposition}
\newtheorem{conj}[thm]{Conjecture}
\newtheorem{defn}[thm]{Definition}
\newtheorem{rem}[thm]{Remark}
\newtheorem{application}[thm]{Application}
\numberwithin{equation}{section}
\definecolor{darkpastelpurple}{rgb}{0.59, 0.44, 0.84}
\definecolor{darkscarlet}{rgb}{0.34, 0.01, 0.1}
\newcommand{\be}{\begin{equation}}
\newcommand{\ee}{\end{equation}}
\newcommand{\bea}{\begin{eqnarray}}
\newcommand{\eea}{\end{eqnarray}}
\newcommand{\Bea}{\begin{eqnarray*}}
\newcommand{\Eea}{\end{eqnarray*}}
\newcommand{\vertiii}[1]{{\left\vert\kern-0.25ex\left\vert\kern-0.25ex\left\vert #1 \right\vert\kern-0.25ex\right\vert\kern-0.25ex\right\vert}}
\def\C{{\mathbb C}}
\def\N{{\mathbb N}}
\def\R{{\mathbb R}}
\def\Z{{\mathbb Z}}
\def\1{\text{\bf {1}}}
\DeclareMathOperator\supp{supp}
\begin{document}

\title[Grushin pseudo-multipliers]
{On $L^2$-boundedness of pseudo-multipliers associated to the Grushin operator} 

%    Information for first author
\author{Sayan Bagchi}
\address{Department of Mathematics and Statistics, Indian Institute of Science Education and Research Kolkata, Mohanpur--741246, West Bengal, India.}
\email{sayan.bagchi@iiserkol.ac.in}
\author{Rahul Garg}
\address{Department of Mathematics, Indian Institute of Science Education and Research Bhopal, Bhopal--462066, Madhya Pradesh, India.}
\email{rahulgarg@iiserb.ac.in}
%    Information for second author

%    General info
\subjclass[2020]{58J40, 43A85, 42B15}

\keywords{Hermite operator, Grushin operator, pseudo-differential operator, Calder\'on--Vaillancourt theorem}

\begin{abstract} 
In this article we define analogues of pseudo-differential operators associated to the joint functional calculus of the Grushin operator using their spectral resolution, and study Calder\'on--Vaillancourt-type theorems for these operators. 
\end{abstract}

\maketitle

\tableofcontents

\section{Introduction} \label{sec:intro} 

\subsection{Pseudo-differential operators} \label{subsec:intro-pseudo-euclidean} 
Given a bounded measurable function $ m $ on $ \mathbb{R}^{n} \times \mathbb{R}^{n} $, one considers the pseudo-differential operator $ m(x, D) $ defined by 
$$ m(x, D)f(x) := (2\pi)^{-n/2} \int_{\mathbb{R}^{n}} m(x, \xi) \widehat{f}(\xi) e^{i x \cdot \xi} \, d\xi, $$
for suitable functions $f$ on $\mathbb{R}^{n}$. Here $\widehat{f}$ stands for the Fourier transform of $f$ defined by 
$$ \widehat{f}(\xi) = (2\pi)^{-n/2} \int_{\mathbb{R}^{n}} f(x) e^{- i x \cdot \xi} \, dx. $$

Such a function $m$ is said to be the symbol of the pseudo-differential operator $m(x, D)$. When $m$ is independent of $x$-variable, we get the Fourier multiplier operator $m(D)$ and by the Plancherel theorem it follows that $m(D)$ is bounded on $L^2(\mathbb{R}^{n})$ if and only if $m \in L^\infty(\mathbb{R}^{n})$. However, it is well known that the boundedness of the function $m(x, \xi)$ alone may not be sufficient to guarantee the boundedness of the (densely defined) operator $m(x, D)$ on $L^2(\mathbb{R}^{n})$. A sufficient condition for the same is given by the celebrated Calder\'on--Vaillancourt theorem \cite{CalderonVaillancourt71} (see also Section $2.5$ of \cite{FollandHarmonicPhaseSpaceBook}). There is a vast literature on pseudo-differential operators, and we refer to \cite{SteinHarmonicBook93, TaylorPseudodifferentialBook81} which are among the classical references on this subject. In order to state this theorem we need to first recall symbol classes $S^{\sigma}_{\rho,\delta}$. For any $\sigma \in \mathbb{R}$ and $\rho, \delta \geq 0$ we define $S^{\sigma}_{\rho,\delta}$ to be the set consisting of all functions $m \in \C^\infty(\mathbb{R}^{n} \times \mathbb{R}^{n})$ which satisfy the following estimate for all $\alpha_1, \alpha_2 \in \mathbb{N}^{n}$: 
$$ \left| \partial_{x}^{\alpha_2} \partial_\xi^{\alpha_1} m(x, \xi)\right| \lesssim_{\alpha_1, \alpha_2} (1+|\xi|)^{\sigma - \rho |\alpha_1| + \delta |\alpha_2|}.$$ 

\begin{thm}[Calder\'on--Vaillancourt] \label{CV1}
Let $m \in S^0_{\rho, \delta}$, with $0 \leq \delta \leq \rho \leq 1, \, \delta \neq 1$. Then the operator $m(x, D)$ extends to a bounded operator from $L^2(\mathbb{R}^{n})$ to itself.
\end{thm}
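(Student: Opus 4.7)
The plan is to establish $L^2$-boundedness via the Cotlar-Stein almost orthogonality lemma, applied to a phase-space decomposition of $m(x,D)$ at the uncertainty scale dictated by the symbol class. Since $S^0_{\rho,\delta} \subset S^0_{\rho',\delta'}$ whenever $\rho' \leq \rho$ and $\delta' \geq \delta$, the genuinely hard case is the diagonal $\rho = \delta < 1$; the subdiagonal range $\delta < \rho$ admits the same scheme with extra slack from integration by parts. I will describe the approach in the critical case $\rho = \delta$.

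First, I would fix a Littlewood-Paley partition $1 = \sum_{j \geq 0} \psi_j(\xi)$ with $\psi_j$ supported where $|\xi| \sim 2^j$, and tile each annulus by phase-space boxes of $\xi$-width $2^{\rho j}$ and $x$-width $2^{-\delta j}$. Using a smooth partition of unity $\sum_\alpha \varphi_\alpha(x,\xi)^2$ subordinate to this covering, I would decompose $m = \sum_\alpha m_\alpha$ with $m_\alpha := \varphi_\alpha m$, and set $T_\alpha := m_\alpha(x,D)$. The single-piece bound $\|T_\alpha\|_{L^2 \to L^2} \lesssim 1$ follows from a Schur test on the Schwartz kernel of $T_\alpha$: integration by parts in $\xi$, combined with the $S^0_{\rho,\delta}$ estimates and the localization of $m_\alpha$, concentrates the kernel within $2^{-\rho j}$ of the diagonal with integrable tails uniformly in $\alpha$. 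For pairs $(T_\alpha, T_\beta)$ whose boxes are widely separated, the symbol of $T_\alpha T_\beta^*$ (and likewise $T_\alpha^* T_\beta$) is given by an oscillatory integral whose phase is non-stationary in the direction of separation, and repeated integration by parts yields $\|T_\alpha T_\beta^*\|, \, \|T_\alpha^* T_\beta\| \lesssim (1+d(\alpha,\beta))^{-N}$ for every $N$, where $d$ is the natural phase-space distance between boxes. Cotlar-Stein then assembles these estimates into a uniform bound on $\sum_\alpha T_\alpha$ depending only on finitely many $S^0_{\rho,\delta}$ seminorms of $m$.

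The main obstacle is precisely the critical case $\rho = \delta$: each derivative in $x$ costs a factor $2^{\delta j}$ while each integration by parts in $\xi$ only recovers $2^{-\rho j}$, so there is no free decay from the symbol estimates and all of it must be extracted from the oscillation of the phase between separated boxes. The cleanest framework for carrying this out is H\"ormander's Weyl-H\"ormander calculus with the metric $g_{(x,\xi)} = (1+|\xi|)^{2\delta}|dx|^2 + (1+|\xi|)^{-2\rho}|d\xi|^2$, which is slowly varying, temperate, and satisfies the uncertainty condition $g \leq g^\sigma$ exactly under the hypotheses $0 \leq \delta \leq \rho \leq 1$ and $\delta < 1$; the general weight-$1$ boundedness theorem in that calculus encodes the Cotlar-Stein computation above. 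The exclusion $\delta = 1$ is sharp: the uncertainty condition then fails, the box decomposition degenerates, and $L^2$-boundedness is genuinely false.
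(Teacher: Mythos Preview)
The paper does not supply a proof of Theorem~\ref{CV1}. It is quoted in the introduction as the classical Calder\'on--Vaillancourt theorem, with references to \cite{CalderonVaillancourt71} and Section~2.5 of \cite{FollandHarmonicPhaseSpaceBook}, and serves only as motivation for the paper's analogous results for Hermite and Grushin pseudo-multipliers. So there is no ``paper's own proof'' to compare against.

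That said, your outline is a correct sketch of one of the standard proofs of the classical result: reduce to the critical diagonal $\rho=\delta<1$, decompose the symbol over phase-space boxes at the uncertainty scale, bound each piece by Schur, and extract almost orthogonality from oscillation between separated boxes via Cotlar--Stein; equivalently, invoke the H\"ormander--Weyl calculus for the metric $g = (1+|\xi|)^{2\delta}|dx|^2 + (1+|\xi|)^{-2\rho}|d\xi|^2$. It is worth noting that the paper's proofs of its own theorems (for $\mathcal{S}^0_{0,0}(\boldsymbol{L},\boldsymbol{U})$ in Sections~\ref{Sec-S00-proof}--\ref{Sec-S00-proof-2} and for general $\mathcal{S}^0_{\rho,\delta}$ in Section~\ref{sec:rho-delta}) are built on the same Cotlar--Stein template you describe, adapted to the Grushin setting: a space--frequency decomposition $T = \sum_{J,l} T^S_{J,l}$ using the partition of unity of Lemma~\ref{lem-grushin-partition-part-fix} and a dyadic spectral cut-off, followed by almost-orthogonality estimates for $(T^S_{J,l})^* T^S_{J',l'}$ and $T^S_{J,l}(T^S_{J',l'})^*$. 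The chief difference from the Euclidean case you sketch is that the control-distance geometry and the $\lambda$-singularities of the scaled Hermite kernels force substantially more delicate kernel estimates (Section~\ref{sec-kernel-estimates}), and in particular the paper cannot close the diagonal case $\rho=\delta$ without the extra cancellation hypothesis \eqref{def:grushin-symb-vanishing-0-condition}.
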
 

Since its inception, Calder\'on--Vaillancourt theorem has found several important applications in the study of partial differential equations. One such application is in the work of Beals--Fefferman \cite{Beals-Fefferman-Annals-1973}, where they studied a conjecture of Nirenberg--Trèves \cite{Nirenberg-Treves-First-local-Solvability-CPAM-1970,Nirenberg-Treves-Second-local-Solvability-CPAM-1970} concerning local solvability of linear partial differential equations. In \cite{Beals-Fefferman-Annals-1973}, Beals--Fefferman made a very crucial use of $L^2$-boundedness of pseudo-differential operators corresponding to symbols coming from the class $S^0_{\frac{1}{2}, \frac{1}{2}}$. 

Analogues of Pseudo-differential operators in the context of various Lie groups have been studied extensively in the literature where the  inversion formula for the group Fourier transform is used to define the pseudo-differential operators. In particular, for compact lie groups we refer to \cite{RuzhanskyTurunenPseudoCompactGroupBook} and the references therein, whereas for Heisenberg groups and more general graded Lie groups one can see \cite{BahouriFermanianGallagherPseudodifferentialHeisenberg, FischerRuzhanskyQuantizationBook} and the references therein. 

On the other hand, for self-adjoint operators, one can also make use of the spectral decomposition to define pseudo-differential operators. We call them pseudo-multipliers. Pseudo-multipliers have been studied in some of the recent works, see for example \cite{BagchiThangaveluHermitePseudo, AnhBuiHermitePseudoBesovTriebelLizorkin, BernicotFreyPseudodifferentialSemigroupOperators, CardonaRuzhanskyHermitePseudoArxiv, EppersonHermitePseudo, GeorgiadisNielsenPseudodifferentialSelfAdjointOperators, LyNaiboHermitePseudoBesovTriebelLizorkin}. But, none of these works address the analogue of the Calder\'on--Vaillancourt theorem in cases other than $S^0_{1, \delta}$, with $0 \leq \delta < 1$. 

In the present article, we restrict our attention to pseudo-multipliers associated to the Hermite operator $H = - \Delta + |x|^2$ on $\mathbb{R}^{n}$, and the Grushin operator $G = - \Delta_{x^{\prime}} - |x^{\prime}|^2 \Delta_{x^{\prime \prime}}$ and its joint functional calculus on $\mathbb{R}^{n_1 + n_2} $. We shall see that these pseudo-multipliers can be defined using their well-known spectral resolutions.  

We begin with Hermite pseudo-multipliers and state our main result for these operators in the following subsection. Results on pseudo-multipliers for the Grushin operator and its joint functional calculus will be discussed in Subsections \ref{subsec:intro-Grushin-pseudo} and \ref{subsec:intro-joint-functional-Grushin-pseudo} respectively. 

%%%%%%%%%%%%%%%%%%%%%%%%%%%%
%%%%%%%%%%%%%%%%%%%%%%%%%%%%
%%%%%%%%%%%%%%%%%%%%%%%%%%%%
%%%%%%%%%%%%%%%%%%%%%%%%%%%%
%%%%%%%%%%%%%%%%%%%%%%%%%%%%
%%%%%%%%%%%%%%%%%%%%%%%%%%%%
%%%%%%%%%%%%%%%%%%%%%%%%%%%%
%%%%%%%%%%%%%%%%%%%%%%%%%%%%
%%%%%%%%%%%%%%%%%%%%%%%%%%%% 

\subsection{Hermite pseudo-multipliers} 
\label{subsec:intro-Hermite-pseudo} 
Spectral resolution of the Hermite operator $H$ is known to be 
$$ H = \sum_{k=0}^\infty (2k + n) P_k, $$ 
where $P_k$ stand for the orthogonal projection of $L^2(\mathbb{R}^{n})$ onto the eigenspace (for the Hermite operator $H = - \Delta + |x|^2$ on $\mathbb{R}^{n}$) corresponding to the eigenvalue $2k + n$. 

Given $m \in L^\infty \left(\mathbb{R}^{n} \times \mathbb{R}_+ \right)$, it is natural to (densely) define the Hermite pseudo-multiplier on $L^2(\mathbb{R}^{n})$ by 
\begin{equation} \label{def:Hermite-pseudo} 
m(x, \sqrt{H}) f(x) := \sum_{k=0}^\infty m (x, \sqrt{2k + n}) P_k f(x). 
\end{equation}
It follows from Plancherel's theorem for the Hermite expansion that the set of all those functions $f$ in $L^2(\mathbb{R}^{n})$ for which $P_k f = 0$ except possibly for finitely many $k \in \mathbb{N}$ is dense in $L^2(\mathbb{R}^{n})$. It is straightforward to see that \eqref{def:Hermite-pseudo} is pointwise well-defined on this dense set.  

Hermite pseudo-multipliers were first introduced by Epperson \cite{EppersonHermitePseudo}. Assuming that such an operator is $L^2$-bounded, Epperson \cite{EppersonHermitePseudo} gave a sufficient condition for the $L^p$-boundedness in dimension $n = 1$. In \cite{BagchiThangaveluHermitePseudo}, the first author and Thangavelu studied the same problem in higher dimensions and obtained a result assuming lesser number of derivatives. We refer to \cite{AnhBuiHermitePseudoBesovTriebelLizorkin, CardonaRuzhanskyHermitePseudoArxiv, LyNaiboHermitePseudoBesovTriebelLizorkin} for recent works concerning analysis of Hermite pseudo-multipliers. Let us define symbol classes for the Hermite operator as follows. 
\begin{defn} \label{def:symbol-class-hermite}
For any $\sigma \in \mathbb{R}$ and $\rho, \delta \geq 0$, we say that 
$m \in C^\infty \left(\mathbb{R}^{n} \times \mathbb{R}_+ \right) $ belongs to the symbol class $S^{\sigma}_{\rho, \delta}(\sqrt{H})$ if it satisfies the following estimate: 
\begin{align}
\label{def-Hermite-symb}
\left| \frac{\partial^\alpha}{\partial {x}^\alpha} \frac{\partial^l}{\partial {\eta}^l} m(x, \eta) \right| \leq_{l, \alpha} (1 + \eta)^{\sigma- \rho\, l + \delta \, |\alpha|} 
\end{align} 
for all $l \in \mathbb{N}$ and $\alpha \in \mathbb{N}^{n}$. 
\end{defn} 

With the above definition of the symbol classes, we have the following analogue of the Calder\'on--Vaillancourt theorem. 
\begin{thm} \label{CVH}
Let $m \in S^0_{\rho,\delta} (\sqrt{H})$, with $0 \leq \delta \leq \rho \leq 1, \, \delta \neq 1$. Then the operator $m(x, \sqrt{H})$ extends to a bounded operator from $L^2(\mathbb{R}^{n})$ to itself.
\end{thm}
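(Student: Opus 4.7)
The plan is to prove \thmref{CVH} by the Cotlar--Stein almost orthogonality lemma applied to a combined dyadic decomposition of the symbol in the spectral variable $\eta$ and a spatial decomposition in $x$ adapted to its regularity. Since $S^0_{\rho,\delta}(\sqrt{H})\subset S^0_{\delta,\delta}(\sqrt{H})$ whenever $\delta\le\rho$, it suffices to treat the diagonal case $\rho=\delta\in[0,1)$. Fix a dyadic partition of unity $1=\sum_{j\ge 0}\psi_j(\eta)$ on $\mathbb{R}_+$ with $\psi_j$ supported in $\{\eta\sim 2^j\}$ for $j\ge 1$, and for each $j$ let $\{\chi_\nu^{(j)}\}_\nu$ be a smooth partition of unity on $\mathbb{R}^{n}$ subordinate to cubes of side $2^{-j\delta}$ (the natural uncertainty scale matching \eqref{def-Hermite-symb}). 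Set
\[
T_{j,\nu}f(x) := \chi_\nu^{(j)}(x)\sum_{k\ge 0} m(x,\sqrt{2k+n})\,\psi_j(\sqrt{2k+n})\, P_k f(x),
\]
so that $m(x,\sqrt{H})=\sum_{j,\nu}T_{j,\nu}$.

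The task reduces to Cotlar--Stein estimates of the form $\|T_{j,\nu}T_{k,\mu}^*\|,\ \|T_{j,\nu}^*T_{k,\mu}\|\le c_N(1+|j-k|)^{-N}(1+d_{j,k}(\nu,\mu))^{-N}$ for every $N$, where $d_{j,k}$ is a suitably rescaled distance between the cube centres. For $T_{j,\nu}T_{k,\mu}^*$, spectral orthogonality $P_rP_l=\delta_{r,l}P_r$ combined with the disjoint supports of $\psi_j$ and $\psi_k$ yields outright vanishing when $|j-k|\ge 2$; the factor $(1+d_{j,k}(\nu,\mu))^{-N}$ for $|j-k|\le 1$ then follows from iterated integration by parts against the kernel $\Phi_r(x,y)$ of $P_r$, which satisfies rapid off-diagonal decay via Mehler's formula for the Hermite semigroup. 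For $T_{j,\nu}^*T_{k,\mu}$ spectral orthogonality alone is insufficient since, in the kernel
\[
\mathcal{K}_{j,\nu;k,\mu}(x,z) = \sum_{r,l}\int \overline{m(y,\sqrt{2r+n})\psi_j(\sqrt{2r+n})}\,m(y,\sqrt{2l+n})\psi_k(\sqrt{2l+n})\,\chi_\nu^{(j)}(y)\chi_\mu^{(k)}(y)\,\Phi_r(y,x)\Phi_l(y,z)\,dy,
\]
the two symbol factors are evaluated at the \emph{same} spatial variable $y$. Decay in $|j-k|$ and in $|\nu-\mu|$ must therefore be extracted from the estimate \eqref{def-Hermite-symb} via iterated integration by parts in $y$, using the spatial cut-offs $\chi_\nu^{(j)},\chi_\mu^{(k)}$ to restrict the domain and Hermite kernel bounds to control each step.

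The main obstacle is the critical case $\rho=\delta=1/2$: each integration by parts in $x$ costs $2^{j/2}$ while each $\partial_\eta$-derivative gains only $2^{-j/2}$, so the two scales balance with no slack to produce summability by direct estimation. Handling this requires a Boulkhemair-type sharpening of the almost orthogonality argument, in which the spatial partition at scale $2^{-j/2}$ itself supplies the required decay through sharp Schwartz-type bounds for $\Phi_r(y,x)$ away from the turning region $|x|\sim\sqrt{2r+n}$ of the Hermite functions, combined with a Fourier expansion of $m_j(\cdot,\eta)$ on each cube that reduces $T_{j,\nu}$ to an absolutely convergent superposition of genuine Hermite multipliers (which are $L^2$-bounded) modulated by spatial phases. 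Once these refined kernel estimates are in place, summability of the Cotlar--Stein diagonals yields the desired $L^2$-boundedness of $m(x,\sqrt{H})=\sum_{j,\nu}T_{j,\nu}$.
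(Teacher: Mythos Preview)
Your approach differs substantially from the paper's. The paper does not argue Theorem~\ref{CVH} directly; it develops the machinery for Grushin pseudo-multipliers and remarks that the Hermite case follows by simplification (no $x''$-variable, hence no $\lambda$-singularities). The implied Hermite proof uses a \emph{fixed-scale} spatial partition $\{\chi_J\}$ (balls of radius~$1$, independent of the frequency index) for the core $S^0_{0,0}$ case via Cotlar--Stein, with off-diagonal kernel decay coming from the explicit algebra of Lemma~\ref{first-layer-lem}: multiplying $\sum_\mu F(\mu)\Phi_\mu(x)\Phi_\mu(y)$ by $(x-y)^\alpha$ yields, through creation/annihilation operators, a kernel of the same form with $F$ replaced by finite differences, hence $\eta$-derivatives of $m$. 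The $\delta>0$ range is then handled by the Section~\ref{sec:rho-delta} argument (decompose only in frequency; the eigenvalue-gap integration by parts of Lemma~\ref{lem:rho-delta-frequency-decay} is much cleaner for Hermite since the denominator $(2|\mu|+n)-(2|\mu'|+n)$ carries no $\lambda$). Your route --- $j$-dependent spatial cubes at scale $2^{-j\delta}$ combined with a Boulkhemair-style Fourier expansion --- is closer to classical Euclidean arguments and is a legitimate alternative, trading the paper's explicit kernel identities for an absolute-convergence mechanism.

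Two points need repair. First, ``rapid off-diagonal decay via Mehler's formula'' is not correct as stated: Mehler's formula gives the heat kernel $e^{-tH}$, not the projection kernels $\Phi_r(x,y)$, and the latter do \emph{not} decay rapidly in $|x-y|$ for fixed $r$ (the Hermite functions at level $r$ live on the whole ball $|x|\lesssim\sqrt{r}$). The decay in $d_{j,k}(\nu,\mu)$ you need for $T_{j,\nu}T^*_{k,\mu}$ comes precisely from the mechanism of Lemma~\ref{first-layer-lem} (or an equivalent commutator identity), not from pointwise bounds on $\Phi_r$. Second, there is nothing special about $\rho=\delta=1/2$: the balance you describe (each $x$-derivative costs $2^{j\delta}$, each $\eta$-derivative gains $2^{-j\rho}$) holds on the entire diagonal $\rho=\delta\in[0,1)$, so your ``Boulkhemair sharpening'' must work uniformly there, not just at one value. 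As written, the proposal does not explain how the Fourier expansion on each cube interacts with the Hermite spectral structure to produce summable Cotlar--Stein constants, and this is where the real work lies.
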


When $\rho = 1$, Theorem \ref{CVH} was also established in \cite{LyNaiboHermitePseudoBesovTriebelLizorkin}. Analogous results were studied in \cite{CardonaRuzhanskyHermitePseudoArxiv}, but with conditions that are different than the ones considered by us. We are not aware of any work in the literature concerning Theorem \ref{CVH} with $0 \leq \rho < 1$. 

%%%%%%%%%%%%%%%%%%%%%%%%%%%%
%%%%%%%%%%%%%%%%%%%%%%%%%%%%
%%%%%%%%%%%%%%%%%%%%%%%%%%%%
%%%%%%%%%%%%%%%%%%%%%%%%%%%%
%%%%%%%%%%%%%%%%%%%%%%%%%%%%
%%%%%%%%%%%%%%%%%%%%%%%%%%%%
%%%%%%%%%%%%%%%%%%%%%%%%%%%%
%%%%%%%%%%%%%%%%%%%%%%%%%%%%
%%%%%%%%%%%%%%%%%%%%%%%%%%%%

\subsection{Grushin pseudo-multipliers} 
\label{subsec:intro-Grushin-pseudo} 
Denoting points of $\mathbb{R}^{n_1 + n_2}$ by $x = (x^{\prime}, x^{\prime \prime}) \in \mathbb{R}^{n_1} \times \mathbb{R}^{n_2}$, we now describe our results concerning the Calder\'on--Vaillancourt type theorem for pseudo-multipliers associated to the Grushin operator $G = - \Delta_{x^{\prime}} - |x^{\prime}|^2 \Delta_{x^{\prime \prime}}$. Given a Schwartz class function $f$ on $\mathbb{R}^{n_1 + n_2}$, let $f^\lambda(x^{\prime}) = \int_{\mathbb{R}} f (x^{\prime}, x^{\prime \prime}) e^{i \lambda \cdot x^{\prime \prime}} \, d x^{\prime \prime}$ denote (upto a dimensional constant multiple) the inverse Fourier transform of $f$ in $x^{\prime \prime}$-variable. Then, one can verify that 
$$ G f (x) = (2\pi)^{-n_2} \int_{\mathbb{R}^{n_2}} e^{-i \lambda \cdot x^{\prime \prime}} H(\lambda)f^{\lambda}(x^{\prime}) \, d\lambda, $$
where $H(\lambda) = - \Delta_{x^{\prime}} + |\lambda|^2 |x^{\prime}|^2$, for $\lambda \neq 0$, is the scaled Hermite operator on $\mathbb{R}^{n_1}$. 

Using the spectral decomposition of $H (\lambda)$ (see for example \cite{JotsaroopSanjayThangaveluRieszTransformsGrushin,ThangaveluHermiteLaguerreExpansionsBook}), one can write the spectral decomposition of the Grushin operator $G$ as follows: 
$$Gf (x) = (2\pi)^{-n_2} \int_{\mathbb{R}^{n_2}} e^{-i \lambda \cdot x^{\prime \prime}} \sum_{k \in \mathbb{N}} (2 k + n)|\lambda| \, P_k (\lambda) f^{\lambda}(x^{\prime}) \, d\lambda.$$ 

Given a bounded measurable function $m$ on $\mathbb{R}^{n_1 + n_2} \times \mathbb{R}_+$, one can then naturally define the Grushin pseudo-multiplier $m(x, \sqrt{G})$ by 
\begin{equation} \label{def:Gru-pseudo}
m (x, \sqrt{G}) f(x) := (2\pi)^{-n_2} \int_{\mathbb{R}^{n_2}} e^{-i \lambda \cdot x^{\prime \prime}} \sum_{k \in \mathbb{N}} m (x, \sqrt{(2 k + n)|\lambda|}) \, P_k (\lambda) f^{\lambda}(x^{\prime}) \, d\lambda, 
\end{equation}
for a suitable class of functions on $\mathbb{R}^{n_1 + n_2}$. In fact, it is easy to see that $m(x, \sqrt{G})$ is pointwise well-defined on the set $\displaystyle \mathcal{E} : = \cup_{l \in \mathbb{N}} \mathcal{E}_l$, where $\mathcal{E}_l$ are the following class of functions: 
\begin{align} \label{def-dense-set-Grushin-pseudo-multiplier}
\mathcal{E}_l := \left\{ \right. f \in L^2(\R^{n_1 + n_2}) : f^\lambda (x') = & \sum_{|\mu| \leq l} C(\lambda,\mu) \, \Phi_{\mu}^{\lambda}(x'), \text{ for a bounded function } C(\lambda, \mu)  \\ 
\nonumber & \text{which is compactly supported in } \lambda \text{-variable} \left. \right\}.
\end{align}
Here, $\Phi_\mu^\lambda$, for $\lambda \in \mathbb{R}^{n_2} \setminus \{0\}$, stand for scaled Hermite functions which are given by $\Phi_\mu^\lambda (x^{\prime}) = |\lambda|^{n_1/4} \Phi_\mu (|\lambda|^{1/2} x^{\prime})$. It is well-known that Hermite functions $\Phi_\mu$ are eigen functions of the Hermite operator $H$ with eigenvalue $(2|\mu| + n_1)$, from which it follows that $\Phi_\mu^\lambda$ are eigen functions of the scaled Hermite operator $H(\lambda) = - \Delta_{\mathbb{R}^{n_1}} + |\lambda|^2 |x^{\prime}|^2$ with eigenvalue $(2|\mu| + n_1) |\lambda|$. For various properties and identities related to Hermite functions, we refer to \cite{ThangaveluHermiteLaguerreExpansionsBook}. In view of the Plancherel theorem for the Grushin operator, the set $\mathcal{E}$ is dense in $L^2(\R^{n_1+n_2})$, which implies that Grushin pseudo-multipliers (as in \eqref{def:Gru-pseudo}) are densely defined on $L^2(\R^{n_1+n_2})$.  

When $m$ is independent of $x$-variable, the associated operator is referred as a Grushin multiplier and is denoted by $m  (\sqrt{G})$. Introduced in \cite{Grushin70}, the analysis of Grushin operators has attracted a lot of attention of harmonic analysts. In particular, we refer to \cite{CasarinoCiattiMartiniGrushinSphere19, JotsaroopSanjayThangaveluRieszTransformsGrushin, MartiniMullerGrushinRevistaMath, MartiniSikoraGrushinMRL, DuongOuhabazSikoraWeightedPlancherel2002JFA} and references therein for some of the important developments on the Grushin multipliers. 

Consider the following first order gradient vector fields: 
\begin{align} \label{first-order-grad}
X_{j} = \frac{\partial}{\partial x_j^{\prime}} \quad \textup{and} \quad X_{j,k} = x_j^{\prime} \frac{\partial}{\partial x_k^{\prime \prime}}, \quad \textup{for } 1 \leq j \leq n_1 \textup{ and }  1 \leq k \leq n_2.
\end{align}
Clearly, 
$$G = - \sum_{j=1}^{n_1} X^2_{j} -\sum_{j=1}^{n_1} \sum_{k=1}^{n_2} X^2_{j,k}.$$

Denoting the gradient vector field $(X_j, X_{l,k})_{1 \leq j, l \leq n_1, 1 \leq k \leq n_2}$ by $X$, we define the symbol classes $\mathcal{S}^{\sigma}_{\rho, \delta} (\sqrt{G})$ as follows.
\begin{defn} \label{grushin-symb-old-def1-sqrt}
For any $\sigma \in \mathbb{R}$ and $\rho, \delta \geq 0$, define the symbol class $\mathcal{S}^{\sigma}_{\rho, \delta} (\sqrt{G})$ to be the set consisting of all functions $m \in C^\infty \left(\mathbb{R}^{n_1 + n_2} \times \mathbb{R} \right)$ which satisfy the following estimate: 
\begin{align} \label{def-grushin-symb-old-sqrt-root}
\left|X^\Gamma \partial_{\eta}^l m(x, \eta) \right| \leq_{\Gamma, l} (1 + |\eta|)^{\sigma - \rho \, l + \delta \, |\Gamma|} 
\end{align} 
for all $\Gamma \in \mathbb{N}^{n_1 + n_1 n_2}$ and $l \in \mathbb{N}$. 
\end{defn}

With the above definition of the symbol class, following is our main result on Grushin pseudo-multipliers. 
\begin{thm} \label{old-main-thm-sqrt}
Let $m \in \mathcal{S}^0_{\rho,\delta} (\sqrt{G})$ for some $0 \leq \delta < \rho \leq 1$. Then the operator $m (x, \sqrt{G})$ extends to a bounded operator from $L^2(\mathbb{R}^{n_1 + n_2})$ to itself. 
\end{thm}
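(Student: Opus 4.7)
The plan is a two-scale Cotlar--Stein argument adapted to the Grushin sub--Riemannian geometry, in the spirit of the standard proof of the Euclidean Calder\'on--Vaillancourt theorem. First, perform a smooth dyadic decomposition of the symbol in $\eta$: with $\{\phi_j\}_{j \geq 0}$ a smooth partition of unity on $[0,\infty)$ and $\phi_j$ supported in $\{\eta \sim 2^j\}$, write $m = \sum_{j \geq 0} m_j$ with $m_j := m \phi_j$. The derivative estimates \eqref{def-grushin-symb-old-sqrt-root} then yield $|X^\Gamma \partial_\eta^l m_j(x, \eta)| \lesssim 2^{j(\delta |\Gamma| - \rho l)}$ uniformly in $j$. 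Setting $T_j := m_j(x, \sqrt{G}) = m(x, \sqrt{G}) \phi_j(\sqrt{G})$, spectral disjointness yields $T_j T_k^* = 0$ for $|j - k|$ large, while the complementary bound on $T_j^* T_k$ follows from commutator estimates between $m(x, \sqrt{G})$ and $\phi_j(\sqrt{G})$ that exploit the $\partial_\eta$-bounds on $m_j$. By the Cotlar--Stein lemma it therefore suffices to prove the uniform estimate $\sup_j \|T_j\|_{L^2 \to L^2} < \infty$.

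For this uniform bound, apply a second, spatial Cotlar--Stein at scale $2^{-j\rho}$. Choose a partition of unity $\{\chi_{j, \nu}\}_\nu$ on $\mathbb{R}^{n_1+n_2}$ subordinate to a covering by sub--Riemannian balls $B_G(x_{j, \nu}, 2^{-j\rho})$ for the Carnot--Carath\'eodory distance $d_G$ associated to the vector fields $X_j, X_{l,k}$ of \eqref{first-order-grad}, and set $T_{j, \nu} f := \chi_{j, \nu}(x) \, T_j f$, so that $T_j = \sum_\nu T_{j, \nu}$. The diagonal estimate $\|T_{j, \nu}\|_{L^2 \to L^2} \lesssim 1$ comes from freezing $x$ at $x_{j, \nu}$ on $\supp(\chi_{j, \nu})$: the Taylor remainder of $m_j(\cdot, \eta)$ in the $X$-directions is $O(2^{j(\delta - \rho)})$, a vanishing factor thanks to $\delta < \rho$, and the frozen piece $m_j(x_{j, \nu}, \sqrt{G})$ is a genuine spectral multiplier with $L^\infty$ symbol bounded by a constant, hence $L^2$-bounded. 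The product $T_{j, \nu}^* T_{j, \nu'} = m_j^* \chi_{j, \nu} \chi_{j, \nu'} m_j$ vanishes unless the supports of $\chi_{j, \nu}$ and $\chi_{j, \nu'}$ overlap, which occurs only when $d_G(x_{j, \nu}, x_{j, \nu'}) \lesssim 2^{-j\rho}$, giving the required decay on that side essentially for free.

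The heart of the argument is the complementary almost--orthogonality $\|T_{j, \nu} T_{j, \nu'}^*\|_{L^2 \to L^2} \lesssim_N (1 + 2^{j\rho} d_G(x_{j, \nu}, x_{j, \nu'}))^{-N}$ for every $N$. The kernel of $T_{j, \nu} T_{j, \nu'}^* = \chi_{j, \nu}\, m_j(x, \sqrt{G}) m_j(x, \sqrt{G})^* \chi_{j, \nu'}$ is read off from \eqref{def:Gru-pseudo} as an oscillatory integral in $\lambda \in \mathbb{R}^{n_2}$ over the scaled Hermite basis $\{\Phi_\mu^\lambda\}$. Integration by parts in $\lambda$ converts the oscillation $e^{-i\lambda \cdot (x'' - y'')}$ into decay in $|x'' - y''|$, while Hermite raising and lowering operators convert $x'$-derivatives of $\Phi_\mu^\lambda$ into multiplication by appropriate factors of $|\lambda|$ and $|\mu|$; these gains are paid for by the $2^{-j\rho}$ and $2^{j\delta}$ costs per $\partial_\eta$ and $X$-derivative of $m_j$ respectively, and the net budget $\rho - \delta > 0$ is what makes the scheme close. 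The main obstacle is precisely this step: the Grushin geometry is anisotropic and $\lambda$-dependent, so the integrations by parts must be orchestrated so that every derivative is exploited and the resulting decay rate overcomes the polynomial counting of values of $\nu$ at a given $d_G$-distance. Once this almost--orthogonality is secured, Cotlar--Stein delivers $\sup_j \|T_j\|_{L^2 \to L^2} \lesssim 1$, completing the proof.
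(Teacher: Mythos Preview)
Your overall architecture (double Cotlar--Stein: spectral then spatial) is reasonable and close to what the paper does in Sections~6--8 for the $\mathcal{S}^0_{0,0}$ case. However, for the theorem as stated ($\delta < \rho$), the paper takes a \emph{different} route: it first proves the $\mathcal{S}^0_{\rho,0}$ case for $\rho>0$ directly (Section~3), without Cotlar--Stein, by splitting $f=f_1+f_2$ into near/far pieces and invoking the already-available weighted Plancherel estimate (Lemma~\ref{lem:weighted-plancherel-joint-functional}); it then reduces the general $\mathcal{S}^0_{\rho,\delta}$ case to $\mathcal{S}^0_{\tilde\rho,0}$ with $\tilde\rho=(\rho-\delta)/(1-\delta)$ via a dilation argument (Lemma~\ref{rho-rho-case-unif-bound}) and a single spectral Cotlar--Stein (Section~\ref{sec:rho-delta}). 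So the paper never runs a spatial Cotlar--Stein at $j$-dependent scale for this theorem; your proposal is a more direct but heavier attack.

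There is a genuine gap in your diagonal estimate. You write that freezing $x$ at $x_{j,\nu}$ leaves a Taylor remainder of size $O(2^{j(\delta-\rho)})$, ``a vanishing factor'', and conclude $\|T_{j,\nu}\|\lesssim 1$. But that $O(2^{j(\delta-\rho)})$ is the \emph{sup-norm} of the remainder symbol $r_j(x,\eta)=m_j(x,\eta)-m_j(x_{j,\nu},\eta)$; the remainder operator $r_j(x,\sqrt G)$ is itself a pseudo-multiplier, and bounding its $L^2$-operator norm by its symbol sup-norm is precisely the statement you are trying to prove. The paper's substitute for this step (Lemma~\ref{freeze-lem}) avoids the circularity: it uses the Grushin Sobolev embedding (Lemma~\ref{grushin-Sobolev-embed}) to pass the supremum in the frozen variable $x_0$ to an $L^2(dx_0)$-norm, so that for each fixed $x_0$ one faces a genuine spectral multiplier, bounded by Plancherel. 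That argument requires control of $X^\Gamma m$ for $|\Gamma|\le 2(1+\lfloor Q/4\rfloor)$ and must be run at the correct scale; your Taylor-remainder heuristic does not do this job.

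Finally, you correctly flag the $TT^*$ kernel estimate as the heart of the matter, but you do not confront the obstacle the paper singles out in Remark~\ref{rem:discussion-pho-neq-delta-issue}: integrating by parts in $\lambda$ produces $\partial_{|\lambda|}\Phi_\mu^\lambda$, which carries a factor $|\lambda|^{-1}$ (see \eqref{hermite-derivative}), so naive repeated integration by parts generates singularities at $\lambda=0$. The paper's Proposition~\ref{prop:lambda-diff-theorem} and Lemma~\ref{weighted-kernel-estimate-3} are devoted precisely to reorganising these derivatives so that the resulting terms are controllable; for $\delta<\rho$ the gap $\rho-\delta>0$ is exactly what absorbs the remaining loss. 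Your sketch asserts ``the net budget $\rho-\delta>0$ is what makes the scheme close'' but gives no mechanism for where the $|\lambda|^{-1}$ factors go, and this is the step that carries all the work.
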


For technical convenience, it is better to work in the following equivalent context. Given $\widetilde{m} \in L^\infty \left( \mathbb{R}^{n} \times \mathbb{R}_+ \right)$, let us define $m  (x, \eta) := \widetilde{m} (x, \sqrt{\eta})$, so that 
\begin{align*} 
m \left(x, G \right) f(x) 
&:= (2\pi)^{-n_2} \int_{\mathbb{R}^{n_2}} e^{-i \lambda \cdot x^{\prime \prime}} \sum_{k \in \mathbb{N}} m (x, (2 k + n)|\lambda|) P_k (\lambda) f^{\lambda}(x^{\prime}) \, d\lambda, \\ 
&= (2\pi)^{-n_2} \int_{\mathbb{R}^{n_2}} e^{-i \lambda \cdot x^{\prime \prime}} \sum_{k \in \mathbb{N}} \widetilde{m} (x, \sqrt{(2 k + n)|\lambda|}) P_k (\lambda) f^{\lambda}(x^{\prime}) \, d\lambda \\ 
& = \widetilde{m} (x, \sqrt{G}) f(x). 
\end{align*} 
For $\eta$ away from $0$, a direct calculation gives that whenever $\widetilde{m} \in \mathcal{S}^{\sigma}_{\rho, \delta} (\sqrt{G})$, then 
$$ \left|X^\Gamma \partial_{\eta}^l m(x, \eta) \right| \leq_{\Gamma, l} (1 + \eta)^{\frac{\sigma}{2}- (1 + \rho) \frac{l}{2} + \delta \frac{|\Gamma|}{2}}, $$
for all $\Gamma \in \mathbb{N}^{n_1 + n_1 n_2}$ and $l \in \mathbb{N}$. 

We are thus led to define an analogous symbol class $\mathcal{S}^{\sigma}_{\rho, \delta} \left( G \right)$ as follows: 
\begin{defn} \label{grushin-symb-old-def1}
For any $\sigma \in \mathbb{R}$ and $\rho, \delta \geq 0$, define the symbol classes $\mathcal{S}^{\sigma}_{\rho, \delta} \left( G \right)$ to be the set consisting of all functions $m \in C^\infty \left(\mathbb{R}^{n_1 + n_2} \times \mathbb{R} \right)$ which satisfy the following estimate: 
\begin{align} \label{def-grushin-symb-old-without-sqrt-root}
\left|X^\Gamma \partial_{\eta}^l m(x, \eta) \right| \leq_{\Gamma, l} (1 + \eta)^{\frac{\sigma}{2}- (1 + \rho) \frac{l}{2} + \delta \frac{|\Gamma|}{2}},
\end{align} 
for all $\Gamma \in \mathbb{N}^{n_1 + n_1 n_2}$ and $l \in \mathbb{N}$. 
\end{defn}

And, with the above definition of the symbol classes $\mathcal{S}^{\sigma}_{\rho, \delta} \left( G \right)$, following is our main result on Grushin pseudo-multipliers $m(x, G)$. 
\begin{thm} \label{old-main-thm}
Let $m \in \mathcal{S}^0_{\rho,\delta} \left( G \right)$ for some $0 \leq \delta < \rho \leq 1$. Then the operator $m \left(x,  G \right)$ extends to a bounded operator from $L^2(\mathbb{R}^{n_1 + n_2})$ to itself. 
\end{thm}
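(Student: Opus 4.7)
The plan is to reduce Theorem \ref{old-main-thm} to Theorem \ref{old-main-thm-sqrt} and then prove the latter via a Calder\'on--Vaillancourt scheme: a dyadic decomposition in the spectral variable combined with the Cotlar--Stein lemma.

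First I would verify that the two theorems are equivalent. The calculation displayed just before Definition \ref{grushin-symb-old-def1} shows that $m \in \mathcal{S}^0_{\rho,\delta}(G)$ whenever $\widetilde m \in \mathcal{S}^0_{\rho,\delta}(\sqrt G)$ and $m(x,\eta) = \widetilde m(x, \sqrt\eta)$; conversely, starting from $m \in \mathcal{S}^0_{\rho,\delta}(G)$ and setting $\widetilde m(x, \eta) := m(x, \eta^2)$, a Fa\`a di Bruno computation shows $\widetilde m \in \mathcal{S}^0_{\rho,\delta}(\sqrt G)$, since each $\partial_\eta$ applied to $\widetilde m$ introduces a factor $\eta$ that exactly cancels half a power in the weight. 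Because $m(x, G) = \widetilde m(x, \sqrt G)$ on the dense set $\mathcal{E}$, it suffices to argue in the $\sqrt G$ setting.

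Fix $\widetilde m \in \mathcal{S}^0_{\rho,\delta}(\sqrt G)$ and pick a smooth dyadic partition of unity $\{\psi_j\}_{j \geq 0}$ on $[0, \infty)$ with $\supp \psi_j \subset [2^{j-1}, 2^{j+1}]$ for $j \geq 1$. Writing $\widetilde m_j := \widetilde m \cdot \psi_j$ and $T_j := \widetilde m_j(x, \sqrt G)$, we have $\widetilde m(x, \sqrt G) = \sum_{j \geq 0} T_j$ on $\mathcal{E}$, and each $\widetilde m_j$ satisfies
$$\left|X^\Gamma \partial_\eta^l \widetilde m_j(x, \eta)\right| \lesssim_{\Gamma, l} 2^{j(-\rho l + \delta |\Gamma|)}$$
for $\eta$ in its support (of length $\sim 2^j$). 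The goal is then the Cotlar--Stein estimate
$$\|T_j^* T_k\|_{L^2 \to L^2} + \|T_j T_k^*\|_{L^2 \to L^2} \lesssim 2^{-\varepsilon |j - k|}$$
for some $\varepsilon > 0$, which yields the desired boundedness of $\sum_j T_j$ on $L^2(\R^{n_1+n_2})$. I would write the kernel of $T_j^* T_k$ (and of $T_j T_k^*$) via \eqref{def:Gru-pseudo} in terms of the scaled Hermite projectors and the partial Fourier transform in $x''$, and apply Schur's test after repeated integration by parts in $\eta$ and $\lambda$. The compact $\eta$-localization of $\widetilde m_j$ makes each $\eta$-derivative contribute at most $2^{-\rho \max(j,k)}$, while commuting $X$-derivatives past the spectral projector costs at most $2^{\delta \max(j,k)}$; since $\rho > \delta$ strictly, iteration produces the required geometric off-diagonal decay.

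The principal technical obstacle is the commutator analysis between the horizontal vector fields $X_{j,k} = x_j' \partial_{x_k''}$ and the scaled Hermite projectors $P_k(\lambda)$. After the partial Fourier transform in $x''$, $X_{j,k}$ becomes multiplication by $i \lambda_k x_j'$, which acts as a creation/annihilation-type operator on the scaled Hermite basis $\{\Phi_\mu^\lambda\}$: it shifts the level $|\mu|$ by one and introduces a $|\lambda|^{1/2}$ weight. Tracking the resulting $|\lambda|$-powers and level shifts through repeated commutations, and matching them against the Grushin-adapted estimate \eqref{def-grushin-symb-old-sqrt-root} uniformly in $\lambda$, is the delicate step. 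Once these commutator bounds and the resulting off-diagonal kernel estimates are in place, the Cotlar--Stein lemma assembles the pieces and yields the stated $L^2$-boundedness.
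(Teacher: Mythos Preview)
Your reduction to the $\sqrt{G}$ setting is fine and matches the paper's equivalence argument. The real gap is in the Cotlar--Stein scheme itself.

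First, the diagonal case $j=k$ of your almost-orthogonality estimate is precisely the statement $\|T_j\|_{L^2\to L^2}\lesssim 1$ uniformly in $j$, and your proposed mechanism (``each $\eta$-derivative contributes $2^{-\rho\max(j,k)}$, each $X$-commutator costs $2^{\delta\max(j,k)}$'') does nothing here: there is no spectral separation to exploit, so no $\eta$-derivative enters. For a genuine multiplier this bound is Plancherel, but for a pseudo-multiplier the $x$-dependence of the symbol makes it a substantial result in its own right. Lemma~\ref{lem:general-pseudo-multiplier-compact-support-L2-bounded} only gives a bound growing like $2^j$, which is useless. The paper obtains this uniform bound by two different routes: for $\delta=0$ it proves Theorem~\ref{thm:S0-rho-0-L2} directly (no Cotlar--Stein) via spatial localisation, the Sobolev embedding of Lemma~\ref{grushin-Sobolev-embed} to freeze the space variable, and the weighted Plancherel estimate of Lemma~\ref{lem:weighted-plancherel-joint-functional}; for $\delta>0$ it uses the non-isotropic dilation $\delta_{2^{l\delta/2}}$ in Lemma~\ref{rho-rho-case-unif-bound} to reduce each piece to a symbol in $\mathcal S^0_{(\rho-\delta)/(1-\delta),0}$ and then invokes the $\delta=0$ result. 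Your sketch contains neither ingredient.

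Second, for the genuinely off-diagonal estimate $\|T_j^*T_k\|$ with $j\neq k$, ``integration by parts in $\eta$'' is not a meaningful operation here: the spectral parameter in the Hermite direction is discrete, and the only continuous integration variable is $\lambda$. The paper's actual mechanism (Section~\ref{Sec-S00-proof-1} and Lemma~\ref{lem:rho-delta-frequency-decay}) is to insert powers of $(I+G)^{\pm N}$, using that $(I+G)^N T_J^*(I+G)^{-N}$ remains bounded once one controls $X^\Gamma m$; the spectral separation then appears through $\psi_l(G)(I+G)^{-N}$ versus $(I+G)^N\psi_{l'}(G)$. This again requires the uniform boundedness of the pieces as input. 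Your commutator remarks about creation/annihilation shifts are relevant to the kernel estimates of Section~\ref{sec-kernel-estimates}, but those estimates feed into the weighted Plancherel bounds, not into a Schur test in the way you describe.

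In short, the paper's proof of Theorem~\ref{old-main-thm} is a two-stage argument: first establish the $\mathcal S^0_{\rho,0}$ case by a direct localisation method (Theorem~\ref{thm:S0-rho-0-L2}), then bootstrap to $\mathcal S^0_{\rho,\delta}$ via dilation and a Cotlar--Stein argument that uses the first stage to bound each dyadic piece (Theorem~\ref{thm:rho-delta}). Your proposal collapses both stages into a single Cotlar--Stein step and thereby assumes the hardest part.
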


Given any $N \in \mathbb{N}$, one can define the seminorm on symbol classes $\mathcal{S}^{\sigma}_{\rho, \delta} ( G )$ as follows:
\begin{align} \label{def-grushin-symb-seminorm}
\| m \|_{\mathcal{S}^{\sigma, N}_{\rho,\delta}} := \sup_{|\Gamma| + l \leq N} \sup_{x, \eta} \, (1 + \eta)^{- \frac{\sigma}{2} + (1 + \rho) \frac{l}{2} - \delta \frac{|\Gamma|}{2}} |X^\Gamma \partial_{\eta}^l m(x, \eta) |. 
\end{align} 
In fact, we shall prove that there exists an $N$, depending only on $n_1, n_2, \rho$, and $\delta$ such that for all symbols $m$ and for all $f \in L^2(\mathbb{R}^{n_1 + n_2})$, 
$$ \| m (x,  G ) f \|_{L^2(\mathbb{R}^{n_1 + n_2})} \lesssim_{n_1, n_2, \rho, \delta} \| m \|_{\mathcal{S}^{0, N}_{\rho,\delta}} \| f \|_{L^2(\mathbb{R}^{n_1 + n_2})}. $$
Our method does not seem to be robust enough to capture sharp $N$, so we do not pursue it here, and for the sake of brevity, we shall mostly use shortened notations $\| m \|_{\rho,\delta}$ or $\| m \|_{\mathcal{S}^0_{\rho,\delta}}$ where the number of derivatives $N$ should be clear from the context where it is used. 

Let us show the connection between symbol classes $\mathcal{S}^{\sigma}_{\rho, \delta} ( \sqrt{G} )$ and $\mathcal{S}^{\sigma}_{\rho, \delta} (G)$ and the associated Theorems \ref{old-main-thm-sqrt} and \ref{old-main-thm}. Note that if we take $m \in \mathcal{S}^{\sigma}_{\rho, \delta} (G)$, then $\widetilde{m} \in \mathcal{S}^{\sigma}_{\rho, \delta} (\sqrt{G})$, with obvious control on the seminorms. As a consequence, Theorem \ref{old-main-thm-sqrt} implies Theorem \ref{old-main-thm}. 

On the other hand, for a given $\widetilde{m} \in \mathcal{S}^{\sigma}_{\rho, \delta} (\sqrt{G})$, we can only ensure 
$$ |X^\Gamma \partial_{\eta}^l m(x, \eta) | \leq_{\Gamma, l} \eta^{\frac{\sigma}{2}- (1 + \rho) \frac{l}{2} + \delta \frac{|\Gamma|}{2}}.$$  
The singularity at the origin in the above estimate is notable. To overcome this obstacle, we can decompose the symbol function $m$ into two pieces in $\eta$-variable, say $m_{loc}$ and $m_{glob}$, with $m_{loc}$ supported near the origin and $m_{glob}$ supported away from the origin. It is clear that $\widetilde{m}_{glob} \in \mathcal{S}^{\sigma}_{\rho, \delta} (\sqrt{G})$ and $m_{glob} \in \mathcal{S}^{\sigma}_{\rho, \delta} (G)$ with equivalence of respective symbol seminorms. Thus, for $m_{glob}$, Theorem \ref{old-main-thm} implies Theorem \ref{old-main-thm-sqrt}. 

Finally, $L^2$-boundedness of operators $m_{loc} (x,  G)$ and $\widetilde{m}_{loc} (x, \sqrt{G})$ follows from Lemma \ref{lem:general-pseudo-multiplier-compact-support-L2-bounded} and Remark \ref{discussion-two-ways-definition-pseudo-multiplier} from where it will also become clear that operator norms of $m_{loc} (x,  G)$ and $\widetilde{m}_{loc} (x, \sqrt{G})$ are controlled by suitable seminorm of the symbol function $\widetilde{m}$. 

Hence, Theorems \ref{old-main-thm-sqrt} and \ref{old-main-thm} are equivalent.

\begin{rem}
\label{rem:discussion-pho-neq-delta-issue} 
Note that Theorems \ref{old-main-thm-sqrt} and \ref{old-main-thm} do not talk about the symbol class $\mathcal{S}^0_{\rho,\rho} (G)$ for any $0 \leq \rho < 1.$ In fact, our proof is built on analysing the action of the distance function on the kernel of the operator. For the same, we estimate $\lambda$-derivatives of the kernels of certain scaled Hermite operators. But, these derivatives may very well have singularities at $\lambda = 0$. This obstacle does not allow us to treat the case when $0 \leq \delta = \rho < 1.$ Nonetheless, we are able to handle the singularities (of $\lambda$ near $0$) in all other cases, that is, when $0 \leq \delta < \rho \leq 1$. 

We shall show that if one studies the analogous question for symbols associated to the joint functional calculus of the Grushin operator, then assuming some  cancellation condition for the symbol function in the $\lambda$-variable, one can establish the $L^2$-boundedness in the full range (that is, for $0 \leq \delta = \rho < 1$ as well). 
\end{rem} 

We explain the set-up and assumptions pertaining to the above mentioned joint functional calculus in the following subsection. 

%%%%%%%%%%%%%%%%%%%%%%%%%%%%
%%%%%%%%%%%%%%%%%%%%%%%%%%%%
%%%%%%%%%%%%%%%%%%%%%%%%%%%%
%%%%%%%%%%%%%%%%%%%%%%%%%%%%
%%%%%%%%%%%%%%%%%%%%%%%%%%%%
%%%%%%%%%%%%%%%%%%%%%%%%%%%%
%%%%%%%%%%%%%%%%%%%%%%%%%%%%
%%%%%%%%%%%%%%%%%%%%%%%%%%%%
%%%%%%%%%%%%%%%%%%%%%%%%%%%%

\subsection{Pseudo-multipliers for the joint functional calculus of the Grushin operator} 
\label{subsec:intro-joint-functional-Grushin-pseudo}
Let us consider the following operators associated with the Grushin operator 
\begin{align} \label{eq:operatorsLandU}
L_j= (-i \partial_{x_j'})^2 + (x_j')^2 \sum_{k=1}^{n_2} (-i\partial_{x_k''})^2 \quad \textup{and} \quad U_k = - i \partial_{x_k''}, 
\end{align} 
for $j= 1, 2, \ldots, n_1$ and $k = 1, 2, \ldots, n_2$. 

As explained in Section 3 of \cite{MartiniSikoraGrushinMRL}, the operators $L_1, L_2, \ldots, L_{n_1}, U_1, U_2, \ldots, U_{n_2}$ (and therefore all the polynomials in $L_1, L_2, \ldots, L_{n_1}$, $U_1, U_2, \ldots, U_{n_2}$) are essentially self-adjoint on $C_c^\infty(\mathbb{R}^{n_1 + n_2})$ and commute strongly (that is, their spectral resolutions commute), and therefore they admit a joint functional calculus on $L^2(\mathbb{R}^{n_1 + n_2})$ in the sense of the spectral theorem. Let us write $\boldsymbol{L} = (L_1, L_2,\ldots, L_{n_1})$ and $\boldsymbol{U} = (U_1, U_2, \ldots, U_{n_2})$. 

Given $m \in L^\infty ( \mathbb{R}^{n_1 + n_2} \times (\mathbb{R}_+)^{n_1} \times (\mathbb{R}^{n_2} \setminus \{0\}) )$, we consider pseudo-multiplier operator $m(x, \boldsymbol{L}, \boldsymbol{U})$, defined on a suitable class of functions on $\mathbb{R}^{n_1 + n_2}$ by 
\begin{align} \label{def:joint-Gru-pseudo}
m(x, \boldsymbol{L}, \boldsymbol{U}) f(x) := (2\pi)^{-n_2} \int_{\mathbb{R}^{n_2}} e^{-i \lambda \cdot x^{\prime \prime}} \sum_{\mu \in \mathbb{N}^{n_1}} m ( x, (2 \mu + \tilde{1})|\lambda|, - \lambda ) ( f^{\lambda}, \Phi_\mu^\lambda ) \Phi_\mu^\lambda (x^{\prime}) \, d\lambda, 
\end{align} 
where $\tilde{1} = (1, 1, \ldots, 1) \in \mathbb{R}^{n_1}$. 

We are naturally led to the following symbol classes. 
\begin{defn} \label{def:joint-symb-def1}
For any $\sigma \in \mathbb{R}$ and $\rho, \delta \geq 0$, define the symbol class $\mathcal{S}^{\sigma}_{\rho_1, \rho_2, \delta}(\boldsymbol{L}, \boldsymbol{U})$ to be the set of all $m \in C^\infty (\mathbb{R}^{n_1 + n_2} \times (\mathbb{R}_+)^{n_1} \times (\mathbb{R}^{n_2} \setminus \{0\}) )$ satisfying the following estimate: 
\begin{align} \label{def:grushin-symb}
|X^\Gamma \partial_{\tau}^{\theta} \partial_{\kappa}^{\beta} m(x, \tau, \kappa) | \leq_{\Gamma, \theta, \beta} (1 + |\tau| + |\kappa|)^{\frac{\sigma}{2} - (1 + \rho) \frac{(|\theta| + |\beta|)}{2} + \delta \frac{|\Gamma|}{2}} 
\end{align} 
for all $\Gamma \in \mathbb{N}^{n_1 + n_1 n_2}$, $\theta \in \mathbb{N}^{n_1}$ and $\beta \in \mathbb{N}^{n_2}$. 
\end{defn}

One can define seminorms for above symbol classes as in \eqref{def-grushin-symb-seminorm}. Though as mentioned earlier, we shall use the notation $\| m \|_{\rho, \delta}$ or $\| m \|_{\mathcal{S}^0_{\rho,\delta}}$ for convenience. 

\begin{thm} \label{thm:old-joint-calc}
Let $m \in \mathcal{S}^0_{\rho, \delta} (\boldsymbol{L}, \boldsymbol{U})$ for some $\delta, \rho \in [0,1]$ with $1 \neq \delta \leq \rho$. 
\begin{enumerate}
\item If $\delta < \rho$, then $m(x, \boldsymbol{L}, \boldsymbol{U})$ extends to a bounded operator from $L^2(\mathbb{R}^{n_1 + n_2})$ to itself. 

\item For $m \in \mathcal{S}^0_{\rho, \rho} (\boldsymbol{L}, \boldsymbol{U})$, with $0 \leq \rho < 1$, if we further assume that 
\begin{align} \label{def:grushin-symb-vanishing-0-condition}
\lim_{\kappa \to 0} \partial_{\kappa}^{\beta} m(x, \tau, \kappa) = 0, 
\tag{CancelCond} 
\end{align} 
for all $\beta \in \mathbb{N}^{n_2}$ with $|\beta| \leq 4 N_0 = 4 ( \lfloor\frac{Q}{4}\rfloor + 1 )$, then $m(x, \boldsymbol{L}, \boldsymbol{U})$ extends to a bounded operator from $L^2(\mathbb{R}^{n_1 + n_2})$ to itself. Here $Q = n_1 + 2 n_2$ is the homogeneous dimension associated to the Grushin operator. 
\end{enumerate}
\end{thm}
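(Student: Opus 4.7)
The plan is to prove $L^2$-boundedness of $m(x,\boldsymbol{L},\boldsymbol{U})$ by a Littlewood--Paley decomposition of the symbol in the joint spectral variable $(\tau,\kappa)$ combined with a Cotlar--Stein almost-orthogonality argument. Choose a smooth partition $1=\sum_{j\geq 0}\chi_j(|\tau|+|\kappa|)$ on $[0,\infty)$ with $\mathrm{supp}\,\chi_j\subset[2^{j-1},2^{j+1}]$ for $j\geq 1$ and $\chi_0$ supported in $[0,2]$. Set $m_j:=\chi_j m$ and $T_j:=m_j(x,\boldsymbol{L},\boldsymbol{U})$; each $m_j$ satisfies the parent symbol estimates and is supported on a dyadic shell. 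Its integral kernel is
\[
K_j(x,y)=(2\pi)^{-n_2}\int_{\R^{n_2}}e^{-i\lambda\cdot(x''-y'')}\sum_{\mu}m_j\bigl(x,(2\mu+\tilde 1)|\lambda|,-\lambda\bigr)\Phi_\mu^\lambda(x')\Phi_\mu^\lambda(y')\,d\lambda.
\]

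The first step is to derive sharp pointwise bounds on $K_j$ and suitable weighted versions. Using the identities $L_i\Phi_\mu^\lambda=(2\mu_i+1)|\lambda|\Phi_\mu^\lambda$ and $(-i\partial_{y''_k})e^{-i\lambda\cdot(x''-y'')}=\lambda_k e^{-i\lambda\cdot(x''-y'')}$ together with integration by parts in $\lambda$ and with the Hermite raising/lowering operators, I would trade $\partial_\tau$- and $\partial_\kappa$-derivatives of $m_j$ for operations involving the vector fields $X_j, X_{j,k}$ of \eqref{first-order-grad} acting in $(x,y)$. Each spectral derivative gains $2^{-j(1+\rho)/2}$ while each $X$-derivative of $m_j$ costs at most $2^{j\delta/2}$, and the procedure produces pointwise decay of $K_j$ in an appropriate subelliptic distance $d_G(x,y)$ adapted to the Grushin geometry. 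A Schur-type argument then gives $\|T_j\|_{L^2\to L^2}\lesssim\|m\|_{\mathcal{S}^0_{\rho,\delta}}$ uniformly in $j$.

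To upgrade uniform bounds to $\|\sum_j T_j\|_{L^2\to L^2}<\infty$ I would estimate $T_jT_k^*$ and $T_j^*T_k$ when $|j-k|\geq 2$, exploiting the disjointness of the spectral shells of $m_j$ and $m_k$. Iterating the integration by parts in $\lambda$ and trading each spectral derivative's gain $2^{-(1+\rho)/2}$ against each vector-field cost $2^{\delta/2}$ produces an estimate
\[
\|T_j^*T_k\|+\|T_jT_k^*\|\lesssim 2^{-(\rho-\delta)N_1|j-k|}\,\|m\|_{\mathcal{S}^0_{\rho,\delta}}^2
\]
for $N_1$ as large as one likes. In part (1), the strict inequality $\delta<\rho$ makes this summable, and Cotlar--Stein concludes.

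In part (2), the exponent $\rho-\delta$ vanishes and the scheme collapses. As indicated in Remark~\ref{rem:discussion-pho-neq-delta-issue}, the genuine obstruction lies in the region $\lambda\to 0$ of $K_j$, where $\lambda$-differentiation of $\Phi_\mu^\lambda$ and of the eigenvalues $(2\mu+\tilde 1)|\lambda|$ introduces singular factors of order $|\lambda|^{-1/2}$ per derivative. The cancellation hypothesis $\partial_\kappa^\beta m(x,\tau,\kappa)\big|_{\kappa=0}=0$ for $|\beta|\leq 4N_0$ enters precisely here: Taylor expansion of $\partial_\kappa^\beta m$ around $\kappa=0$ produces a factor $|\kappa|^{4N_0-|\beta|}=|\lambda|^{4N_0-|\beta|}$ in the integrand which absorbs the singular powers, and the choice $4N_0=4(\lfloor Q/4\rfloor+1)$ is dictated by the $Q$-dimensional integration needed in the final Plancherel/Schur step. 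The main obstacle is to make this interplay fully quantitative, balancing the number of $\lambda$-derivatives placed on the phase, on $\Phi_\mu^\lambda$, and on the symbol so as to simultaneously honour the cancellation hypothesis, achieve uniform-in-$\mu$ control, and recover the almost-orthogonality gain required by Cotlar--Stein in the endpoint $\delta=\rho$; this balancing act is the core technical content of part (2).
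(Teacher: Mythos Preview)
Your outline has the right flavour but misses two structural ingredients that the paper relies on, and the mechanism you propose for the $T_j^*T_k$ estimate is not the one that actually works. The paper does not run Cotlar--Stein directly on $\mathcal{S}^0_{\rho,\delta}$; instead it first reduces to a $\delta=0$ base case by an anisotropic dilation $\delta_t(x',x'')=(tx',t^2x'')$ with $t=2^{l\delta/2}$, which sends each dyadic piece $m_l$ into $\mathcal{S}^0_{\tilde\rho,0}$ with $\tilde\rho=(\rho-\delta)/(1-\delta)$ (Theorem~\ref{thm:rho-delta}, Lemma~\ref{rho-rho-case-unif-bound}). For part~(1) one has $\tilde\rho>0$ and the base case is Theorem~\ref{thm:S0-rho-0-L2}, proved by a direct localised weighted-kernel argument, not Cotlar--Stein. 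For part~(2) one has $\tilde\rho=0$, and the $\mathcal{S}^0_{0,0}$ base case is handled in Sections~\ref{Sec-S00-proof}--\ref{Sec-S00-proof-2}. Moreover, the almost-orthogonality decay for $(T_l)^*T_{l'}$ in the reduction step (Lemma~\ref{lem:rho-delta-frequency-decay}) is obtained not by ``integration by parts in $\lambda$'' but by multiplying and dividing by the eigenvalue gap $(2|\mu|+n_1)|\lambda|-(2|\mu'|+n_1)|\lambda'|\sim 2^{\max(l,l')}$ and integrating by parts in the \emph{space} variable $z$ via $H(\lambda)\Phi_\mu^\lambda=(2|\mu|+n_1)|\lambda|\Phi_\mu^\lambda$; the resulting decay $2^{-N(1-\delta)\max(l,l')/2}$ uses only $\delta<1$, and the hypothesis $\delta<\rho$ plays no role at this stage.

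The principal gap is in part~(2). A single-parameter (spectral-only) Cotlar--Stein argument cannot work for $\mathcal{S}^0_{0,0}$: once $\rho=\delta$ there is no gain in $|j-k|$, as you yourself note, and the cancellation condition does not restore it. The paper's remedy is to introduce a \emph{second} index via a smooth spatial partition of unity $\{\chi_J\}$ adapted to the Grushin balls (Lemma~\ref{lem-grushin-partition-part-fix}) and to run Cotlar--Stein on the double family $T_{J,l}$. The two off-diagonal bounds in Proposition~\ref{TT*-and-T*T-main-prop} come from genuinely different sources: $\|(T_{J,l})^*T_{J',l'}\|$ decays in $|l-l'|$ by the eigenvalue-gap trick (Section~\ref{Sec-S00-proof-1}), while $\|T_{J,l}(T_{J',l'})^*\|$ decays in $d(x_J,x_{J'})$ via the weighted kernel estimates of Section~\ref{sec-kernel-estimates} (Section~\ref{Sec-S00-proof-2}). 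The cancellation condition~\eqref{def:grushin-symb-vanishing-0-condition} enters only in the latter estimate (Proposition~\ref{prop:weighted-kernel-estimate-1}), where Taylor expansion in $\kappa$ supplies the powers of $|\lambda|$ needed to kill the singularities produced when the control-distance weight acts on the kernel; it does not, and cannot, manufacture decay in the spectral index. Without the spatial localisation your scheme for part~(2) has no source of almost-orthogonality.
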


\begin{rem} \label{rem:joint-symb-lesser-assumption}
Note that in the definition of the operator $m(x, \boldsymbol{L}, \boldsymbol{U})$, the symbol function $m$ is evaluated at points of the form $(x, \tau, \kappa) = (x, (2 \mu + \tilde{1})|\lambda|, -\lambda)$, and therefore, in our context, we have that $|(\tau, \kappa)| \sim |\tau|$. 

Now, even though our proof of part $(2)$ of Theorem \ref{thm:old-joint-calc}  requires a cancellation condition in the form of \eqref{def:grushin-symb-vanishing-0-condition}, but it also gives us the required result with the ``genuinely" lesser assumption. Namely, we do not need any decay via $\kappa$-derivatives, and only need to assume the following condition on the symbol function $m$: 
\begin{align} \label{ineq:joint-symb-lesser-assumption-2}
|X^\Gamma \partial_{\tau}^{\theta} \partial_{\kappa}^{\beta} m(x, \tau, \kappa) | \leq_{\Gamma, \theta, \beta} (1 + |\tau|)^{- (1 + \rho) \frac{|\theta|}{2} + \rho \frac{|\Gamma|}{2}}
\end{align} 
for sufficiently many $\Gamma \in \mathbb{N}^{n_1 + n_1 n_2}$, $\theta \in \mathbb{N}^{n_1}$ and $\beta \in \mathbb{N}^{n_2}$. 
\end{rem}

\begin{application}
We discuss here the following interesting applications of Theorem \ref{thm:old-joint-calc} (and its proof) and Remark \ref{rem:joint-symb-lesser-assumption} . 

\begin{enumerate}[label=(\roman*)]
\item One can prove a Calder\'on-Vaillancourt type theorem even in the presence of Hermite-type shifts. More precisely, for $m \in \mathcal{S}^0_{\rho, \delta}(G)$ with $\delta < \rho$, and $\vec{c} \in \mathbb{R}^{n_1}$, let us define symbol functions $F : \mathbb{R}^{n_1 + n_2} \times (\mathbb{R}_+)^{n_1} \to \mathbb{C}$ and $M_{\vec{c}} : \mathbb{R}^{n_1 + n_2} \times (\mathbb{R}_+)^{n_1} \times (\mathbb{R}^{n_2} \setminus \{0\}) \to \mathbb{C}$ by 
$F (x, \tau) = m (x, |\tau|_1)$, and $M_{\vec{c}} (x, \tau, \kappa) = F ( x, \tau + |\kappa| \vec{c} )$. We show in Lemma 2.7 of \cite{BBGG-2} that given a compact set $\mathcal{K} \subset \mathbb{R}^{n_1}$, there exists a constant $C = C_{\mathcal{K}}$ such that for all $\vec{c} \in \mathcal{K}$, 
\begin{align*} 
\| M_{\vec{c}} (x, \boldsymbol{L}, \boldsymbol{U}) \|_{op} \leq C_{\mathcal{K}} \| m \|_{\mathcal{S}^0_{\rho, \delta}}. 
\end{align*}
 
\item Consider any symbol of the form  
$$ m(x, \tau, \kappa) = (\sin \kappa_j)^{4 ( \lfloor\frac{Q}{4}\rfloor + 1 )} \, F (x, |\tau|_1) $$ 
for some $1 \leq j \leq n_2$ and $F \in S^0_{\rho, \rho}(G)$ with $0 \leq \rho < 1$, then the operator $m(x, \boldsymbol{L}, \boldsymbol{U})$ is $L^2$-bounded. 

\item Given a measurable subset $\mathcal{U} \subseteq \mathbb{R}^{n_2}$, let us denote by $L^2_{\mathcal{U}} (\mathbb{R}^{n_1 + n_2})$ the collection of all functions $f \in L^2(\mathbb{R}^{n_1 + n_2})$ such that $f^\lambda(x')$ is supported in $\mathbb{R}^{n_1} \times \mathcal{U}$ as a function of $(x', \lambda)$. We have the following result. 
\begin{cor} \label{cor:Grushin-CV-compact-support} 
Let $\mathcal{U} \subseteq \mathbb{R}^{n_2}$ be a closed set such that $0 \notin \mathcal{U}$. Then, for any $m_1 \in \mathcal{S}^0_{\rho, \rho} (G)$ and $m_2 \in \mathcal{S}^0_{\rho, \rho} (\boldsymbol{L}, \boldsymbol{U})$ with $0 \leq \rho < 1$, we have 
\begin{align*}
\| m_1(x, G) f \|_2 & \lesssim_{\mathcal{U}} \| m_1 \|_{\rho, \rho} \|f\|_2, \\ 
\| m_2(x, \boldsymbol{L}, \boldsymbol{U}) f \|_2 & \lesssim_{\mathcal{U}} \| m_2 \|_{\rho, \rho} \|f\|_2 
\end{align*}
for all $f \in L^2_{\mathcal{U}} (\mathbb{R}^{n_1 + n_2})$. 
\end{cor}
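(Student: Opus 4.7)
The plan is to reduce both inequalities to Theorem~\ref{thm:old-joint-calc}(2), applied in the relaxed form of Remark~\ref{rem:joint-symb-lesser-assumption}, by replacing each given symbol with a modified symbol that vanishes identically on a neighborhood of $\kappa = 0$ yet agrees with the original on the portion of the spectrum of $\boldsymbol{U}$ seen by functions in $L^2_\mathcal{U}$. Since $\mathcal{U}$ is closed and $0 \notin \mathcal{U}$, we have $c := \operatorname{dist}(0, \mathcal{U}) > 0$. First I would fix a cutoff $\psi \in C^\infty(\mathbb{R}^{n_2})$ with $\psi \equiv 0$ on $\{|\lambda| \leq c/2\}$ and $\psi \equiv 1$ on $\{|\lambda| \geq c\}$, and define
\begin{align*}
\tilde{m}_1(x, \tau, \kappa) := m_1(x, |\tau|_1)\, \psi(-\kappa), \qquad \tilde{m}_2(x, \tau, \kappa) := m_2(x, \tau, \kappa)\, \psi(-\kappa).
\end{align*}
Using that $\bigl|(2\mu + \tilde{1})|\lambda|\bigr|_1 = (2|\mu| + n_1)|\lambda|$ (so that $m_1$ evaluated at $\tau = (2\mu + \tilde{1})|\lambda|$ recovers the scalar Grushin symbol), and that for $f \in L^2_\mathcal{U}$ the function $f^\lambda$ is supported in $\lambda \in \mathcal{U}$ where $\psi(\lambda) = 1$, the definitions \eqref{def:Gru-pseudo} and \eqref{def:joint-Gru-pseudo} (with $\kappa = -\lambda$) immediately give
\begin{align*}
m_1(x, G) f = \tilde{m}_1(x, \boldsymbol{L}, \boldsymbol{U}) f, \qquad m_2(x, \boldsymbol{L}, \boldsymbol{U}) f = \tilde{m}_2(x, \boldsymbol{L}, \boldsymbol{U}) f.
\end{align*}

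Next I would verify that $\tilde{m}_1$ and $\tilde{m}_2$ satisfy the hypotheses of Theorem~\ref{thm:old-joint-calc}(2) as relaxed by Remark~\ref{rem:joint-symb-lesser-assumption}. The cancellation condition \eqref{def:grushin-symb-vanishing-0-condition} is automatic: $\psi$ is identically zero on a neighborhood of the origin, so by the Leibniz rule every $\kappa$-derivative of $\tilde{m}_j$ vanishes identically in a full neighborhood of $\kappa = 0$, forcing the required limits to be zero. For the symbol estimate \eqref{ineq:joint-symb-lesser-assumption-2}, Leibniz expands $\partial_\kappa^\beta \tilde{m}_j$ as a sum of $\partial_\kappa^{\beta-\gamma} m_j \cdot \partial_\kappa^\gamma \psi(-\kappa)$; the $\gamma = 0$ term is handled by $|\psi| \lesssim 1$ together with the original symbol estimates (for $\tilde{m}_1$ one additionally uses $\partial_{\tau_i}(m_1(x, |\tau|_1)) = (\partial_\eta m_1)(x, |\tau|_1)$ to recast the $\mathcal{S}^0_{\rho, \rho}(G)$ bounds in the joint form), while for $\gamma \neq 0$ the derivative $\partial_\kappa^\gamma \psi(-\kappa)$ is compactly supported in $\kappa$, so on its support $1 + |\tau| + |\kappa| \sim 1 + |\tau|$ and the desired bound again follows, with constants involving only finitely many sup-norms of derivatives of $\psi$.

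The main obstacle is not conceptual but rather bookkeeping: one has to track that all the seminorms of $\tilde{m}_j$ are dominated by $C_\mathcal{U}\, \|m_j\|_{\rho, \rho}$, where $C_\mathcal{U}$ depends only on $c = \operatorname{dist}(0, \mathcal{U})$ through finitely many derivatives of $\psi$, so that the final operator bound inherits the claimed $\lesssim_\mathcal{U}$ dependence. Once these estimates are in place, Theorem~\ref{thm:old-joint-calc}(2), in the form of Remark~\ref{rem:joint-symb-lesser-assumption}, applied separately to $\tilde{m}_1$ and $\tilde{m}_2$, yields the two stated inequalities.
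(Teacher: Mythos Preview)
Your approach is correct and constitutes a genuinely different route from the paper's. The paper (see Remark~\ref{rem:proof-rem-Grushin-CV-compact-support}) does not modify the symbol at all; instead it goes back inside the proof machinery of Proposition~\ref{prop:weighted-kernel-estimate-1} and observes that for $f\in L^2_{\mathcal U}$ the integration in $\lambda$ never sees a neighbourhood of the origin, so the derivatives $\partial_\kappa^{\beta_1'}\bigl(\psi_1(2^j|\cdot|)\bigr)(\kappa)$ that would otherwise blow up near $\kappa=0$ are harmless. In particular the Taylor expansion used to exploit \eqref{def:grushin-symb-vanishing-0-condition} becomes unnecessary, and the Cotlar--Stein estimates go through unchanged. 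Your argument is instead entirely external: multiply by a cutoff $\psi(-\kappa)$, note that on $L^2_{\mathcal U}$ the operators coincide, and then invoke Theorem~\ref{thm:old-joint-calc}(2) (in the form of Remark~\ref{rem:joint-symb-lesser-assumption}) as a black box. This buys a cleaner corollary-style proof that does not require the reader to revisit Sections~\ref{Sec-S00-proof}--\ref{Sec-S00-proof-2}; the paper's approach, on the other hand, explains \emph{why} the cancellation condition was only ever needed to tame $\lambda\to 0$.

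One small point worth tightening: for the $\gamma=0$ term in your Leibniz expansion, the bound ``$|\psi|\lesssim 1$ together with the original symbol estimates'' gives control in terms of $(1+|\tau|+|\kappa|)$, not $(1+|\tau|)$, and these are not globally comparable when the exponent is positive. This is exactly why you need Remark~\ref{rem:joint-symb-lesser-assumption} rather than the full $\mathcal S^0_{\rho,\rho}$ membership (the $\gamma\neq 0$ terms would spoil the latter anyway, since derivatives of $\psi$ carry no decay). The first sentence of that remark---that in the operator the symbol is only evaluated where $|(\tau,\kappa)|\sim|\tau|$---is precisely what makes \eqref{ineq:joint-symb-lesser-assumption-2} the right target, and on that region the original $\mathcal S^0_{\rho,\rho}$ bound on $m_2$ does yield \eqref{ineq:joint-symb-lesser-assumption-2}. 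It would be worth saying this explicitly.
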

\end{enumerate}
\end{application}

Although we require cancellation condition of the type \eqref{def:grushin-symb-vanishing-0-condition} in our proof of Theorem \ref{thm:old-joint-calc} (2), we believe that the conclusion of Theorem \ref{thm:old-joint-calc} should in fact hold true without assuming such a condition. 
\begin{conj} \label{conjecture:S-0-0-rho}
Let $m \in \mathcal{S}^0_{\rho, \rho} (\boldsymbol{L}, \boldsymbol{U})$ for some $0 \leq \rho < 1$. Then the operator $m(x, \boldsymbol{L}, \boldsymbol{U})$ extends to a bounded operator from $L^2(\mathbb{R}^{n_1 + n_2})$ to itself. 
\end{conj}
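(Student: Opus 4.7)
The plan is to settle the conjecture by refining the approach used for part (2) of Theorem \ref{thm:old-joint-calc}, with the main novelty being to replace the pointwise cancellation hypothesis \eqref{def:grushin-symb-vanishing-0-condition} by an almost orthogonality argument that exploits the joint spectral structure of $(\boldsymbol{L}, \boldsymbol{U})$. As a first step, I would introduce a dyadic partition of unity $\{\phi_j\}_{j \in \mathbb{Z}}$ in the $\kappa$-variable with $\phi_j$ supported on $|\kappa| \sim 2^{-j}$, and decompose $m = \sum_j m_j$ with $m_j = m \cdot \phi_j$. For the low-frequency tail $j \leq 0$, the symbol $m_j$ is supported away from $\kappa = 0$, the $\lambda$-derivatives of the scaled-Hermite kernels are nonsingular, and a modification of Theorem \ref{thm:old-joint-calc}(1) (together with Lemma \ref{lem:general-pseudo-multiplier-compact-support-L2-bounded} for the compactly supported part) should yield uniform $L^2$ bounds.

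For $j \geq 1$, the idea is to pass to a rescaled pseudo-multiplier on a fixed reference scale. Because $m_j$ is effectively evaluated at $(\tau, \kappa) = ((2\mu+\tilde 1)|\lambda|, -\lambda)$ with $|\lambda| \sim 2^{-j}$, one can perform the Grushin-type dilation $(x', x'') \mapsto (2^{j/2} x', 2^j x'')$, which transforms $\boldsymbol{U}$ into an operator spectrally localized at unit scale while rescaling $\boldsymbol{L}$ in a compatible way. The rescaled symbol $\widetilde m_j$ will satisfy estimates in $\mathcal{S}^0_{\rho, \rho}$ uniformly in $j$; the crucial point is that because $\rho = \delta$, the worsening of the $x$-derivatives under this dilation is exactly balanced by the improved $(\tau,\kappa)$-derivative gains, so that the rescaled operators have uniformly controlled norms. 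Each of them reduces, after a further frequency cutoff in $\boldsymbol{L}$, to a Hermite pseudo-multiplier to which Theorem \ref{CVH} can be applied.

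The remaining task is to sum the pieces. I would attempt a Cotlar--Stein argument: the operators $T_j = m_j(x, \boldsymbol{L}, \boldsymbol{U})$ are roughly localized, in the spectral variable $\boldsymbol{U}$, on the annulus $|\lambda| \sim 2^{-j}$, so that $T_j T_{j'}^*$ and $T_j^* T_{j'}$ ought to enjoy decay in $|j - j'|$ provided one can trade a commutator with the spectral cutoff in $\boldsymbol{U}$ for a factor of $2^{-|j-j'|\eta}$ for some $\eta > 0$. Producing this decay should be feasible when $\rho > \delta$, which is why Theorem \ref{thm:old-joint-calc}(1) holds, and this is precisely where the case $\rho = \delta$ becomes borderline.

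The principal obstacle is exactly this last step. Since the $x$-derivatives scale like $(1+|\tau|+|\kappa|)^{\rho/2}$ while the $(\tau,\kappa)$-derivatives only save $(1+|\tau|+|\kappa|)^{-(1+\rho)/2}$, the formal symbolic calculus gives pairings $T_j T_{j'}^*$ whose symbols lie in $\mathcal{S}^0_{\rho,\rho}$ without any additional decay in $|j-j'|$; this is the Grushin analogue of the difficulty that makes the Euclidean $S^0_{\rho,\rho}$ boundedness result of Beals--Fefferman so delicate. Overcoming it appears to require either a genuinely new gain extracted from the oscillation of $e^{-i\lambda \cdot x''}$ at different $\kappa$-scales (a form of stationary phase in the Fourier variable $x''$ that is invisible at the level of the pointwise symbol estimates), or the discovery of a structural reduction—perhaps via a symbolic smoothing operator adapted to $(\boldsymbol{L}, \boldsymbol{U})$—that converts a general element of $\mathcal{S}^0_{\rho, \rho}(\boldsymbol{L}, \boldsymbol{U})$ into one satisfying \eqref{def:grushin-symb-vanishing-0-condition} modulo an $L^2$-bounded remainder.
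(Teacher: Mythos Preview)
This statement is labelled a \emph{conjecture} in the paper and is left open; there is no proof in the paper to compare against. Your proposal is therefore not competing with an existing argument but rather probing why the problem resists the paper's techniques, and on that score your diagnosis is accurate. The paper itself explains (see Remark~\ref{rem:discussion-pho-neq-delta-issue}) that the obstruction is precisely the singularity of the $\lambda$-derivatives of the scaled Hermite kernels at $\lambda=0$, which in the Cotlar--Stein scheme manifests as the failure to extract any decay in $|j-j'|$ when $\rho=\delta$; you have identified the same bottleneck.

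That said, your proposal is not a proof but an outline with a self-acknowledged gap, and the gap is real. The dilation step you describe does produce rescaled symbols $\widetilde m_j$ with uniform $\mathcal{S}^0_{\rho,\rho}$ seminorms, but uniform boundedness of the pieces is not enough: you still need summability, and your own final paragraph concedes that the commutator argument yields no gain when $\rho=\delta$. The two speculative exits you mention---a stationary-phase gain in $x''$ beyond the pointwise symbol estimates, or a symbolic smoothing that manufactures the cancellation condition \eqref{def:grushin-symb-vanishing-0-condition} modulo a bounded remainder---are plausible directions, but neither is carried out, and neither is known to work in this setting. In short, your write-up is a fair account of the difficulty and matches the paper's own assessment, but it does not resolve the conjecture.
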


As an application of our techniques, we also establish the following Weighted Plancherel estimates for the joint functional calculus of $(\boldsymbol{L}, \boldsymbol{U})$. For a compactly supported, bounded Borel measurable function $m : \mathbb{R}^{n_1} \times \mathbb{R}^{n_2} \to \mathbb{C}$, let $k_{m(\boldsymbol{L}, \boldsymbol{U})}$ denote the kernel of the operator $m(\boldsymbol{L}, \boldsymbol{U})$. 

\begin{thm} \label{thm:weighted-Plancherel-L2} 
Let $2 \leq p \leq \infty$ and $r_0 \in \mathbb{N}$. For all $R > 0$ and $0 \leq r \leq 4 \lfloor{ \frac{r_0}{4} \rfloor}$ we have 
\begin{align} \label{ineq:weighted-Plancherel-L2} 
\left\| \left| B(\cdot, R^{-1}) \right|^{\frac{1}{2} - \frac{1}{p}} \left( 1 + R d(x, \cdot) \right)^{r} k_{m( \boldsymbol{L}, \boldsymbol{U} )} (x, \cdot) \right\|_p \lesssim_{p,r} \left| B(x, R^{-1}) \right|^{-1/2} \left\| m \left(R^2 \cdot \right) \right\|_{W^r_{\infty}}. 
\end{align} 
for any bounded $C^r$ function $m : \mathbb{R}^{n_1} \times \mathbb{R}^{n_2} \to \mathbb{C}$ such that $\supp m \subseteq [-R^2,  R^2]^{n_1 + n_2},$ and $\lim_{\kappa \to 0} \partial_\kappa^{\beta} m(\tau, \kappa) = 0$ for all $|\beta| \leq r_0.$ 
\end{thm}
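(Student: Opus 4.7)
The strategy is to establish the estimate first at $p=2$ (with the weight $(1+Rd(x,y))^r$ included), and then to transfer it to $p \in (2, \infty]$ by interpolation with a pointwise kernel bound. The key mechanism for inserting the weight into the $L^2$ estimate is to encode the Grushin control distance via differential operators which, after integration by parts, land on the symbol $m$ and are absorbed into the Sobolev norm $\|m(R^2\cdot)\|_{W^r_\infty}$.

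First I would write the kernel via the spectral formula underlying \eqref{def:joint-Gru-pseudo} (specialized to the case of symbols independent of $x$):
\begin{equation*}
k_{m(\boldsymbol{L}, \boldsymbol{U})}(x, y) = (2\pi)^{-n_2} \int_{\mathbb{R}^{n_2}} e^{-i\lambda \cdot (x''-y'')} \sum_{\mu \in \mathbb{N}^{n_1}} m\bigl((2\mu+\tilde{1})|\lambda|, -\lambda\bigr) \Phi_\mu^\lambda(x') \Phi_\mu^\lambda(y') \, d\lambda.
\end{equation*}
For the base case $r=0$, $p=2$, Plancherel in $y''$ combined with the orthonormality of $\{\Phi_\mu^\lambda\}_\mu$ in $y'$ reduces $\int |k(x,y)|^2 \, dy$ to
\begin{equation*}
c_{n_2} \int_{\mathbb{R}^{n_2}} \sum_{\mu} \bigl|m\bigl((2\mu+\tilde{1})|\lambda|, -\lambda\bigr)\bigr|^2 |\Phi_\mu^\lambda(x')|^2 \, d\lambda.
\end{equation*}
The spectral restriction $\supp m \subseteq [-R^2, R^2]^{n_1+n_2}$ forces $(2\mu+\tilde{1})|\lambda| \lesssim R^2$, and a standard estimate on the localized Hermite diagonal (via the Mehler kernel at scale $R^{-2}$, or equivalently the heat kernel of $H(\lambda)$) produces the volume factor $|B(x,R^{-1})|^{-1}$, which gives the $p=2$, $r=0$ instance of \eqref{ineq:weighted-Plancherel-L2}.

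Next I would incorporate the weight $(1+Rd(x,y))^r$ in $L^2$. Using the standard bound of the Grushin control distance by a polynomial expression in $R|x'-y'|$ and a twisted $x''$-difference, the task reduces to controlling weighted $L^2$-norms of the kernel by weights of the form $R^{|\alpha|}(x'-y')^\alpha$ and $R^{2|\beta|}(x''-y'')^\beta$. Each monomial $(x''_k - y''_k)$ acts on the integrand as $-i\partial_{\lambda_k}$; iterating Leibniz, these derivatives fall either on $m$ (contributing $\partial_\kappa^\beta m$, bounded after rescaling by $\|m(R^2\cdot)\|_{W^r_\infty}$), or on the Hermite factors $\Phi_\mu^\lambda(x')\Phi_\mu^\lambda(y')$, whose $\lambda$-derivatives carry singularities of order $|\lambda|^{-1}$ at the origin. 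The cancellation hypothesis $\lim_{\kappa \to 0} \partial_\kappa^\beta m(\tau, \kappa) = 0$ for $|\beta| \leq r_0$ is precisely what neutralizes these singularities: writing the offending term as a fundamental-theorem-of-calculus integral with enough vanishing factors allows the $|\lambda|^{-1}$'s to be absorbed, and a careful counting forces the admissible weight exponent to satisfy $r \leq 4\lfloor r_0/4 \rfloor$. Factors of $(x'-y')$ are treated analogously through the Hermite creation/annihilation operators (equivalently, the vector fields $X_j$), transferring them into $\partial_\tau$-derivatives of $m$, again controlled by $\|m(R^2\cdot)\|_{W^r_\infty}$. Applying Plancherel as in the base case then concludes the $p=2$ weighted bound.

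Finally, for $p \in (2, \infty]$, I would establish the pointwise estimate
\begin{equation*}
|k_{m(\boldsymbol{L}, \boldsymbol{U})}(x,y)| \lesssim |B(x, R^{-1})|^{-1/2} |B(y, R^{-1})|^{-1/2} (1 + R d(x,y))^{-r} \|m(R^2\cdot)\|_{W^r_\infty}
\end{equation*}
by running the same integration-by-parts scheme combined with uniform Hermite bounds, and then interpolate it with the $p=2$ bound. The factorization $g_p = g_\infty^{1-2/p} g_2^{2/p}$ and H\"older's inequality, applied to $g_q(y) = |B(y,R^{-1})|^{1/2 - 1/q} (1+Rd(x,y))^r k(x,y)$ at $q=2$ and $q=\infty$, deliver the full range $2 \leq p \leq \infty$. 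The main obstacle will be the bookkeeping in the weighted $p=2$ step: tracking, through repeated integration by parts in $\lambda$, how the $|\lambda|^{-1}$ singularities produced when $\partial_\lambda$ falls on $\Phi_\mu^\lambda$ are exactly balanced by the vanishing of $\partial_\kappa^\beta m$ at $\kappa = 0$, and making this cancellation quantitative enough to match $r \leq 4\lfloor r_0/4\rfloor$ with $|\beta| \leq r_0$.
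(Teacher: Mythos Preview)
Your overall architecture (Plancherel for $p=2$, $r=0$; integration by parts to insert the weight; cancellation hypothesis to kill the $|\lambda|$-singularities; Cauchy--Schwarz for $p=\infty$; interpolation) matches the paper's. But there is one genuine gap.

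You write that the control distance can be bounded by ``a polynomial expression in $R|x'-y'|$ and a twisted $x''$-difference'', and then reduce to weights $R^{|\alpha|}(x'-y')^\alpha$ and $R^{2|\beta|}(x''-y'')^\beta$. That reduction is only valid in the regime $|x'|+|y'|\lesssim R^{-1}$, where $d(x,y)\sim |x'-y'|+|x''-y''|^{1/2}$ so that $d^{4N}\lesssim |x'-y'|^{4N}+|x''-y''|^{2N}$. In the complementary regime $|x'|+|y'|\gtrsim R^{-1}$, the Grushin distance behaves like $\varrho_2(x,y)=|x'-y'|+\frac{|x''-y''|}{|x'|+|y'|}$, and $d^{4N}$ is controlled by $|x'-y'|^{4N}+\frac{|x''-y''|^{4N}}{(|x'|+|y'|)^{4N}}$. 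Your scheme, applied here, would need $4N$ integrations by parts in $\lambda$ (not $2N$), which via Proposition~\ref{prop:lambda-diff-theorem} produces up to $8N$ $\tau$-derivatives of $m$ and hence an $W^{2r}_\infty$ norm on the right, destroying the sharpness that is the point of the theorem.

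The paper handles this by splitting the $y$-integration into the two regimes and, crucially, proving a refined kernel identity (Proposition~\ref{prop:lambda-diff-lem-half-derivative-full-dim}) for the $\varrho_2$-regime: when $|x''-y''|^{2N}$ acts on the kernel, one can arrange that at most $N$ of the resulting $\lambda$-derivatives actually hit the Hermite factors, the remaining powers of $(x''-y'')$ being retained explicitly and then absorbed by the denominator $(|x'|+|y'|)^{4N}$ together with the monomials $x'^{\alpha_1}y'^{\alpha_1}x'^{\alpha_2}$ that Proposition~\ref{prop:lambda-diff-theorem} produces. This keeps the total $\tau$-derivative count at $\leq 4N=r$, matching the Sobolev order. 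Your plan as written does not contain this mechanism; without it, the $\varrho_2$-regime estimate fails to close with the claimed right-hand side.

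A secondary point: the $|\lambda|$-singularities you need the cancellation condition for do not come only from $\partial_\lambda \Phi_\mu^\lambda$; when $n_2\ge 2$, iterated $\lambda$-derivatives of functions of $|\lambda|$ produce homogeneous-of-degree-$(-l)$ factors $\Theta_l(\lambda)$, and after the cutoff in $\kappa$ (needed to justify the integrations by parts) also derivatives of the cutoff. The paper applies Taylor's theorem in $\kappa$ using the hypothesis $\lim_{\kappa\to 0}\partial_\kappa^\beta m=0$ for $|\beta|\le r_0$ to generate enough powers of $|\kappa|=|\lambda|$ to neutralize all of these at once. Your description is morally right but you should be aware that the bookkeeping is with both sources of singularity, not just one.
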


The conclusion of Theorem \ref{thm:weighted-Plancherel-L2} is known to be true without assuming the cancellation condition of the type $\lim_{\kappa \to 0} \partial_\kappa^{\beta} m(\tau, \kappa) = 0$ (see \cite{AnhBuiDuongSpectralMultipliersBesovTriebelLizorkin, DuongOuhabazSikoraWeightedPlancherel2002JFA} for such estimates for a single self-adjoint operator, and \cite{MartiniJointFunctionalCalculiMathZ} for the joint functional calculus) with the Sobolev norm $W^{r+ \epsilon}_\infty$ in the right hand side of the estimate with $\epsilon > 0$. The main novelty of our result is the removal of this extra $\epsilon > 0$ in the specific case of the joint functional calculus of the Grushin operator on $\mathbb{R}^{n_1 + n_2}$. In fact, building on the above mentioned kernel estimates along with Theorem \ref{thm:weighted-Plancherel-L2}, we are able to establish some sharp $L^p$ weighted estimates for a class of Grushin pseudo-multipliers. These results will appear in a forthcoming work. 

In future, we are interested in studying applications of Hermite and Grushin pseudo-multipliers to the associated partial differential equations.

%%%%%%%%%%%%%%%%%%%%%%%%%%%%
%%%%%%%%%%%%%%%%%%%%%%%%%%%%
%%%%%%%%%%%%%%%%%%%%%%%%%%%%
%%%%%%%%%%%%%%%%%%%%%%%%%%%%
%%%%%%%%%%%%%%%%%%%%%%%%%%%%
%%%%%%%%%%%%%%%%%%%%%%%%%%%%
%%%%%%%%%%%%%%%%%%%%%%%%%%%%
%%%%%%%%%%%%%%%%%%%%%%%%%%%%
%%%%%%%%%%%%%%%%%%%%%%%%%%%%

\subsection{Methodology of the proof} \label{subsec:intro-methodology-proofs} 
We got an inspiration from the recent work of Fisher-Ruzhansky \cite{FischerRuzhanskyQuantizationBook}, where they have established Calder\'on--Vaillancourt-type theorems on graded Lie groups for operators corresponding to symbols defined in terms of the Rockland operators (see Chapter 5 of \cite{FischerRuzhanskyQuantizationBook}). However, in our context, the absence of the group structure and the behaviour of the optimal control distance function corresponding to the Grushin operator make the analysis considerably difficult. 

When $\delta = 0$ and $\rho > 0$, we prove part $(1)$ of Theorem \ref{thm:old-joint-calc} by making use of the known kernel estimates of Grushin multipliers together with a minor variant of the classical technique of proving a similar result for the pseudodifferential operators on the Euclidean space. 

The case of $\rho = 0$ is much harder. Note that there is not enough decay on the symbols in this case and therefore we can not directly apply the known kernel estimates from the literature. This leads us to develop some new kernel estimates. We do so in Proposition \ref{prop:lambda-diff-theorem}. We have to also establish a smooth partition of unity compatible with the Grushin space, and the same is done in Lemma \ref{lem-grushin-partition-part-fix}. Thereafter, we prove our kernel estimates in Proposition \ref{prop:lambda-diff-theorem} and Lemma \ref{first-layer-lem}. While summing the involved operator norms with the help of our kernel estimates, we encounter a serious obstacle of singularities at $\lambda=0$. This forces us to further assume a cancelling condition of the type \eqref{def:grushin-symb-vanishing-0-condition}. 

For $\delta>0$ case in Theorem \ref{thm:old-joint-calc}, we again go through a delicate analysis of the kernels, making full use of Lemma \ref{first-layer-lem} together with some of the weighted Plancherel estimates that are already known in the literature.  

%%%%%%%%%%%%%%%%%%%%%%%%%%%%
%%%%%%%%%%%%%%%%%%%%%%%%%%%%
%%%%%%%%%%%%%%%%%%%%%%%%%%%%
%%%%%%%%%%%%%%%%%%%%%%%%%%%%
%%%%%%%%%%%%%%%%%%%%%%%%%%%%
%%%%%%%%%%%%%%%%%%%%%%%%%%%%
%%%%%%%%%%%%%%%%%%%%%%%%%%%%
%%%%%%%%%%%%%%%%%%%%%%%%%%%%
%%%%%%%%%%%%%%%%%%%%%%%%%%%%

\subsection{Organisation of the paper} \label{subsec:intro-paper-organisation}
\begin{itemize}
\item We discuss the aspect of defining pseudo-multipliers via the spectral theory, and its relation with the definition of pseudo-multipliers via spectral resolution (as in \eqref{def:Hermite-pseudo} and \eqref{def:Gru-pseudo}) in Subsection \ref{subsec:pseudo-mult-via-spectral-theory}. Afterwards, we recall all the relevant preliminaries related to the Grushin operator and also establish some basic results that we require in later subsections. This includes the Sobolev embedding theorem and construction of a partition of unity associated to the Grushin operator, and some weighted Plancherel estimates for the joint functional calculus of $(\boldsymbol{L}, \boldsymbol{U})$. 

\item We study the $L^2$-boundedness in the case of $S^0_{\rho, 0}(\boldsymbol{L}, \boldsymbol{U}), \, \rho > 0$ in Section \ref{sec:rho-0-proof}. 

\item We prove our main kernel estimates in Section \ref{sec-kernel-estimates}. 

\item Building on the kernel estimates of Section \ref{sec-kernel-estimates}, we prove Theorem \ref{thm:weighted-Plancherel-L2} in Section \ref{Sec:weighted-Planch}. 

\item The case of  $L^2$-boundedness for $\mathcal{S}^0_{0, 0}(\boldsymbol{L}, \boldsymbol{U})$ class of symbols is developed in Sections \ref{Sec-S00-proof}, \ref{Sec-S00-proof-1}, \ref{Sec-S00-proof-2}. We sketch the key part of the proof leading to Corollary \ref{cor:Grushin-CV-compact-support} in Remark \ref{rem:proof-rem-Grushin-CV-compact-support}. 

\item Finally, the case of $S^0_{\rho, \rho}(\boldsymbol{L}, \boldsymbol{U}), \, 0 \leq \delta \leq \rho \leq 1, \delta \neq 1$, is studied in Section \ref{sec:rho-delta}, establishing Theorem \ref{thm:rho-delta}. Theorem \ref{thm:old-joint-calc} (1) follows from Theorems \ref{thm:S0-rho-0-L2} and \ref{thm:rho-delta}, whereas Theorem \ref{thm:old-joint-calc} (2) follows from the $\mathcal{S}^0_{0, 0}(\boldsymbol{L}, \boldsymbol{U})$ case and Theorem \ref{thm:rho-delta}. 

\item It is not much difficult to verify that one can prove Theorem \ref{old-main-thm-sqrt} by modifying the proof of Theorem \ref{thm:old-joint-calc} (2), so we omit it's proof. Finally, it will become apparent from these proofs that a proof of Theorem \ref{CVH} (Calder\'{o}n-Vaillancourt theorem for Hermite pseudo-multipliers) is much more simpler because of the absence of $x^{\prime \prime}$-variable and it can be established by performing some obvious modifications in the proof of Theorem \ref{thm:old-joint-calc}, so we leave the details of the proof of Theorem \ref{CVH} as well. 
\end{itemize}

%%%%%%%%%%%%%%%%%%%%%%%%%%%%
%%%%%%%%%%%%%%%%%%%%%%%%%%%%
%%%%%%%%%%%%%%%%%%%%%%%%%%%%
%%%%%%%%%%%%%%%%%%%%%%%%%%%%
%%%%%%%%%%%%%%%%%%%%%%%%%%%%
%%%%%%%%%%%%%%%%%%%%%%%%%%%%
%%%%%%%%%%%%%%%%%%%%%%%%%%%%
%%%%%%%%%%%%%%%%%%%%%%%%%%%%
%%%%%%%%%%%%%%%%%%%%%%%%%%%%

\subsection{Notations and parameters} \label{app:notations} 
For ready reference, we list here most of the notations and parameters that we are going to frequently use. 
\begin{itemize} 
\item $\mathbb{N} = \{0, 1, 2, 3, \ldots\}$ and $\mathbb{R}_+ = [0, \infty)$. 

\item For $A, B >0$, the expression $A \lesssim B$ stands for $A \leq C B$ for some $C > 0$. We write $A \lesssim_{\epsilon} B$ when the implicit constant $C$ may depend on $\epsilon$. We use the notation $A \sim B$ for $A \lesssim B$ and $B \lesssim A$.

\item For a Hilbert space $H$, we denote by $\mathcal{B} ( H ) $ the Banach space of all bounded linear operators on $H$. We denote the operator norm of $T \in \mathcal{B} ( H )$ by $\| T \|_{op}$. 

\item We write $x = (x^{\prime}, x^{\prime \prime}) \in \mathbb{R}^{n_1} \times \mathbb{R}^{n_2} \, = \mathbb{R}^{n_1 + n_2}$. 

\item The homogeneous dimension of the space $\mathbb{R}^{n_1 + n_2}$ with respect to the Grushin operator is $Q = n_1 + 2 n_2$. 

\item Whenever it is obvious that a vector $h = (h_1, \ldots, h_{n_1})$ belongs to $\mathbb{N}^{n_1}$ (for any $n_1 \geq 2$), we denote by $|h|$ the $l^1$-sum of it's indices, that is, $|h| = \sum_{j=1}^{n_1} h_j$. On the other hand, if we are in a situation where $h$ could be a general element from $\mathbb{R}^{n_1}$, then we use the notations $|h| = ( \sum_{j=1}^{n_1} |h_j|^2 )^{1/2}$ and $|h|_1 = \sum_{j=1}^{n_1} |h_j|$. 

\item $J, J^{\prime}, l, l^{\prime}, j, j^{\prime}, \nu, \nu_j, N, N_j, L, L_j, q$ etc always represent elements of $\mathbb{N}$. 

\item $\mu, \tilde{\mu}, \mu^{\prime}, \tilde{\mu}^{\prime}, \alpha, \alpha_j, \gamma_j, \theta, \theta_j, \theta^{\prime}, \theta_j^{\prime}, \tilde{\theta}, \tilde{\tilde{\theta}}$ etc always represent elements of $\mathbb{N}^{n_1}$. 

\item $\beta, \beta_j, \beta^{\prime}$ etc always represent elements of $\mathbb{N}^{n_2}$. 

\item $\Gamma, \Gamma_j, \Gamma^{\prime}, \tilde{\Gamma} $ etc always represent elements of $\mathbb{N}^{n_1 + n_1 n_2}$. 

\item $\tilde{a}, \tilde{b}, \vec{a}, \vec{b}, \vec{c}$ etc denote  elements of $\mathbb{R}^{n_1}$. 

\item $\tau, \tau^{\prime}, \tilde{\tau}$ etc represent elements of $ ( \mathbb{R}_+ )^{n_1}$. Similarly, $\lambda, \lambda^{\prime}, \kappa, \kappa^{\prime}$ etc represent elements of $\mathbb{R}^{n_2}$. Moreover, 
$\partial_{\tau}^{\alpha}$, $\partial_\lambda^{\beta}$ and $\partial_\kappa^{\beta}$ stand for partial differential operators. 

\item $\vec{c}(s)$ denotes $(c_1(s), \ldots, c_{n_1}(s))$ with each $c_j(s)$ being a linear function on $[0,1]^{L}$, for an appropriate positive integer $L$. 

\item $\Omega$ denotes a compact subset of $\mathbb{R}^2$ which may be different at different occurrences. 

\item $\vec{a}(s, s^{\prime})$ stands for $(a_1(s, s^{\prime}), \ldots, a_{n_1}(s, s^{\prime}))$ with each $a_j(s, s^{\prime})$ being a real-valued linear function on $[0,1]^{\nu_1} \times \Omega^{\nu_2}$. In short we use $\vec{a}(w)$ and denote by $dw$ the Lebesgue measure on the corresponding space.

\item $g(w)$ always denote a continuous function on the compact set of integration.

\item $\Lambda$ is used for compact sets of the form $[0,1]^{\nu_1} \times \Omega^{\nu_2} \times [0,1]^{4 N}$. 
\item $\mathbbm{1}$ denotes the indicator function. 

\item For a multi-index $\alpha = (\alpha^{(j)})_{j=1}^{n_1}$, we write $\alpha! = \prod_{j = 1}^{n_1} \alpha^{(j)}!$ and $\tau^{\frac{1}{2} \alpha} = \prod_{j = 1}^{n_1} \tau_j^{\frac{1}{2} \alpha^{(j)}}$.

\item For multi-indices $\alpha_1 = (\alpha_1^{(j)})_{j=1}^{n_1}$ and $\alpha_2 = (\alpha_2^{(j)})_{j=1}^{n_1}$, the notation $\alpha_1 \leq \alpha_2$ stands for $\alpha_1^{(j)} \leq \alpha_2^{(j)}$ for all $1 \leq j \leq n_1$. 

\item $E_{\mu, \lambda} (x, y) = \Phi_\mu^{\lambda}(x^{\prime}) \, \Phi_\mu^{\lambda}(y^{\prime}) \, e^{- i \lambda \cdot ( x^{\prime \prime} - y^{\prime \prime} )}.$

\item While discussing the case of the Euclidean space or for the Hermite operator, we write $\mathbb{R}^n$ and use the indices as tabulated above with the convention of $n = n_1$. 
\end{itemize}

%%%%%%%%%%%%%%%%%%%%%%%%%%%%%%%%%%%%%%%%
%%%%%%%%%%%%%%%%%%%%%%%%%%%%%%%%%%%%%%%%
%%%%%%%%%%%%%%%%%%%%%%%%%%%%%%%%%%%%%%%%
%%%%%%%%%%%%%%%%%%%%%%%%%%%%%%%%%%%%%%%%
%%%%%%%%%%%%%%%%%%%%%%%%%%%%%%%%%%%%%%%%
%%%%%%%%%%%%%%%%%%%%%%%%%%%%%%%%%%%%%%%%
%%%%%%%%%%%%%%%%%%%%%%%%%%%%%%%%%%%%%%%%
%%%%%%%%%%%%%%%%%%%%%%%%%%%%%%%%%%%%%%%%
%%%%%%%%%%%%%%%%%%%%%%%%%%%%%%%%%%%%%%%%
%%%%%%%%%%%%%%%%%%%%%%%%%%%%%%%%%%%%%%%%
%%%%%%%%%%%%%%%%%%%%%%%%%%%%%%%%%%%%%%%%
%%%%%%%%%%%%%%%%%%%%%%%%%%%%%%%%%%%%%%%%
%%%%%%%%%%%%%%%%%%%%%%%%%%%%%%%%%%%%%%%%
%%%%%%%%%%%%%%%%%%%%%%%%%%%%%%%%%%%%%%%%

\section{Preliminaries and basic results} \label{sec:prelim}

\subsection{Pseudo-multipliers and spectral theory} \label{subsec:pseudo-mult-via-spectral-theory}

Let $\mathcal{M}$ be a $\sigma$-finite measure space and $\mathcal{L}$ a non-negative self-adjoint operator on $L^2(\mathcal{M})$. We know by spectral theory that $\mathcal{L}$ admits a spectral resolution 
$$ \mathcal{L} = \int_0^\infty \eta \, dE_{\mathcal{L}} (\eta),$$ 
and that given a bounded Borel function $m: [0, \infty) \to \mathbb{C}$, one can define the operator 
$$m ( \sqrt{\mathcal{L}} ) := \int_0^\infty m ( \sqrt{\eta}) \, dE_{\mathcal{L}} (\eta)$$ 
which is bounded on $L^2(\mathcal{M})$, and its operator norm equals to $ \| m \|_{L^\infty ( [0, \infty) )}$. 

In particular, for any $\widetilde{\eta} \neq 0$, the operator $\exp (i \widetilde{\eta} \sqrt{\mathcal{L}} )$ is unitary on $L^2(\mathcal{M})$. Under some additional assumptions on $m$, for example when $m, \widehat{m} \in L^1(\mathbb{R})$, one can make use of the Euclidean Fourier inversion formula, to verify that $m ( \sqrt{\mathcal{L}} )$ can alternatively be expressed as 
\begin{align} \label{def:general-spectral-multiplier-Fourier-inversion} 
m (\sqrt{\mathcal{L}}) = (2\pi)^{-1/2} \int_{\mathbb{R}} \widehat{m} (\widetilde{\eta}) \, \exp (i \widetilde{\eta} \sqrt{\mathcal{L}}) \, d \widetilde{\eta}. 
\end{align} 

Motivated by \eqref{def:general-spectral-multiplier-Fourier-inversion}, one can also define pseudo-multipliers $m (x, \sqrt{\mathcal{L}})$ by 
\begin{align} \label{def:general-pseudo-multiplier-Fourier-inversion}
m (x, \sqrt{\mathcal{L}}) f(x) := (2\pi)^{-1/2} \int_{\mathbb{R}} \widehat{m} (x, \widetilde{\eta}) \, \exp (i \widetilde{\eta} \sqrt{\mathcal{L}} ) f(x) \, d \widetilde{\eta}. 
\end{align}
for a suitable class of symbol functions $m \in L^\infty ( \mathcal{M} \times \mathbb{R} )$, where $\widehat{m} (x, \widetilde{\eta})$ stand for the Euclidean Fourier transform of $m (x, \eta)$ as a function of $\eta$-variable (that is, with $x$-fixed). 

For the sake of brevity, we have used the same notation $m (x, \sqrt{\mathcal{L}})$ in \eqref{def:general-pseudo-multiplier-Fourier-inversion} to denote pseudo-multipliers which was already used in the introduction to denote Hermite and Grushin pseudo-multipliers. Shortly, we shall see that the two definitions are equivalent, at least for some special class of operators and symbols, thus justifying the use of the same notation. 

In particular, if we assume that $m(x, \eta)$ is supported on $\mathcal{M} \times [-R, R]$ for some $R > 0$, and that for almost every $x \in \mathcal{M}$, the function $m(x, \cdot)$ is twice continuously differentiable on $\mathbb{R}$, then we shall show in Lemma \ref{lem:general-pseudo-multiplier-compact-support-L2-bounded} that for all $f \in L^2 ( \mathcal{M} )$,  
\begin{align} \label{def:general-pseudo-multiplier-compact-support-L2-bounded} 
\| m (x, \sqrt{\mathcal{L}}) f \|_{L^2 (\mathcal{M}) } 
& \lesssim R \sum_{j=0}^2 \| \partial_{\eta}^j m \|_{L^\infty ( \mathcal{M} \times \mathbb{R}_+)} \|f\|_{L^2 (\mathcal{M})}. 
\end{align}

However, in the absence of the compact support in $\eta$-variable of the symbol function, unless we assume appropriate decay conditions on the symbol function and its derivatives, it seems difficult to even properly define pseudo-multipliers in a very general set-up. The only consideration in a general frame work, that we are aware of, is mainly in the works \cite{BernicotFreyPseudodifferentialSemigroupOperators,GeorgiadisNielsenPseudodifferentialSelfAdjointOperators}, where the authors worked with appropriate symbol classes $S^0_{1, \delta}$. 

\begin{lem} \label{lem:general-pseudo-multiplier-compact-support-L2-bounded} 
Given a non-negative self-adjoint operator $\mathcal{L}$ on a $\sigma$-finite measure space $\mathcal{M}$, let $m \in L^\infty ( \mathcal{M} \times \mathbb{R} )$ be supported on $\mathcal{M} \times [-R, R]$ for some $R > 0$, and that function $m(x, \cdot)$ is twice continuously differentiable on $\mathbb{R}$, then the operator $m (x, \sqrt{\mathcal{L}})$, defined in \eqref{def:general-pseudo-multiplier-Fourier-inversion}, satisfies inequality \eqref{def:general-pseudo-multiplier-compact-support-L2-bounded}. 
\end{lem}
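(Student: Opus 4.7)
My plan is to apply Minkowski's integral inequality to the defining formula \eqref{def:general-pseudo-multiplier-Fourier-inversion} and to exploit the unitarity of the propagator $\exp(i \widetilde{\eta} \sqrt{\mathcal{L}})$ on $L^2(\mathcal{M})$. After bounding $|\widehat{m}(x, \widetilde{\eta})|$ pointwise (in $x$) by $\| \widehat{m}(\cdot, \widetilde{\eta}) \|_{L^\infty(\mathcal{M})}$ and using the isometry $\| \exp(i \widetilde{\eta} \sqrt{\mathcal{L}}) f \|_{L^2(\mathcal{M})} = \|f\|_{L^2(\mathcal{M})}$, the problem reduces to the scalar estimate
$$ \int_{\mathbb{R}} \| \widehat{m}(\cdot, \widetilde{\eta}) \|_{L^\infty(\mathcal{M})} \, d\widetilde{\eta} \lesssim R \sum_{j=0}^{2} \| \partial_\eta^j m \|_{L^\infty(\mathcal{M} \times \mathbb{R}_+)}. $$

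The second step is to produce two uniform-in-$x$ bounds on the partial Fourier transform $\widehat{m}(x, \widetilde{\eta})$. Since $m(x, \cdot)$ is supported in $[-R, R]$, the trivial estimate yields $|\widehat{m}(x, \widetilde{\eta})| \lesssim R \|m\|_{L^\infty}$, while integrating by parts twice in $\eta$ (which is legitimate because $m(x, \cdot) \in C^2$ with compact support) gives $\widetilde{\eta}^2 |\widehat{m}(x, \widetilde{\eta})| \lesssim R \|\partial_\eta^2 m\|_{L^\infty}$. Splitting the $\widetilde{\eta}$-integral at $|\widetilde{\eta}| = 1$ and applying each of these bounds on its appropriate regime then produces exactly the desired sum over $j = 0, 2$, which is in turn dominated by $R \sum_{j=0}^2 \| \partial_\eta^j m \|_{L^\infty}$.

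A small side task is to justify that the vector-valued integrand $\widetilde{\eta} \mapsto \widehat{m}(\cdot, \widetilde{\eta}) \, \exp(i \widetilde{\eta} \sqrt{\mathcal{L}}) f$ is Bochner integrable in $L^2(\mathcal{M})$, so that \eqref{def:general-pseudo-multiplier-Fourier-inversion} indeed defines an element of $L^2(\mathcal{M})$; this is immediate from the same pointwise decay estimates for $\widehat{m}$ combined with the unitarity of the propagator. I do not anticipate any genuine obstacle in this argument: the only delicate point is the commutation of the essential supremum on $\mathcal{M}$ with the $\widetilde{\eta}$-integration, and this is permitted since the $L^\infty$-norm is taken at each fixed $\widetilde{\eta}$ before the outer integral in $\widetilde{\eta}$ is performed.
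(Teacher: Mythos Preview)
Your proof is correct and follows essentially the same approach as the paper: both apply Minkowski's inequality, use the unitarity of $\exp(i\widetilde{\eta}\sqrt{\mathcal{L}})$, and obtain decay of $\widehat{m}(x,\widetilde{\eta})$ in $\widetilde{\eta}$ via two integrations by parts together with the compact $\eta$-support. The only cosmetic difference is that the paper packages the two pointwise bounds into a single weight $(1+\widetilde{\eta}^2)^{-1}$ (writing $(1+\widetilde{\eta}^2)\widehat{m}=\widehat{m}-\widehat{\partial_\eta^2 m}$ and integrating $(1+\widetilde{\eta}^2)^{-1}$ over $\mathbb{R}$), whereas you split the $\widetilde{\eta}$-integral at $|\widetilde{\eta}|=1$; the two presentations are equivalent.
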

\begin{proof}
We can assume that $\sum_{j=0}^2  \| \partial_{\eta}^j m \|_{L^\infty ( \mathcal{M} \times \mathbb{R} )} < \infty$, otherwise there is nothing to prove. Using the standard integration by parts twice on $\mathbb{R}$, one gets that $ \widehat{ \partial_{\eta}^2 m } (x, \widetilde{\eta}) = - \widetilde{\eta}^2 \, \widehat{m}(x, \widetilde{\eta})$. Remember that $\widehat{m} (x, \widetilde{\eta})$ denotes the Euclidean Fourier transform of $m (x, \eta)$ as a function of $\eta$-variable on $\mathbb{R}$. Next, using the fact that $m$ is supported in $\eta$-variable on $[-R, R]$, one has 
$\sum_{j=0}^2 \| \widehat{\partial_{\eta}^j m} \|_{L^\infty ( \mathcal{M} \times \mathbb{R} )} \lesssim R \sum_{j=0}^2 \| \partial_{\eta}^j m \|_{L^\infty ( \mathcal{M} \times \mathbb{R} )} < \infty.$ Consequently, 
\begin{align*} 
\| m (x, \sqrt{\mathcal{L}}) f \|_{L^2 ( \mathcal{M})} &\lesssim \int_{\mathbb{R}} \| \widehat{m}(\cdot, \widetilde{\eta}) \, \exp (i \widetilde{\eta} \sqrt{\mathcal{L}} ) f \|_{L^2 ( \mathcal{M} )} d \widetilde{\eta} \\ 
&= \int_{\mathbb{R}} \| (1 + \widetilde{\eta}^2) \widehat{m}(\cdot, \widetilde{\eta}) \, \exp (i \widetilde{\eta} \sqrt{\mathcal{L}} ) f \|_{L^2 ( \mathcal{M} )} (1 + \widetilde{\eta}^2)^{-1} \, d \widetilde{\eta} \\ 
& \leq \sum_{j=0}^2  \| \widehat{\partial_{\eta}^j m} \|_{L^\infty ( \mathcal{M} \times \mathbb{R} )} \int_{\mathbb{R}} \| \exp (i \widetilde{\eta} \sqrt{\mathcal{L}} ) f \|_{L^2 ( \mathcal{M} )} \, (1 + \widetilde{\eta}^2)^{-1} d \widetilde{\eta} \\ 
& = \sum_{j=0}^2  \| \widehat{\partial_{\eta}^j m} \|_{L^\infty ( \mathcal{M} \times \mathbb{R} )} \| f \|_{L^2 ( \mathcal{M} )} \int_{\mathbb{R}} (1 + \widetilde{\eta}^2)^{-1} d \widetilde{\eta} \\ 
& \lesssim R \sum_{j=0}^2  \| \partial_{\eta}^j m \|_{L^\infty ( \mathcal{M} \times \mathbb{R} )} \| f \|_{L^2 ( \mathcal{M} )}, 
\end{align*}
for any $f \in L^2 ( \mathcal{M} )$, and this completes the proof of the claimed estimate. 
\end{proof}

As mentioned earlier, we have considered two different ways of defining pseudo-multipliers: one by direct spectral resolutions, at least for specific operators, such as the (densely defined) ones for Hermite and Grushin operators as in \eqref{def:Hermite-pseudo} and \eqref{def:Gru-pseudo} respectively, and the other one given by \eqref{def:general-pseudo-multiplier-Fourier-inversion} in the general set-up. In the following remark, we discuss that these two definitions agree for a suitable class of symbol functions. 

\begin{rem} \label{discussion-two-ways-definition-pseudo-multiplier} 
Suppose that we are given a function $m \in L^\infty ( \mathcal{M} \times \mathbb{R}_+ )$ with the property that it is continuously differentiable on $\mathbb{R}_+$ (for almost every $x \in \mathcal{M}$), and that $ \| \partial_{\eta} m \|_{L^\infty ( \mathcal{M} \times \mathbb{R}_+ )} < \infty$. It then follows from basic calculus that $\lim_{\eta \to 0^+} m(x, \eta)$ exists, that is, we can continuously extend the function $m(x, \cdot)$ to $[0, \infty)$ with $m(x, 0) := \lim_{\eta \to 0^+} m(x, \eta)$. Repeating the same argument, if we assume that $m(x, \cdot)$ is thrice continuously differentiable on $\mathbb{R}_+$ with $\sum_{j=0}^3  \| \partial_{\eta}^j m \|_{L^\infty ( \mathcal{M} \times \mathbb{R}_+ )} < \infty$, then we can conclude that $m(x, \cdot)$ extends to $[0, \infty)$ as a twice continuously differentiable function. 

Using standard analysis, one can then extend $m$ to a function, say $m_1$, on all of $\mathcal{M} \times \mathbb{R}$ as a twice continuously differentiable function in $\eta$-variable, with the property that $m_1(x, \eta) = 0$ for all $\eta \in (-\infty, -1]$ and for almost every $x \in \mathcal{M}$, and satisfying 
$$\sum_{j=0}^2 \| \partial_{\eta}^j m_1 (x, \eta) \|_{L^\infty ( \mathcal{M} \times \mathbb{R} )} \lesssim \sum_{j=0}^3  \| \partial_{\eta}^j m \|_{L^\infty ( \mathcal{M} \times \mathbb{R}_+ )}. $$
We can now decompose $m_1$ on $\mathbb{R}$ into two pieces, one near $0$, and the other piece living on the positive real line away from the origin, as follows. Fix a function $\phi \in C^\infty(\mathbb{R})$ satisfying $\phi = 1$ on $(-\infty, 1]$, and $\phi = 0$ on $[2, \infty)$, and define $m_2(x, \eta) = \phi(\eta) m_1(x, \eta)$ and $m_3(x, \eta) = ( 1 - \phi(\eta) ) m_1(x, \eta) = ( 1 - \phi(\eta) ) m(x, \eta)$, so that $m_1 = m_2 + m_3$. It follows that 
\begin{align*}
& \sum_{j=0}^2 \| \partial_{\eta}^j m_2 (x, \eta) \|_{L^\infty ( \mathcal{M} \times \mathbb{R} )} \lesssim_\phi \sum_{j=0}^2 \| \partial_{\eta}^j m_1 (x, \eta) \|_{L^\infty ( \mathcal{M} \times \mathbb{R} )} \lesssim \sum_{j=0}^3  \| \partial_{\eta}^j m \|_{L^\infty ( \mathcal{M} \times \mathbb{R}_+ )}, \\ 
\textup{and} & \quad \sum_{j=0}^2 \| \partial_{\eta}^j m_3 (x, \eta) \|_{L^\infty ( \mathcal{M} \times \mathbb{R} )} \lesssim \sum_{j=0}^2  \| \partial_{\eta}^j m \|_{L^\infty ( \mathcal{M} \times \mathbb{R}_+ )}. 
\end{align*} 
Now, one can invoke Lemma \ref{lem:general-pseudo-multiplier-compact-support-L2-bounded} for symbol function $m_2$, to conclude that the operator $m_2 (x, \sqrt{\mathcal{L}})$, as defined by \eqref{def:general-pseudo-multiplier-Fourier-inversion}, is bounded on $L^2(\mathcal{M})$, with the operator norm controlled by $ \sum_{j=0}^3  \| \partial_{\eta}^j m \|_{L^\infty ( \mathcal{M} \times \mathbb{R}_+ )}$. So, it remains to analyse $m_3 (x, \sqrt{\mathcal{L}})$, whenever it makes sense. 

Our aim now is to discuss how we can define $m (x, \sqrt{\mathcal{L}})$ in a natural way (at least in some particular cases of $\mathcal{L}$ and $\mathcal{M}$) so that the two definitions match in the case of compact support in $\eta$-variable, that is, for symbol function such as $m_1$. Let us analyse it for the Grushin operator, that is, we are specialising to $\mathcal{L} = G$ and $\mathcal{M} = \R^{n_1+n_2}$. Take a function $f \in \mathcal{E}_l$, where sets $\mathcal{E}_l$ were defined in \eqref{def-dense-set-Grushin-pseudo-multiplier}, and recall that the set $\cup_{k \in \mathbb{N}} \mathcal{E}_l$ is dense in $L^2(\R^{n_1+n_2})$. By definition of $\mathcal{E}_l$, we have that 
$f^\lambda (x') = \sum_{|\mu| \leq l} C(\lambda,\mu) \, \Phi_{\mu}^{\lambda}(x'), $
with $C(\lambda, \mu)$ a bounded function which is compactly supported in $\lambda$-variable. Using the spectral resolution of $G$, we have 
$$\exp (i \widetilde{\eta} \sqrt{G} ) f(x) = (2 \pi)^{- n_2} \int_{\mathbb{R}^{n_2}} e^{-i \lambda \cdot x^{\prime \prime}} \sum_{|\mu| \leq l} \exp ( i \widetilde{\eta} \sqrt{(2|\mu| + n_1)|\lambda|} ) \, C(\lambda,\mu) \, \Phi_{\mu}^{\lambda}(x') \, d\lambda. $$
Next, multiplying both sides by $\widehat{m_2} (x, \widetilde{\eta})$ and integrating with respect to $\widetilde{\eta}$-variable, one can make use of the assumptions on $C(\lambda, \mu)$ to validate the change of order of integration in double integrals (in $\widetilde{\eta}$ and $\lambda$-variables), that is, 
\begin{align*}
& (2 \pi)^{- 1/2} \int_{\mathbb{R}} \widehat{m_2} (x, \widetilde{\eta}) \, \exp (i \widetilde{\eta} \sqrt{G} ) f(x) \, d \widetilde{\eta} \\ 
& = (2 \pi)^{- (n_2 + \frac{1}{2} )} \int_{\mathbb{R}^{n_2}} e^{-i \lambda \cdot x^{\prime \prime}} \sum_{|\mu| \leq l} \left( \int_{\mathbb{R}} \widehat{m_2} (x, \widetilde{\eta}) \, \exp ( i \widetilde{\eta} \sqrt{(2|\mu| + n_1)|\lambda|} ) \, d \widetilde{\eta} \right) C(\lambda,\mu) \, \Phi_{\mu}^{\lambda}(x') \, d\lambda \\ 
& = (2 \pi)^{-n_2} \int_{\mathbb{R}^{n_2}} e^{-i \lambda \cdot x^{\prime \prime}} \sum_{|\mu| \leq l} m_2 (x, \sqrt{(2|\mu| + n_1)|\lambda|} ) \, C(\lambda,\mu) \, \Phi_{\mu}^{\lambda}(x') \, d\lambda \\ 
& = (2 \pi)^{-n_2} \int_{\mathbb{R}^{n_2}} e^{-i \lambda \cdot x^{\prime \prime}} \sum_{k \in \mathbb{N}} m_2 ( x, \sqrt{(2 k + n)|\lambda|} ) P_k (\lambda) f^{\lambda}(x^{\prime}) \, d\lambda, 
\end{align*} 
where the final expression is exactly what we had considered in \eqref{def:Gru-pseudo}. 

Now, $(2 k + n)|\lambda| \in \mathbb{R}_+$ for every $\lambda \neq 0$ and $k \in \mathbb{N}$, and for any $\eta \in \mathbb{R}_+$, we have $ m ( x, \eta ) = m_1 ( x, \eta ) = m_2 ( x, \eta ) + m_3 ( x, \eta ).$ Summarising, we have that definitions \eqref{def:general-pseudo-multiplier-Fourier-inversion} and \eqref{def:Gru-pseudo} coincide for symbol function $m_2$ on the dense set $\mathcal{E}$. While expression \eqref{def:general-pseudo-multiplier-Fourier-inversion} help us conclude $L^2$-boundedness of the pseudo-multiplier $m_2 (x, \sqrt{G})$, the formula given by \eqref{def:Gru-pseudo} is pointwise well-defined on the same dense set $\mathcal{E}$ for any of the symbol functions $m$, $m_1$ or $m_2$.  
\end{rem}

%%%%%%%%%%%%%%%%%%%%%%%%%%%%
%%%%%%%%%%%%%%%%%%%%%%%%%%%%
%%%%%%%%%%%%%%%%%%%%%%%%%%%%
%%%%%%%%%%%%%%%%%%%%%%%%%%%%
%%%%%%%%%%%%%%%%%%%%%%%%%%%%
%%%%%%%%%%%%%%%%%%%%%%%%%%%%
%%%%%%%%%%%%%%%%%%%%%%%%%%%%
%%%%%%%%%%%%%%%%%%%%%%%%%%%%
%%%%%%%%%%%%%%%%%%%%%%%%%%%%

\subsection{Control distance for the Grushin operator} 
It is known (see Proposition $5.1$ of \cite{RobinsonSikoraDegenerateEllipticOperatorsGrushinTypeMathZ2008}) that the control distance $\tilde{d}(x,y)$ for the Grushin operator has the following behaviour: 
\begin{align*}
\tilde{d}(x,y) &\sim \left|x^{\prime} - y^\prime \right| + 
\begin{cases}
\frac{\left|x^{\prime \prime} - y^{\prime \prime}\right|}{\left|x^{\prime} \right| + \left|y^\prime \right|} &\textup{ if } \left|x^{\prime \prime} - y^{\prime \prime}\right|^{1/2} \leq \left|x^{\prime} \right| + \left|y^\prime \right| \\
\left|x^{\prime \prime} - y^{\prime \prime}\right|^{1/2} &\textup{ if } \left|x^{\prime \prime} - y^{\prime \prime}\right|^{1/2} \geq \left|x^{\prime} \right| + \left|y^\prime \right| 
\end{cases}. 
\end{align*}

Instead of $\tilde{d}$, we shall work with $d$, which is defined as 
\begin{align} \label{def:distance-1} 
d(x,y) := \left(\left|x^{\prime} - y^\prime \right|^4 + \frac{\left|x^{\prime \prime} - y^{\prime \prime}\right|^4}{\left| x^{\prime \prime} - y^{\prime \prime}\right|^2 + \left(\left|x^{\prime} \right|^2 + \left|y^\prime \right|^2\right)^2}\right)^{1/4}.
\end{align}
Clearly, $d \sim \tilde{d}$. One may notice that $d$ is smooth off-diagonal, a fact that we crucially use in our analysis. Since $d$ is equivalent to the control distance $\tilde{d}$, we have that $d$ is a quasi metric, that is, there exists a constant $C_0 \geq 1$ such that 
\begin{align} \label{quasi-metric-constant}
d(x,y) \leq C_0 (d(x,z) + d(z,y)),
\end{align}
for all $x, y, z \in \mathbb{R}^{n_1 + n_2}$. From now on wards, by abuse of notation, we refer to $d$ as a metric or distance. Let us denote by $B(x, r)$ the ball centered at $x$, of radius $r$ with respect to the metric $d$, and write $\left|B(x,r)\right|$ for it's volume. It is known (see, for example, Proposition 5.1 of \cite{RobinsonSikoraDegenerateEllipticOperatorsGrushinTypeMathZ2008}) that there exist constants $c_1, C_1 > 0$ such that 
\begin{align} \label{grushin-ball-growth} 
c_1 r^{n_1 + n_2} \max\{r, |x^{\prime}|\}^{n_2} \leq \left|B(x,r)\right| \leq C_1 r^{n_1 + n_2} \max\{r, |x^{\prime}|\}^{n_2},
\end{align}
from which it follows immediately that the metric $d$ has the doubling property. Also, we have $|B(x,r)| \gtrsim r^Q,$ where $Q = n_1 + 2 n_2$ is the homogeneous dimension of $\mathbb{R}^{n_1 + n_2}$. 

%%%%%%%%%%%%%%%%%%%%
%%%%%%%%%%%%%%%%%%%%
%%%%%%%%%%%%%%%%%%%%
%%%%%%%%%%%%%%%%%%%%
%%%%%%%%%%%%%%%%%%%%
%%%%%%%%%%%%%%%%%%%%

In the later sections, we shall also make use of the following result concerning the integrability property of the metric $d$. 

\begin{lem} \label{lem-integrability-ball-growth}
For any real number $s > Q$ we have 
\begin{align} \label{integrability-ball-growth}
\sup_y |B(y,1)|^{-1/2} \int_{\mathbb{R}^{n_1 + n_2}} |B(x,1)|^{-1/2} (1 + d(x,y))^{- s} \, dx  & < \infty. 
\end{align} 
\end{lem}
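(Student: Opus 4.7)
The ball-volume estimate \eqref{grushin-ball-growth} with $r = 1$ gives $|B(x, 1)| \sim \max\{1, |x'|\}^{n_2}$, and in particular $|B(x,1)|^{-1/2} \lesssim 1$ pointwise. My plan is to prove the equivalent statement
\[ \int_{\mathbb{R}^{n_1+n_2}} |B(x, 1)|^{-1/2} (1+d(x,y))^{-s} \, dx \lesssim |B(y, 1)|^{1/2} \]
uniformly in $y \in \mathbb{R}^{n_1+n_2}$, by splitting into cases according to the size of $|y'|$.

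When $|y'| \leq 1$, one has $|B(y, 1)| \sim 1$ and $|B(y, 2^k)| \sim 2^{kQ}$ for every $k \geq 0$. Dropping the factor $|B(x,1)|^{-1/2} \lesssim 1$ and decomposing $\mathbb{R}^{n_1+n_2}$ dyadically into $B(y,1)$ and the annuli $B(y, 2^k) \setminus B(y, 2^{k-1})$, $k \geq 1$, the integral is controlled by $\sum_{k \geq 0} 2^{-ks} |B(y, 2^k)| \sim \sum_{k \geq 0} 2^{k(Q-s)}$, a convergent geometric series precisely because $s > Q$.

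When $|y'| \geq 1$ we have $|B(y, 1)| \sim |y'|^{n_2}$, so the target becomes $\int |B(x,1)|^{-1/2}(1+d(x,y))^{-s}\,dx \lesssim |y'|^{n_2/2}$. I would apply Fubini and first integrate in $x''$ using the explicit formula \eqref{def:distance-1}: writing $a = |x'-y'|$, $b = (|x'|^2+|y'|^2)^{1/2}$, and $\xi = x''-y''$, one verifies from \eqref{def:distance-1} that $d(x,y) \gtrsim \max\{a, |\xi|/b\}$ when $|\xi| \leq b^2$ and $d(x,y) \gtrsim \max\{a, |\xi|^{1/2}\}$ when $|\xi| > b^2$. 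A routine computation (splitting the $\xi$-integral at $|\xi| = b^2$ and using the change of variables $\eta = \xi/b$ for the first piece) then yields
\[ \int_{\mathbb{R}^{n_2}} (1 + d(x,y))^{-s} \, dx'' \lesssim (1+b)^{n_2}(1+a)^{n_2-s} + (1+\max\{a, b\})^{2n_2-s}, \]
valid since $s > Q \geq 2n_2$. Next I partition the $x'$-integration into the three regions $\{|x'| \leq |y'|/2\}$, $\{|y'|/2 \leq |x'| \leq 2|y'|\}$, and $\{|x'| \geq 2|y'|\}$. In the middle region $|x'| \sim |y'| \sim b$, so $|B(x,1)|^{-1/2} \sim |y'|^{-n_2/2}$; together with $\int_{|u| \leq 3|y'|} (1+|u|)^{n_2-s}\,du \lesssim 1$ (using $s > n_1+n_2$), the first term of the above bound produces a contribution of order $|y'|^{n_2/2}$, matching exactly the required size. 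In the left region, $a \gtrsim |y'|$, so the inner integral is $\lesssim |y'|^{2n_2-s}$, and multiplying by the $x'$-volume $\sim |y'|^{n_1}$ gives a contribution of order $|y'|^{Q-s} \leq 1$. In the right region, $|B(x,1)|^{-1/2} \sim |x'|^{-n_2/2}$, $a \sim b \sim |x'|$, and the resulting $x'$-integral yields $\lesssim |y'|^{n_1+3n_2/2-s} \leq 1$, bounded because $s > n_1 + 2n_2 \geq n_1 + 3n_2/2$.

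The principal obstacle is the case analysis in the inner $x''$-integration governed by the crossover $|\xi| = b^2$ in \eqref{def:distance-1}; without care here, one loses the sharp power of $|y'|$. The hypothesis $s > Q$ is used twice in an essential way: for the geometric series in the $|y'| \leq 1$ case, and for the middle $x'$-region in the $|y'| \geq 1$ case to produce the factor $|y'|^{n_2/2} \sim |B(y,1)|^{1/2}$ rather than anything larger.
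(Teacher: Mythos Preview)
Your argument is correct. The core idea---integrate in $x''$ first and then split the $x'$-integration into the three regions $\{|x'|<|y'|/2\}$, $\{|x'|\sim|y'|\}$, $\{|x'|>2|y'|\}$---is the same as the paper's, but the two proofs are organised differently.

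The paper does not separate the cases $|y'|\le 1$ and $|y'|\ge 1$. Instead it uses the equivalence $d\sim\min\{\varrho_1,\varrho_2\}$ with $\varrho_1(x,y)=|x'-y'|+|x''-y''|^{1/2}$ and $\varrho_2(x,y)=|x'-y'|+|x''-y''|/(|x'|+|y'|)$, and proves the bound for $\varrho_1$ and $\varrho_2$ separately. The $\varrho_1$ piece is disposed of in one line by dropping both ball weights; the $\varrho_2$ piece is handled by an exponent splitting $s=s_1+s_2$ with $s_1>n_1+n_2$, $s_2>n_2$, then the same three-region analysis in $x'$ that you carry out. Your dyadic-annulus argument for $|y'|\le 1$ (using the doubling property of $|B(y,\cdot)|$) is a cleaner, more ``metric-space'' alternative to the paper's $\varrho_1$ computation, while your direct crossover analysis of \eqref{def:distance-1} at $|\xi|=b^2$ replaces the $\varrho_1/\varrho_2$ dichotomy and avoids the exponent splitting altogether. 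Both routes use $s>Q$ in the same essential places. One small point: in the middle region $|x'|\sim|y'|$ you only track the first term $(1+b)^{n_2}(1+a)^{n_2-s}$; the second term $(1+\max\{a,b\})^{2n_2-s}$ contributes $|y'|^{n_1+3n_2/2-s}\le 1$ there as well, by the same reasoning you use in the right region.
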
 
\begin{proof} 
From the definition it is clear that $d(x,y) \sim \min \{\varrho_1(x,y), \varrho_2(x,y) \}$, where 
\begin{align} \label{def:distance-2} 
\varrho_1(x,y) = \left|x^\prime - y^\prime \right| + \left|x^{\prime \prime} - y^{\prime \prime}\right|^{1/2}, \quad \quad \varrho_2(x,y) = \left|x^\prime - y^\prime \right| + 
\frac{\left|x^{\prime \prime} - y^{\prime \prime}\right|}{\left|x^\prime \right| + \left|y^\prime \right|}.
\end{align} 
Therefore, it suffices to establish  \eqref{integrability-ball-growth} for both $\varrho_1$ and $\varrho_2$ instead of $d$. 

The estimate for $\varrho_1$ is quite simple and straightforward. In fact, 
\begin{align*} 
|B(y,1)|^{-1/2} \int_{\mathbb{R}^{n_1 + n_2}} |B(x,1)|^{-1/2} (1 + \varrho_1(x,y))^{- s} \, dx & \leq \int_{\mathbb{R}^{n_1 + n_2}} (1 + \varrho_1(x,y))^{-s} \, dx \\ 
& = \int_{\mathbb{R}^{n_1 + n_2}} (1 + \left| x^\prime \right| + \left| x^{\prime \prime} \right|^{1/2})^{-s} \, dx, 
\end{align*} 
where the last integral is independent of $y$ and is finite since $s > Q$. 

For the estimation with respect to $\varrho_2$, let us choose and fix $s_1, s_2\in \mathbb{R}_+$ such that $s_1 > n_1 + n_2$, $s_2 > n_2$ and $s_1 + s_2= s$. Then,  
\begin{align*} 
& |B(y,1)|^{-1/2} \int_{\mathbb{R}^{n_1 + n_2}} |B(x,1)|^{-1/2} (1 + \varrho_2(x,y))^{-s} \, dx \\ 
\lesssim & \left(1 + \left| y^\prime \right| \right)^{-n_2/2} \int_{\mathbb{R}^{n_1 + n_2}} \left(1 + \left| x^\prime \right| \right)^{-n_2/2} \left(1 + \left|x^\prime - y^\prime \right| \right)^{-s_1} \left(1 + \frac{\left|x^{\prime \prime} - y^{\prime \prime}\right|}{\left|x^\prime \right| + \left|y^\prime \right|}\right)^{-s_2} \, dx^\prime \, dx^{\prime \prime} \\ 
= & C_{s_2} \left(1 + \left| y^\prime \right| \right)^{-n_2/2} \int_{\mathbb{R}^{n_1}} \left(1 + \left| x^\prime \right| \right)^{-n_2/2} \left(1 + \left|x^\prime - y^\prime \right| \right)^{-s_1} \left( \left|x^\prime \right| + \left|y^\prime \right| \right)^{n_2} \, dx^{\prime}. 
\end{align*} 

With fixed $y$, we decompose the last integral into following three regions: $W_1 = \{ x^{\prime} \in \mathbb{R}^{n_1} : \frac{|y^{\prime}|}{2} < |x^{\prime}| < 2 |y^{\prime}| \}$, $W_2 = \{ x^{\prime} \in \mathbb{R}^{n_1} : |x^{\prime}| \leq \frac{|y^{\prime}|}{2} \}$ and $W_3 = \{ x^{\prime} \in \mathbb{R}^{n_1} : |x^{\prime}| \geq 2 |y^{\prime}| \}$. 

On $W_1$, we have 
\begin{align*} 
& \left(1 + \left| y^\prime \right| \right)^{-n_2/2} \int_{W_1} \left(1 + \left| x^\prime \right| \right)^{-n_2/2} \left(1 + \left|x^\prime - y^\prime \right| \right)^{-s_1} \left( \left|x^\prime \right| + \left|y^\prime \right| \right)^{n_2} \, dx^{\prime} \\ 
& \leq \int_{W_1} \left(1 + \left|x^\prime - y^\prime \right| \right)^{-s_1} \, dx^{\prime} \leq \int_{\mathbb{R}^{n_1}} \left(1 + \left|x^\prime \right| \right)^{-s_1} \, dx^{\prime} < \infty. 
\end{align*} 

On $W_2$, we have $\left|x^\prime - y^\prime \right| \geq \frac{\left|y^\prime \right|}{2} \geq \left|x^\prime \right|$, and thus 
\begin{align*} 
& \left(1 + \left| y^\prime \right| \right)^{-n_2/2} \int_{W_2} \left(1 + \left| x^\prime \right| \right)^{-n_2/2} \left(1 + \left|x^\prime - y^\prime \right| \right)^{-s_1} \left( \left|x^\prime \right| + \left|y^\prime \right| \right)^{n_2} \, dx^{\prime} \\ 
& \lesssim \int_{W_2} \left(1 + \left| x^\prime \right| \right)^{-n_2} \left(1 + \left|x^\prime - y^\prime \right| \right)^{-s_1}  \left|y^\prime \right|^{n_2} \, dx^{\prime} \\ 
& \lesssim \int_{W_2} \left(1 + \left| x^\prime \right| \right)^{-n_2} \{ \left(1 + \left|x^\prime \right| \right)^{-(s_1- n_2)} \left(1 + \left|y^\prime \right| \right)^{-n_2} \} \left|y^\prime \right|^{n_2} \, dx^{\prime} \lesssim \int_{\mathbb{R}^{n_1}} \left(1 + \left| x^\prime \right| \right)^{-s_1} \, dx^{\prime} < \infty. 
\end{align*} 

Finally, on $W_3$, we have $\left|x^\prime - y^\prime \right| \geq \frac{\left|x^\prime \right|}{2} \geq \left|y^\prime \right|$, and thus 
\begin{align*} 
& \left(1 + \left| y^\prime \right| \right)^{-n_2/2} \int_{W_3} \left(1 + \left| x^\prime \right| \right)^{-n_2/2} \left(1 + \left|x^\prime - y^\prime \right| \right)^{-s_1} \left( \left|x^\prime \right| + \left|y^\prime \right| \right)^{n_2} \, dx^{\prime} \\ 
& \lesssim \int_{W_3} \left(1 + \left|x^\prime - y^\prime \right| \right)^{-s_1}  \left|x^\prime \right|^{n_2} \, dx^{\prime} \\ 
& \lesssim \int_{W_3} \left(1 + \left|x^\prime \right| \right)^{-s_1} \left|x^\prime \right|^{n_2} \, dx^{\prime} \leq \int_{\mathbb{R}^{n_1}} \left(1 + \left| x^\prime \right| \right)^{-(s_1- n_2)} \, dx^{\prime} < \infty. 
\end{align*} 

This completes the proof of Lemma \ref{lem-integrability-ball-growth}. 
\end{proof} 

%%%%%%%%%%%%%%%%%%%%%%%%%%%%
%%%%%%%%%%%%%%%%%%%%%%%%%%%%
%%%%%%%%%%%%%%%%%%%%%%%%%%%%
%%%%%%%%%%%%%%%%%%%%%%%%%%%%
%%%%%%%%%%%%%%%%%%%%%%%%%%%%
%%%%%%%%%%%%%%%%%%%%%%%%%%%%
%%%%%%%%%%%%%%%%%%%%%%%%%%%%
%%%%%%%%%%%%%%%%%%%%%%%%%%%%
%%%%%%%%%%%%%%%%%%%%%%%%%%%%

\subsection{Heat kernel for the Grushin operator}
It is well known that the heat kernel $p_t$ for the Grushin operator (that is, the integral kernel of the operator $e^{-t G}$, $t>0$) satisfies pointwise Gaussian estimates (see, Corollary 6.6 of \cite{RobinsonSikoraDegenerateEllipticOperatorsGrushinTypeMathZ2008}). More precisely, there exist constants $b, C>0$ such that 
\begin{align} \label{heat-kernel-bounds} 
\left|p_t(x,y)\right| \leq C \left|B(x,t^{1/2})\right|^{-1} \exp \left( \frac{-b}{t} d(x,y)^2 \right)  
\end{align}
for all $x,y \in \mathbb{R}^{n_1 + n_2}$ and $t>0$. 

From the heat semigroup property we have 
$$\int_{\mathbb{R}^{n_1 + n_2}} p_{t_1} (x,z) p_{t_2} (z,y) \, dz = p_{t_1 + t_2}(x,y) \quad \textup{for all } x,y \in \mathbb{R}^{n_1 + n_2} \textup{ and } t_1, t_2 > 0.$$

Let us consider the Bessel potential $\mathcal{B}_s$, for $s>0$, which is the integral kernel of the operator $(I+G)^{-s/2}.$ We can express $\mathcal{B}_s$ in terms of the heat kernel $p_t$ as follows: 
$$\mathcal{B}_s(x,y) = B_s(y,x) = \frac{1}{\Gamma \left( s/2 \right)} \int_0^\infty t^{\frac{s}{2} -1} e^{-t} p_t(x,y) \, dt.$$

We have the following Sobolev embedding theorem for the Grushin operator. 
\begin{lem}[Sobolev embedding] \label{grushin-Sobolev-embed}
Given $s > Q/2$, there exists a constant $C_s > 0$ such that 
$$\left\||B(\cdot, 1)|^{1/2} f\right\|_{\infty} \leq C_s \left\|(I+G)^{s/2}f\right\|_2.$$
\end{lem}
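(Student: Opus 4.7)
The strategy is the standard Bessel-potential approach adapted to the non-doubling-exponent volume growth for the Grushin metric. Write $g = (I+G)^{s/2}f$, so that $f(x) = \int_{\R^{n_1+n_2}} \mathcal{B}_s(x,y) g(y)\, dy$. By Cauchy--Schwarz,
\begin{equation*}
|f(x)| \leq \| \mathcal{B}_s(x,\cdot) \|_2 \, \| g \|_2,
\end{equation*}
so the lemma reduces to the uniform pointwise bound
\begin{equation*}
|B(x,1)|^{1/2} \, \| \mathcal{B}_s(x,\cdot) \|_2 \leq C_s \quad \text{for all } x \in \R^{n_1+n_2},
\end{equation*}
which I will establish by plugging in the subordination formula $\mathcal{B}_s(x,y) = \Gamma(s/2)^{-1}\int_0^\infty t^{s/2-1} e^{-t} p_t(x,y)\, dt$.

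Apply Minkowski's integral inequality in $t$ and use the semigroup identity together with the heat kernel estimate \eqref{heat-kernel-bounds} to obtain
\begin{equation*}
\| p_t(x, \cdot) \|_2^2 = p_{2t}(x,x) \lesssim |B(x, t^{1/2})|^{-1},
\end{equation*}
hence
\begin{equation*}
|B(x,1)|^{1/2} \| \mathcal{B}_s(x,\cdot) \|_2 \lesssim \int_0^\infty t^{s/2-1} e^{-t} \, \frac{|B(x,1)|^{1/2}}{|B(x, t^{1/2})|^{1/2}} \, dt.
\end{equation*}
For $t \geq 1$ the ratio $|B(x,1)|/|B(x,t^{1/2})|$ is bounded by $1$ and the $e^{-t}$ factor handles the integrability. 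The delicate part is $0 < t < 1$, where I will invoke the two-sided estimate \eqref{grushin-ball-growth}:
\begin{equation*}
|B(x,r)| \sim r^{n_1+n_2} \max\{r, |x'|\}^{n_2}.
\end{equation*}
Splitting into the cases $|x'| \geq 1$, $t^{1/2} \leq |x'| < 1$, and $|x'| < t^{1/2}$, a direct computation gives in each case
\begin{equation*}
\frac{|B(x,1)|}{|B(x, t^{1/2})|} \lesssim t^{-Q/2} \qquad (0 < t < 1),
\end{equation*}
where $Q = n_1 + 2 n_2$ is the homogeneous dimension. Consequently
\begin{equation*}
|B(x,1)|^{1/2} \| \mathcal{B}_s(x,\cdot) \|_2 \lesssim \int_0^1 t^{s/2 - 1 - Q/4} \, dt + \int_1^\infty t^{s/2-1} e^{-t} \, dt,
\end{equation*}
and the first integral is finite precisely when $s > Q/2$, which is our hypothesis. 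Taking the supremum over $x$ in the Cauchy--Schwarz estimate yields the claim.

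\textbf{Main obstacle.} The principal technical point is that the metric $d$ is \emph{not} a homogeneous metric in the Euclidean sense: the ball volume $|B(x,r)|$ depends on both $r$ and the base point $x$ (via $|x'|$), so a naive $r^{Q}$-type comparison breaks. Handling the three regimes for $|x'|$ against $t^{1/2}$ and $1$ in the volume ratio estimate is what gives the clean power $t^{-Q/2}$ needed to match the condition $s > Q/2$; everything else is routine.
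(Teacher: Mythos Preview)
Your proof is correct and follows essentially the same route as the paper: Cauchy--Schwarz against the Bessel kernel, the subordination formula, the semigroup identity to evaluate $\|p_t(x,\cdot)\|_2^2 = p_{2t}(x,x)$, the on-diagonal heat kernel bound, and finally a volume-ratio estimate giving the threshold $s>Q/2$. The only cosmetic differences are that the paper squares $\|\mathcal{B}_s(x,\cdot)\|_2$ directly (yielding a double integral in $t_1,t_2$) rather than applying Minkowski first, and that it invokes the doubling inequality $|B(x,1)|/|B(x,r)| \lesssim (1+r^{-1})^Q$ in one stroke rather than your three-case analysis on $|x'|$; neither changes the substance of the argument.
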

\begin{proof}
Let us write 
$\displaystyle f(x) = (I+G)^{-s/2} ((I+G)^{s/2} f) (x) = \int_{\mathbb{R}^{n_1 + n_2}} \mathcal{B}_s(x,y) (I+G)^{s/2} f(y) \, dy.$ 
Then, by Cauchy-Schwarz inequality  
\begin{align*}
|f(x)|^2 &\leq \left(\int_{\mathbb{R}^{n_1 + n_2}} \left|\mathcal{B}_s(x,y)\right|^2 \, dy\right) \left(\int_{\mathbb{R}^{n_1 + n_2}} |(I+G)^{s/2} f(y)|^2 \, dy \right) \\ 
&= \left(\int_{\mathbb{R}^{n_1 + n_2}} \left|\mathcal{B}_s(x,y)\right|^2 \, dy\right) \left\|(I+G)^{s/2}f\right\|_2^2.
\end{align*} 

We can now analyse the $L^2$-norm of $B_s(x, \cdot)$ as follows. 
\begin{align*}
& \int_{\mathbb{R}^{n_1 + n_2}} \left|\mathcal{B}_s(x,y)\right|^2 \, dy \\ 
& = \int_{\mathbb{R}^{n_1 + n_2}} \mathcal{B}_s(x,y) \mathcal{B}_s(y,x) \, dy \\
&= \frac{1}{\left( \Gamma \left( s/2 \right)\right)^2} \int_{\mathbb{R}^{n_1 + n_2}} \int_0^\infty \int_0^\infty t_1^{\frac{s}{2} -1} e^{-t_1} p_{t_1} (x,y) t_2^{\frac{s}{2} -1} e^{-t_2} p_{t_2} (y,x) \, dt_1\, dt_2 \, dy \\
&= \frac{1}{\left( \Gamma \left( s/2 \right)\right)^2} \int_0^\infty \int_0^\infty (t_1 t_2)^{\frac{s}{2} -1} e^{-(t_1 + t_2)} \left(\int_{\mathbb{R}^{n_1 + n_2}} p_{t_1} (x,y) p_{t_2} (y,x) \, dy\right) \, dt_1\, dt_2 \\
&= \frac{1}{\left( \Gamma \left( s/2 \right)\right)^2} \int_0^\infty \int_0^\infty (t_1 t_2)^{\frac{s}{2} - 1} e^{-(t_1 + t_2)} p_{t_1 + t_2}(x,x) \, dt_1\, dt_2 \\ 
&\lesssim_s \int_0^\infty \int_0^\infty (t_1 t_2)^{\frac{s}{2} -1} e^{-(t_1 + t_2)} \frac{1}{\left|B(x,(t_1 + t_2)^{1/2})\right|} \, dt_1\, dt_2 \\ 
&= |B(x, 1)|^{-1} \int_0^\infty \int_0^\infty (t_1 t_2)^{\frac{s}{2} -1} e^{-(t_1 + t_2)} \frac{|B(x, 1)|}{\left|B(x,(t_1 + t_2)^{1/2})\right|} \, dt_1\, dt_2 \\
&\lesssim |B(x, 1)|^{-1} \int_0^\infty \int_0^\infty (t_1 t_2)^{\frac{s}{2} -1} e^{-(t_1 + t_2)} \left( 1 + \frac{1}{(t_1 + t_2)^{1/2}}\right)^{Q} \, dt_1\, dt_2 \\
&\lesssim_s |B(x, 1)|^{-1},
\end{align*}
where the last integral converges since $s > Q/2$. 

This completes the proof of Lemma \ref{grushin-Sobolev-embed}. 
\end{proof}

%%%%%%%%%%%%%%%%%%%%%%%%%%%%
%%%%%%%%%%%%%%%%%%%%%%%%%%%%
%%%%%%%%%%%%%%%%%%%%%%%%%%%%
%%%%%%%%%%%%%%%%%%%%%%%%%%%%
%%%%%%%%%%%%%%%%%%%%%%%%%%%%
%%%%%%%%%%%%%%%%%%%%%%%%%%%%
%%%%%%%%%%%%%%%%%%%%%%%%%%%%
%%%%%%%%%%%%%%%%%%%%%%%%%%%%
%%%%%%%%%%%%%%%%%%%%%%%%%%%%

\subsection{Weighted Plancherel estimates}
We recall first the following two results from the existing literature about weighted Plancherel estimates for Grushin multipliers with compact support. 

\begin{lem} \label{lem:weighted-plancherel-L-2}
For all $s > r \geq 0$, we have 
$$ \left\| \left( 1 + R d(x, \cdot) \right)^{r} k_{F(G)} (x, \cdot) \right\|_2 \lesssim_{r, s} \left| B(x, R^{-1}) \right|^{-1/2} \left\| F \left(R^2 \cdot \right) \right\|_{W^s_2},$$
for every $R > 0$, and for all bounded Borel functions $F : \mathbb{R} \to \mathbb{C}$ such that $\supp F \subseteq [R^2, 4 R^2].$ 
\end{lem}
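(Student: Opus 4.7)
I would begin by reducing to $R = 1$. Under the non-isotropic Grushin dilations $\delta_t(x', x'') = (tx', t^2 x'')$ one has $G \circ \delta_t = t^2 \, \delta_t \circ G$, $d(\delta_t x, \delta_t y) = t \, d(x,y)$, and $|B(\delta_t x, tr)| = t^Q |B(x,r)|$, so substituting $x = \delta_{R^{-1}} \tilde{x}$, $y = \delta_{R^{-1}} \tilde{y}$ together with the new symbol $\tilde{F}(\lambda) = F(R^2 \lambda)$ (supported in $[1,4]$) transforms the claimed estimate into the $R=1$ case
\[
\| (1 + d(x, \cdot))^r \, k_{F(G)}(x, \cdot) \|_2 \lesssim_{r,s} |B(x, 1)|^{-1/2} \|F\|_{W^s_2}, \qquad \supp F \subset [1,4].
\]
The unweighted case $r = 0$ is then handled by the spectral theorem: $\int |k_{F(G)}(x,y)|^2 \, dy = |F|^2(G)(x,x)$, and since $\supp F \subset [1,4]$ gives $|F(\lambda)|^2 \leq e^4 \, |F(\lambda)|^2 \, e^{-\lambda}$, the Gaussian heat kernel bound \eqref{heat-kernel-bounds} yields $|F|^2(G)(x,x) \lesssim \|F\|_\infty^2 \, p_1(x,x) \lesssim \|F\|_\infty^2 \, |B(x,1)|^{-1}$. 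Sobolev embedding on the compact interval $[1,4]$ upgrades $\|F\|_\infty$ to $\|F\|_{W^s_2}$.

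For the weighted step ($r > 0$), I would factor $F(G) = \widetilde{F}(G) \cdot e^{-G/2}$ with $\widetilde{F}(\lambda) = e^{\lambda/2} F(\lambda)$, which has the same support as $F$ and satisfies $\|\widetilde{F}\|_{W^s_2} \sim \|F\|_{W^s_2}$. This gives
\[
k_{F(G)}(x, y) = \int k_{\widetilde{F}(G)}(x, z) \, p_{1/2}(z, y) \, dz.
\]
Applying the quasi-triangle inequality \eqref{quasi-metric-constant} in the form $(1 + d(x, y))^r \lesssim (1 + d(x, z))^r (1 + d(z, y))^r$, and absorbing the second factor into the Gaussian decay of $p_{1/2}$ via $(1 + d(z, y))^r \, e^{-b \, d(z,y)^2} \lesssim e^{-b' \, d(z,y)^2}$ for some $0 < b' < b$, I would then use Minkowski's integral inequality in $y$ to reduce the weighted $L^2$ norm of $k_{F(G)}(x, \cdot)$ to the convolution of a Gaussian-weighted factor depending on $(x,z)$ against $\| k_{\widetilde{F}(G)}(z, \cdot) \|_2$. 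An application of the unweighted bound from the previous paragraph to $\widetilde{F}$, combined with the doubling growth of balls (cf. Lemma \ref{lem-integrability-ball-growth}), closes the estimate.

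The principal technical obstacle is that this naive heat-kernel factorisation produces the estimate with $s > r + 1/2$, losing half a derivative over the sharp threshold $s > r$ claimed in the statement. To capture the full range one must switch to a Fourier-theoretic argument based on the finite propagation speed of the wave operator $\cos(t \sqrt{G})$: writing $F(G) = (2\pi)^{-1/2} \int \widehat{H}(t) \cos(t \sqrt{G}) \, dt$ with $H(\xi) = F(\xi^2)$ (treating the even and odd parts separately), finite propagation speed ensures that $k_{\cos(t\sqrt{G})}(x, \cdot)$ is supported in the metric ball $B(x, c|t|)$. Consequently, factors of $(1 + d(x, y))^r$ become factors of $(1 + c|t|)^r$ inside the Fourier integral, and Plancherel's identity on $\mathbb{R}$ converts the resulting expression into precisely the $W^s_2$-norm of $H$ (and hence of $F$ after a change of variable). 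This is the standard Duong--Ouhabaz--Sikora scheme; finite propagation speed for the Grushin sub-Laplacian is known, and the details constitute a routine adaptation of the classical sub-Riemannian arguments.
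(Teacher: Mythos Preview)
Your proposal is correct, and the finite-propagation-speed argument you outline in the final paragraph is precisely the Duong--Ouhabaz--Sikora scheme. In fact the paper does not give its own proof of this lemma at all: it is stated as a known result and attributed directly to Lemma~4.3(a) of \cite{DuongOuhabazSikoraWeightedPlancherel2002JFA}, so your sketch essentially reconstructs the cited argument rather than offering an alternative route.

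One small comment: your intermediate heat-kernel factorisation (Step~3) is unnecessary once you invoke finite propagation speed, since the latter already handles all $r \geq 0$ in one stroke; you correctly identify that the factorisation alone only reaches $s > r + 1/2$, so it serves at best as motivation. Also, in your unweighted step the pointwise inequality $|F|^2(G)(x,x) \leq \|F\|_\infty^2 \, p_1(x,x)$ relies on the positivity of the spectral measure (i.e.\ monotonicity of the functional calculus for nonnegative functions), which is fine but worth stating explicitly.
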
 

\begin{lem} \label{lem:weighted-plancherel-L-infty} 
For all $s > r \geq 0$, we have 
\begin{align} \label{eq:heat-kernel-weighted-plancherel}
\left( 1 + R d(x,y) \right)^r \left| k_{F(G)} (x,y) \right| \lesssim_{r,s} \left| B \left( x, R^{-1} \right) \right|^{-1/2} \left| B \left( y, R^{-1} \right) \right|^{-1/2} \left\| F (R^2 \cdot ) \right\|_{W^s_2}, 
\end{align} 
for every $R > 0$, and for all bounded Borel functions $F : \mathbb{R} \to \mathbb{C}$ such that $\supp F \subseteq [R^2, 4 R^2].$ 
\end{lem}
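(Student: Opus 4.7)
The plan is to derive the pointwise estimate directly from the $L^2$ weighted Plancherel estimate of Lemma~\ref{lem:weighted-plancherel-L-2}, via a composition $F(G) = F(G) \cdot \eta(G/R^2)$ and Cauchy--Schwarz. This avoids any appeal to Sobolev embedding (which would strictly increase the number of derivatives of $F$ required) and exactly matches the regularity condition $s > r$ of the $L^2$ statement. The one mild ingredient needed from the geometry of $d$ is the quasi-metric inequality \eqref{quasi-metric-constant}.

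\textbf{Step 1 (cutoff identity).} Fix a smooth $\eta \in C_c^{\infty}((1/4, 16))$ with $\eta \equiv 1$ on $[1,4]$. Since $\supp F \subseteq [R^2, 4R^2]$, the symbol identity $F(\cdot)\, \eta(\cdot / R^2) = F(\cdot)$ holds, and therefore, as operators, $F(G) = F(G) \circ \eta(G/R^2)$. Writing this composition at the level of integral kernels,
\begin{equation*}
k_{F(G)}(x, y) \;=\; \int_{\mathbb{R}^{n_1+n_2}} k_{F(G)}(x, z) \, k_{\eta(G/R^2)}(z, y) \, dz.
\end{equation*}

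\textbf{Step 2 (distribute the weight).} From \eqref{quasi-metric-constant} and the elementary inequality $(a+b)^r \lesssim_r a^r + b^r$, one has $(1+Rd(x,y))^r \lesssim_r (1+Rd(x,z))^r + (1+Rd(z,y))^r$. Inserting this into the identity of Step~1 produces the bound
\begin{equation*}
(1+Rd(x,y))^r \, |k_{F(G)}(x,y)| \;\lesssim_r\; I_1(x,y) + I_2(x,y),
\end{equation*}
where $I_1$ collects the terms in which the weight multiplies $k_{F(G)}(x,z)$, and $I_2$ the terms in which the weight multiplies $k_{\eta(G/R^2)}(z,y)$.

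\textbf{Step 3 (Cauchy--Schwarz and two applications of Lemma~\ref{lem:weighted-plancherel-L-2}).} Applying Cauchy--Schwarz in the $z$-variable to $I_1$ gives
$I_1 \leq \|(1+Rd(x,\cdot))^r k_{F(G)}(x,\cdot)\|_2 \cdot \|k_{\eta(G/R^2)}(\cdot, y)\|_2.$
Lemma~\ref{lem:weighted-plancherel-L-2} controls the first factor by $|B(x, R^{-1})|^{-1/2}\, \|F(R^2\cdot)\|_{W^s_2}$; for the second factor, self-adjointness of the real-valued multiplier $\eta(G/R^2)$ gives $k_{\eta(G/R^2)}(z,y) = \overline{k_{\eta(G/R^2)}(y,z)}$, and another application of Lemma~\ref{lem:weighted-plancherel-L-2} (with $r=0$, applied to $\eta(\cdot/R^2)$, whose $W^s_2$ seminorm after the rescaling $F(R^2\cdot) = \eta(\cdot)$ is a fixed constant) yields the bound $C_s\, |B(y, R^{-1})|^{-1/2}$. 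The symmetric argument handles $I_2$: Cauchy--Schwarz, apply Lemma~\ref{lem:weighted-plancherel-L-2} with $r=0$ to $\|k_{F(G)}(x,\cdot)\|_2$ and with weight $r$ to the second factor (again using kernel symmetry of $\eta(G/R^2)$). Summing gives the claim.

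The main technical obstacle is really only bookkeeping: one must observe that Lemma~\ref{lem:weighted-plancherel-L-2}, stated for symbols supported in $[R^2, 4R^2]$, extends with essentially the same proof to symbols supported in $[R^2/4, 16R^2]$ (decompose $\eta(\cdot/R^2)$ into a finite union of dyadic pieces), and that the required Sobolev norm $\|\eta\|_{W^s_2}$ of the fixed cutoff is a harmless constant absorbed into $\lesssim_{r,s}$. No higher regularity of $F$ than $s > r$ is needed, which is why the condition on $s$ in Lemma~\ref{lem:weighted-plancherel-L-infty} matches that of Lemma~\ref{lem:weighted-plancherel-L-2}.
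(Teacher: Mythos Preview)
Your proof is correct. Note, however, that the paper does not actually prove Lemma~\ref{lem:weighted-plancherel-L-infty}: immediately after stating it, the authors write that it ``is taken from \cite{AnhBuiDuongSpectralMultipliersBesovTriebelLizorkin} (see Lemma 4.3 of \cite{AnhBuiDuongSpectralMultipliersBesovTriebelLizorkin}).'' Your argument --- factoring $F(G) = F(G)\,\eta(G/R^2)$, distributing the weight $(1+Rd(x,y))^r$ via the quasi-triangle inequality, and combining Cauchy--Schwarz with two applications of Lemma~\ref{lem:weighted-plancherel-L-2} --- is precisely the standard route from $L^2$ weighted Plancherel estimates to pointwise ones, and is essentially how it is done in the cited reference. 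The bookkeeping points you flag (kernel symmetry of the self-adjoint real multiplier $\eta(G/R^2)$ to swap variables, and a finite dyadic decomposition of $\eta$ to match the support hypothesis $[R^2,4R^2]$) are the only things to watch, and you handle both correctly. So your write-up supplies what the paper leaves as a citation.
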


We refer to Lemma 4.3 (a) of \cite{DuongOuhabazSikoraWeightedPlancherel2002JFA} for the above written Lemma \ref{lem:weighted-plancherel-L-2}, whereas Lemma \ref{lem:weighted-plancherel-L-infty} is taken from \cite{AnhBuiDuongSpectralMultipliersBesovTriebelLizorkin} (see Lemma 4.3 of \cite{AnhBuiDuongSpectralMultipliersBesovTriebelLizorkin}). In both of the above lemmas, we have taken the integrability index 2 in the Sobolev norm, as it is known that the Grushin multipliers satisfy the unweighted Plancherel estimate with this index (see, for example, Proposition 12 of \cite{MartiniMullerGrushinRevistaMath} or Proposition 10 of \cite{MartiniSikoraGrushinMRL}). 

Next, for $\boldsymbol{L} = (L_1, L_2,\ldots, L_{n_1})$ and $\boldsymbol{U} = (U_1, U_2, \ldots, U_{n_2})$, with $L_j$ and $U_k$ as in \eqref{eq:operatorsLandU}, the integral kernel $k_{F(\boldsymbol{L},\boldsymbol{U})}$ of the multiplier $F(\boldsymbol{L}, \boldsymbol{U})$ associated with a compactly supported bounded Borel function $F$ on $\mathbb{R}_+^{n_1} \times (\mathbb{R}^{n_2}\setminus\{0\})$ can be given as follows (see also Proposition 5 in \cite{MartiniSikoraGrushinMRL}): 
$$ k_{F(\boldsymbol{L},\boldsymbol{U})}(x,y) = (2\pi)^{-n_2} \int_{\mathbb{R}^{n_2}} \sum_\mu F((2\mu+\tilde{1})|\lambda|, - \lambda) E_{\mu, \lambda} (x, y) \, d\lambda,$$ 
where $\tilde{1} = (1, 1, \ldots, 1) \in \mathbb{R}^{n_1},$ and 
$E_{\mu, \lambda} (x, y) = \Phi_\mu^\lambda(x^\prime) \Phi_\mu^\lambda(y^\prime) e^{- i \lambda \cdot (x^{\prime \prime} - y^{\prime \prime})}.$ 

One can deduce the following weighted Plancherel estimate for the joint functional calculus of $(\boldsymbol{L}, \boldsymbol{U})$ from Theorem 6.1 of \cite{MartiniJointFunctionalCalculiMathZ}. 

\begin{lem} \label{lem:weighted-plancherel-joint-functional}
For all $s > r \geq 0$ and $2 \leq p \leq \infty$, we have 
$$ \left\| \left( 1 + R d(x, \cdot) \right)^{r} k_{F(\boldsymbol{L}, \boldsymbol{U})} (x, \cdot) \right\|_p \lesssim_{p, r, s} \left| B(x, R^{-1}) \right|^{-1/{p^{\prime}}} \left\| F \left(R^2 \cdot \right) \right\|_{W^s_{\infty}},$$
for every $R > 0$, and for all bounded Borel functions $F : \mathbb{R}^{n_1 + n_2} \to \mathbb{C}$ such that $\supp F \subseteq [-R^2, R^2]^{n_1 + n_2}.$ 
\end{lem}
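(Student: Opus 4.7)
The estimate \eqref{ineq:weighted-Plancherel-L2} is, morally, a direct consequence of Theorem 6.1 of \cite{MartiniJointFunctionalCalculiMathZ}, and the plan is to reduce to the case $R = 1$ by the natural scaling of the Grushin operator and then verify that Martini's hypotheses are in force in our setting.

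First, I would set up the Grushin dilations $\delta_R(x', x'') = (R x', R^2 x'')$ together with the induced unitaries $D_R f(x) := R^{Q/2} f(\delta_R x)$ on $L^2(\mathbb{R}^{n_1 + n_2})$. From the definition \eqref{eq:operatorsLandU} of $L_j$ and $U_k$ one computes directly
\[
D_R \, L_j \, D_R^{-1} = R^{-2} L_j, \qquad D_R \, U_k \, D_R^{-1} = R^{-2} U_k.
\]
Setting $\tilde F(\tau, \kappa) := F(R^2 \tau, R^2 \kappa)$, the rescaled symbol is supported in $[-1,1]^{n_1+n_2}$ whenever $F$ is supported in $[-R^2, R^2]^{n_1+n_2}$, and by the joint functional calculus $F(\boldsymbol{L}, \boldsymbol{U}) = D_R \, \tilde F(\boldsymbol{L}, \boldsymbol{U}) \, D_R^{-1}$. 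Passing to integral kernels yields $k_{F(\boldsymbol{L},\boldsymbol{U})}(x, y) = R^Q k_{\tilde F(\boldsymbol{L},\boldsymbol{U})}(\delta_R x, \delta_R y)$. Combining this with $d(\delta_R x, \delta_R y) = R \, d(x, y)$ and $|B(\delta_R x, 1)| = R^Q |B(x, R^{-1})|$ (both immediate from the metric definition \eqref{def:distance-1} and the volume estimate \eqref{grushin-ball-growth}), a straightforward change of variables in $y$ converts the desired bound for $F$ at scale $R$ into the corresponding bound for $\tilde F$ at scale $1$.

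Second, I would invoke Theorem 6.1 of \cite{MartiniJointFunctionalCalculiMathZ} at $R = 1$. Martini's theorem produces weighted $L^p$-Plancherel estimates of exactly the type stated in \eqref{ineq:weighted-Plancherel-L2} for a finite family of strongly commuting non-negative self-adjoint operators on a doubling metric measure space, under suitable Gaussian-type heat kernel bounds and an appropriate unweighted Plancherel estimate. All of these hypotheses are available in our context: strong commutativity and essential self-adjointness of $(\boldsymbol{L}, \boldsymbol{U})$ are recalled in Subsection~\ref{subsec:intro-joint-functional-Grushin-pseudo}; the doubling property of $(\mathbb{R}^{n_1 + n_2}, d, dx)$ with homogeneous dimension $Q = n_1 + 2 n_2$ follows from \eqref{grushin-ball-growth}; Gaussian heat kernel bounds for $e^{-tG} = e^{-t(L_1 + \cdots + L_{n_1})}$ are \eqref{heat-kernel-bounds}; and the unweighted joint $L^2$-Plancherel estimate that Martini needs as an input can be taken from Proposition 10 of \cite{MartiniSikoraGrushinMRL}. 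Feeding $\tilde F$ into this abstract estimate and undoing the scaling from the first step produces \eqref{ineq:weighted-Plancherel-L2}.

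The principal point requiring care is the scaling bookkeeping: the Grushin dilation acts anisotropically in space but acts \emph{isotropically} on the joint spectrum of $(\boldsymbol{L}, \boldsymbol{U})$, each $L_j$ and $U_k$ being rescaled by the same factor $R^{-2}$, which is why the isotropic compact support condition $\supp F \subseteq [-R^2, R^2]^{n_1 + n_2}$ in all $n_1 + n_2$ spectral variables matches well with the dilation. Once the conjugation identities $D_R L_j D_R^{-1} = R^{-2} L_j$ and $D_R U_k D_R^{-1} = R^{-2} U_k$ are recorded and the change of variables is carried out, the powers of $R$ coming from the kernel scaling and from $|B(\delta_R x, 1)| = R^Q |B(x, R^{-1})|$ combine to give precisely the factor $|B(x, R^{-1})|^{-1/p'}$ on the right-hand side. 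Beyond this, no real obstacle arises: the proof is essentially a citation of \cite{MartiniJointFunctionalCalculiMathZ} dressed up with the correct Grushin rescaling.
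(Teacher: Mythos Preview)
Your overall strategy---invoke Theorem~6.1 of \cite{MartiniJointFunctionalCalculiMathZ}---is exactly the paper's approach, and your scaling computation is correct. But there is a genuine gap in your verification of Martini's hypotheses.

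Martini's Theorem~6.1, in the form used here, does not take as input merely the Gaussian bound \eqref{heat-kernel-bounds} on $p_t$ together with an unweighted Plancherel estimate. One must choose a map $\Psi = (\Psi_1,\ldots,\Psi_{n_1+n_2+1})$, here $\Psi(\tau,\kappa) = \bigl(e^{-|\tau|_1},\, \tau_1 e^{-|\tau|_1},\ldots,\kappa_{n_2} e^{-|\tau|_1}\bigr)$, and verify that $\sup_{R>0}\vertiii{K_{\Psi_j \circ \epsilon_{R^{-1}}(\boldsymbol{L},\boldsymbol{U})}}_{2,r,R} < \infty$ for \emph{every} $j$ and every $r \geq 0$. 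For $j=1$ this kernel is $p_{R^{-2}}$ and \eqref{heat-kernel-bounds} suffices. For $j \geq 2$, however, the kernels are $R^{-2}\bigl(L_{j-1}\, p_{R^{-2}}(\cdot,y)\bigr)$ and $R^{-2}\bigl(U_{j-n_1-1}\, p_{R^{-2}}(\cdot,y)\bigr)$, and the required weighted bounds amount to pointwise Gaussian estimates on these \emph{derivatives} of the heat kernel, which are not consequences of \eqref{heat-kernel-bounds}. The paper obtains them by relating the Grushin heat kernel to that of a Heisenberg--Reiter group as in \cite{MartiniSikoraGrushinMRL, DziubanskiSikoraLieApproach}, invoking the left-invariant gradient bounds available on Lie groups of polynomial growth \cite{VaropoulosSaloffCosteCoulhonAnalysisGeometryGroupsBook92}, and using $U_k = [X_{j,k}, X_j]$ for the central-variable derivatives. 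Your proposal omits this entire step, which is precisely the substantive content of the argument.

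A minor remark: your preliminary scaling reduction to $R=1$ is correct but redundant. Martini's framework already incorporates the spectral dilations $\epsilon_R$, and the hypothesis is a supremum over all $R>0$; by the very Grushin scaling invariance you record, that supremum condition is equivalent to its $R=1$ instance, so nothing is gained by the reduction.
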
 

Let us sketch a proof of Lemma \ref{lem:weighted-plancherel-joint-functional}. Following the terminology of Section 6 of \cite{MartiniJointFunctionalCalculiMathZ}, let us consider the dilation operators $\epsilon_R$ on $\mathbb{R}^{n_1} \times \mathbb{R}^{n_2}$ by $\epsilon_R (\tau, \kappa) = \left( R^2 \tau, R^2 \kappa \right)$, for $R>0$, and the continuous map $\Psi = \left( \Psi_1, \ldots, \Psi_{n_1 + n_2 + 1} \right) : \mathbb{R}^{n_1 + n_2} \to \mathbb{R}^{n_1 + n_2 + 1}$ defined by 
$$ \Psi(\tau, \kappa) = \left( e^{-|\tau|_1}, \tau_1 e^{-|\tau|_1}, \ldots, \tau_{n_1} e^{-|\tau|_1}, \kappa_1 e^{-|\tau|_1}, \ldots, \kappa_{n_2} e^{-|\tau|_1} \right).$$
Here the notation $|\tau|_1$ is used for $\sum_{1 \leq j \leq n_1} \tau_j$. 

For all $ p \in [2, \infty], r \in [0, \infty), R \in (0, \infty)$, let us define the mixed weighted Lebesgue norms for functions $K : \mathbb{R}^{n_1 + n_2} \times \mathbb{R}^{n_1 + n_2} \to \mathbb{C}$ by 
\begin{align} \label{def-mixed-weighted}
\vertiii{K}_{p,r,R} = \text{ess sup}_y \left| B \left(y, R^{-1} \right) \right|^{1/p^{\prime}} \left\| K(\cdot, y) \left( 1 + R d(\cdot, y) \right)^r \right\|_{L^p \left( \mathbb{R}^{n_1 + n_2}\right)}.
\end{align}

Now, in order to apply Theorem 6.1 of \cite{MartiniJointFunctionalCalculiMathZ}, we need to check that 
$$ \sup_{R>0} \vertiii{K_{\Psi_j \circ \epsilon_{R^{-1}} \left( L_1, L_2, \ldots, L_{n_1}, U_1, U_2, \ldots, U_{n_2} \right)}}_{2,r,R} < \infty $$
for all $r \in [0, \infty)$ and $ 1 \leq j \leq n_1 + n_2 + 1$. 

Note that 
\begin{align*}
& K_{\Psi_j \circ \epsilon_{R^{-1}} \left( L_1, L_2, \ldots, L_{n_1}, U_1, U_2, \ldots, U_{n_2} \right)} (x,y) \\ 
& = 
\begin{cases}
p_{R^{-2}}(x,y), & \quad \text{if} \quad j = 1; \\ 
R^{-2} \left( L_j p_{R^{-2}} (\cdot, y) \right) (x), & \quad \text{if} \quad 2 \leq j \leq n_1 + 1; \\ 
R^{-2} \left( U_j p_{R^{-2}} (\cdot, y) \right) (x), & \quad \text{if} \quad n_1 + 2 \leq j \leq n_1 + n_2 + 1. 
\end{cases}
\end{align*}

It follows from the Gaussian bounds of the heat kernel corresponding to the Grushin operator that $\vertiii{p_{R^{-2}}}_{2,r,R}$ is finite (see, for example, \cite{MartiniSikoraGrushinMRL}). The same would be true for $K (x,y) = R^{-2} \left( L_j p_{R^{-2}} (\cdot, y) \right) (x)$ or $K(x,y) = R^{-2} \left( U_j p_{R^{-2}} (\cdot, y) \right) (x)$ provided that we have analogous pointwise bounds for gradients $L_j p_{R^{-2}}$ and $U_j p_{R^{-2}}$. To this end, we can make use of the approach of relating the pointwise bounds on the heat kernel (and its left-invariant gradients) for the Grushin operator with the analogous pointwise bounds on the heat kernel (and its Left-invariant gradients) for Heisenberg-Reiter group (see Section 2 of \cite{MartiniSikoraGrushinMRL}). The above idea has also been recently used in \cite{DziubanskiSikoraLieApproach} to estimate heat kernels in a more general set-up of Grushin-type operators. For Lie groups of polynomial growth, pointwise estimates for the heat kernel and it's left-invariant gradients (of arbitrary order) are well known (see \cite{VaropoulosSaloffCosteCoulhonAnalysisGeometryGroupsBook92}). Also, for the central variable derivative, note that $U_k = [X_{j, k}, X_j].$ Overall, making use of the above ideas, one can show the following estimates: 
\begin{align*} 
R^{-2} \left| \left(L_j p_{R^{-2}} (\cdot, y) \right) (x) \right| & \lesssim \left| B \left(x, R^{-1} \right) \right|^{-1} \exp \left(-b R^{2} d(x,y)^2 \right), \\ 
\text{and} \quad R^{-2} \left| \left(U_j p_{R^{-2}} (\cdot, y) \right) (x) \right| & \lesssim \left| B \left(x, R^{-1} \right) \right|^{-1} \exp \left(-b R^{2} d(x,y)^2 \right). 
\end{align*}

With these estimates at hand, Lemma \ref{lem:weighted-plancherel-joint-functional} follows from Theorem 6.1 of \cite{MartiniJointFunctionalCalculiMathZ}. 

%%%%%%%%%%%%%%%%%%%%%%%%%%%%
%%%%%%%%%%%%%%%%%%%%%%%%%%%%
%%%%%%%%%%%%%%%%%%%%%%%%%%%%
%%%%%%%%%%%%%%%%%%%%%%%%%%%%
%%%%%%%%%%%%%%%%%%%%%%%%%%%%
%%%%%%%%%%%%%%%%%%%%%%%%%%%%
%%%%%%%%%%%%%%%%%%%%%%%%%%%%
%%%%%%%%%%%%%%%%%%%%%%%%%%%%
%%%%%%%%%%%%%%%%%%%%%%%%%%%%

\subsection{Partition of unity}
We now discuss the existence of a smooth partition of unity associated to the metric $d$. For this, we closely follow the ideas from Lemma $5.7.5$ from page $387$ onwards of \cite{FischerRuzhanskyQuantizationBook}, with the changes essentially pertaining to the challenges that are related to the behaviour of the control distance function as well as the fact that the volume of the balls in our case may also depend on their centers while it is not the case when one works with balls associated to a homogeneous norm on a Lie group. 

Let $C_0$, $C_1$ and $c_1$ be same as in \eqref{quasi-metric-constant} and \eqref{grushin-ball-growth}. By Zorn's lemma, there exists a maximal family $\left\{B\left(x_J, \frac{1}{2C_0} \right)\right\}_{J \in \mathbb{N}}$ of disjoint balls of radius $\frac{1}{2C_0}$. We choose and fix one such family. 
%%%%%%%%%%%%%%%%%
%%%%%%%%%%%%%%%%%
%%%%%%%%%%%%%%%%%
%%%%%%%%%%%%%%%%%
%%%%%%%%%%%%%%%%%
%%%%%%%%%%%%%%%%%

\begin{lem} \label{lem-grushin-partition-part-fix}
The following assertions hold. 
\renewcommand{\theenumi}{\roman{enumi}}
\begin{enumerate}
\item The balls $\left\{B\left(x_J, 1 \right)\right\}_{J \in \mathbb{N}}$ cover $\mathbb{R}^{n_1 + n_2}$. 

\item For any $C \geq 1,$ no point of $\mathbb{R}^{n_1 + n_2}$ belongs to more than $\frac{C_1}{c_1} \left(4 C^2_0 C\right)^{Q}$ number of balls from the family $\left\{B\left(x_J, C \right)\right\}_{J \in \mathbb{N}}$. 

\item For every real number $s > Q$, there exists a finite constant $C_s > 0$ such that 
$$ \sup_{x \in \mathbb{R}^{n_1 + n_2}} \sum_{J \in \mathbb{N}} (1 + d\left(x, x_J\right))^{-s} \leq C_s.$$ 

\item There exists a sequence $\left\{\chi_J \right\}_{J \in \mathbb{N}}$ of functions belonging to $C_c^\infty(\mathbb{R}^{n_1 + n_2})$ such that for each $J$, the function $\chi_J$ is supported in $B\left(x_J, 2 \right)$ and satisfies $0 \leq \chi_J \leq 1$ and $\sum_{J \in \mathbb{N}} \chi_J = 1.$ Moreover, for any $\Gamma \in \mathbb{N}^{n_1 + n_1 n_2}$, there exists a finite constant $C_{\Gamma} > 0$ such that 
$$ \sup_{J \in \mathbb{N}} \sup_{x \in \mathbb{R}^{n_1 + n_2}} \left|X^\Gamma \chi_J (x)\right| \leq C_{\Gamma}. $$
\end{enumerate}
\end{lem}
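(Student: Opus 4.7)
These are standard packing/covering arguments driven by the quasi-triangle inequality \eqref{quasi-metric-constant} and the ball-growth bound \eqref{grushin-ball-growth}. For (i), I argue by maximality: if $d(x, x_J) \geq 1$ for every $J$, then the existence of any common point $y \in B(x, \tfrac{1}{2C_0}) \cap B(x_J, \tfrac{1}{2C_0})$ would give $d(x, x_J) \leq C_0(d(x,y) + d(y, x_J)) < 1$, a contradiction, so the new ball $B(x, \tfrac{1}{2C_0})$ would be disjoint from the whole family, violating maximality. For (ii), I fix $x$ and set $\mathcal{J} = \{J : x \in B(x_J, C)\}$; \eqref{quasi-metric-constant} together with $C \geq 1$ gives $B(x_J, \tfrac{1}{2C_0}) \subset B(x, 2 C_0 C)$ for every $J \in \mathcal{J}$. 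Summing over this disjoint family and using \eqref{grushin-ball-growth} with $|x' - x_J'| \leq d(x, x_J) < C$, a short comparison shows $(2C_0 C + |x'|)/(\tfrac{1}{2C_0} + |x_J'|) \lesssim 4 C_0^2 C$, which produces the cardinality bound on $\mathcal{J}$. Part (iii) then follows by a dyadic decomposition: with $A_k = \{J : 2^{k-1} \leq 1 + d(x, x_J) < 2^k\}$, part (ii) applied with $C = 2^k$ gives $|A_k| \lesssim 2^{kQ}$, and hence
$$\sum_{J \in \mathbb{N}} (1 + d(x, x_J))^{-s} \;\lesssim\; \sum_{k \geq 1} 2^{kQ}\, 2^{-(k-1)s} \;\lesssim\; \sum_{k \geq 1} 2^{-k(s - Q)} < \infty$$
uniformly in $x$ whenever $s > Q$.

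\textbf{Part (iv).} Pick $\psi \in C^{\infty}(\mathbb{R}, [0,1])$ with $\psi \equiv 1$ on $(-\infty, 1]$ and $\supp \psi \subset (-\infty, 3/2)$, and set $\tilde{\chi}_J(x) := \psi(d(x, x_J))$. Since the expression $d^4$ in \eqref{def:distance-1} is a rational function whose denominator vanishes only on the diagonal, $d(\cdot, x_J)$ is smooth on $\mathbb{R}^{n_1+n_2} \setminus \{x_J\}$; and because $\psi$ is locally constant near $0$, the function $\tilde{\chi}_J$ is globally smooth, equals $1$ on $B(x_J, 1)$, and is supported in $\overline{B(x_J, 3/2)} \subset B(x_J, 2)$. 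Define
$$\chi_J(x) := \frac{\tilde{\chi}_J(x)}{\sum_K \tilde{\chi}_K(x)}.$$
By (i) the denominator is $\geq 1$ everywhere, and by (ii) the sum at any $x$ has uniformly boundedly many nonzero terms, so the denominator is smooth with a uniform two-sided bound. The estimate $|X^\Gamma \chi_J| \leq C_\Gamma$ then reduces, via the Leibniz/chain rules, to showing that $|X^\Gamma d(\cdot, x_J)|$ is uniformly bounded on the annulus $\{1 \leq d(x, x_J) \leq 3/2\}$. For this I will use the two symmetries of $d$: the $x''$-translation invariance $d(\tau_a x, \tau_a y) = d(x, y)$ (where $\tau_a$ commutes with each $X_j$ and $X_{j,k}$), reducing to $x_J'' = 0$; and the dilation covariance $d(\delta_r x, \delta_r y) = r\, d(x, y)$ together with $X^\Gamma(f \circ \delta_r) = r^{|\Gamma|}(X^\Gamma f) \circ \delta_r$. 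For $|x_J'|$ in a compact range, the annulus sits in a fixed compact set avoiding the diagonal where $d$ is smooth, so continuity gives the bound. For $|x_J'| \to \infty$, the anisotropic change of variables $u = x' - x_J'$, $v = x''/|x_J'|$ turns $d^4$ into a family of rational functions converging smoothly on the rescaled annulus to the Heisenberg-type model $|u|^4 + \tfrac{1}{4}|v|^4$, while the transformed coefficients $(x_J' + u)/|x_J'|$ of $X_{j,k}$ remain bounded.

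\textbf{Main obstacle.} The principal difficulty lies in the last step of (iv): a uniform $X^\Gamma$-bound on $d(\cdot, x_J)$ across the full range of $|x_J'|$. The Grushin setting lacks the transitive translation group that, on graded Lie groups (cf.\ \cite{FischerRuzhanskyQuantizationBook}), trivializes such a construction by translating a single bump; instead, the ball $B(x_J, r)$ interpolates between an essentially isotropic Hermite-like shape $d \sim |x'-y'| + |x''-y''|^{1/2}$ for small $|x_J'|$ and an elongated Heisenberg-like shape for large $|x_J'|$, and the scaling-and-asymptotics argument sketched above is required to bridge these regimes uniformly.
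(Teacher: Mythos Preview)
Your arguments for parts (i)--(iii) follow the same packing/covering/dyadic-summation route as the paper and are correct.

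For part (iv) you use the same construction (a cutoff of $d(\cdot,x_J)$ normalized by the sum), but your proof of the uniform bound on $X^\Gamma \chi_J$ is genuinely different. The paper proceeds by direct computation: after reducing to bounding $X^\Gamma\bigl(d(x,x_J)^4\bigr)$ on the annulus $1<d(x,x_J)<2$, it writes $X^\Gamma$ as a combination of ${x'}^\gamma\partial_{x'}^\alpha\partial_{x''}^\beta$ with $|\gamma|\le|\beta|$, applies these to the rational function $|x''-x_J''|^4/w(x,x_J)$ with $w=|x''-x_J''|^2+(|x'|^2+|x_J'|^2)^2$, and bounds each resulting term by exploiting the homogeneity of $w$ (each factor such as $|x'|^{|\beta|}/w^{|\beta|/4}$ is $\le 1$, and $w\gtrsim 1$ on the annulus). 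Your approach instead uses $x''$-translation to set $x_J''=0$, then splits into $|x_J'|$ bounded (compactness away from the diagonal) and $|x_J'|\to\infty$ (the rescaling $u=x'-x_J'$, $v=x''/|x_J'|$, under which $d^4$ and the vector-field coefficients converge uniformly in $C^k$ to the nondegenerate model $|u|^4+\tfrac14|v|^4$). This is correct in outline and more conceptual, but to make it rigorous you should check explicitly that the rescaled annulus stays in a fixed compact set in $(u,v)$ and that the convergence is uniform over directions $x_J'/|x_J'|$; the paper's explicit computation avoids any such limiting argument. One minor point: the homogeneous dilation covariance $d(\delta_r x,\delta_r y)=r\,d(x,y)$ that you list is not actually used in your argument (your $(u,v)$ change of variables is not $\delta_r$), so you may wish to drop that remark.
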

\begin{proof} 
\textbf{(i)} Take and fix $x \in \mathbb{R}^{n_1 + n_2}$. Since $\left\{B\left(x_J, \frac{1}{2C_0} \right)\right\}_{J \in \mathbb{N}}$ is a maximal family of disjoint balls of radius $\frac{1}{2C_0}$, there exists $J_0 \in \mathbb{N}$ such that $B\left(x, \frac{1}{2C_0} \right) \cap B\left(x_{J_0}, \frac{1}{2C_0} \right) \neq \emptyset.$ Take an element $y$ from the intersection of these two balls. Then,  
$$d(x, x_{J_0}) \leq C_0 \left(d(x, y) + d(y, x_{J_0})\right) < 1.$$

Hence, we have that the balls $\left\{B\left(x_J, 1 \right)\right\}_{J \in \mathbb{N}}$ cover $\mathbb{R}^{n_1 + n_2}$. 

\medskip \noindent \textbf{(ii)} Take and fix $C \geq 1$ and $x \in \mathbb{R}^{n_1 + n_2}$. Let $r_0 \geq 1$ be such that $x \in \cap_{r=1}^{r_0} B(x_{J_r}, C).$ Then, for any $1 \leq r \leq r_0$ and $y \in B(x_{J_r}, C),$ we have 
$$ d(x,y) \leq C_0 \left( d(x, x_{J_r}) + d(x_{J_r}, y) \right) \leq 2 C_0 C, $$ 
implying that $\cup_{r=1}^{r_0} B(x_{J_r}, C) \subseteq B(x, 2C_0 C).$ As a consequence, we get that the disjoint union $\cup_{r=1}^{r_0} B\left(x_{J_r}, \frac{1}{2C_0} \right)$ is contained in the ball $B\left(x, 2 C_0 C \right)$. Thus 
\begin{align} \label{finite-overlap-ineq1}
\sum_{r=1}^{r_0} \left|B\left(x_{J_r}, \frac{1}{2C_0} \right)\right| \leq \left|B\left(x, 2 C_0 C \right)\right|.
\end{align}

Now, in view of the estimates (\ref{grushin-ball-growth}), we have 
\begin{align*} 
\sum_{r=1}^{r_0} \left|B\left(x_{J_r}, \frac{1}{2C_0} \right)\right| &\geq c_1 \left(\frac{1}{2C_0}\right)^{n_1 + n_2} \sum_{r=1}^{r_0} \max\left\{\frac{1}{2C_0}, \left|x^{\prime}_{J_r}\right|\right\}^{n_2}, \\
\textup{and} \quad \quad \quad \quad \left|B\left(x, 2C_0 C \right)\right| &\leq C_1 \left(2C_0 C\right)^{n_1 + n_2} \max\left\{2C_0 C, \left|x^{\prime}\right|\right\}^{n_2}.
\end{align*}

Putting these estimates in (\ref{finite-overlap-ineq1}), we get 
\begin{align} \label{finite-overlap-ineq2}
c_1 \left(\frac{1}{2C_0}\right)^{n_1 + n_2} \sum_{r=1}^{r_0} \max\left\{\frac{1}{2C_0}, \left|x^{\prime}_{J_r}\right| \right\}^{n_2} \leq C_1 \left(2C_0 C\right)^{n_1 + n_2} \max\left\{2C_0 C, \left|x^{\prime}\right|\right\}^{n_2}.
\end{align}

\medskip \noindent {\bf Case 1:} When $|x^{\prime}| \geq 2C_0 C$. 

\medskip Since $x \in B\left(x_{J_r}, C \right)$, we have $d(x, x_{J_r}) < C$, and therefore $\left|x^{\prime} - x^{\prime}_{J_r}\right| \leq d(x, x_{J_r}) < C.$ From this, we see that  
$$\left| x^{\prime}_{J_r}\right| \, \geq \, \left|x^{\prime}\right| - \left|x^{\prime} - x^{\prime}_{J_r}\right| \, \geq \, \left(1 - \frac{1}{2C_0}\right)|x^{\prime}| \, \geq \, \frac{1}{2} |x^{\prime}|.$$
and therefore estimate (\ref{finite-overlap-ineq2}) implies that 
\begin{align*} 
r_0 c_1 \left(\frac{1}{2C_0}\right)^{n_1 + n_2} \left(\frac{1}{2} |x^{\prime}|\right)^{n_2} \leq C_1 \left(2C_0 C\right)^{n_1 + n_2} \left|x^{\prime}\right|^{n_2},
\end{align*} 
which is same as 
$$r_0 \leq \frac{C_1}{c_1} 2^{n_2} \left(4 C^2_0 C\right)^{n_1 + n_2}\leq  \frac{C_1}{c_1}  \left(4 C^2_0 C\right)^{Q},$$
and this completes the claim of part (ii) in Case 1. 

\medskip \noindent 
{\bf Case 2:} When $|x^{\prime}| < 2C_0 C$. 

\medskip In this case, estimate (\ref{finite-overlap-ineq2}) implies that 
\begin{align*}
r_0 c_1 \left(\frac{1}{2C_0}\right)^{n_1 + n_2} \left(\frac{1}{2C_0}\right)^{n_2} \leq C_1 \left(2C_0 C\right)^{n_1 + n_2} \left(2C_0 C\right)^{n_2},
\end{align*} 
which is same as 
$$r_0 \leq \frac{C_1}{c_1} \left(4 C^2_0 C\right)^{Q},$$
and this completes the claim of part (ii) in Case 2. 

\medskip \noindent \textbf{(iii)} Take and fix $x \in \mathbb{R}^{n_1 + n_2}$. Now, we know from part (ii) that no point of $\mathbb{R}^{n_1 + n_2}$ belongs to more than $\frac{C_1}{c_1} \left(4 C^2_0 C\right)^{Q}$ number of balls from the family $\left\{B\left(x_J, C \right)\right\}_{J \in \mathbb{N}}$ for any $C \geq 1$. Therefore, for any $k \in \mathbb{N}$, we have 
$$ \#\{J \in \mathbb{N} : d(x, x_J) \in [2^k, 2^{k+1})\} \leq \#\{J \in \mathbb{N} : d(x, x_J) < 2^{k+1} \} \leq \frac{C_1}{c_1} \left(4 C^2_0 \, 2^{k+1} \right)^{Q}.$$

Hence, 
\begin{align*} 
\sum_{J \in \mathbb{N}} \left( 1 + d\left(x, x_J\right) \right)^{-s} & = \sum_{d\left(x, x_J\right) \in [0, 1)} \left( 1 + d\left(x, x_J \right) \right)^{-s} + \sum_{k = 0}^\infty \sum_{d\left(x, x_J\right) \in [2^k, 2^{k+1})} \left( 1 + d\left(x, x_J \right) \right)^{-s} \\ 
&\leq \frac{C_1}{c_1} \left(4 C^2_0 \right)^{Q} \left[ 1 + \sum_{k = 0}^\infty 2^{(k+1)Q} 2^{-k s} \right] = C_s < \infty, 
\end{align*}
whenever $s > Q.$ This completes the proof of part (iii) of the lemma. 

\medskip \noindent \textbf{(iv)} Take and fix $\chi \in C_c^\infty([-2,2])$ satisfying $0 \leq \chi \leq 1,$ and $\chi \equiv 1$ on $[-1,1]$. We then define 
\begin{align} \label{eq-part-unity-1}
f_J(x) := \frac{\chi(d(x_J,x))}{\sum\limits_{J^{\prime} \in \mathbb{N}} \chi(d(x_{J^{\prime}},x))}.
\end{align} 

Clearly, $1 \equiv \sum_{J \in \mathbb{N}} f_J(x)$ on $\mathbb{R}^{n_1 + n_2}$, and the smoothness of each $f_J$ is a consequence of the fact that $d$ is smooth away from the diagonal. Furthermore, it follows from the definition of $\chi$ that any $x_j^\prime$ or $x_k^{\prime \prime}$ derivative of $\chi (d(x_J, \cdot))$ may survive only when $1 < d(x_J, x) < 2$, and in this range of $x$ it suffices to show that derivatives of $ \left( d(x_J, x) \right)^4$ are uniformly bounded for all $J \in \mathbb{N}$. Now, $\left|x^\prime - x_J^\prime \right| \leq d(x_J, x) < 2$ and therefore it is straightforward to see that the derivatives of $\left|x^\prime - x_J^\prime \right|^4$ are uniformly bounded in $J$. Therefore, it remains to analyse the derivatives of $\left|x^{\prime \prime} - x_J^{\prime \prime}\right|^4 / w(x, x_J)$, where $w(x, x_J) = \left| x^{\prime \prime} - x_J^{\prime \prime}\right|^2 + \left(\left|x^\prime \right|^2 + \left|x_J^\prime \right|^2\right)^2$. 

Since $X^\Gamma$ corresponds to a finite linear combination of ${x^\prime}^\gamma \partial^\alpha_{x^{\prime}} \partial^{\beta}_{x^{\prime \prime}}$ with $|\gamma| \leq |\beta|$, we need to analyse the action of one such operator on $\left| x^{\prime \prime} - x_J^{\prime \prime} \right|^4 / w(x, x_J)$. For any $\beta \in \mathbb{N}^{n_2}$ we have 
\begin{align*} 
\partial^\beta_{x^{\prime \prime}} \left( \frac{ \left| x^{\prime \prime} - x_J^{\prime \prime} \right|^4} {w(x, x_J)}\right) 
= \sum_{l_2 = 0}^{|\beta|} \frac{P^{n_2, +}_{4 + 2 l_2 - |\beta|} \left(x^{\prime \prime} - x_J^{\prime \prime}\right)}{\left(w(x, x_J)\right)^{1 + l_2}}
\end{align*}
where $P^{n_2, +}_l$ stands for a polynomial of degree $l$ on $\mathbb{R}^{n_2}$ when $l \geq 0$, otherwise the function $P^{n_2, +}_l$ is $0$. 

Similarly, for any $\alpha \in \mathbb{N}^{n_1}$ and $k \in \mathbb{N}$ we have 
\begin{align*} 
\partial^\alpha_{x^{\prime}} \left( \frac{1} {\left( w(x, x_J) \right)^k} \right) 
= \sum_{l_1 = 0}^{|\alpha|} \frac{P^{n_1, n_1, +}_{4 l_1 - |\alpha|} \left(x^{\prime}, x_J^{\prime} \right)}{\left( w(x, x_J) \right)^{k + l_1}}
\end{align*} 
where $P^{n_1, n_1, +}_l$ stands for a polynomial of degree $l$ on $\mathbb{R}^{n_1} \times \mathbb{R}^{n_1}$ when $l \geq 0$, otherwise it is $0$. 

Thus, we get that 
$$ {x^\prime}^\gamma \partial^\alpha_{x^{\prime}} \partial^\beta_{x^{\prime \prime}} \left( \frac{ \left| x^{\prime \prime} - x_J^{\prime \prime} \right|^4} {w(x, x_J)}\right) = \sum_{l_1 = 0}^{|\alpha|} \sum_{l_2 = 0}^{|\beta|}  \frac{{x^\prime}^\gamma \, P^{n_1, n_1, +}_{4 l_1 - |\alpha|} \left(x^{\prime}, x_J^{\prime} \right) P^{n_2, +}_{4 + 2 l_2 - |\beta|} \left(x^{\prime \prime} - x_J^{\prime \prime}\right)}{\left( w(x, x_J) \right)^{1 + l_1 + l_2}} $$ 
with each term in the above sum being bounded by 
\begin{align*} 
\frac{\left| x^\prime\right|^{|\beta|}}{\left( w(x, x_J) \right)^{\frac{|\beta|}{4}}} 
\times 
\frac{\left| P^{n_1, n_1, +}_{4 l_1 - |\alpha|} \left(x^{\prime}, x_J^{\prime} \right) \right|}{\left( w(x, x_J) \right)^{l_1 - \frac{|\alpha|}{4}}}
\times 
\frac{\left| P^{n_2, +}_{4 + 2 l_2 - |\beta|} \left(x^{\prime \prime} - x_J^{\prime \prime}\right) \right|}{\left( w(x, x_J) \right)^{1 + \frac{l_2}{2} - \frac{|\beta|}{4}}}
\times 
\frac{1}{\left( w(x, x_J) \right)^{\frac{l_2}{2} + \frac{|\alpha|}{4}}}. 
\end{align*}
Each of the first three terms in the above product is bounded by $1$, whereas for the last term note that $1 < d(x, x_J)^4 \leq \left|x^{\prime} - x_J^\prime \right|^4 + \left|x^{\prime \prime} - x_J^{\prime \prime}\right|^2 $ implies that either $\left| x^{\prime} - x_J^{\prime} \right| > 2^{-1/4}$ or $\left|x^{\prime \prime} - x_J^{\prime \prime}\right| > 2^{-1/2}$. In any case, $w(x, x_J) \geq \max \{ \left| x^{\prime \prime} - x_J^{\prime \prime} \right|^4, \, \left(\left|x^\prime \right|^2 + \left|x_J^\prime \right|^2\right)^2\} \geq 1/32$. This completes the proof of the lemma. 
\end{proof}  

%%%%%%%%%%%%%%%%%%%%%%%%%%%%%%%%%%%%%%%%
%%%%%%%%%%%%%%%%%%%%%%%%%%%%%%%%%%%%%%%%
%%%%%%%%%%%%%%%%%%%%%%%%%%%%%%%%%%%%%%%%
%%%%%%%%%%%%%%%%%%%%%%%%%%%%%%%%%%%%%%%%
%%%%%%%%%%%%%%%%%%%%%%%%%%%%%%%%%%%%%%%%
%%%%%%%%%%%%%%%%%%%%%%%%%%%%%%%%%%%%%%%%
%%%%%%%%%%%%%%%%%%%%%%%%%%%%%%%%%%%%%%%%
%%%%%%%%%%%%%%%%%%%%%%%%%%%%%%%%%%%%%%%%
%%%%%%%%%%%%%%%%%%%%%%%%%%%%%%%%%%%%%%%%
%%%%%%%%%%%%%%%%%%%%%%%%%%%%%%%%%%%%%%%%
%%%%%%%%%%%%%%%%%%%%%%%%%%%%%%%%%%%%%%%%
%%%%%%%%%%%%%%%%%%%%%%%%%%%%%%%%%%%%%%%%
%%%%%%%%%%%%%%%%%%%%%%%%%%%%%%%%%%%%%%%%
%%%%%%%%%%%%%%%%%%%%%%%%%%%%%%%%%%%%%%%%

\section{\texorpdfstring{$L^2$}--boundedness in the case of \texorpdfstring{$S^0_{\rho, 0}(\boldsymbol{L}, \boldsymbol{U}), \, \rho > 0$}{}} \label{sec:rho-0-proof} 

In this section we prove that the operators corresponding to symbols from $S^0_{\rho, 0}(\boldsymbol{L}, \boldsymbol{U})$ class with $\rho > 0$, are bounded on $L^2 \left( \mathbb{R}^{n_1 + n_2} \right)$. We carefully follow the proofs of Theorem 1 of Section 2 of Chapter VI of \cite{SteinHarmonicBook93} and Theorem 5.4.17 of \cite{FischerRuzhanskyQuantizationBook}. 

\begin{thm} \label{thm:S0-rho-0-L2}
For $m \in S^0_{\rho, 0}(\boldsymbol{L}, \boldsymbol{U})$ with $\rho > 0$, the associated operator $T = m(x, \boldsymbol{L}, \boldsymbol{U})$ is bounded on $L^2 \left( \mathbb{R}^{n_1 + n_2} \right)$. In fact, it suffices to assume the symbol condition \eqref{def:grushin-symb} for $\lfloor\frac{Q}{2}\rfloor+1$ number of derivatives in $x$-variable and a total of $\lfloor\frac{Q}{2 (2\rho - 1)}\rfloor + 1$ number of derivatives in $\tau$ and $\kappa$-variables.
\end{thm}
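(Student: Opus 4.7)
The plan is to adapt the classical Stein--Fisher-Ruzhansky proof of Calder\'on--Vaillancourt via a dyadic decomposition of the symbol in the spectral variable, followed by an application of the Cotlar--Stein almost-orthogonality lemma. I would begin by fixing smooth cutoffs $\phi_0 \in C_c^\infty([0, 1))$ and $\phi \in C_c^\infty((1/8, 8))$ with $\phi_0(r) + \sum_{k \geq 1} \phi(2^{-2k} r) = 1$ for $r \geq 0$, and decomposing $m = m_0 + \sum_{k \geq 1} m_k$ where $m_k(x, \tau, \kappa) = \phi(2^{-2k}(|\tau| + |\kappa|)) \, m(x, \tau, \kappa)$ is supported in the spectral annulus $|\tau| + |\kappa| \sim R_k^2$ with $R_k := 2^k$. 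Setting $T_k := m_k(x, \boldsymbol{L}, \boldsymbol{U})$, the low-frequency piece $T_0$ is $L^2$-bounded directly by \lemref{lem:general-pseudo-multiplier-compact-support-L2-bounded}, and the task reduces to establishing $L^2$-boundedness of $\sum_{k \geq 1} T_k$ with bounds controlled by finitely many symbol seminorms.

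For $k \geq 1$, the symbol class hypothesis $m \in S^0_{\rho, 0}(\boldsymbol{L}, \boldsymbol{U})$ together with $\delta = 0$ yields $|X^\Gamma \partial_\tau^\theta \partial_\kappa^\beta m_k(x, \tau, \kappa)| \lesssim R_k^{-(1+\rho)(|\theta|+|\beta|)}$, uniformly in $x$, which after rescaling gives $\|m_k(x, R_k^2 \cdot)\|_{W^s_\infty} \lesssim R_k^{(1-\rho) s}$ for every $s \geq 0$. Applying the weighted Plancherel estimate \lemref{lem:weighted-plancherel-joint-functional} to the frozen-$x$ multiplier $m_k(x, \cdot)$ with $R = R_k$ furnishes the pointwise kernel bound
\[
(1 + R_k d(x, y))^r \, |K_k(x, y)| \, \lesssim \, |B(x, R_k^{-1})|^{-1} \, R_k^{(1-\rho) s}, \qquad s > r.
\]

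The central step is a Cotlar--Stein analysis of the compositions $T_j T_k^*$ and $T_j^* T_k$. The kernel of $T_j T_k^*$ is $L_{jk}(x, z) = \int K_j(x, y) \overline{K_k(z, y)} \, dy$, and a standard computation using orthonormality of Hermite functions in $y'$ and Fourier inversion in $y''$ reduces $L_{jk}$ to a Grushin-type kernel whose symbol is the product $m_j(x, (2\mu+\tilde{1})|\lambda|, -\lambda) \, \overline{m_k(z, (2\mu+\tilde{1})|\lambda|, -\lambda)}$ evaluated at common spectral arguments; since the supports of $\phi(2^{-2j} \cdot)$ and $\phi(2^{-2k} \cdot)$ are disjoint whenever $|j - k|$ exceeds a fixed constant, this forces $T_j T_k^* \equiv 0$ for such pairs. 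The dual composition $T_j^* T_k$, with kernel $\int \overline{K_j(x, y)} K_k(x, z) \, dx$, does not enjoy the same outright cancellation; here the bounded $x''$-derivatives of the symbols (afforded by $\delta = 0$) permit repeated integration by parts against the phase $e^{i(\lambda - \lambda')x''}$, yielding rapid decay in $|\lambda - \lambda'|$ that localizes the $(\lambda, \lambda')$-integration, after which the dyadic spectral disjointness of $m_j$ and $m_k$ delivers the off-diagonal decay $\|T_j^* T_k\|_{op} \lesssim_N 2^{-N|j-k|}$.

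The main obstacle is managing the $R_k^{(1-\rho) s}$ growth in the diagonal kernel bound, which prevents naive Schur testing from yielding uniform boundedness of $\|T_k\|_{op}$. This growth must be offset against the polynomial decay $(1 + R_k d(x, y))^{-r}$ combined with the integrability estimate of \lemref{lem-integrability-ball-growth}, and the resulting summability requirement in the Cotlar--Stein sum forces the threshold $s > Q/(2(2\rho - 1))$ on the total number of $(\tau, \kappa)$-derivatives (which is meaningful precisely when $\rho > 1/2$, with the remaining range absorbed into the seminorm count). The integration-by-parts argument producing the off-diagonal decay of $\|T_j^* T_k\|_{op}$ then requires uniform pointwise control on $x$-derivatives of the kernel, for which the Grushin Sobolev embedding of \lemref{grushin-Sobolev-embed} supplies the stated count of $\lfloor Q/2 \rfloor + 1$ $x$-derivatives.
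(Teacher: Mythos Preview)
Your approach diverges from the paper's and has a genuine gap. The paper does \emph{not} use Cotlar--Stein here; it follows the localization scheme of Stein (Chapter~VI, \S2, Theorem~1) and Fischer--Ruzhansky: one fixes a point $u$, proves the uniform estimate
\[
\int_{B(u,1)} |\mathcal{T}_l f|^2 \, dx \lesssim \|m\|_{\rho,0}^2 \int (1+d(y,u))^{-2r} |f(y)|^2 \, dy
\]
for the partial sums $\mathcal{T}_l = \sum_{j \leq l} T_j$, and then integrates over $u$. The proof splits $f = f\mathcal{V}_u + f(1-\mathcal{V}_u)$; for the near part, Sobolev embedding (\lemref{grushin-Sobolev-embed}) freezes the $x$-dependence of the symbol and Plancherel finishes. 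For the far part, the weighted $L^2$ bound on the kernel from \lemref{lem:weighted-plancherel-joint-functional} is applied to each dyadic piece, and the \emph{sum} $\sum_j \|d(x,\cdot)^r k_j^{z_0,\Gamma}(x,\cdot)\|_2$ converges as a geometric series precisely because the exponent $r/2 - Q/4 - s(1-\rho)/2$ can be made positive when $\rho>0$; this is where the derivative count in $(\tau,\kappa)$ arises.

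The gap in your Cotlar--Stein route is the $T_j^* T_k$ estimate. Writing out the kernel and integrating by parts in $x''$ against $e^{i(\lambda-\lambda')\cdot x''}$ does not produce a convergent expression: the symbol product $\overline{m_j(x,\cdot)}\,m_k(x,\cdot)$ is merely bounded in $x''$ (since $\delta=0$ gives no decay, only bounded $X^\Gamma$-derivatives), so the $x''$-integral over all of $\mathbb{R}^{n_2}$ diverges. Rapid decay in $|\lambda-\lambda'|$ is therefore not obtained, and even if it were, $\lambda \approx \lambda'$ does not force $(2|\mu|+n_1)|\lambda| \approx (2|\mu'|+n_1)|\lambda'|$, so the dyadic spectral separation is not directly accessible from it. The paper's own Cotlar--Stein argument (reserved for the harder $S^0_{0,0}$ case in \S\S\ref{Sec-S00-proof}--\ref{Sec-S00-proof-2}) confronts exactly this issue by first introducing a spatial partition of unity $\{\chi_J\}$ to localize in $x$, and the decay in $|l-l'|$ comes not from integration by parts but from sandwiching with powers of $(I+G)^{\pm N}$. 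Separately, your uniform diagonal bound $\sup_k \|T_k\|_{op} < \infty$ is asserted without proof; for a pseudo-multiplier (symbol depending on $x$) this is already non-trivial and is precisely what the Sobolev-freezing step in the paper's argument supplies.
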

\begin{proof} 
Choose and fix $\psi_0 \in C_c^\infty((-2,2))$ and $\psi_1 \in C_c^\infty((1/2,2))$ such that $0 \leq \psi_0, \psi_1 \leq 1$, and for all $\eta \geq 0$, 
$$\sum_{l=0}^\infty \psi_l(\eta) = 1,$$ 
where $\psi_l (\eta) = \psi_1(2^{-(l-1)}\eta),$ for $l \geq 2.$ 

We decompose the operator $m(x, \boldsymbol{L}, \boldsymbol{U})$ into a countable sum of operators as follows. For each $l \in \mathbb{N}$, define 
\begin{align} \label{S-rho-0-symbol-decompose}
m_{l}(x, \tau, \kappa) & := m(x, \tau, \kappa) \psi_l(|(\tau, \kappa)|), 
\end{align}
and denote the operators $T_{l} = m_{l}(x, \boldsymbol{L}, \boldsymbol{U})$ and $\mathcal{T}_{l} = \sum_{j = 0}^l T_{j}$, then (formally) we have 
\begin{align*} 
T = \lim_{l \to \infty} \mathcal{T}_{l}. 
\end{align*} 

We shall be done if we could show that 
$$ \sup_{l \in \mathbb{N}} \| \mathcal{T}_{l} \|_{op} < \infty. $$ 

Note that $ T_{l}$ is an integral operator. Let us denote it's kernel by $ k^{x}_{l}(x,y),$ where 
$$ k_{l}^{z} (x,y) = (2\pi)^{-n_2} \int_{\mathbb{R}^{n_2}} \sum_{\mu} m_{l} \left( z, (2 \mu + \tilde{1}) |\lambda|, - \lambda \right) E_{\mu, \lambda} (x, y) \, d\lambda. $$

In fact, the above integral is absolutely convergent. To see this note that 
\begin{align*}
& \int_{\mathbb{R}^{n_2}} \sum_{\mu} \left| m_{l} \left( z, (2 \mu + \tilde{1}) |\lambda|, - \lambda \right) E_{\mu, \lambda} (x, y) \right| \, d\lambda \\ 
& \leq \left( \int_{\mathbb{R}^{n_2}} \sum_{\mu} \left| m_{l} \left( z, (2 \mu + \tilde{1}) |\lambda|, - \lambda \right)\right| \left| \Phi_{\mu}^{\lambda} (x) \right|^2 \, d\lambda \right)^{1/2} \\ 
& \quad \quad \times \left( \int_{\mathbb{R}^{n_2}} \sum_{\mu} \left| m_{l} \left( z, (2 \mu + \tilde{1}) |\lambda|, - \lambda \right)\right| \left| \Phi_{\mu}^{\lambda} (y) \right|^2 \, d\lambda \right)^{1/2}. 
\end{align*}

For fixed $z$, one can then invoke part (i) of Theorem 6.1 of \cite{MartiniJointFunctionalCalculiMathZ} along with the orthogonality of Hermite functions $\Phi_{\mu}^{\lambda}$, to conclude that the above is bounded by 
$$\left| B \left(x, 2^{-l} \right) \right|^{-1/2} \left| B \left(y, 2^{-l} \right) \right|^{-1/2} \|m_l (z, \cdot)\|^2_{\infty}.$$

With $C_0$ as in \eqref{quasi-metric-constant}, let $\mathcal{V} \in C^\infty_c (\mathbb{R})$ be such that $0 \leq \mathcal{V} \leq 1$, $\supp (\mathcal{V}) \subseteq [-4 C_0, 4 C_0]$ and $\mathcal{V} \equiv 1$ on the interval $[-2 C_0, 2 C_0]$. Given $u \in \mathbb{R}^{n_1 + n_2}$, we define $\mathcal{V}_u(x) = \mathcal{V} \left( d(u,x) \right)$. Clearly, $\supp \left(\mathcal{V}_u \right) \subseteq B(u, 4 C_0)$ and $\mathcal{V}_u \equiv 1$ on $B(u, 2C_0)$. As done in the proof of Lemma \ref{lem-grushin-partition-part-fix}, one can show that for every $\Gamma \in \mathbb{N}^{n_1 + 2 n_2}$, there exists a constant $C_{\Gamma} > 0$ such that $\left\| X^{\Gamma} \mathcal{V}_u \right\|_\infty \leq C_{\Gamma}$ for all $u \in \mathbb{R}^{n_1 + n_2}$. 

We shall show that there exists a constant $C > 0$ such that 
$$ \| \mathcal{T}_{l} f\|_2 \leq C \|f\|_2, $$
for all $l \in \mathbb{N}$ and for any $f \in \mathcal{S} \left( \mathbb{R}^{n_1 + n_2} \right)$. 
For this we claim that the following estimate holds true: There exists $r > \frac{Q}{2\rho}$ such that 
\begin{align} \label{claim-xx}
\int_{B(u,1)} | \mathcal{T}_{l} f(y)|^2 \, dy \lesssim_r \|m\|_{\rho, 0}^2 \int_{\mathbb{R}^{n_1 + n_2}} \left( 1 + d(y, u) \right)^{-2r} \left| f(y) \right|^2 \, dy, 
\end{align}
for all $u \in \mathbb{R}^{n_1 + n_2}$. The exact choice of $r$ will become clear towards the end of the proof of the theorem.

Assuming inequality \eqref{claim-xx} for the time being, we multiply both sides of it by $ |B(u,1)|^{-1} $. Then, integrating the left hand side of the inequality over $u \in \mathbb{R}^{n_1 + n_2}$ we get 
\begin{align*}
& \int_{\mathbb{R}^{n_1 + n_2}} |B(u,1)|^{-1} \left( \int_{B(u,1)} | \mathcal{T}_{l} f (y)|^2 \, dy \right) du \\ 
& = \int_{\mathbb{R}^{n_1 + n_2}} \int_{\mathbb{R}^{n_1 + n_2}} |B(u,1)|^{-1} \mathbbm{1}_{d(u,y) < 1} | \mathcal{T}_{l} f (y)|^2 \, du \, dy \\ 
& \gtrsim \int_{\mathbb{R}^{n_1 + n_2}} |B(y,1)|^{-1} \left( \int_{\mathbb{R}^{n_1 + n_2}} \mathbbm{1}_{d(u,y) < 1} \, du \right) | \mathcal{T}_{l} f (y)|^2 \, dy \\ 
& = \int_{\mathbb{R}^{n_1 + n_2}} | \mathcal{T}_{l} f (y)|^2 \, dy. 
\end{align*}

Similarly, integrating the right hand side of inequality \eqref{claim-xx} (multiplied by $|B(u,1)|^{-1} $) over $u \in \mathbb{R}^{n_1 + n_2}$ we get 
\begin{align*}
& \|m\|_{\rho, 0}^2 \int_{\mathbb{R}^{n_1 + n_2}} |B(u,1)|^{-1} \left( \int_{\mathbb{R}^{n_1 + n_2}} \left( 1 + d(y, u) \right)^{-2r} \left| f(y) \right|^2 \, dy \right) du \\ 
& = \|m\|_{\rho, 0}^2 \int_{\mathbb{R}^{n_1 + n_2}} \left(\int_{\mathbb{R}^{n_1 + n_2}}  |B(u,1)|^{-1} \left( 1 + d(y, u) \right)^{-2r} \, du \right) \left| f(y) \right|^2 \, dy \\ 
& \lesssim_r \|m\|_{\rho, 0}^2 \int_{\mathbb{R}^{n_1 + n_2}} \left| f(y) \right|^2 \, dy, 
\end{align*}
whenever $r > Q/2$. Here the last inequality could be proved by a slight modification of Lemma \ref{lem-integrability-ball-growth}. 

The two estimates above complete the proof of Theorem \ref{thm:S0-rho-0-L2}. So, we will be done if we could establish the inequality \eqref{claim-xx}. For this, let us fix $u \in \mathbb{R}^{n_1 + n_2}$, and write $f = f_1 + f_2$ where $f_1 = f \mathcal{V}_u$. 

\medskip \noindent \underline{\textbf{Step 1}} : We claim that for any $r \geq 0$
$$ \int_{B(u, 1)} | \mathcal{T}_{l} f _1(x)|^2 \, dx \lesssim_r \|m\|^2_{S^0_{\rho, 0}} \int_{\mathbb{R}^{n_1 + n_2}} \left( 1 + d(y, u) \right)^{-2r} |f_1(y)|^2 \, dy. $$

To prove this claim, take $x \in B(u,1)$, then 
\begin{align*}
\mathcal{T}_{l} f_1(x) = \int_{\mathbb{R}^{n_1 + n_2}} \sum_{j = 0}^l k_{j}^x (x,y) f_1(y) \, dy = \int_{\mathbb{R}^{n_1 + n_2}} \mathcal{V}_u(x) \sum_{j = 0}^l k_{j}^x (x,y) f_1(y) \, dy,
\end{align*}
which implies that 
\begin{align*}
\left| \mathcal{T}_{l} f_1(x) \right|^2 &= \left| \int_{\mathbb{R}^{n_1 + n_2}} \mathcal{V}_u(x) \sum_{j = 0}^l k_{j}^x (x,y) f_1(y) \, dy \right|^2 \\ 
& \lesssim \sup_z |B(z,1)|^{-1} \left| |B(z,1)|^{1/2} \int_{\mathbb{R}^{n_1 + n_2}} \mathcal{V}_u(z) \sum_{j = 0}^l k_{j}^z (x,y) f_1(y) \, dy \right|^2. 
\end{align*}

Now, by Lemma \ref{grushin-Sobolev-embed} we have 
\begin{align*}
& \left| |B(z,1)|^{1/2} \int_{\mathbb{R}^{n_1 + n_2}} \mathcal{V}_u(z) \sum_{j = 0}^l k_{j}^z(x,y) f_1(y) \, dy \right|^2 \\ 
& \lesssim \sum_{|\Gamma| \leq 2 \left( 1 + \lfloor\frac{Q}{4}\rfloor \right)} \int_{\mathbb{R}^{n_1 + n_2}} \left| \int_{\mathbb{R}^{n_1 + n_2}} X_{z_0}^\Gamma \left( \mathcal{V}_u(z_0) \sum_{j = 0}^l k_{j}^{z_0}(x,y) \right) f_1(y) \, dy \right|^2 dz_0 \\ 
& = \sum_{|\Gamma| \leq 2 \left( 1 + \lfloor\frac{Q}{4}\rfloor \right)} \int_{B(u, 4 C_0)} \left| \int_{\mathbb{R}^{n_1 + n_2}} X_{z_0}^\Gamma \left( \mathcal{V}_u(z_0) \sum_{j = 0}^l k_{j}^{z_0}(x,y) \right) f_1(y) \, dy \right|^2 dz_0. 
\end{align*}

Finally, since $|B(z,1)| \sim |B(u, 4 C_0)|$ for all $z \in B(u, 4 C_0)$, we get 
\begin{align*}
& |B(u, 4 C_0)| \int_{x \in B(u,1)} \left| \mathcal{T}_{l} f_1(x) \right|^2 \, dx \\ 
& \lesssim \sum_{|\Gamma| \leq 2 \left( 1 + \lfloor\frac{Q}{4}\rfloor \right)} \int\limits_{B(u, 4 C_0)} \left( \int_{B(u,1)} \left| \int_{\mathbb{R}^{n_1 + n_2}} X_{z_0}^\Gamma \left( \mathcal{V}_u(z_0) \sum_{j = 0}^l k_{j}^{z_0}(x,y) \right) f_1(y) \, dy \right|^2 dx \right) dz_0. 
\end{align*} 
Dominating the above by taking the integration in $x$-variable over full space $\mathbb{R}^{n_1 + n_2}$, by Plancherel's theorem and the conditions on $\mathcal{V}_u$ and the symbol function $m$ we get that the quantity in the parenthesis of the above estimate is bounded by $\| \sum_{j = 0}^l m_{j} \|^2_{S^0_{0, 0}} \|f_1\|^2_2$, uniformly in $z_0 \in B(u, 4 C_0)$. But, we also have $\| \sum_{j = 0}^l m_{j} \|_{S^0_{0, 0}} \leq \| m \|_{S^0_{0, 0}} \leq \| m \|_{S^0_{\rho, 0}}$, and hence  
\begin{align*} 
\int_{B(u, 1)} | \mathcal{T}_{l} f_1(x)|^2 \, dx \lesssim \|m\|^2_{S^0_{\rho, 0}} \int_{\mathbb{R}^{n_1 + n_2}} |f_1 (y)|^2 \, dy. 
\end{align*}
 
Now, since $f_1$ is supported in $B(u, 4 C_0)$, we have $\left( 1 + 4 C_0 \right)^{-2r} \lesssim_r \left( 1 + d(y, u) \right)^{-2r} \leq 1$ on support of $f_1$, and therefore the above estimate implies that 
\begin{align*} 
\int_{B(u, 1)} | \mathcal{T}_{l} f_1(x)|^2 \, dx \lesssim_r \|m\|^2_{S^0_{\rho, 0}} \int_{\mathbb{R}^{n_1 + n_2}} \left( 1 + d(y, u) \right)^{-2r} |f_1 (y)|^2 \, dy, 
\end{align*}
completing the proof of the claim of step 1. 

\medskip \noindent \underline{\textbf{Step 2}} : We claim that there exists $r> 0$ (whose exact choice will become clear towards the end of the proof of the theorem) such that   
$$ \int_{B(u, 1)} | \mathcal{T}_{l}  f_2 (x)|^2 \, dx \lesssim_r \|m\|^2_{S^0_{\rho, 0}} \int_{\mathbb{R}^{n_1 + n_2}} \left( 1 + d(y,u) \right)^{-2r} |f_2 (y)|^2 \, dy. $$

For this, note that 
$$ \mathcal{T}_{l} f_2 (x) = \int_{y \notin B(u, 2C_0)} \left( d(x,y) \right)^r \sum_{j = 0}^l k_{j}^x(x,y) d(x,y)^{-r} f_2(y) \, dy. $$ 

Now, for any $x \in B(u,1)$ and $y \notin B(u, 2C_0)$, since 
$$d(y,u) \leq C_0 \left( d(y,x) + d(x,u) \right) < C_0 \left( d(y,x) + 1 \right) < C_0 d(y,x) + \frac{1}{2} d(y,u),$$ 
therefore 
\begin{align*}
d(y,x) \geq \frac{1}{2 C_0} d(y, u) \geq \frac{1}{4 C_0} (1 + d(y, u)). 
\end{align*}

Altogether, for $x \in B(u,1)$, by Cauchy-Schwarz inequality we have 
$$\left| \mathcal{T}_{l} f_2 (x) \right|^2 \lesssim \left( \int_{\mathbb{R}^{n_1 + n_2}} \left( d(x,y) \right)^{2r} \left| \sum_{j = 0}^l k_{j}^x(x,y) \right|^2 dy \right) \left( \int_{\mathbb{R}^{n_1 + n_2}} \left( 1 + d(y, u) \right)^{-2r} \left| f_2(y) \right|^2 \, dy \right). $$

As earlier, we can make use of Lemma \ref{grushin-Sobolev-embed} to show that 
\begin{align*}
& \left| B(u, 4 C_0) \right| \int_{\mathbb{R}^{n_1 + n_2}} \left( d(x,y) \right)^{2r} \left| \sum_{j = 0}^l k_{j}^x(x,y) \right|^2 dy \\ 
& = \left| B(u, 4 C_0) \right| \int_{\mathbb{R}^{n_1 + n_2}} \left( d(x,y) \right)^{2r} \left| \mathcal{V}_u(x) \sum_{j = 0}^l k_{j}^x(x,y) \right|^2 dy \\ 
& \lesssim \sum_{|\Gamma| \leq 2 \left( 1 + \lfloor\frac{Q}{4}\rfloor \right)} \int_{B(u, 4 C_0)} \left( \int_{\mathbb{R}^{n_1 + n_2}} \left( d(x,y) \right)^{2r} \left| X^{\Gamma}_{z_0} \left\{ \mathcal{V}_u (z_0) \sum_{j = 0}^l k_{j}^{z_0}(x,y) \right\} \right|^2 dy \right) dz_0 \\ 
& = \sum_{|\Gamma_1 + \Gamma_2| \leq 2 \left( 1 + \lfloor\frac{Q}{4}\rfloor \right)} \int_{B(u, 4 C_0)} \left| X^{\Gamma_1}_{z_0} \mathcal{V}_u (z_0) \right|^2 \left( \int_{\mathbb{R}^{n_1 + n_2}} \left( d(x,y) \right)^{2r} \left| \sum_{j = 0}^l X^{\Gamma_2}_{z_0} k_{j}^{z_0}(x,y) \right|^2 dy \right) dz_0. 
\end{align*}

Since $X^{\Gamma_1} \mathcal{V}_u$ are uniformly bounded in $u$ for all $|\Gamma_1| \leq 2 \left( 1 + \lfloor\frac{Q}{4}\rfloor \right) $, step 2 will be established if we could show that  
\begin{align} \label{ineq:claim2-part}
|B(x, 1)| \int_{\mathbb{R}^{n_1 + n_2}} \left( d(x,y) \right)^{2r} \left| \sum_{j = 0}^l k_{j}^{z_0, \Gamma} (x,y) \right|^2 dy \lesssim_r \|m\|^2_{S^0_{\rho, 0}}, \quad \textup{uniformly in } z_0 \textup{ and } l. 
\end{align}
In the above expression, $k_{j}^{z_0 ,\Gamma}(x,y) := X^{\Gamma_2}_{z_0} k_{j}^{z_0}(x,y)$. 

Applying Lemma \ref{lem:weighted-plancherel-joint-functional} (with $R = 2^{j/2}, \, j \geq 1$) we get that for all $s > r \geq 0$ we have
\begin{align*}
\left\| \left( 2^{j/2} d(x, \cdot) \right)^{r} k_{j}^{z_0, \Gamma} (x, \cdot) \right\|_2 
& \lesssim_{r, s} \left| B(x, 2^{-j/2}) \right|^{-1/2} \left\| X^{\Gamma}_{z_0} m(z_0, 2^j (\cdot, \cdot) ) \psi_1 \left( 2 |(\cdot, \cdot)| \right) \right\|_{W^{s}_{\infty}} \\ 
& \lesssim_{r, s} |B(x, 1)|^{-1/2} 2^{j Q/4} 2^{j s} \left(2^j\right)^{-s (1 + \rho)/2} \|m\|_{S^0_{\rho, 0}} \\ 
& = |B(x, 1)|^{-1/2} \left( 2^j \right)^{\frac{Q}{4} + \frac{s(1 - \rho)}{2}} \|m\|_{S^0_{\rho, 0}}. 
\end{align*}
A similar estimate holds for $k_0^{z_0, \Gamma} (x, \cdot)$ as well. 

Altogether, for all $l \geq 0$ we have 
\begin{align*}
|B(x, 1)|^{1/2} \left\| \left( d(x, \cdot) \right)^{r} k^{z_0}_{j} (x, \cdot) \right\|_2 
& \lesssim_{r, s} \left( 2^j \right)^{-\left(\frac{r}{2} - \frac{Q}{4} - \frac{s(1 - \rho)}{2} \right)} \|m\|^2_{S^0_{\rho, 0}}.
\end{align*}

Since $0 \leq 1 - \rho < 1$, taking $s = \lfloor \frac{Q}{2 (2 \rho - 1)} \rfloor + 1$, it is possible to choose $ r \in \left( Q/2, s \right)$ such that $\frac{r}{2} - \frac{Q}{4} - \frac{s(1 - \rho)}{2} > 0$. Overall, with such a choice of $s$ and $r$, the series $\sum\limits_{j \geq 0} \left( 2^j \right)^{-\left(\frac{r}{2} - \frac{Q}{4} - \frac{s(1 - \rho)}{2} \right)}$ is summable, completing the proof of the claim of step 2. 

Steps $1$ and $2$ together imply estimate \eqref{claim-xx} which completes the proof of Theorem \ref{thm:S0-rho-0-L2}. 
\end{proof}

%%%%%%%%%%%%%%%%%%%%%%%%%%%%%%%%%%%%%%%%
%%%%%%%%%%%%%%%%%%%%%%%%%%%%%%%%%%%%%%%%
%%%%%%%%%%%%%%%%%%%%%%%%%%%%%%%%%%%%%%%%
%%%%%%%%%%%%%%%%%%%%%%%%%%%%%%%%%%%%%%%%
%%%%%%%%%%%%%%%%%%%%%%%%%%%%%%%%%%%%%%%%
%%%%%%%%%%%%%%%%%%%%%%%%%%%%%%%%%%%%%%%%
%%%%%%%%%%%%%%%%%%%%%%%%%%%%%%%%%%%%%%%%
%%%%%%%%%%%%%%%%%%%%%%%%%%%%%%%%%%%%%%%%
%%%%%%%%%%%%%%%%%%%%%%%%%%%%%%%%%%%%%%%%
%%%%%%%%%%%%%%%%%%%%%%%%%%%%%%%%%%%%%%%%
%%%%%%%%%%%%%%%%%%%%%%%%%%%%%%%%%%%%%%%%
%%%%%%%%%%%%%%%%%%%%%%%%%%%%%%%%%%%%%%%%
%%%%%%%%%%%%%%%%%%%%%%%%%%%%%%%%%%%%%%%%
%%%%%%%%%%%%%%%%%%%%%%%%%%%%%%%%%%%%%%%%

\section{Kernel estimates} \label{sec-kernel-estimates}
The main objective of this section is to establish Proposition \ref{prop:lambda-diff-theorem} and Lemma \ref{first-layer-lem}. Through these results we establish the action of powers of $(x^{\prime} - y^{\prime})$ and that of the derivatives with respect to the $\lambda$-variable on kernels of scaled Hermite multipliers. These are the key results that will be used in later sections to get precise kernel estimates of certain operators in the proof of Theorem \ref{thm:old-joint-calc}. 

\medskip \noindent \textbf{Convention and Remark:} We have listed the parameters that we are going to use in Appendix \ref{app:notations}. Below, we write some of them for quick set-up as well as some of the conventions that we are going to frequently use, sometimes even without mentioning. 
\begin{itemize}
\item For Hermite functions $\Phi_{\mu}^{\lambda}$, we treat the index $\mu$ in all of $\mathbb{Z}^{n_1}$ with the understanding that $\Phi_{\mu}^{\lambda} = 0$ whenever $\mu \in \mathbb{Z}^{n_1} \setminus \mathbb{N}^{n_1}$. 

\item We use the notation $E_{\mu, \lambda} (x, y) = \Phi_\mu^{\lambda}(x^{\prime}) \Phi_\mu^{\lambda}(y^{\prime}) e^{- i \lambda \cdot \left( x^{\prime \prime} - y^{\prime \prime} \right)}.$ 

\item For kernel results in this section, we consider the symbol function $m (\tau)$ (respectively $m (\tau, \kappa)$) to be defined on $\mathbb{R}^{n_1}$ (respectively on $\mathbb{R}^{n_1 + n_2}$) with support of $m (\tau)$ (respectively $m (\tau, \kappa)$) being compact and contained in $\left( \mathbb{R}_+ \right)^{n_1}$ (respectively on $\left( \mathbb{R}_+ \right)^{n_1} \times \left( \mathbb{R}^{n_2} \setminus \{0\} \right)$). This is only to ensure the validity of various pointwise infinite sums in $\mu$-variable and the integrability in $\lambda$-variable. However, it can be immediately checked from the proofs that none of the constants or the number of terms in various finite linear combinations depend on the assumed compact support. 

\item We shall denote by $\Delta_j^+$ and $\Delta_j^-$ the forward and backward-difference operators defined by $\Delta_j^+ m(\mu |\lambda|) = m((\mu + e_j) |\lambda|) - m(\mu |\lambda|)$ and $\Delta_j^- m(\mu |\lambda|) = m(\mu |\lambda|) - m((\mu - e_j) |\lambda|)$, and their multi-variable versions are given by $\Delta_{+}^{\gamma} m = \left(\Delta_1^+ \right)^{\gamma_1} \cdots \left(\Delta_{n_1}^+ \right)^{\gamma_{n_1}} m$ and $\Delta_{-}^{\gamma} m = \left(\Delta_1^- \right)^{\gamma_1} \cdots \left(\Delta_{n_1}^- \right)^{\gamma_{n_1}} m$. 

\item Unless specified, the number of terms in various finite linear combinations depend only on $n_1, n_2$ and the order of $\lambda$-derivatives or powers of $(x^{\prime} - y^{\prime})$ or $(x^{\prime \prime} - y^{\prime \prime})$. 
\end{itemize}

\begin{prop} \label{prop:lambda-diff-theorem}
For any integer $N \geq 1$, we have
\begin{equation} \label{lambda-diff-eq-1}
\frac{\partial^N}{\partial |\lambda|^N} \left\{ \sum_{\mu} m((2\mu + \tilde{1})|\lambda|) \Phi_{\mu}^{\lambda}(x^{\prime}) \Phi_{\mu}^{\lambda}(y^{\prime}) \right\}
\end{equation}
is a finite linear combination of terms of the form
\begin{align} \label{eq-lambda-diff-1-1}
& \left({x^{\prime}}^{\alpha_{1}} {y^{\prime}}^{\alpha_{1}} \right) {x^{\prime}}^{\alpha_{2}} \left(|\lambda| {y^{\prime}} \right)^{\alpha_{2}} |\lambda|^{\nu} \int\limits_{[0,1]^{\nu_1} \times \Omega^{\nu_2}} \sum_{\mu} \left(\tau^{\theta_1} \partial_\tau^{\theta_2} m\right) ((2\mu + \vec{a}(\omega) + \tilde{1})|\lambda|) \\ 
\nonumber & \quad \quad \quad \quad \quad \quad \quad \quad \quad \quad \quad \quad \quad \quad \quad \quad \quad \quad \Phi_{\mu}^{\lambda}(x^{\prime}) \Phi_{\mu}^{\lambda}(y^{\prime}) g(\omega) \, d \omega, 
\end{align}
where $\alpha_1, \alpha_2, \theta_1, \theta_2 \in \mathbb{N}^{n_1}$, $\nu, \nu_1, \nu_2 \in \mathbb{N}$ with $\nu_1\leq N$, $\nu_2\leq N$, $ \left( 2 |\theta_1| + |\alpha_2| + 2 \nu \right) \leq |\theta_2| \leq 2N$, $\left|2 \alpha_{1} + \alpha_{2} \right| \leq N$, and $\left( |\theta_2| + |\alpha_1| \right) - \left( |\theta_1| + \nu \right) = N$. Also, $\Omega$ is a compact subset of $\mathbb{R}^2$, $\vec{a}(w)$ and $g(w)$ are continuous functions on $[0,1]^{\nu_1} \times \Omega^{\nu_2}$, $|\vec{a}(w)|_1 \leq 2N$, and $dw$ is the Lebesgue measure $[0,1]^{\nu_1} \times \Omega^{\nu_2}$. 
\end{prop}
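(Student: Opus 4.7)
I would prove Proposition~\ref{prop:lambda-diff-theorem} by induction on $N$. The basic analytic toolkit is the following. First, from the identity $\Phi_\mu^\lambda(x')=|\lambda|^{n_1/4}\Phi_\mu(|\lambda|^{1/2}x')$ one derives the scaling formula
\[
\partial_{|\lambda|}\Phi_\mu^\lambda(x') = \frac{n_1}{4|\lambda|}\Phi_\mu^\lambda(x') + \frac{1}{2|\lambda|}\sum_{j=1}^{n_1} x'_j\,\partial_{x'_j}\Phi_\mu^\lambda(x').
\]
Second, one uses the creation/annihilation identities $A_j^-(\lambda)\Phi_\mu^\lambda=\sqrt{\mu_j}\,\Phi_{\mu-e_j}^\lambda$ and $A_j^+(\lambda)\Phi_\mu^\lambda=\sqrt{\mu_j+1}\,\Phi_{\mu+e_j}^\lambda$, together with the induced three-term recurrence
\[
|\lambda|(x_j')^2\Phi_\mu^\lambda = \sqrt{(\mu_j+1)(\mu_j+2)}\,\Phi_{\mu+2e_j}^\lambda + (2\mu_j+1)\,\Phi_\mu^\lambda + \sqrt{\mu_j(\mu_j-1)}\,\Phi_{\mu-2e_j}^\lambda
\]
and its analogue in $y'$. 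Third, index shifts $\mu\mapsto\mu\pm e_j$ convert shifts of the Hermite-function index into shifts of the argument of $m$. Fourth, the Taylor--integral remainder
\[
m(\tau+c|\lambda|e_j) \;=\; \sum_{k=0}^{K-1}\frac{(c|\lambda|)^k}{k!}\partial_{\tau_j}^k m(\tau) + (c|\lambda|)^K\!\int_0^1\frac{(1-s)^{K-1}}{(K-1)!}(\partial_{\tau_j}^K m)(\tau+sc|\lambda|e_j)\,ds
\]
trades polynomial factors of $|\lambda|$ against higher $\tau$-derivatives of $m$ at shifted arguments; the integration variables $s$ become the entries of $\omega\in[0,1]^{\nu_1}$ in \eqref{eq-lambda-diff-1-1}, and their linear combinations become the shift $\vec a(\omega)$.

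\textbf{Base case $N=1$.} Leibniz splits $\partial_{|\lambda|}\sum_\mu m((2\mu+\tilde 1)|\lambda|)\Phi_\mu^\lambda(x')\Phi_\mu^\lambda(y')$ into (a) $\sum_{\mu,j}(2\mu_j+1)(\partial_{\tau_j}m)\Phi_\mu^\lambda(x')\Phi_\mu^\lambda(y')$ from the derivative on $m$, and (b) $\sum_\mu m\cdot\partial_{|\lambda|}(\Phi_\mu^\lambda(x')\Phi_\mu^\lambda(y'))$. A direct ladder computation gives $\sum_j x'_j\partial_{x'_j}\Phi_\mu^\lambda=-\tfrac{n_1}{2}\Phi_\mu^\lambda+\tfrac{1}{2}\sum_j[\sqrt{\mu_j(\mu_j-1)}\Phi_{\mu-2e_j}^\lambda-\sqrt{(\mu_j+1)(\mu_j+2)}\Phi_{\mu+2e_j}^\lambda]$, which when combined with the scaling formula reduces (b) to a sum of off-diagonal Hermite products with a $|\lambda|^{-1}$ prefactor (the $n_1/(4|\lambda|)$ pieces cancel). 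To bring (a) into the same shape I invert the three-term recurrence, writing $(2\mu_j+1)\Phi_\mu^\lambda\Phi_\mu^\lambda = \tfrac{|\lambda|}{2}((x'_j)^2+(y'_j)^2)\Phi_\mu^\lambda\Phi_\mu^\lambda$ minus symmetrized off-diagonal pieces; substituting into (a) therefore produces exactly the same kind of off-diagonal Hermite products as in (b). The combined off-diagonal contribution is processed by index shifts $\nu=\mu\pm 2e_j$ (which move the shift from $\Phi$ to the argument of $m$), followed by reverting $\Phi_\nu^\lambda(x')\Phi_{\nu\pm 2e_j}^\lambda(y')$ to the diagonal product via further ladder substitutions (producing $|\lambda|^{1/2}y_j'$-type multiplicative factors), and finally by applying the Taylor--integral remainder to the resulting $m$-differences. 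After regrouping one obtains a finite linear combination of terms of the form \eqref{eq-lambda-diff-1-1} with $N=1$, and the constraints on $\theta_1,\theta_2,\alpha_1,\alpha_2,\nu,\nu_1,\nu_2$ are verified by inspection.

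\textbf{Induction step.} Given the decomposition at level $N$, apply $\partial_{|\lambda|}$ to a generic term in \eqref{eq-lambda-diff-1-1}. The derivative lands on exactly one of: the explicit $|\lambda|^{|\alpha_2|+\nu}$ (lowering the power by one); the integrand $(\tau^{\theta_1}\partial_\tau^{\theta_2}m)((2\mu+\vec a(\omega)+\tilde 1)|\lambda|)$ via the chain rule through the argument $(2\mu+\vec a(\omega)+\tilde 1)|\lambda|$, producing an additional $(2\mu_j+\vec a_j(\omega)+1)$ coefficient together with $\tau^{\theta_1}\partial_\tau^{\theta_2+e_j}m$ (and, from $\partial_{|\lambda|}$ on $\tau^{\theta_1}$, terms with $\theta_1$ shifted); or the product $\Phi_\mu^\lambda(x')\Phi_\mu^\lambda(y')$, handled as in the base case. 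The resulting $\mu$-coefficients are reabsorbed by the base-case machinery. A dimension/scaling check shows that $(|\theta_2|+|\alpha_1|)-(|\theta_1|+\nu)$ increases by exactly $1$, bringing us to level $N+1$, and the inequalities $2|\theta_1|+|\alpha_2|+2\nu\le|\theta_2|\le 2N$ and $|2\alpha_1+\alpha_2|\le N$ are propagated by direct tracking.

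\textbf{Main obstacle.} The core difficulty is combinatorial bookkeeping, the most delicate point being the verification---already in the base case---that the naively singular $|\lambda|^{-1}$ prefactors produced in (a) and (b) are cancelled by the extra factor of $|\lambda|$ generated by the Taylor--integral remainder, so that every surviving term has $\nu\ge 0$ as required. A secondary subtlety is that the square-root coefficients $\sqrt{\mu_j(\mu_j\pm 1)}$ and $\sqrt{(\mu_j+1)(\mu_j+2)}$ arising from the ladder/recurrence identities are not polynomials in $\mu$; handling them requires combining pairs of ladder substitutions, each such pair being parametrised by an auxiliary variable ranging over a compact subset of $\mathbb{R}^2$, which is the source of the factor $\Omega^{\nu_2}$ in the integration domain of \eqref{eq-lambda-diff-1-1}.
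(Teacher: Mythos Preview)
Your induction framework, the scaling/ladder identities, and the induction step outline are all in line with the paper.  The gap is in the base case, specifically in how you handle the term (a), i.e.\ $I_1=\sum_{\mu,j}(2\mu_j+1)(\partial_{\tau_j}m)\,\Phi_\mu^\lambda(x')\Phi_\mu^\lambda(y')$.

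Your proposed move---substitute $(2\mu_j+1)\Phi_\mu^\lambda\Phi_\mu^\lambda=|\lambda|\bigl((x'_j)^2+(y'_j)^2\bigr)\Phi_\mu^\lambda\Phi_\mu^\lambda+\text{(off-diag)}$ via the three-term recurrence---leaves behind the main diagonal pieces $|\lambda|(x'_j)^2(\partial_{\tau_j}m)\Phi_\mu^\lambda\Phi_\mu^\lambda$ and $|\lambda|(y'_j)^2(\partial_{\tau_j}m)\Phi_\mu^\lambda\Phi_\mu^\lambda$, and these do \emph{not} fit the monomial pattern $(x'^{\alpha_1}y'^{\alpha_1})\,x'^{\alpha_2}(|\lambda|y')^{\alpha_2}|\lambda|^\nu$ of \eqref{eq-lambda-diff-1-1} for any admissible choice of $\alpha_1,\alpha_2,\nu$.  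Nothing in your subsequent off-diagonal processing removes them.  The paper does \emph{not} touch $I_1$ with the three-term recurrence.  Instead it processes only the four off-diagonal pieces $I_2,\dots,I_5$ coming from (b): after a one-step re-indexing, $I_3+I_4$ (resp.\ $I_2+I_5$) becomes $\tfrac{1}{4|\lambda|}\sqrt{\mu_j(\mu_j+1)}\,(\Delta_j^{+}+\Delta_j^{-})m\,\Phi_{\mu-e_j}^\lambda(x')\Phi_{\mu+e_j}^\lambda(y')$; then a \emph{single} ladder step $\sqrt{\mu_j}\,\Phi_{\mu-e_j}^\lambda(x')=\sqrt{2|\lambda|}\,x'_j\Phi_\mu^\lambda(x')-\sqrt{\mu_j+1}\,\Phi_{\mu+e_j}^\lambda(x')$ splits each into a part carrying $x'_j$ and a ``diagonal-return'' part $\Phi_\mu^\lambda(x')\Phi_\mu^\lambda(y')$.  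The $x'_j$-parts from both groups combine (one more ladder step in $y'$) to give the $x'_jy'_j\,|\lambda|\!\int_0^1\partial_{\tau_j}m\,ds$ term.  The diagonal-return parts carry coefficients $\mu_j\cdot\tfrac{1}{4|\lambda|}\Delta_j^{\pm}m$ and $(\mu_j{+}1)\cdot\tfrac{1}{4|\lambda|}\Delta_j^{\pm}m$, and it is precisely these that are paired with the halves $\tfrac12\mu_j\,\partial_{\tau_j}m$, $\tfrac12(\mu_j{+}1)\,\partial_{\tau_j}m$ of $I_1$.  Each pairing is a derivative minus a first divided difference; a twice-iterated fundamental theorem of calculus turns it into
\[
\mu_j|\lambda|\int_{-1}^{0}\!\!\int_{s}^{0}\partial_{\tau_j}^{2}m\bigl((2(\mu+s'e_j)+\tilde 1)|\lambda|\bigr)\,ds'\,ds,
\]
and writing $\mu_j|\lambda|=\tfrac12\bigl((2\mu_j+2s'+1)|\lambda|-(2s'+1)|\lambda|\bigr)$ yields the $\tau_j\partial_{\tau_j}^2 m$ term (no spatial prefactor) and the $|\lambda|\,\partial_{\tau_j}^2 m$ term ($\nu=1$).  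This specific pairing of $I_1$ with the diagonal-return pieces---and the resulting second-order cancellation---is the mechanism your sketch is missing.

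Relatedly, your interpretation of $\Omega^{\nu_2}$ is incorrect.  The compact set $\Omega\subset\mathbb{R}^2$ is simply the triangular domain $\{(s,s'):-1\le s\le 0,\ s\le s'\le 0\}$ of the iterated integral above; it is \emph{not} a device for handling the square-root ladder coefficients, which disappear cleanly through the single-step ladder identity and require no auxiliary parametrisation.
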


\begin{rem} \label{rem-indices-clarity}
Let us closely analyse the indices in the previous proposition's statement. Consider the annihilation and creation operators 
\begin{align} \label{def-ann-cre-op}
A_j(\lambda) = \frac{\partial}{\partial x^{\prime}_j} + |\lambda| x^{\prime}_j \quad \textup{and} \quad A_j(\lambda)^* = - \frac{\partial}{\partial x^{\prime}_j} + |\lambda| x^{\prime}_j.
\end{align}  
We know that 
\begin{align} \label{annihi-creation}
A_j(\lambda) \Phi_{\mu}^{\lambda} = \left( (2 \mu_j) |\lambda| \right)^{1/2} \Phi_{\mu - e_j}^{\lambda} \quad \textup{and} \quad A_j(\lambda)^* \Phi_{\mu}^{\lambda} = \left( (2 \mu_j + 2) |\lambda|\right)^{1/2} \Phi_{\mu + e_j}^{\lambda}. 
\end{align} 

One gets the following recursion formula for Hermite functions from \eqref{annihi-creation}: 
\begin{align*} 
\left( \mu_j + 1 \right)^{1/2} \Phi_{\mu}^{\lambda}(x^{\prime}) = \left( 2|\lambda| \right)^{1/2} x^{\prime}_j \Phi_{\mu + e_j}^{\lambda}(x^{\prime}) - \left( \mu_j + 2 \right)^{1/2} \Phi_{\mu + 2 e_j}^{\lambda}(x^{\prime}),
\end{align*}
which implies that 
\begin{align*} 
|\lambda| y^{\prime}_j \Phi_{\mu}^{\lambda}(y^{\prime}) = \frac{1}{\sqrt{2}} \sqrt{|\lambda|(\mu_j + 1)} \Phi_{\mu + e_j}^{\lambda}(y^{\prime}) + \frac{1}{\sqrt{2}} \sqrt{|\lambda| \mu_j} \Phi_{\mu - e_j}^{\lambda}(y^{\prime}).
\end{align*}

In view of the above relation, one can see that $\left( |\lambda| {y^{\prime}} \right)^{\alpha_{2}} \Phi_{\mu}^{\lambda}(y^{\prime})$ can be expressed as a finite linear combination of terms of the form 
\begin{align} \label{reverse-step-lambda-2} 
d_{\mu, \omega} \left((2\mu + \vec{a}(\omega) + \tilde{1}) |\lambda| \right)^{\frac{\alpha_2}{2}} \Phi^\lambda_{\mu + \tilde{\mu}}(y^{\prime}), 
\end{align}
where $d_{\mu, \omega}$ is a bounded function of $\mu$ and $\omega$ and $-|\alpha_{2}| \leq |\tilde{\mu}| \leq |\alpha_{2}|$. 

Also, $|\lambda|^{\nu}$ could be rewritten as a linear combination of the terms which are of the form $\left|2\mu + \vec{a} (\omega) + \tilde{1} \right|_1^{-\nu} ((2\mu+\vec{a}(\omega) + \tilde{1})|\lambda|)^{\alpha_3}$, where $|\alpha_3|=\nu$. 

Collecting all powers of $\tau = (2\mu+\vec{a}(\omega) + \tilde{1}) |\lambda|$ and using the relations between the indices given in \eqref{eq-lambda-diff-1-1}, one can verify that \eqref{lambda-diff-eq-1} can be rewritten as a finite linear combination of the terms which are of the form
\begin{align} \label{eq-lambda-diff-1-2}
& \left({x^{\prime}}^{\alpha_{1}} {y^{\prime}}^{\alpha_{1}} \right) {x^{\prime}}^{\alpha_{2}} 
\int\limits_{[0,1]^{\nu_1} \times \Omega^{\nu_2}} \sum_{\mu} C_{\mu, \vec{a}} \left(\tau^{\frac{1}{2} \mathcal{\theta}_3} \partial_\tau^{{\theta}_2} m\right) ((2\mu + \vec{a}(\omega) + \tilde{1})|\lambda|) \Phi_{\mu}^{\lambda}(x^{\prime}) \Phi_{\mu+\tilde{\mu}}^{\lambda}(y^{\prime}) g(\omega) \, d \omega, 
\end{align}
where $ |\mathcal{\theta}_3| \leq |\mathcal{\theta}_2| \leq 2 N$, $|\mathcal{\theta}_2| - \frac{1}{2} |\mathcal{\theta}_3| = N -|\alpha_1| - \frac{|\alpha_2|}{2} \geq \frac{N}{2}$, and $C_{\mu, \vec{a}}$ is a bounded function of $\mu$ and $\vec{a}$. 

The main advantage of having an expression of the form \eqref{eq-lambda-diff-1-1} is that there is no shift in the Hermite functions $\Phi_{\mu}^{\lambda}(y^{\prime})$ in it which is not the case in \eqref{eq-lambda-diff-1-2}. Although \eqref{eq-lambda-diff-1-2} is good enough for our purposes, we have written the expression in the form of \eqref{eq-lambda-diff-1-1} believing that it may be useful in some other analysis. 
\end{rem}

%%%%%%%%%%%%%%%%%%%
%%%%%%%%%%%%%%%%%%%
%%%%%%%%%%%%%%%%%%%
%%%%%%%%%%%%%%%%%%%
%%%%%%%%%%%%%%%%%%%
%%%%%%%%%%%%%%%%%%%

\begin{proof}[Proof of Proposition \ref{prop:lambda-diff-theorem}] We shall prove the proposition by mathematical induction. 

\medskip \noindent {\bf Step I.} When $N=1.$ 

Recall first that 
\begin{align} \label{hermite-derivative}
\frac{\partial}{\partial |\lambda|} \Phi_{\mu}^{\lambda} & = \frac{1}{4|\lambda|} \sum_{j=1}^{n_1} \left\{-\sqrt{(\mu_j + 1)(\mu_j + 2)} \Phi_{\mu + 2e_j}^{\lambda} +  \sqrt{\mu_j(\mu_j - 1)} \Phi_{\mu - 2e_j}^{\lambda}\right\}. 
\end{align}

Therefore, 
\begin{align} \label{first-der}
& \frac{\partial}{\partial |\lambda|} \left\{\sum_{\mu} m((2\mu + \tilde{1})|\lambda|) \Phi_{\mu}^{\lambda}(x^{\prime}) \Phi_{\mu}^{\lambda}(y^{\prime}) \right\} \\ 
\nonumber &\quad \quad \quad = \sum_{\mu} \sum_{j=1}^{n_1} (2 \mu_j +1) \left(\partial_{\tau_j} m\right) ((2\mu + \tilde{1})|\lambda|) \Phi_{\mu}^{\lambda}(x^{\prime}) \Phi_{\mu}^{\lambda}(y^{\prime}) \\
\nonumber &\quad \quad \quad \quad - \frac{1}{4|\lambda|}  \sum_{\mu} \sum_{j=1}^{n_1} \sqrt{(\mu_j + 1)(\mu_j + 2)} m((2\mu + \tilde{1})|\lambda|) \Phi_{\mu + 2e_j}^{\lambda}(x^{\prime})  \Phi_{\mu}^{\lambda}(y^{\prime}) \\
\nonumber &\quad \quad \quad \quad +  \frac{1}{4|\lambda|}  \sum_{\mu} \sum_{j=1}^{n_1} \sqrt{\mu_j(\mu_j - 1)} m((2\mu + \tilde{1})|\lambda|) \Phi_{\mu - 2e_j}^{\lambda}(x^{\prime}) \Phi_{\mu}^{\lambda}(y^{\prime}) \\
\nonumber &\quad \quad \quad \quad -  \frac{1}{4|\lambda|}  \sum_{\mu} \sum_{j=1}^{n_1} \sqrt{(\mu_j + 1)(\mu_j + 2)} m((2\mu + \tilde{1})|\lambda|) \Phi_{\mu}^{\lambda}(x^{\prime}) \Phi_{\mu + 2e_j}^{\lambda}(y^{\prime}) \\
\nonumber &\quad \quad \quad \quad +  \frac{1}{4|\lambda|} \sum_{\mu} \sum_{j=1}^{n_1} \sqrt{\mu_j(\mu_j - 1)} m((2\mu + \tilde{1})|\lambda|) \Phi_{\mu}^{\lambda}(x^{\prime}) \Phi_{\mu - 2e_j}^{\lambda}(y^{\prime}) \\
\nonumber &\quad \quad =  I_1 + I_2 + I_3 + I_4 + I_5.
\end{align}

We decompose $I_1$ as follows: 
\begin{align} \label{I1-decompose}
I_1 &= \sum_{\mu} \sum_{j=1}^{n_1} \mu_j \left(\partial_{\tau_j} m\right)((2\mu + \tilde{1})|\lambda|) \Phi_{\mu}^{\lambda}(x^{\prime}) \Phi_{\mu}^{\lambda}(y^{\prime}) \\
\nonumber &\quad + \sum_{\mu} \sum_{j=1}^{n_1} (\mu_j +1) \left(\partial_{\tau_j} m\right)((2\mu + \tilde{1})|\lambda|) \Phi_{\mu}^{\lambda}(x^{\prime}) \Phi_{\mu}^{\lambda}(y^{\prime}) \\
\nonumber &= I^1_1 + I^2_1.
\end{align}

Next, we rewrite $I_3$ and $I_4$ as follows: 
\begin{align} \label{I3-I4-rewrite}
I_3 &= \frac{1}{4 |\lambda|} \sum_{\mu} \sum_{j=1}^{n_1} \sqrt{\mu_j(\mu_j + 1)} m((2\mu + 2e_j + \tilde{1})|\lambda|) \Phi_{\mu - e_j}^{\lambda}(x^{\prime}) \Phi_{\mu + e_j}^{\lambda}(y^{\prime}), \\
\nonumber \quad I_4 &= -\frac{1}{4 |\lambda|} \sum_{\mu} \sum_{j=1}^{n_1} \sqrt{\mu_j (\mu_j + 1)} m((2\mu - 2e_j + \tilde{1})|\lambda|) \Phi_{\mu - e_j}^{\lambda}(x^{\prime}) \Phi_{\mu + e_j}^{\lambda}(y^{\prime}).
\end{align}

Therefore, 
\begin{align} \label{I3I4}
I_3 + I_4 &= \frac{1}{4 |\lambda|} \sum_{\mu} \sum_{j=1}^{n_1} \sqrt{\mu_j (\mu_j + 1)} \left(\left(\Delta_j^+ + \Delta_j^-\right) m((2\mu + \tilde{1})|\lambda|)\right) \Phi_{\mu - e_j}^{\lambda}(x^{\prime}) \Phi_{\mu + e_j}^{\lambda}(y^{\prime})\\ 
\nonumber &= \frac{1}{2 \sqrt{2} |\lambda|^{1/2}} \sum_{\mu} \sum_{j=1}^{n_1} x_j^{\prime}  \sqrt{\mu_j + 1} \left(\left(\Delta_j^+ + \Delta_j^-\right)m((2\mu + \tilde{1})|\lambda|)\right) \Phi_{\mu}^{\lambda}(x^{\prime}) \Phi_{\mu + e_j}^{\lambda}(y^{\prime}) \\
\nonumber & \quad - \frac{1}{4 |\lambda|} \sum_{\mu} \sum_{j=1}^{n_1} (\mu_j + 1) \left(\left(\Delta_j^+ + \Delta_j^-\right)m((2\mu + \tilde{1})|\lambda|)\right) \Phi_{\mu + e_j}^{\lambda}(x^{\prime}) \Phi_{\mu + e_j}^{\lambda}(y^{\prime}) \\
\nonumber &= \frac{1}{2 \sqrt{2} |\lambda|^{1/2}} \sum_{\mu} \sum_{j=1}^{n_1} x_j^{\prime} \sqrt{\mu_j + 1} \left(\Delta_j^+ m\right)((2\mu + \tilde{1})|\lambda|) \Phi_{\mu}^{\lambda}(x^{\prime}) \Phi_{\mu + e_j}^{\lambda}(y^{\prime}) \\
\nonumber &\quad + \frac{1}{2 \sqrt{2} |\lambda|^{1/2}} \sum_{\mu} \sum_{j=1}^{n_1} x_j^{\prime} \sqrt{\mu_j + 1} \left(\Delta_j^- m\right)((2\mu + \tilde{1})|\lambda|) \Phi_{\mu}^{\lambda}(x^{\prime}) \Phi_{\mu + e_j}^{\lambda}(y^{\prime}) \\
\nonumber &\quad - \frac{1}{4 |\lambda|} \sum_{\mu} \sum_{j=1}^{n_1} \mu_j \left(\Delta_j^+ m\right)((2\mu - 2e_j + \tilde{1})|\lambda|) \Phi_{\mu}^{\lambda}(x^{\prime}) \Phi_{\mu}^{\lambda}(y^{\prime}) \\
\nonumber &\quad - \frac{1}{4 |\lambda|} \sum_{\mu} \sum_{j=1}^{n_1} \mu_j \left(\Delta_j^- m\right)((2\mu - 2e_j + \tilde{1})|\lambda|)  \Phi_{\mu}^{\lambda}(x^{\prime}) \Phi_{\mu}^{\lambda}(y^{\prime}) \\
\nonumber &= I^1_{3,4,+} + I^1_{3,4,-} + I^2_{3,4,+} + I^2_{3,4,-}.
\end{align}

Note that 
\begin{align} \label{I134}
I^1_{3,4,+} &= \frac{1}{2 \sqrt{2} |\lambda|^{1/2}} \sum_{\mu} \sum_{j=1}^{n_1} x_j^{\prime} \sqrt{\mu_j + 1} \left(\Delta_j^+ m\right)((2\mu + \tilde{1})|\lambda|) \Phi_{\mu}^{\lambda}(x^{\prime}) \Phi_{\mu + e_j}^{\lambda}(y^{\prime}) \\
\nonumber &= \frac{1}{2} \sum_{\mu} \sum_{j=1}^{n_1} x_j^{\prime} y_j^{\prime} \left(\Delta_j^+ m\right)((2\mu + \tilde{1})|\lambda|) \Phi_{\mu}^{\lambda}(x^{\prime}) \Phi_{\mu}^{\lambda}(y^{\prime}) \\ 
\nonumber &\quad -\frac{1}{2 \sqrt{2} |\lambda|^{1/2}} \sum_{\mu} \sum_{j=1}^{n_1} x_j^{\prime} \sqrt{\mu_j} \left(\Delta_j^+ m\right)((2\mu + \tilde{1})|\lambda|) \Phi_{\mu}^{\lambda}(x^{\prime}) \Phi_{\mu - e_j}^{\lambda}(y^{\prime}). 
\end{align}
One can estimate $I^1_{3,4,-}$ analogously. 

We perform an analysis analogous to (\ref{I3I4}) on $I_2 + I_5$ to get 
\begin{align} \label{I2I5}
I_2 + I_5 &= \frac{1}{2 \sqrt{2} |\lambda|^{1/2}} \sum_{\mu} \sum_{j=1}^{n_1} x_j^{\prime} \sqrt{\mu_j} \left(\Delta_j^+ m\right)((2\mu + \tilde{1}) |\lambda|) \Phi_{\mu}^{\lambda}(x^{\prime}) \Phi_{\mu - e_j}^{\lambda}(y^{\prime}) \\ 
\nonumber &\quad + \frac{1}{2 \sqrt{2} |\lambda|^{1/2}} \sum_{\mu} \sum_{j=1}^{n_1} x_j^{\prime} \sqrt{\mu_j} \left(\Delta_j^- m\right)((2\mu + \tilde{1}) |\lambda|) \Phi_{\mu}^{\lambda}(x^{\prime}) \Phi_{\mu - e_j}^{\lambda}(y^{\prime}) \\ 
\nonumber &\quad - \frac{1}{4 |\lambda|} \sum_{\mu} \sum_{j=1}^{n_1} (\mu_j +1) \left(\Delta_j^+ m\right)((2\mu + 2e_j + \tilde{1}) |\lambda|) \Phi_{\mu}^{\lambda}(x^{\prime}) \Phi_{\mu}^{\lambda}(y^{\prime}) \\
\nonumber &\quad - \frac{1}{4 |\lambda|} \sum_{\mu} \sum_{j=1}^{n_1} (\mu_j +1) \left(\Delta_j^- m\right)((2\mu + 2e_j + \tilde{1}) |\lambda|) \Phi_{\mu}^{\lambda}(x^{\prime}) \Phi_{\mu}^{\lambda}(y^{\prime}) \\
\nonumber &= I^1_{2,5,+} +I^1_{2,5,-} + I^2_{2,5,+} + I^2_{2,5,-}.
\end{align}

From (\ref{I134}) and (\ref{I2I5}), we get that
\begin{align} \label{I134I2I5}
I^1_{3,4,+} + I^1_{2,5,+} & = \frac{1}{2} \sum_{\mu} \sum_{j=1}^{n_1} x_j^{\prime} y_j^{\prime} \left(\Delta_j^+ m\right)((2\mu + \tilde{1}) |\lambda|) \Phi_{\mu}^{\lambda}(x^{\prime}) \Phi_{\mu}^{\lambda}(y^{\prime}) \\
\nonumber & = \sum_\mu \sum_{j=1}^{n_1} (x_j^{\prime} y_j^{\prime}) |\lambda| \left\{ \frac{m((2\mu + 2e_j + \tilde{1}) |\lambda|) - m((2\mu + \tilde{1})|\lambda|)}{2 |\lambda|} \right\} \Phi_{\mu}^{\lambda}(x^{\prime}) \Phi_{\mu}^{\lambda}(y^{\prime}) \\
\nonumber & = \sum_\mu \sum_{j=1}^{n_1} (x_j^{\prime} y_j^{\prime}) |\lambda| \left\{ \int_{0}^1 \frac{\partial}{\partial \tau_j} \{m((2(\mu + s e_j) + \tilde{1})|\lambda|)\} \, ds \right\} \Phi_{\mu}^{\lambda}(x^{\prime}) \Phi_{\mu}^{\lambda}(y^{\prime}).
\end{align}
Similarly, we can estimate $I^1_{3,4,-} + I^1_{2,5,-}$. 

Finally, we shall estimate $\frac{1}{2} I^1_1 + I^2_{3,4,+}$, $\frac{1}{2} I^1_1 + I^2_{3,4,-}$, $\frac{1}{2} I^2_1 + I^2_{2,5,+}$, and $\frac{1}{2} I^2_1 + I^2_{2,5,-}$. For this, note that 
\begin{align*}
\frac{1}{2} I^1_1 + I^2_{3,4,+} &= \frac{1}{2} \sum_{\mu} \sum_{j=1}^{n_1} \mu_j \left(\partial_{\tau_j} m\right) ((2\mu + \tilde{1})|\lambda|) \Phi_{\mu}^{\lambda}(x^{\prime}) \Phi_{\mu}^{\lambda}(y^{\prime}) \\
& \quad - \frac{1}{4 |\lambda|} \sum_{\mu} \sum_{j=1}^{n_1} \mu_j \Delta_j^+ \left(m((2\mu -2e_j + \tilde{1})|\lambda|)\right) \Phi_{\mu}^{\lambda}(x^{\prime}) \Phi_{\mu}^{\lambda}(y^{\prime}). 
\end{align*}

Now, applying the fundamental theorem of calculus on the terms involving the difference operators $\Delta_j^+$, the above expression implies that 
\begin{align} \label{I11I234}
& \frac{1}{2} I^1_1 + I^2_{3,4,+} \\ 
\nonumber &= \frac{1}{2} \sum_{\mu} \sum_{j=1}^{n_1} \mu_j \int_{-1}^0 \left\{ \left(\partial_{\tau_j} m\right) ((2\mu + \tilde{1})|\lambda|) - \left(\partial_{\tau_j} m\right) ((2(\mu + s e_j) + \tilde{1})|\lambda|) \right\} \, ds \Phi_{\mu}^{\lambda}(x^{\prime}) \Phi_{\mu}^{\lambda}(y^{\prime}) \\ 
\nonumber &= \sum_{\mu} \sum_{j=1}^{n_1} |\lambda| \mu_j \int_{-1}^0 \int_{s}^0 \left(\partial^2_{\tau_j} m\right) ((2(\mu + s^{\prime} e_j) + \tilde{1})|\lambda|) \, ds^\prime \, ds \, \Phi_{\mu}^{\lambda}(x^{\prime}) \Phi_{\mu}^{\lambda}(y^{\prime}) \\ 
\nonumber &= \frac{1}{2} \sum_{\mu} \sum_{j=1}^{n_1} \int_{-1}^0 \int_{s}^0 (2(\mu_j + s^{\prime}) + 1) |\lambda| \left(\partial^2_{\tau_j} m\right) ((2(\mu + s^{\prime} e_j) + \tilde{1}) |\lambda|) \, ds^\prime \, ds \, \Phi_{\mu}^{\lambda}(x^{\prime}) \Phi_{\mu}^{\lambda}(y^{\prime}) \\ 
\nonumber &\quad - \frac{1}{2} \sum_{\mu} \sum_{j=1}^{n_1} \int_{-1}^0 \int_{s}^0 (2s^\prime+1) |\lambda| \left(\partial^2_{\tau_j} m\right) ((2(\mu + s^{\prime} e_j) + \tilde{1}) |\lambda|) \, ds^\prime \, ds \, \Phi_{\mu}^{\lambda}(x^{\prime}) \Phi_{\mu}^{\lambda}(y^{\prime}). 
\end{align}

We can estimate $\frac{1}{2} I^1_1 + I^2_{3,4,-}, \, \frac{1}{2} I^2_1 + I^2_{2,5,+}$ and $\frac{1}{2} I^2_1 + I^2_{2,5,-}$ in a similar way. Putting the estimates from (\ref{I134I2I5}) and (\ref{I11I234}) into (\ref{first-der}), the claimed estimate (\ref{lambda-diff-eq-1}) holds true for $N=1$.

\medskip \noindent {\bf Step II.} When $N \geq 2.$ 

Let the proposition be true for some $N\in \mathbb{N}$. We will prove it for $N+1$. Let us consider a prototype term from the $N$-th derivative (as in (\ref{lambda-diff-eq-1})). We write $\theta = (\theta_1, \theta_2)$ and $F_{\theta, \vec{a}}(|\lambda|) := \sum_{\mu} m_{\theta}((2 \mu + \vec{a}(s,s^{\prime}) + \tilde{1}) |\lambda|) \Phi_{\mu}^{\lambda}(x^{\prime}) \Phi_{\mu}^{\lambda}(y^{\prime}),$ where $m_{\theta}(\tau) = \left(\tau^{\theta_1} \partial_\tau^{\theta_2} m\right)(\tau).$

Now, 
\begin{align*} 
& \frac{\partial}{\partial |\lambda|} \left\{ \left({x^{\prime}}^{\alpha_{1}} {y^{\prime}}^{\alpha_{1}} \right) {x^{\prime}}^{\alpha_{2}} \left(|\lambda| {y^{\prime}} \right)^{\alpha_{2}} |\lambda|^{\nu}  F_{\theta, \vec{a}} \right\} (|\lambda|) \\ 
&\quad = \left({x^{\prime}}^{\alpha_{1}} {y^{\prime}}^{\alpha_{1}} \right) {x^{\prime}}^{\alpha_{2}} \frac{\partial}{\partial |\lambda|} \left\{ \left(|\lambda| {y^{\prime}} \right)^{\alpha_{2}} |\lambda|^{\nu} \right\} F_{\theta, \vec{a}} (|\lambda|) \\ 
&\quad \quad + \left({x^{\prime}}^{\alpha_{1}} {y^{\prime}}^{\alpha_{1}} \right) {x^{\prime}}^{\alpha_{2}} \left(|\lambda| {y^{\prime}} \right)^{\alpha_{2}} |\lambda|^{\nu} \frac{\partial}{\partial |\lambda|} F_{\theta, \vec{a}}(|\lambda|) \\ 
&\quad = |\alpha_{2}| \left({x^{\prime}}^{\alpha_{1} + e_j} {y^{\prime}}^{\alpha_{1} + e_j} \right) {x^{\prime}}^{\alpha_{2} - e_j} \left(|\lambda| {y^{\prime}} \right)^{\alpha_{2} - e_j} |\lambda|^{\nu} F_{\theta, \vec{a}}(|\lambda|) \\ 
&\quad \quad + \nu \left({x^{\prime}}^{\alpha_{1}} {y^{\prime}}^{\alpha_{1}} \right) {x^{\prime}}^{\alpha_{2}} \left(|\lambda| {y^{\prime}} \right)^{\alpha_{2}} |\lambda|^{\nu - 1} F_{\theta, \vec{a}}(|\lambda|) \\ 
&\quad \quad + \left({x^{\prime}}^{\alpha_{1}} {y^{\prime}}^{\alpha_{1}} \right) {x^{\prime}}^{\alpha_{2}} \left(|\lambda| {y^{\prime}} \right)^{\alpha_{2}} |\lambda|^{\nu} \frac{\partial}{\partial |\lambda|} F_{\theta, \vec{a}}(|\lambda|), 
\end{align*}
where the first term appears only when there exists some $j$ such that $(\alpha_2)_j \geq 1$ and the second term appears only when $\nu \geq 1.$

Clearly, using the induction hypothesis, the indices in the first term satisfy $|2(\alpha_{1} + e_j) + (\alpha_{2} - e_j)| = |2\alpha_{1} + \alpha_{2}| + 1 \leq N+1$, 
$ \left( 2 |\theta_1| + |\alpha_2 - e_j| + 2 \nu \right) \leq \left( 2 |\theta_1| + |\alpha_2| + 2 \nu \right) \leq |\theta_2| \leq 2N \leq 2(N+1)$, and $\left( |\theta_2| + |\alpha_1 + e_j| \right) - \left( |\theta_1| + \nu \right) = N+1$. Similarly, in the second term, the appearance of $\nu - 1$ in the power of $|\lambda|$ implies that the term corresponding to it in the $(N+1)$-th derivative is of the claimed form. So, we are only left to analyse $\frac{\partial}{\partial |\lambda|} F_{\theta, \vec{a}}(|\lambda|),$ and it follows from the analysis of the case of $N=1$. To see this, note that 
\begin{align} \label{induction-est-2}
\frac{\partial}{\partial |\lambda|} F_{\theta, \vec{a}}(|\lambda|) &= \sum_{\mu} \sum_{j=1}^{n_1} (2\mu_j + a_j(s,s^{\prime}) + 1) \left(\partial_{\tau_j} m_{\theta}\right) ((2\mu + \vec{a}(s,s^{\prime}) + \tilde{1})|\lambda|) \Phi_{\mu}^{\lambda}(x^{\prime}) \Phi_{\mu}^{\lambda}(y^{\prime}) \\
\nonumber &\quad + \sum_{\mu} m_{\theta} (x, (2\mu + \vec{a}(s,s^{\prime}) + \tilde{1})|\lambda|) \frac{\partial}{\partial |\lambda|} \left(\Phi_{\mu}^{\lambda}(x^{\prime}) \Phi_{\mu}^{\lambda}(y^{\prime})\right) \\
\nonumber &= \sum_{\mu} \sum_{j=1}^{n_1} a_j(s,s^{\prime}) \left(\partial_{\tau_j} m_{\theta}\right) ((2\mu + \vec{a}(s,s^{\prime}) + \tilde{1}) |\lambda|) \Phi_{\mu}^{\lambda}(x^{\prime}) \Phi_{\mu}^{\lambda}(y^{\prime}) \\
\nonumber &\quad + \sum_{\mu} \sum_{j=1}^{n_1} (2\mu_j + 1) \left(\partial_{\tau_j} m_{\theta}\right) ((2\mu + \vec{a}(s,s^{\prime}) + \tilde{1}) |\lambda|) \Phi_{\mu}^{\lambda}(x^{\prime}) \Phi_{\mu}^{\lambda}(y^{\prime}) \\
\nonumber &\quad + \sum_{\mu} m_{\theta} ((2\mu + \vec{a}(s,s^{\prime}) + \tilde{1})|\lambda|) \frac{\partial}{\partial |\lambda|} \left(\Phi_{\mu}^{\lambda}(x^{\prime}) \Phi_{\mu}^{\lambda}(y^{\prime})\right) \\
\nonumber &= I_{0, N+1} + I_{1,N+1} + (I_{2,N+1} + I_{3,N+1} + I_{4,N+1} + I_{5,N+1}), 
\end{align}
where $I_{2, N+1}$, $I_{3, N+1}$, $I_{4, N+1}$ and $I_{5, N+1}$ are similar to the terms $I_2$, $I_3$, $I_4$ and $I_5$ appearing in \eqref{first-der} with some obvious changes. 

We first show that the term corresponding to $I_{0, N+1}$ of the above sum in the $(N+1)$-th derivative is of the claimed form. In fact, the term corresponding to $I_{0, N+1}$ equals to a constant multiple of $\left({x^{\prime}}^{\alpha_{1}} {y^{\prime}}^{\alpha_{1}} \right) {x^{\prime}}^{\alpha_{2}} \left(|\lambda| {y^{\prime}} \right)^{\alpha_{2}} |\lambda|^{\nu}$ and 
\begin{align*}
& \int_{[0,1]^{\nu_1} \times \Omega^{\nu_2}} \sum_{\mu} \sum_{j=1}^{n_1} \left(\partial_{\tau_j} m_{\theta} \right) ((2\mu + \vec{a}(s,s^{\prime}) + \tilde{1})|\lambda|) \Phi_{\mu}^{\lambda}(x^{\prime}) \Phi_{\mu}^{\lambda}(y^{\prime}) a_j(s,s^{\prime}) g(s, s^{\prime}) \, ds \, ds^{\prime} \\ 
& = \sum_{j=1}^{n_1} (\theta_1)_j \int_{[0,1]^{\nu_1} \times \Omega^{\nu_2}} \sum_{\mu} \left(\tau^{\theta_1 - e_j} \partial_{\tau_j}^{\theta_2} m\right) ((2\mu + \vec{a}(s,s^{\prime}) + \tilde{1})|\lambda|) \Phi_{\mu}^{\lambda}(x^{\prime}) \Phi_{\mu}^{\lambda}(y^{\prime}) \\ 
& \quad \quad \quad \quad \quad \quad \quad \quad \quad \quad \quad \quad \quad a_j(s,s^{\prime}) g(s, s^{\prime}) \, ds \, ds^{\prime} \\ 
& \quad + \sum_{j=1}^{n_1} \int_{[0,1]^{\nu_1} \times \Omega^{\nu_2}} \sum_{\mu} \left(\tau^{\theta_1} \partial_{\tau_j}^{\theta_2 + e_j} m\right) ((2\mu + \vec{a}(s,s^{\prime}) + \tilde{1})|\lambda|) \Phi_{\mu}^{\lambda}(x^{\prime}) \Phi_{\mu}^{\lambda}(y^{\prime}) \\ 
& \quad \quad \quad \quad \quad \quad \quad \quad \quad \quad a_j(s,s^{\prime}) g(s, s^{\prime}) \, ds \, ds^{\prime}. 
\end{align*}
In the above expression, the $j$-th term in the first sum will occur only if $(\theta_1)_j > 0$. Clearly, both the terms in the above sum are of the claimed form. 
 
Finally, one can verify that the analysis for $I_{1,N+1} + (I_{2,N+1} + I_{3,N+1} + I_{4,N+1} + I_{5,N+1})$ of (\ref{induction-est-2}) could be done exactly same as that for (\ref{first-der}) of case $N=1.$ Importantly, the dependence of the multiplier function $m_{\theta}(\tau) = \left(\tau^{\theta_1} \partial_\tau^{\theta_2} m\right)(\tau)$ on $\vec{a}(s,s^{\prime})$ does not affect the analysis leading to the estimates (\ref{I134I2I5}) and (\ref{I11I234}). Notice that the operator acting on the multiplier corresponding to the estimate (\ref{I134I2I5}) is a finite linear combination of terms of the form $x^{\prime}_j y^{\prime}_j |\lambda| \partial{\tau_j} \left(\tau^{\theta_1} \partial_{\tau}^{\theta_2} \right),$ whereas there are two prototype operators corresponding to the estimate (\ref{I11I234}), namely, $|\lambda| \partial_{\tau_j}^2 \left(\tau^{\theta_1} \partial_{\tau}^{\theta_2}\right)$ and $\tau_j \partial_{\tau_j}^2 \left(\tau^{\theta_1} \partial_{\tau}^{\theta_2}\right).$ After applying Leibniz rule on $\tau$-derivative(s), one gets that all these terms independently satisfy the claimed form with some of the $\nu$, $\alpha_j$'s and $\theta_j$'s possibly altered. In summary, the conditions in the terms corresponding to the above mentioned prototype operators are satisfied as per the the following rules, where the indices which are not mentioned remain unchanged: 

\begin{itemize}
\item $ x^{\prime}_j y^{\prime}_j |\lambda| \partial_{\tau_j} \left( \tau^{\theta_1} \partial_{\tau}^{\theta_2} \right) $ : $(\alpha_2, \theta_1, \theta_2)$ replaced by either $(\alpha_2 + e_j, \theta_1 - e_j, \theta_2)$ or $(\alpha_2 + e_j, \theta_1, \theta_2 + e_j)$; 

\item $ |\lambda| \partial^2_{\tau_j} \left( \tau^{\theta_1} \partial_{\tau}^{\theta_2} \right) $ : $(\nu, \theta_1, \theta_2)$ replaced by either $(\nu + 1, \theta_1 - 2 e_j, \theta_2)$, $(\nu + 1, \theta_1 - e_j, \theta_2 + e_j)$ or $(\nu + 1, \theta_1, \theta_2 + 2 e_j)$; 

\item $ \tau_j \partial_{\tau_j}^2 \left( \tau^{\theta_1} \partial_{\tau}^{\theta_2} \right) $ : $(\theta_1, \theta_2)$ replaced by either $(\theta_1 - e_j, \theta_2)$, $(\theta_1, \theta_2 + e_j)$ or $(\theta_1 + e_j, \theta_2 + 2 e_j)$. 
\end{itemize} 

This completes the proof of Proposition \ref{prop:lambda-diff-theorem}. 
\end{proof}

%%%%%%%%%%%%%%%%%%%
%%%%%%%%%%%%%%%%%%%

\begin{rem} \label{remark-lambda-diff-theorem}
The conclusion of Proposition \ref{prop:lambda-diff-theorem} holds true if instead of $m((2\mu + \tilde{1})|\lambda|)$ one works with the symbol function $m((2\mu + \tilde{a})|\lambda|)$, for a fixed $\tilde{a} \in \mathbb{R}^{n_1}$. This could be checked, for example, from the estimation of $I_{0, N+1}$ which was done immediately after \eqref{induction-est-2}. When $N=1$, this extra term would correspond to the case of $|\theta_2| = 1$. Note also that while the implicit constants may depend on $\tilde{a}$ but the number of terms in the finite linear combination claimed in \eqref{lambda-diff-eq-1} won't depend on $\tilde{a}$. 
\end{rem}

%%%%%%%%%%%%%%%%%%%
%%%%%%%%%%%%%%%%%%%

From Proposition \ref{prop:lambda-diff-theorem} and Remark \ref{remark-lambda-diff-theorem} one can easily derive the following corollary.

\begin{cor} \label{lambda-diff-lem-cor}
Let $\tilde{a}, \tilde{b} \in \mathbb{R}^{n_1}$ be fixed. For any $\beta \in \mathbb{N}^{n_2}$, we have 
\be 
\frac{\partial^{\beta}}{\partial \lambda^{\beta}} \left\{\sum_{\mu} m_1((2\mu + \tilde{a})|\lambda|, - \lambda) m_2((2\mu + \tilde{b})|\lambda|, - \lambda) \, \Phi_{\mu}^{\lambda}(x^{\prime}) \Phi_{\mu}^{\lambda}(y^{\prime}) \right\} 
\ee
is a finite linear combination of terms of the form
\begin{align*}
& \Theta_l(\lambda) (x^\prime-y^\prime)^{\alpha_1-\alpha_3} {x^{\prime}}^{\alpha_{1}+\alpha_2+\alpha_3}\left(|\lambda| {y^{\prime}} \right)^{\alpha_{2}} |\lambda|^{\nu} \\ 
& \int_{[0,1]^{\nu_1} \times \Omega^{\nu_2}} \sum_{\mu} \left(\tau^{\theta_1} \partial_\tau^{\theta_2 - \theta_3} \partial_\kappa^{\beta_1} m_1\right) ((2\mu + \vec{a}(\omega) + \tilde{a})|\lambda|, - \lambda) \left( \partial_{\tau}^{\theta_3}\partial_{\kappa}^{\beta_2} m_2 \right) ((2\mu + \vec{a}(\omega) + \tilde{b})|\lambda|, - \lambda) \\ 
& \quad \quad \quad \quad \quad \quad \Phi_{\mu}^{\lambda}(x^{\prime}) \Phi_{\mu}^{\lambda}(y^{\prime}) \, g(\omega) \, d \omega, 
\end{align*} 
where $\nu_1, \nu_2, l \in \mathbb{N}$ and $\alpha_1, \alpha_2, \alpha_3, \beta_1, \beta_2, \theta_1, \theta_2, \theta_3 \in \mathbb{N}^{n_1}$ be such that $0 \leq l < |\beta|$, $\alpha_3 \leq \alpha_1$, $\theta_3 \leq \theta_2$, with $\nu_1, \nu_2 \leq |\beta| - ( |\beta_1| + |\beta_2| + l)$, $ \left( 2 |\theta_1| + |\alpha_2| + 2 \nu \right) \leq |\theta_2| \leq 2|\beta|- 2 ( |\beta_1| + |\beta_2| + l)$, $\left|2 \alpha_{1} + \alpha_{2} \right| \leq |\beta| - ( |\beta_1| + |\beta_2| + l)$,  $\left( |\theta_2| + |\alpha_1| \right) - \left( |\theta_1| + \nu \right) = |\beta| - ( |\beta_1| + |\beta_2| + l)$ and $\Theta_l$ is a continuous function on $\R^{n_2} \setminus \{0\}$ which is homogeneous of degree $-l$.
\end{cor}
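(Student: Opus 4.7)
The plan is to reduce the computation to Proposition \ref{prop:lambda-diff-theorem} (with Remark \ref{remark-lambda-diff-theorem}) via two preliminary reductions: first, separating the $\lambda$-derivatives into those acting through the $\kappa=-\lambda$ slot and those acting through $|\lambda|$; second, distributing the resulting $|\lambda|$-derivatives across the product $m_{1}\cdot m_{2}$. By induction on $|\beta|$ using $\partial_{\lambda_j}=\tfrac{\lambda_j}{|\lambda|}\partial_{s}-\partial_{\kappa_{j}}$, I would first establish the chain-rule identity
\[
\partial^{\beta}_{\lambda}\bigl\{G(|\lambda|,-\lambda)\bigr\}
=\sum_{A+|B|+l=|\beta|} \Theta^{A,B}_{l}(\lambda)\,(\partial_{s}^{A}\partial_{\kappa}^{B}G)(|\lambda|,-\lambda),
\]
valid for every smooth $G(s,\kappa)$, with each $\Theta^{A,B}_{l}$ smooth on $\R^{n_{2}}\setminus\{0\}$ and homogeneous of degree $-l$; the inductive step uses that differentiating an existing factor $\lambda_{m}/|\lambda|$ produces a factor homogeneous of degree $-1$, which accounts for the lower-order $l$ terms. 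I would then apply this to $G(s,\kappa)=\sum_{\mu} m_{1}((2\mu+\tilde{a})s,\kappa)\,m_{2}((2\mu+\tilde{b})s,\kappa)\,\Phi^{s}_{\mu}(x')\Phi^{s}_{\mu}(y')$ (legitimate thanks to the absolute convergence granted by the compact-support convention, and using that $\Phi^{\lambda}_{\mu}$ depends on $\lambda$ only through $|\lambda|$), and expand $\partial_{\kappa}^{B}$ by Leibniz as $\partial_{\kappa}^{\beta_{1}}m_{1}\cdot\partial_{\kappa}^{\beta_{2}}m_{2}$ with $\beta_{1}+\beta_{2}=B$. This step produces the $\Theta_{l}(\lambda)$ prefactor together with the $\partial_{\kappa}^{\beta_{1}}m_{1}$ and $\partial_{\kappa}^{\beta_{2}}m_{2}$ structure appearing in the corollary.

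The next step handles the remaining $\partial_{s}^{A}$-derivatives by Leibniz across the three factors of $G$. Derivatives hitting $m_{i}((2\mu+\tilde{a}_{i})s,\kappa)$ yield, by the single-variable chain rule, sums of $(2\mu+\tilde{a}_{i})^{\theta}\partial_\tau^{\theta}m_{i}$; the binomial expansion $(2\mu+\tilde{b})^{\theta'}=\sum\binom{\theta'}{\theta''}(\tilde{b}-\tilde{a})^{\theta'-\theta''}(2\mu+\tilde{a})^{\theta''}$ consolidates everything into the single shift $\tilde{a}$, and the remaining powers $(2\mu+\tilde{a})^{\theta'_{1}}$ are rewritten as $\tau^{\theta'_{1}}|\lambda|^{-|\theta'_{1}|}$ with $\tau=(2\mu+\tilde{a})|\lambda|$, feeding into the $\tau^{\theta_{1}}$ and $|\lambda|^{\nu}$ factors in the target form. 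The remaining $\partial_{s}$-derivatives then act only on $\Phi^{s}_{\mu}(x')\Phi^{s}_{\mu}(y')$, so the problem reduces to evaluating sums of the form
\[
\partial_{s}^{a}\Bigl\{\textstyle\sum_{\mu}\widetilde{m}_{1}((2\mu+\tilde{a})s)\,\widetilde{m}_{2}((2\mu+\tilde{b})s)\,\Phi^{s}_{\mu}(x')\Phi^{s}_{\mu}(y')\Bigr\}\Big|_{s=|\lambda|},
\]
with $\widetilde{m}_{i}$ the appropriate $\partial_{\tau},\partial_{\kappa}$ derivatives of $m_{i}$. This is precisely the type of sum treated by Proposition \ref{prop:lambda-diff-theorem}: its proof relies only on the Hermite recursion identities and the discrete difference operators $\Delta_{j}^{\pm}$, which are unaffected by the presence of a second multiplier factor inside the sum. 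In particular, the discrete Leibniz rule $\Delta_{j}^{\pm}[uv](\mu)=(\Delta_{j}^{\pm}u)(\mu)\,v(\mu\pm e_{j})+u(\mu)\,(\Delta_{j}^{\pm}v)(\mu)$ combined with the fundamental-theorem-of-calculus representation $\Delta_{j}^{\pm}m(\cdot)=\pm\int_{0}^{1}(\partial_{\tau_{j}}m)(\cdot\pm 2s|\lambda|e_{j})\cdot 2|\lambda|\,ds$ automatically produces the split $\partial_\tau^{\theta_{2}-\theta_{3}}m_{1}\cdot\partial_\tau^{\theta_{3}}m_{2}$ with $\theta_{3}\leq\theta_{2}$ that appears in the conclusion.

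Finally, I would convert the $(x')^{\alpha_{1}}(y')^{\alpha_{1}}$ factor in the output of Proposition \ref{prop:lambda-diff-theorem} into the desired form $(x'-y')^{\alpha_{1}-\alpha_{3}}(x')^{\alpha_{1}+\alpha_{2}+\alpha_{3}}$ using the binomial expansion
\[
(y')^{\alpha_{1}}=\sum_{\alpha_{3}\leq\alpha_{1}}\binom{\alpha_{1}}{\alpha_{3}}(-1)^{|\alpha_{1}-\alpha_{3}|}(x')^{\alpha_{3}}(x'-y')^{\alpha_{1}-\alpha_{3}}.
\]
The main obstacle will be the combinatorial bookkeeping: tracking how the indices $(l,\alpha_{1},\alpha_{2},\alpha_{3},\beta_{1},\beta_{2},\theta_{1},\theta_{2},\theta_{3},\nu,\nu_1,\nu_2)$ produced by the three reductions above combine to satisfy the stated relations --- in particular, the budget constraint $\nu_{1},\nu_{2}\leq|\beta|-(|\beta_{1}|+|\beta_{2}|+l)$, which records the $\partial_{s}$-derivatives remaining after the $\partial_{\kappa}^{B}$ and $\Theta_{l}$ contributions have been peeled off, and the identity $|\theta_{2}|+|\alpha_{1}|-|\theta_{1}|-\nu=|\beta|-(|\beta_{1}|+|\beta_{2}|+l)$ inherited from the corresponding relation with $N$ in Proposition \ref{prop:lambda-diff-theorem}.
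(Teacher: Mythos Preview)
Your overall architecture is exactly what the paper intends: pass from $\partial_\lambda^\beta$ to $\Theta_l(\lambda)\,\partial_{|\lambda|}^A\partial_\kappa^B$ via the chain rule, split $\partial_\kappa^B$ over $m_1,m_2$ by Leibniz, invoke Proposition~\ref{prop:lambda-diff-theorem} (in the product form, cf.\ Remark~\ref{remark-lambda-diff-theorem}) for the $\partial_{|\lambda|}^A$ part, and finally binomially expand $(y')^{\alpha_1}$ in terms of $(x'-y')^{\alpha_1-\alpha_3}(x')^{\alpha_3}$. The paper's own argument is nothing more than ``from Proposition~\ref{prop:lambda-diff-theorem} and Remark~\ref{remark-lambda-diff-theorem} one can easily derive the following corollary'' together with the note on the binomial expansion, so your outline and theirs coincide.

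There is, however, a genuine inconsistency in your second paragraph. You first say you will handle $\partial_s^A$ ``by Leibniz across the three factors of $G$,'' pulling factors $(2\mu+\tilde a)^{\theta}=\tau^{\theta}|\lambda|^{-|\theta|}$ out of $m_i$ and leaving the remaining $\partial_s$-derivatives on $\Phi_\mu^s(x')\Phi_\mu^s(y')$ alone; but then you say the problem ``reduces to'' evaluating $\partial_s^a\bigl\{\sum_\mu \widetilde m_1\widetilde m_2\,\Phi_\mu^s\Phi_\mu^s\bigr\}$, i.e.\ with the multipliers still \emph{inside} the derivative. These are different computations, and only the second one is correct. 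A na\"{\i}ve Leibniz split destroys precisely the cancellations that Proposition~\ref{prop:lambda-diff-theorem} is designed to exploit (the pairing of $I_1$ with $I_2,\dots,I_5$): you would be left with negative powers $|\lambda|^{-|\theta|-A_3}$ and with Hermite functions shifted independently in $x'$ and $y'$, neither of which matches the target form (where $\nu\in\mathbb N$ and no shift appears in $\Phi_\mu^\lambda(x')\Phi_\mu^\lambda(y')$). The right move---which you do identify---is to run the proof of Proposition~\ref{prop:lambda-diff-theorem} directly on the product multiplier $M(\mu,s)=m_1((2\mu+\tilde a)s,\kappa)\,m_2((2\mu+\tilde b)s,\kappa)$, using the discrete Leibniz rule for $\Delta_j^{\pm}$ on $m_1\cdot m_2$ and the FTC representation of each difference; this is what produces the split $\partial_\tau^{\theta_2-\theta_3}m_1\cdot\partial_\tau^{\theta_3}m_2$ while preserving the index relations. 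Drop the na\"{\i}ve three-factor Leibniz step entirely and go straight to that.
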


Note that in order to get the above expression, we have also used the binomial expansion to replace ${y^{\prime}}^{\alpha_1}$ by linear combination of $(y^{\prime} - x^{\prime})^{\alpha_1 - \alpha_3} \, {x^{\prime}}^{\alpha_3}$ with $\alpha_3 \leq \alpha_1$. Such an expression is helpful in the proofs in the later sections. 

%%%%%%%%%%%%%%%%%%%
%%%%%%%%%%%%%%%%%%%

We now look at the action of $\left( x^{\prime} - y^{\prime} \right)^{\alpha}$ on kernels of the scaled Hermite multipliers. Using annihilation and creation operators $A_j(\lambda)$ and $A_j(\lambda)^*$ (as in \eqref{def-ann-cre-op}), let us define the non-commutative derivatives of any bounded operator $M$ on $L^2(\R^{n_1})$ in the following way:
\begin{align} \label{def:non-comm-derivative}
\delta_j(\lambda) M = |\lambda|^{-1/2} [M, A_j(\lambda)] \quad \textup{and} \quad \bar{\delta}_j(\lambda)M = |\lambda|^{-1/2} [A^*_j(\lambda), M].
\end{align} 
For $\alpha = \left( \alpha^{(1)}, \alpha^{(2)}, \ldots, \alpha^{(n_1)} \right), \theta = \left( \theta^{(1)}, \theta^{(2)}, \ldots, \theta^{(n_1)} \right) \in \N^{n_1}$, we write 
\begin{align*} 
\delta (\lambda)^\alpha = \delta_1 (\lambda)^{\alpha^{(1)}} \delta_2 (\lambda)^{\alpha^{(2)}} \cdots \delta_{n_1} (\lambda)^{\alpha^{(n_1)}} \quad \textup{and} \quad \bar{\delta} (\lambda)^\theta = \bar{\delta}_1 (\lambda)^{\theta^{(1)}} \bar{\delta}_2 (\lambda)^{\theta^{(2)}} \cdots \bar{\delta}_{n_1} (\lambda)^{\theta^{(n_1)}}. 
\end{align*}

\begin{lem} \label{first-layer-lem}
Let $\tilde{a} \in \mathbb{R}^{n_1}$ be fixed. For any $\alpha \in \mathbb{N}^{n_1}$, we have 
$$ \left(x^{\prime} - y^{\prime} \right)^{\alpha} \sum_{\mu} m((2\mu + \tilde{a})|\lambda|) \Phi_{\mu}^{\lambda}(x^{\prime}) \Phi_{\mu}^{\lambda}(y^{\prime})$$
is a finite linear combination of terms of the form 
\begin{equation} 
\int_{[0,1]^{|\alpha|}} \sum_{\mu} C_{\mu, \tilde{a}, \vec{c}} \left(\tau^{\frac{1}{2} \gamma_2} \partial_{\tau}^{ \gamma_1 } m\right) ((2\mu  + \tilde{a} + \vec{c}(s))|\lambda|) \Phi_{\mu + \tilde{\mu}}^{\lambda}(x^{\prime}) \Phi_{\mu}^{\lambda}(y^{\prime}) \, ds, 
\end{equation} 
where $|\gamma_2| \leq |\gamma_1| \leq |\alpha|$, $|\gamma_1| - \frac{1}{2} |\gamma_2| = \frac{|\alpha|}{2}$, $|\tilde{\mu}| \leq |\alpha|$, and $C_{\mu, \tilde{a}, \vec{c}}$ is a bounded function of $\mu$, $\tilde{a}$ and $\vec{c}$. 
\end{lem}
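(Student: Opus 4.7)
I would argue by induction on $|\alpha|$, with the case $\alpha = 0$ being tautological ($\gamma_1 = \gamma_2 = 0$, $\tilde{\mu} = 0$). The main tool is the Hermite recursion
\begin{equation*}
x'_j \Phi_\mu^\lambda(x') \;=\; \frac{1}{\sqrt{2|\lambda|}}\Bigl(\sqrt{\mu_j+1}\,\Phi_{\mu+e_j}^\lambda(x') \;+\; \sqrt{\mu_j}\,\Phi_{\mu-e_j}^\lambda(x')\Bigr),
\end{equation*}
and its $y'$-analogue. Conceptually, multiplication by $(x'_j - y'_j)$ on a kernel corresponds at the operator level to the commutator $[x'_j,\,\cdot\,]$, and since $x'_j = (A_j(\lambda) + A_j^\ast(\lambda))/(2|\lambda|)$, the shifts produced on the $x'$-Hermite function and on the $y'$-Hermite function differ only by re-indexing the summation variable $\mu$. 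This re-indexing, together with the fundamental theorem of calculus, converts the discrete differences into integrals of $\partial_\tau m$ and accounts for the exact form of the claimed expansion.

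\textbf{Base case $|\alpha|=1$.} Take $\alpha = e_j$ and apply the recursion simultaneously to the $x'$- and $y'$-factors of $\sum_\mu m((2\mu+\tilde a)|\lambda|)\Phi_\mu^\lambda(x')\Phi_\mu^\lambda(y')$; this yields four summands. Re-indexing $\mu \mapsto \mu\mp e_j$ in the two $y'$-shifted ones makes the $y'$-Hermite index equal to $\mu$ in every term, and the $m$-evaluation then collapses into discrete differences, leaving a linear combination of expressions
\begin{equation*}
\frac{1}{\sqrt{2|\lambda|}}\sum_\mu \sqrt{\mu_j + c}\;\bigl(\Delta_j^{\pm} m\bigr)\bigl((2\mu+\tilde a)|\lambda|\bigr)\,\Phi_{\mu\pm e_j}^\lambda(x')\Phi_\mu^\lambda(y')
\end{equation*}
with $c \in \{0,1\}$. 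Writing $\Delta_j^{\pm} m((2\mu+\tilde a)|\lambda|) = 2|\lambda|\int_0^1(\partial_{\tau_j}m)((2\mu+\tilde a+\vec c(s))|\lambda|)\,ds$, and observing that
\begin{equation*}
\frac{2|\lambda|}{\sqrt{2|\lambda|}}\sqrt{\mu_j+c}\;=\; C_{\mu,\tilde a,\vec c}\,\bigl((2\mu+\tilde a+\vec c(s))_j|\lambda|\bigr)^{1/2}
\end{equation*}
with $C_{\mu,\tilde a,\vec c}$ bounded (on the support of the symbol the argument stays bounded away from zero in units of $|\lambda|$), yields the desired form with $\gamma_1 = \gamma_2 = e_j$, $|\tilde{\mu}|\leq 1$, and $|\gamma_1| - \tfrac12|\gamma_2| = \tfrac12$.

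\textbf{Inductive step and obstacle.} Applying $(x'_k - y'_k)$ to a prototype term from the hypothesis for $|\alpha|$, the same recursion/re-indexing/FTC sequence produces terms where the difference operator $\Delta_k^{\pm}$ now acts on the entire $\mu$-dependent quantity $\bigl(\tau^{\frac12\gamma_2}\partial_\tau^{\gamma_1} m\bigr)((2\mu+\tilde a+\vec c(s))|\lambda|)$. The resulting $\partial_{\tau_k}$ splits by the Leibniz rule into (i) hitting $\partial_\tau^{\gamma_1}m$, giving $\gamma_1 \to \gamma_1 + e_k$, or (ii) hitting $\tau_k^{\gamma_2^{(k)}/2}$, giving $\gamma_2 \to \gamma_2 - e_k$ (when $\gamma_2^{(k)} \geq 1$). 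In case (i) the fresh prefactor from the Hermite recursion contributes an extra $\tau_k^{1/2}$, so $(\gamma_1,\gamma_2) \to (\gamma_1+e_k,\gamma_2+e_k)$ and $|\gamma_1|-\tfrac12|\gamma_2|$ increases by $\tfrac12$; in case (ii) the prefactor $\tau_k^{1/2}$ cancels one of the two units lost from $\gamma_2^{(k)}$, giving $(\gamma_1,\gamma_2) \to (\gamma_1,\gamma_2 - e_k)$ and the same increment of $\tfrac12$. Thus the invariant $|\gamma_1|-\tfrac12|\gamma_2| = |\alpha|/2$ propagates to $(|\alpha|+1)/2$, while the shift index satisfies $|\tilde{\mu}| \leq |\alpha|+1$. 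The principal bookkeeping difficulty — and the step that demands care — is tracking the multiple parallel channels ($\pm e_k$ in the recursion, and the Leibniz split), and verifying at every stage that the factors $\sqrt{(\mu+\tilde\mu)_k + O(1)}\cdot|\lambda|^{1/2}$ can be rewritten as $\bigl((2\mu+\tilde a+\vec c(s))_k|\lambda|\bigr)^{1/2}$ multiplied by a $\mu$-uniformly bounded constant; this is exactly what Remark~\ref{rem-indices-clarity} suggests is routine, relying on the symbol's support staying inside $(\R_+)^{n_1}$ so the ratio of the two square roots is uniformly bounded.
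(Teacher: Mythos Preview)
Your base case is correct, but the inductive step has a genuine gap, and the paper in fact avoids induction entirely by a different route.

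\textbf{The paper's argument.} The paper observes that, at the operator level, $(x'_j-y'_j)\,m(H(\lambda)) = \tfrac{1}{2|\lambda|^{1/2}}(\bar\delta_j(\lambda)-\delta_j(\lambda))\,m(H(\lambda))$ where $\delta_j,\bar\delta_j$ are the noncommutative derivatives \eqref{def:non-comm-derivative}. It then invokes a \emph{closed-form} kernel expression for $|\lambda|^{-|\alpha|/2}\delta(\lambda)^{\gamma_1}\bar\delta(\lambda)^{\gamma_2}m(H(\lambda))$ from \cite{BagchiFourierMultipliersHeisenbergStudia,MauceriWeylTransformJFA80}: a finite sum over an auxiliary index $\gamma_3$ of terms $|\lambda|^{-(|\gamma_2|+2|\gamma_3|-|\gamma_1|)/2}\bigl(\Delta_-^{\gamma_3}\Delta_+^{\gamma_2}m\bigr)\,\bigl(A^{*\,\gamma_2+\gamma_3-\gamma_1}A^{\gamma_3}\Phi_\mu^\lambda\bigr)(x')\,\Phi_\mu^\lambda(y')$. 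A single application of the fundamental theorem of calculus then gives the claimed form. Crucially, all creation/annihilation operators land on the $x'$-factor, so the $y'$-factor remains $\Phi_\mu^\lambda(y')$ throughout and no re-indexing is ever needed.

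\textbf{Where your induction breaks.} In your inductive step you apply the recursion to $\Phi_{\mu+\tilde\mu}^\lambda(x')$ and to $\Phi_\mu^\lambda(y')$, then re-index the $y'$-shifted summands. After re-indexing, the two contributions sharing the Hermite pair $\Phi_{\mu+\tilde\mu\pm e_k}^\lambda(x')\Phi_\mu^\lambda(y')$ carry \emph{different} prefactors: $\sqrt{(\mu+\tilde\mu)_k+c}$ from the $x'$-side versus $\sqrt{\mu_k+c'}$ from the $y'$-side, and $C_\mu$ versus $C_{\mu\pm e_k}$. Hence the combination is not a clean $\Delta_k^{\pm}$ acting on $\bigl(\tau^{\gamma_2/2}\partial_\tau^{\gamma_1}m\bigr)_\mu$ as you assert; one gets that discrete difference \emph{plus} residual terms of size $\tfrac{1}{\sqrt{|\lambda|}}\bigl[\sqrt{(\mu+\tilde\mu)_k+c}-\sqrt{\mu_k+c'}\bigr]\,C_\mu F_\mu$ and $\tfrac{\sqrt{\mu_k+c'}}{\sqrt{|\lambda|}}\,(C_\mu-C_{\mu\pm e_k})\,F_{\mu\pm e_k}$. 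These residuals scale like $\tau_k^{-1/2}$ times bounded quantities, and to absorb them into the claimed form you would need $\gamma_2^{(k)}\ge 1$ at that stage --- which is not guaranteed by your bookkeeping (your own case~(ii) decrements $\gamma_2^{(k)}$). Your final paragraph acknowledges the square-root rewriting as ``routine'' but treats each factor in isolation; it is precisely the pairing across the re-indexing that fails. The paper's operator-calculus shortcut sidesteps this entirely.
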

\begin{proof} 
It is easy to see that 
$$ (x_j^{\prime} - y_j^{\prime}) m(H(\lambda)) = \frac{1}{2|\lambda|^{1/2}} (\bar{\delta}_j(\lambda) - \delta_j(\lambda)) m(H(\lambda)), $$ 
where $\delta_j(\lambda)$ and $\bar{\delta}_j(\lambda)$ are given by \eqref{def:non-comm-derivative}. Thus, it is enough to show that the kernel of the operators $|\lambda|^{-|\alpha|/2}  \delta(\lambda)^{\gamma_1}\bar{\delta}^{\gamma_2}(\lambda)m(H(\lambda))$, with $|\gamma_1 + \gamma_2|= |\alpha|$, can be written as a finite linear combination of terms of our required form. Now the kernel of $|\lambda|^{-|\alpha|/2} \delta(\lambda)^{\gamma_1}\bar{\delta}^{\gamma_2}(\lambda)m(H(\lambda))$ can be written explicitly (see Lemma 2.3 of \cite{BagchiFourierMultipliersHeisenbergStudia} and Lemma 2.1 of \cite{MauceriWeylTransformJFA80}) as 
\begin{align*}
& |\lambda|^{-|\alpha|/2} \sum_{\gamma_3} C_{\gamma_1, \gamma_2, \gamma_3} |\lambda|^{-(|\gamma_2|+2|\gamma_3|-|\gamma_1|)/2} \left(\Delta_{-}^{\gamma_3} \Delta_{+}^{\gamma_2} m((2\mu + \tilde{a})|\lambda|)\right) \\ 
& \quad \quad \quad \quad \quad  \left(A^*(\lambda)^{\gamma_2+\gamma_3-\gamma_1}  A(\lambda)^{\gamma_3} \Phi_\mu^\lambda \right) (x^{\prime}) \, \Phi_{\mu}^\lambda (y^{\prime}),
\end{align*}
where $0 \leq |\gamma_3| \leq |\gamma_1| \leq |\gamma_2 + \gamma_3| \leq |\alpha|$ and $|\gamma_1 + \gamma_2| = |\alpha|$. Clearly, $|\gamma_1 - \gamma_3| \leq |\alpha|/2$. 

Now, each term from the above summation is of the form 
\begin{flalign*}
& |\lambda|^{-|\alpha|/2} |\lambda|^{-|\gamma_2 + 2 \gamma_3 - \gamma_1|/2}  \sum_{\mu} C_{\mu} ((2|\mu|+n_1)|\lambda|)^{|\gamma_2 + 2  \gamma_3 -  \gamma_1|/2} \left(\Delta_{-}^{\gamma_3} \Delta_{+}^{\gamma_2} m((2\mu + \tilde{a})|\lambda|)\right) \\ 
& \quad \quad \quad \quad \quad \quad \quad \quad \quad \quad \quad \Phi_{\mu + \tilde{\mu}}^{\lambda}(x^{\prime}) \Phi_{\mu}^{\lambda}(y^{\prime}),
\end{flalign*}
for a bounded function $C_\mu$ of $\mu$-variable, which can be rewritten (using the fundamental theorem of calculus) as 
\begin{align*}
& |\lambda|^{-|\alpha|/2}|\lambda|^{-|\gamma_2 + 2 \gamma_3 - \gamma_1|/2} \sum_{\mu} C_{\mu} ((2|\mu|+n_1)|\lambda|)^{|\gamma_2 + 2 \gamma_3 - \gamma_1|/2} \\ 
&\quad \quad \quad \quad \quad \quad \quad \quad (2|\lambda|)^{|\gamma_3 + \gamma_2|} \left(\int_{[0,1]^{|\gamma_3 + \gamma_2|}} \partial^{\gamma_3 + \gamma_2}_{\tau} m((2\mu  + \tilde{a} +  \vec{c}(s))|\lambda|) \, ds\right) \, \Phi_{\mu + \tilde{\mu}}^{\lambda}(x^{\prime}) \Phi_{\mu}^{\lambda}(y^{\prime}) \\
& = C_{\gamma_2, \gamma_3} \sum_{\mu} C_{\mu, \tilde{a}, \vec{c}} ((2|\mu|+n_1)|\lambda|)^{|\gamma_2 + 2  \gamma_3 - \gamma_1|/2} \int_{[0,1]^{|\gamma_3 + \gamma_2|}} \partial^{\gamma_3 + \gamma_2}_{\tau} m((2\mu  + \tilde{a} + + \vec{c}(s))|\lambda|) \, ds \\ 
&\quad \quad \quad \quad \Phi_{\mu + \tilde{\mu}}^{\lambda}(x^{\prime}) \Phi_{\mu}^{\lambda}(y^{\prime}), \\ 
& = C_{\gamma_2, \gamma_3} \int_{[0,1]^{|\gamma_3 + \gamma_2|}} \sum_{\mu} C_{\mu, \tilde{a}, \vec{c}} \left(\tau^{(\gamma_2 + 2 \gamma_3 - \gamma_1)/2} \partial^{\gamma_3 + \gamma_2}_{\tau} m\right) ((2\mu  + \tilde{a} + + \vec{c}(s))|\lambda|) \Phi_{\mu + \tilde{\mu}}^{\lambda}(x^{\prime}) \Phi_{\mu}^{\lambda}(y^{\prime}) \, ds \\
& = C_{\gamma_2, \gamma_3} \int_{[0,1]^{|\alpha|}} \sum_{\mu} C_{\mu, \tilde{a}, \vec{c}} \left(\tau^{\frac{1}{2}(\gamma_1 + \gamma_2) - (\gamma_1 - \gamma_3)} \partial_{\tau}^{(\gamma_1 + \gamma_2)-(\gamma_1 - \gamma_3)} m\right) ((2\mu  + \tilde{a} + + \vec{c}(s))|\lambda|) \\ 
&\quad \quad \quad \quad \quad \Phi_{\mu + \tilde{\mu}}^{\lambda}(x^{\prime}) \Phi_{\mu}^{\lambda}(y^{\prime}) \, ds,
\end{align*}
where $0 \leq |\gamma_1 - \gamma_3| \leq |\alpha|/2$ and $C_{\mu, \tilde{a}, \vec{c}}$ is a bounded function of bounded function of indexing variables. 

The claimed form of Lemma \ref{first-layer-lem} holds true by suitably relabeling $\gamma_1, \gamma_2, \gamma_3$. 
\end{proof}

%%%%%%%%%%%%%%%%%%%
%%%%%%%%%%%%%%%%%%%

\begin{rem} \label{first-second-layer-rem} 
Analogous to Corollary \ref{lambda-diff-lem-cor}, one could rewrite Lemma \ref{first-layer-lem} for product symbol functions of the form $m_1((2\mu + \tilde{a})|\lambda|) \, m_2((2\mu + \tilde{b})|\lambda|)$ or $m_1((2\mu + \tilde{a})|\lambda|, - \lambda) \, m_2((2\mu + \tilde{b})|\lambda|, - \lambda)$. 
\end{rem}

%%%%%%%%%%%%%%%%%%%
%%%%%%%%%%%%%%%%%%%

We combine the results obtained thus far to analyse the action of $\left|x^\prime - y^\prime \right|^{4 N_1} \left|x^{\prime \prime} - y^{\prime \prime}\right|^{2 N_2}$ on kernel functions. 

\begin{lem} \label{weighted-kernel-estimate-3} 
Let $\tilde{a} \in \mathbb{R}^{n_1}$ be fixed and $m$ is a compactly supported smooth function on $(\R_+)^{n_1}\times(\R^{n_2}\setminus \{0\})$. For any $N_1, N_2\in \mathbb{N}$, we can write 
$$ \left|x^\prime - y^\prime \right|^{4 N_1} \left|x^{\prime \prime} - y^{\prime \prime} \right|^{2 N_2} \int_{\mathbb{R}^{n_2}} \sum_{\mu} m((2\mu + \tilde{a})|\lambda|, - \lambda) E_{\mu, \lambda} (x, y) \, d \lambda $$ 
as a finite linear combination of terms of the form 
\begin{align} \label{y-prime-last---}
& {x^{\prime}}^{\alpha_{1}}  \int_{[0,1]^{\nu_1} \times \Omega^{\nu_2} \times [0,1]^{4 N_1}} \int_{\mathbb{R}^{n_2}} \Theta_l(\lambda) \left(|\lambda| {y^{\prime}} \right)^{\alpha_{2}} \\ 
\nonumber & \sum_{\mu} C_{\mu, \tilde{a}, \vec{a}} \left(\tau^{\frac{1}{2} \theta_1} \partial_\tau^{\theta_2} \partial_{\kappa}^{\beta} m\right) ((2\mu + \tilde{a} + \vec{a}(\omega)) |\lambda|, - \lambda) \Phi_{\mu + \tilde{\mu}}^{\lambda}(x^{\prime}) \Phi_{\mu}^{\lambda}(y^{\prime}) e^{- i \lambda \cdot (x^{\prime \prime} - y^{\prime \prime})} \, g(\omega) \, d \lambda \, d \omega, 
\end{align}
where $\nu_1, \nu_2 \leq 2 N_2 - (|\beta| + l)$, $|\tilde{\mu}| \leq 4 N_1$, $ |\theta_1| + |\alpha_2| \leq |\theta_2| \leq 4 N_1 + 4 N_2 - 2 (|\beta| + l)$, $\left| \alpha_{1}  \right| \leq 2 N_2 - (|\beta| + l)$, $|\theta_2| - \frac{1}{2} |\theta_1| + \frac{1}{2}(|\alpha_1|-|\alpha_2|) = 2 N_1 + 2 N_2 - (|\beta| + l)$ and $\Theta_l$ is a continuous function on $\R^{n_2} \setminus \{0\}$ which is homogeneous of degree $-l$. 
\end{lem}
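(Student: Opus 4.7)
The plan is to prove the identity by a two-stage reduction: first absorb $|x^{\prime \prime} - y^{\prime \prime}|^{2N_2}$ into $\lambda$-derivatives via integration by parts and use Corollary~\ref{lambda-diff-lem-cor} to expand these derivatives; then absorb $|x^\prime - y^\prime|^{4N_1}$ (together with the residual $(x^\prime - y^\prime)$-factors produced in the first stage) via Lemma~\ref{first-layer-lem}.

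For the first stage I would multinomially expand $|x^{\prime \prime} - y^{\prime \prime}|^{2N_2} = \sum_{|\beta_0| = N_2} c_{\beta_0} (x^{\prime \prime} - y^{\prime \prime})^{2\beta_0}$, and use the identity $(x^{\prime \prime} - y^{\prime \prime})^{2\beta_0} e^{-i \lambda \cdot (x^{\prime \prime} - y^{\prime \prime})} = (-1)^{N_2} \partial_\lambda^{2\beta_0} e^{-i \lambda \cdot (x^{\prime \prime} - y^{\prime \prime})}$. Since $m$ has compact support, integration by parts in $\lambda$ carries no boundary terms, and transfers $\partial_\lambda^{2\beta_0}$ onto the Hermite-side sum. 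Corollary~\ref{lambda-diff-lem-cor}, applied with $m_2 \equiv 1$ (so that only the terms with $\theta_3 = 0$ and $\beta_2 = 0$ in that corollary survive) and $\lambda$-derivative of total order $2N_2$, then writes the output as a finite linear combination of expressions
\begin{align*}
& \Theta_l(\lambda) \, (x^\prime - y^\prime)^{\bar{\alpha}_1 - \bar{\alpha}_3} \, {x^{\prime}}^{\bar{\alpha}_1 + \bar{\alpha}_2 + \bar{\alpha}_3} \, (|\lambda| y^\prime)^{\bar{\alpha}_2} \, |\lambda|^{\bar{\nu}} \\
& \quad \times \int \sum_\mu \bigl( \tau^{\bar{\theta}_1} \partial_\tau^{\bar{\theta}_2} \partial_\kappa^{\beta} m \bigr) \bigl( (2\mu + \tilde{a} + \vec{a}(\omega)) |\lambda|, -\lambda \bigr) \, \Phi_\mu^\lambda(x^\prime) \Phi_\mu^\lambda(y^\prime) \, g(\omega) \, d\omega,
\end{align*}
with $0 \leq l < 2N_2$, $|\beta| + l \leq 2N_2$ (here $\beta$ plays the role of $\beta_1$ in Corollary~\ref{lambda-diff-lem-cor}), $|\bar{\theta}_2| \leq 4N_2 - 2(|\beta|+l)$, $2|\bar{\theta}_1| + |\bar{\alpha}_2| + 2\bar{\nu} \leq |\bar{\theta}_2|$, $\bar{\alpha}_3 \leq \bar{\alpha}_1$ coordinatewise, $2|\bar{\alpha}_1| + |\bar{\alpha}_2| \leq 2N_2 - (|\beta|+l)$, and the degree balance $|\bar{\theta}_2| + |\bar{\alpha}_1| - |\bar{\theta}_1| - \bar{\nu} = 2N_2 - (|\beta|+l)$.

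For the second stage I would expand $|x^\prime - y^\prime|^{4N_1} = \sum_{|\eta| = 2N_1} c_\eta (x^\prime - y^\prime)^{2\eta}$, multiply with the residual $(x^\prime - y^\prime)^{\bar{\alpha}_1 - \bar{\alpha}_3}$ to get $(x^\prime - y^\prime)^{2\eta + \bar{\alpha}_1 - \bar{\alpha}_3}$, and absorb the latter using Lemma~\ref{first-layer-lem} (in the extended form of Remark~\ref{first-second-layer-rem} that accommodates the pre-existing $\tau^{\bar{\theta}_1}$ factor, the $\partial_\kappa^{\beta}$ derivative, and the parameter shift $\tilde{a} + \vec{a}(\omega)$). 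Before invoking Lemma~\ref{first-layer-lem}, I rewrite $|\lambda|^{\bar{\nu}}$ using Remark~\ref{rem-indices-clarity} as a bounded-in-$\mu$ coefficient times $\tau^{\alpha_3^\prime}$ with $|\alpha_3^\prime| = \bar{\nu}$, merging it into $\tau^{\bar{\theta}_1}$. Lemma~\ref{first-layer-lem} then introduces a new $\tau^{\frac{1}{2}\gamma_2} \partial_\tau^{\gamma_1}$ with $|\gamma_1| - \frac{1}{2}|\gamma_2| = 2N_1 + \frac{1}{2}(|\bar{\alpha}_1| - |\bar{\alpha}_3|)$, an extra $[0,1]^{4N_1}$-integration, and a shift $\Phi_\mu^\lambda(x^\prime) \mapsto \Phi_{\mu + \tilde{\mu}}^\lambda(x^\prime)$ with $|\tilde{\mu}| \leq 4N_1$. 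Applying Leibniz's rule to merge $\tau^{\frac{1}{2}\gamma_2} \partial_\tau^{\gamma_1}$ with the pre-existing $\tau^{\bar{\theta}_1 + \alpha_3^\prime} \partial_\tau^{\bar{\theta}_2}$ and relabeling $\alpha_1 := \bar{\alpha}_1 + \bar{\alpha}_2 + \bar{\alpha}_3$, $\alpha_2 := \bar{\alpha}_2$ produces the claimed form.

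The main obstacle is the careful index bookkeeping to verify the constraints asserted in the statement, in particular the delicate equality $|\theta_2| - \frac{1}{2}|\theta_1| + \frac{1}{2}(|\alpha_1| - |\alpha_2|) = 2N_1 + 2N_2 - (|\beta|+l)$. This arises by summing the degree identity $|\bar{\theta}_2| - |\bar{\theta}_1| = 2N_2 - (|\beta|+l) - |\bar{\alpha}_1| + \bar{\nu}$ from Corollary~\ref{lambda-diff-lem-cor} with the identity $|\gamma_1| - \frac{1}{2}|\gamma_2| = 2N_1 + \frac{1}{2}(|\bar{\alpha}_1| - |\bar{\alpha}_3|)$ from Lemma~\ref{first-layer-lem}, cancelling the $\pm \bar{\nu}$ contributions (the $+\bar{\nu}$ from the Corollary against the $-\bar{\nu}$ coming from merging $|\lambda|^{\bar{\nu}}$ into $\tau^{\alpha_3^\prime}$), observing that the Leibniz-index cancels between $|\theta_2|$ and $\frac{1}{2}|\theta_1|$, and using $|\alpha_1| - |\alpha_2| = |\bar{\alpha}_1| + |\bar{\alpha}_3|$. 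The accompanying inequalities $|\theta_1| + |\alpha_2| \leq |\theta_2| \leq 4N_1 + 4N_2 - 2(|\beta|+l)$, $|\alpha_1| \leq 2N_2 - (|\beta|+l)$ (this last one from $|\alpha_1| \leq 2|\bar{\alpha}_1| + |\bar{\alpha}_2|$), and $|\tilde{\mu}| \leq 4N_1$ follow from the analogous bounds supplied by the two cited results.
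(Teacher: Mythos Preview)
Your two-stage strategy (integration by parts in $\lambda$, then Corollary~\ref{lambda-diff-lem-cor}, then Lemma~\ref{first-layer-lem}, then Leibniz and absorption of $|\lambda|^{\bar\nu}$) is exactly the route the paper takes, and your verification of the degree identity $|\theta_2| - \tfrac12|\theta_1| + \tfrac12(|\alpha_1|-|\alpha_2|) = 2N_1 + 2N_2 - (|\beta|+l)$ is correct.

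There is, however, one point where your variant deviates from the paper and produces a weaker outcome than the lemma asserts. You choose to merge the residual factor $(x'-y')^{\bar\alpha_1 - \bar\alpha_3}$ coming out of Corollary~\ref{lambda-diff-lem-cor} with $|x'-y'|^{4N_1}$ and then apply Lemma~\ref{first-layer-lem} to the combined exponent $2\eta + \bar\alpha_1 - \bar\alpha_3$. But Lemma~\ref{first-layer-lem} then gives an $[0,1]^{|\alpha|}$-integration and a shift $|\tilde\mu| \leq |\alpha|$ with $|\alpha| = 4N_1 + |\bar\alpha_1 - \bar\alpha_3|$, not $4N_1$. Since $|\bar\alpha_1 - \bar\alpha_3|$ can be strictly positive, your route yields only $|\tilde\mu| \leq 4N_1 + |\bar\alpha_1 - \bar\alpha_3|$ and an $[0,1]^{4N_1 + |\bar\alpha_1 - \bar\alpha_3|}$ integration domain, so the bounds $|\tilde\mu|\le 4N_1$ and the $[0,1]^{4N_1}$ factor in \eqref{y-prime-last---} do not follow from your argument as written.

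The paper avoids this by \emph{not} using the binomially expanded form of Corollary~\ref{lambda-diff-lem-cor}: it keeps the $({x'}^{\alpha_1}{y'}^{\alpha_1}){x'}^{\alpha_2}$ prefactor coming straight from Proposition~\ref{prop:lambda-diff-theorem}, applies Lemma~\ref{first-layer-lem} only to $|x'-y'|^{4N_1}$ (so $|\tilde\mu|\le 4N_1$ and $[0,1]^{4N_1}$ come for free), and then in the final ``Leibniz and absorb $|\lambda|^\nu$'' step relabels the $x'$- and $(|\lambda|y')$-powers into the $\alpha_1,\alpha_2$ of \eqref{y-prime-last---}. If you want the sharper bound $|\tilde\mu|\le 4N_1$ exactly as stated, you should follow that order; your version proves a slightly weaker (but for all downstream purposes equally usable) statement.
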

\begin{proof} Integrating by parts, we get 
\begin{align*}
& \left|x^{\prime \prime} - y^{\prime \prime}\right|^{2 N_2} \int_{\mathbb{R}^{n_2}} \sum_{\mu} m((2\mu + \tilde{a})|\lambda|, - \lambda)  E_{\mu, \lambda} (x, y) \, d \lambda \\ 
& = (-1)^{N_2} \int_{\mathbb{R}^{n_2}} \left\{ \left(\sum_{j=1}^{n_2} \frac{\partial^2}{\partial \lambda_j^2} \right) \left(\sum_{\mu} m((2\mu + \tilde{a})|\lambda|, - \lambda) \Phi_{\mu}^{\lambda}(x^{\prime}) \Phi_{\mu}^{\lambda}(y^{\prime})\right) \right\} e^{- i \lambda \cdot (x^{\prime \prime} - y^{\prime \prime})} \, d \lambda. 
\end{align*} 
Clearly, the integrand in the parenthesis in the last expression is a finite linear combination of terms of the form 
$$ \frac{\partial^{\beta}}{\partial \lambda^{\beta}} \left\{\sum_{\mu} m((2\mu + \tilde{a})|\lambda|, - \lambda) \, \Phi_{\mu}^{\lambda}(x^{\prime}) \Phi_{\mu}^{\lambda}(y^{\prime}) \right\} $$
with $|\beta| = 2 N_2$. We can estimate each such term with the help of Corollary \ref{lambda-diff-lem-cor} (with $m_1 = m$ and without the second multiplier function $m_2$). 

Finally, using Lemma \ref{first-layer-lem}, one can write 
$$\left|x^\prime - y^\prime \right|^{4 N_1} \left|x^{\prime \prime} - y^{\prime \prime}\right|^{2 N_2} \int_{\mathbb{R}^{n_2}} \sum_{\mu} m((2\mu + \tilde{a})|\lambda|, - \lambda) E_{\mu, \lambda} (x, y) \, d \lambda$$ 
as a finite linear combination of terms of the form
\begin{align*} 
&  \left({x^{\prime}}^{\alpha_{1}} {y^{\prime}}^{\alpha_{1}} \right) {x^{\prime}}^{\alpha_{2}} \iiint\limits_{[0,1]^{\nu_1} \times \Omega^{\nu_2} \times [0,1]^{4 N_1}} \int_{\mathbb{R}^{n_2}} \Theta_l(\lambda) \left(|\lambda| {y^{\prime}} \right)^{\alpha_{2}} |\lambda|^{\nu} \sum_{\mu} C_{\mu, \tilde{a}, \vec{a}} \\ 
& \left( \tau^{\gamma_1 - \gamma_2} \partial_{\tau}^{2 \gamma_1 - \gamma_2} \left( \tau^{\theta_1} \partial_\tau^{\theta_2} \partial_\kappa^{\beta_1} m \right)\right) ((2\mu + \tilde{a} + \vec{a}(\omega))|\lambda|, - \lambda) \Phi_{\mu + \tilde{\mu}}^{\lambda}(x^{\prime}) \Phi_{\mu}^{\lambda}(y^{\prime}) e^{- i \lambda \cdot (x^{\prime \prime} - y^{\prime \prime})} \, g(\omega) \, d \lambda \, d\omega, 
\end{align*}
where $|\tilde{\mu}| \leq 4 N_1$ and $C_{\mu, \tilde{a}, \vec{a}}$ is a bounded function of indexing variables. 

Using Leibniz formula for derivatives with respect to $\tau$-variable, and absorbing $|\lambda|^{\nu}$ in the powers of $\tau^{\theta_1}$ (as already seen in Remark \ref{rem-indices-clarity}), one can  verify that the above expression can be rewritten as a finite linear combination of the terms which are of the form as claimed in \eqref{y-prime-last---}, and this completes the proof of Lemma \ref{weighted-kernel-estimate-3}. 
\end{proof}

%%%%%%%%%%%%%%%%%%%
%%%%%%%%%%%%%%%%%%%

\begin{rem} \label{rem2-indices-clarity} 
Following the discussion of Remark \ref{rem-indices-clarity}, one can verify that  \eqref{y-prime-last---} is of the form 
\begin{align*} 
& {x^{\prime}}^{\alpha_{1}}\int_{[0,1]^{\nu_1} \times \Omega^{\nu_2} \times [0,1]^{4 N_1}} \int_{\mathbb{R}^{n_2}} \Theta_l(\lambda) \sum_{\mu} C_{\mu, \tilde{a}, \vec{a}} \left(\tau^{\frac{1}{2} \theta_1} \partial_\tau^{\theta_2}\partial_{\kappa}^{\beta} m\right) ((2\mu + \tilde{a} + \vec{a}(\omega)) |\lambda|, - \lambda) \\ 
\nonumber & \quad \quad \quad \quad \quad \quad \quad \quad \quad \quad \quad \quad \quad \quad \quad \quad \quad \Phi_{\mu + \tilde{\mu}}^{\lambda}(x^{\prime}) \Phi_{\mu}^{\lambda}(y^{\prime}) e^{- i \lambda \cdot (x^{\prime \prime} - y^{\prime \prime})} \, g(\omega) \, d \lambda \, d\omega, 
\end{align*}
where $\nu_1, \nu_2 \leq 2 N_2 - (|\beta| + l)$, $| \alpha_1 | \leq 2 N_2 - (|\beta|+l)$, $ |\mathcal{\theta}_1| \leq |\mathcal{\theta}_2| \leq 4 N_1+ 4 N_2 - 2  (|\beta|+l)$, $|\mathcal{\theta}_2| - \frac{1}{2} |\mathcal{\theta}_1| = 2 N_1 + 2 N_2 -  (|\beta|+l)- \frac{|\alpha_1|}{2} \geq 2 N_1 + N_2 - \frac{1}{2} (|\beta|+l)$, $C_{\mu, \tilde{a}, \vec{a}}$ is a bounded function of $\mu$ and $\vec{a}$, and $\Theta_l$ is a continuous function on $\R^{n_2} \setminus \{0\}$ which is homogeneous of degree $-l$.
\end{rem} 

%%%%%%%%%%%%%%%%%%%
%%%%%%%%%%%%%%%%%%%
\begin{rem} \label{weighted-kernel-estimate-cor} 
Analogous to Corollary \ref{lambda-diff-lem-cor}, one could rewrite Lemma \ref{weighted-kernel-estimate-3} (and Remark \ref{rem2-indices-clarity}) for product symbol functions of the form $m_1((2\mu + \tilde{a})|\lambda|, - \lambda) \, m_2((2\mu + \tilde{b})|\lambda|, - \lambda)$. 
\end{rem}

%%%%%%%%%%%%%%%%%%%%%%%%%%%%%%%%%%%%%%%%
%%%%%%%%%%%%%%%%%%%%%%%%%%%%%%%%%%%%%%%%
%%%%%%%%%%%%%%%%%%%%%%%%%%%%%%%%%%%%%%%%
%%%%%%%%%%%%%%%%%%%%%%%%%%%%%%%%%%%%%%%%
%%%%%%%%%%%%%%%%%%%%%%%%%%%%%%%%%%%%%%%%
%%%%%%%%%%%%%%%%%%%%%%%%%%%%%%%%%%%%%%%%
%%%%%%%%%%%%%%%%%%%%%%%%%%%%%%%%%%%%%%%%
%%%%%%%%%%%%%%%%%%%%%%%%%%%%%%%%%%%%%%%%
%%%%%%%%%%%%%%%%%%%%%%%%%%%%%%%%%%%%%%%%
%%%%%%%%%%%%%%%%%%%%%%%%%%%%%%%%%%%%%%%%
%%%%%%%%%%%%%%%%%%%%%%%%%%%%%%%%%%%%%%%%
%%%%%%%%%%%%%%%%%%%%%%%%%%%%%%%%%%%%%%%%
%%%%%%%%%%%%%%%%%%%%%%%%%%%%%%%%%%%%%%%%
%%%%%%%%%%%%%%%%%%%%%%%%%%%%%%%%%%%%%%%%

\section{An application to weighted Plancherel estimates} \label{Sec:weighted-Planch}
In this section, we shall prove Theorem \ref{thm:weighted-Plancherel-L2}. For that, recall first from the asymptotic behaviour of the distance function $\tilde{d}$ that in order to study the action of $\tilde{d}(x,y)^{4N}$ on the integral kernels, we need to look at the action of $|x^{\prime \prime} - y^{\prime \prime}|^{2N}$ and $|x^{\prime \prime} - y^{\prime \prime}|^{4N} / (|x^{\prime}| + |y^{\prime}|)^{4N}$ in two different regimes. Now, we have seen that $|x^{\prime \prime} - y^{\prime \prime}|^{2N}$ leads to $2N$ number of derivatives with respect to the $\lambda$-variable and this further amounts to a maximum of $4N$ number of derivatives of the symbol $m$ in $\tau$-variable. So, in this case, the number of $\tau$-derivatives on $m$ are at most equal to the power of the distance function. But then in the other regime, if we apply the same $\lambda$-derivative analysis for $|x^{\prime \prime} - y^{\prime \prime}|^{4N} / (|x^{\prime} + y^{\prime}|^{4N})$, then the number of $\tau$-derivatives on $m$ could become double of the power of the distance function. Since the main purpose of Theorem \ref{thm:weighted-Plancherel-L2} is to have sharpness in terms of the number of derivatives on the symbol function, we must revisit the action of the powers of $|x^{\prime \prime} - y^{\prime \prime}|$ on the integral kernels, aiming to control the number of $\tau$-derivatives in the above mentioned second regime.

\begin{prop} \label{prop:lambda-diff-lem-half-derivative-full-dim}
Let $\tilde{a} \in \mathbb{R}^{n_1}$ be fixed and $m$ a compactly supported smooth function on $(\R_+)^{n_1}\times(\R^{n_2}\setminus \{0\})$. For any $N \in \mathbb{N}$, we can write 
$$ \left| x^{\prime \prime} - y^{\prime \prime} \right|^{2N} \int_{\mathbb{R}^{n_2}} \sum_{\mu} m((2\mu + \tilde{a} + \tilde{1})|\lambda|, - \lambda) E_{\mu, \lambda} (x, y) \, d \lambda $$ 
as a finite linear combination of terms of the form 
\begin{align} \label{lambda-diff-eq-3} 
& \left( x^{\prime \prime} - y^{\prime \prime} \right)^{\beta_1} \left({x^{\prime}}^{\alpha_{1}} {y^{\prime}}^{\alpha_{1}} \right) {x^{\prime}}^{\alpha_{2}} \int_{[0,1]^{\nu_1} \times \Omega^{\nu_2}} \int_{\mathbb{R}^{n_2}} \Theta_l (\lambda) \left( |\lambda| {y^{\prime}} \right)^{\alpha_{2}} |\lambda|^{\nu} \\ 
\nonumber & \quad \quad \quad \quad \sum_{\mu} \left(\tau^{\theta_1} \partial_\tau^{\theta_2} \partial_{\kappa}^{\beta_2} m \right) ((2\mu + \tilde{a} + \vec{a}(\omega)) |\lambda|, - \lambda) E_{\mu, \lambda} (x, y) \, d \lambda \, d \omega, 
\end{align} 
where $|\beta_1| \leq N$, $l < 2N - |\beta_1|$, $ 2 |\theta_1| + |\alpha_2| + 2 \nu \leq |\theta_2| \leq 2N$, $\left|2 \alpha_{1} + \alpha_{2} \right| \leq 2N - 2 |\beta_1|$, and $\left( |\beta_1| + |\beta_2| + |\theta_2| + |\alpha_1| + l \right) - \left( |\theta_1| + \nu \right) = 2N$ and $\Theta_l$ is a continuous function on $\R^{n_2} \setminus \{0\}$ which is homogeneous of degree $-l$. 
\end{prop}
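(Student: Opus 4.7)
The plan is to mimic the strategy of Lemma~\ref{weighted-kernel-estimate-3} applied with $N_1=0,\,N_2=N$, but with a key refinement: rather than converting all of $|x''-y''|^{2N}$ into $\lambda$-derivatives (which would yield $|\theta_2|\leq 4N$), I would keep some factors $(x''-y'')^{\beta_1}$ outside the $\lambda$-integral. Each kept factor saves one $\lambda$-derivative and hence up to two $\tau$-derivatives, which is precisely what is needed to enforce $|\theta_2| \leq 2N$ rather than $4N$. This refinement is the whole point of Proposition~\ref{prop:lambda-diff-lem-half-derivative-full-dim}, as noted in the preamble to this section.

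Concretely, I would first expand via the multinomial theorem $|x''-y''|^{2N} = \sum_{|\beta|=N}\binom{N}{\beta}(x''-y'')^{2\beta}$, and for each fixed $\beta$ with $|\beta|=N$ write $(x''-y'')^{2\beta} = (x''-y'')^{\beta_1}(x''-y'')^{\beta_3}$ with $\beta_1+\beta_3 = 2\beta$. The outer factor $(x''-y'')^{\beta_1}$ becomes the $\beta_1$-factor in \eqref{lambda-diff-eq-3}, while for the inner factor I would use the identity $(x''-y'')^{\beta_3} e^{-i\lambda\cdot(x''-y'')} = (i\partial_\lambda)^{\beta_3} e^{-i\lambda\cdot(x''-y'')}$ together with $|\beta_3|$-fold integration by parts (valid because $m$ is compactly supported away from $\lambda=0$, so that no boundary terms arise) to replace the inner $(x''-y'')^{\beta_3}$ by $(-i)^{|\beta_3|}\partial_\lambda^{\beta_3}$ acting on $\sum_\mu m((2\mu+\tilde{a}+\tilde{1})|\lambda|,-\lambda)\Phi_\mu^\lambda(x')\Phi_\mu^\lambda(y')$. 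I would then apply Corollary~\ref{lambda-diff-lem-cor} in its single-multiplier version (formally setting the auxiliary multiplier to $1$) to decompose this $\lambda$-derivative into a finite linear combination of terms of the form \eqref{lambda-diff-eq-3}. The constraints of Corollary~\ref{lambda-diff-lem-cor} (specifically $|\theta_2|\leq 2|\beta_3|$, together with the conservation identity $(|\theta_2|+|\alpha_1|)-(|\theta_1|+\nu)=|\beta_3|-(|\beta_2|+l)$), combined with the outer $(x''-y'')^{\beta_1}$, then yield the global weight equation $(|\beta_1|+|\beta_2|+|\theta_2|+|\alpha_1|+l)-(|\theta_1|+\nu)=|\beta_1|+|\beta_3|=2N$, and the bound $|\theta_2|\leq 2|\beta_3|=2(2N-|\beta_1|)$ together with $|\beta_1|\leq N$ forces $|\beta_1|=N$ (hence $|\beta_3|=N$) whenever $|\theta_2|$ is close to saturation.

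The most delicate point is enforcing the tight constraint $|2\alpha_1+\alpha_2|\leq 2N-2|\beta_1|$: Corollary~\ref{lambda-diff-lem-cor} on its own yields only the weaker bound $|2\alpha_1+\alpha_2|\leq |\beta_3|-(|\beta_2|+l)$. To extract the stronger bound I would further invoke the Hermite-function identities employed in the proof of Proposition~\ref{prop:lambda-diff-theorem} (and collected in Remark~\ref{rem-indices-clarity} and in the proof of Lemma~\ref{first-layer-lem}): namely, $(|\lambda|y'_j)\Phi_\mu^\lambda(y')$ can be absorbed into $\tau^{1/2}$-factors and shifts $\Phi_{\mu+\tilde{\mu}}^\lambda(y')$, and analogous identities hold for $x'_j\Phi_\mu^\lambda(x')$; the power $|\lambda|^\nu$ can in turn be rewritten as $|2\mu+\vec{a}(\omega)+\tilde{1}|_1^{-\nu}\tau^{\alpha_3}$ with $|\alpha_3|=\nu$. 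This absorption trades excess powers of $(|\lambda|y')^{\alpha_2}$, $|\lambda|^\nu$ and $x'^{\alpha_2}$ for shifts in $\mu$ and additional powers of $\tau$ that can be moved into the multi-index $\theta_1$, while simultaneously allowing $|\beta_1|$ to be decreased from $N$ for those specific terms with residual $x'$, $y'$ content. The combinatorial bookkeeping of how these absorptions redistribute among the admissible indices, so that each resulting term sits inside the form \eqref{lambda-diff-eq-3} with $|2\alpha_1+\alpha_2|\leq 2N-2|\beta_1|$, is the main technical obstacle; once it is carried out, the remaining inequalities $l<2N-|\beta_1|$ and $2|\theta_1|+|\alpha_2|+2\nu\leq|\theta_2|$ follow directly from the analogous bounds in Corollary~\ref{lambda-diff-lem-cor} and Proposition~\ref{prop:lambda-diff-theorem}.
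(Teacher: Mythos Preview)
Your approach has a genuine gap. The heart of the matter is that you try to fix a \emph{global} split $(x''-y'')^{2\beta}=(x''-y'')^{\beta_1}(x''-y'')^{\beta_3}$ in advance and then patch up the index constraints afterwards. This cannot work for the form \eqref{lambda-diff-eq-3} as stated. To get $|\theta_2|\leq 2N$ from Corollary~\ref{lambda-diff-lem-cor} you are forced (as you note) to take $|\beta_1|=N$, $|\beta_3|=N$. But Corollary~\ref{lambda-diff-lem-cor} then produces terms with $|2\alpha_1+\alpha_2|$ as large as $N$, whereas the proposition demands $|2\alpha_1+\alpha_2|\leq 2N-2|\beta_1|=0$. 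Your proposed fix---absorbing $(|\lambda|y')^{\alpha_2}$ and $|\lambda|^\nu$ into $\tau$-powers and Hermite shifts---destroys the required form: it replaces $\Phi_\mu^\lambda(y')$ by $\Phi_{\mu+\tilde\mu}^\lambda(y')$, so the resulting terms are no longer multiples of $E_{\mu,\lambda}(x,y)$. And ``allowing $|\beta_1|$ to be decreased for those specific terms'' is not an operation you can perform after the fact: $(x''-y'')^{\beta_1}$ is an honest prefactor, and shrinking it means converting part of it back into $\lambda$-derivatives, which re-introduces new $\alpha_1,\alpha_2$ content via Proposition~\ref{prop:lambda-diff-theorem}.

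What the paper actually does is an \emph{adaptive, inductive} procedure. For $N=1$ one applies a single $\partial_{\lambda_k}$ and obtains (besides the $\partial_\kappa m$ term) the four prototype pieces $\mathcal{J}_0,\mathcal{J}_1,\mathcal{J}_2,\mathcal{J}_3$ coming from the $N=1$ case of Proposition~\ref{prop:lambda-diff-theorem}. The pieces $\mathcal{J}_0,\mathcal{J}_1,\mathcal{J}_2$ already carry enough $\tau$-structure, so the remaining factor $(x_k''-y_k'')$ is \emph{kept outside} (contributing $|\beta_1|=1$). The piece $\mathcal{J}_3$, which carries the $x_j'y_j'|\lambda|$ prefactor and only one $\tau$-derivative, is instead hit with the second $\partial_{\lambda_k}$ (so $|\beta_1|=0$ for its descendants); the extra $\partial_{|\lambda|}$ on $\mathcal{J}_3$ is then expanded using the same $I_1,I^2_{3,4,+},I^1_{3,4,+}+I^1_{2,5,+}$ analysis. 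The induction step repeats this term-by-term decision. In short, the split between ``kept $(x''-y'')$'' and ``converted to $\partial_\lambda$'' is made \emph{per term, after seeing its $\tau$- and $x',y'$-content}, not globally. That adaptive choice is exactly what enforces the tight coupling $|2\alpha_1+\alpha_2|\leq 2N-2|\beta_1|$.
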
 

\begin{rem} \label{rem:homogeneity-dim-1}
When $n_2 = 1$, the homogeneity $l$ of $\Theta_l$ in the above proposition is always $0$. 
\end{rem}

\begin{proof}[Proof of Proposition \ref{prop:lambda-diff-lem-half-derivative-full-dim}] We build the proof of this lemma by carefully analysing the prototype terms from the proof of Proposition \ref{prop:lambda-diff-theorem}, using which Proposition \ref{prop:lambda-diff-lem-half-derivative-full-dim} can be proved by mathematical induction. First, we show that it is true for $N=1$, which also gives us an insight as to why an expression of the type \eqref{lambda-diff-eq-3} should hold true. 

Note that an application of the integration by parts give 
\begin{align}
& \left( x_k^{\prime \prime} - y_k^{\prime \prime} \right) \int_{\mathbb{R}^{n_2}} \sum_{\mu} m((2\mu + \tilde{a})|\lambda|, - \lambda) E_{\mu, \lambda} (x, y) \, d \lambda \\ 
\nonumber & = i \int_{\mathbb{R}^{n_2}} \left( \frac{\partial}{\partial \lambda_k} \sum_{\mu} m((2\mu + \tilde{a})|\lambda|, - \lambda) \Phi_{\mu}^{\lambda}(x^{\prime}) \Phi_{\mu}^{\lambda}(y^{\prime})\right) e^{- i \lambda \cdot (x^{\prime \prime} - y^{\prime \prime})} \, d \lambda \\
\nonumber & = i \int_{\mathbb{R}^{n_2}} \sum_{\mu} \partial_{\kappa}^{e_k} m ((2\mu + \tilde{a})|\lambda|, - \lambda) \Phi_{\mu}^{\lambda}(x^{\prime}) \Phi_{\mu}^{\lambda}(y^{\prime}) e^{- i \lambda \cdot (x^{\prime \prime} - y^{\prime \prime})} \, d \lambda \\ 
\nonumber & \quad + i \int_{\mathbb{R}^{n_2}} \frac{\lambda_k}{|\lambda|} \left. \left( \frac{\partial}{\partial |\lambda|} \sum_{\mu} m((2\mu + \tilde{a})|\lambda|, \kappa) \Phi_{\mu}^{\lambda}(x^{\prime}) \Phi_{\mu}^{\lambda}(y^{\prime}) \right) \right|_{\kappa = - \lambda} e^{- i \lambda \cdot (x^{\prime \prime} - y^{\prime \prime})} \, d \lambda. 
\end{align}

Clearly, the first term is of the claimed form. As for the second term, recall that while applying $\frac{\partial}{\partial |\lambda|}$ on the kernel, we get four prototype terms, namely, $I_{0, N+1}$ from \eqref{induction-est-2}, the two terms from \eqref{I11I234}, and $I^1_{3,4,+} + I^1_{2,5,+}$ from \eqref{I134I2I5}. We would like to point out that $I_{0, N+1}$ comes simply because of the presence of $\tilde{a}$ in the symbol function. For convenience, we write these terms below with new name tags: 
\begin{align} \label{prop:sharp-dim-der-joint-calc}
\mathcal{J}_0^{\lambda} (x^{\prime}, y^{\prime}) & : = \tilde{a}_j \sum_{\mu} \left(\partial_{\tau_j} m \right) ((2\mu + \tilde{a} + \tilde{1}) |\lambda| , - \lambda) \Phi_{\mu}^{\lambda}(x^{\prime}) \Phi_{\mu}^{\lambda}(y^{\prime}) \\ 
\nonumber \mathcal{J}_1^{\lambda} (x^{\prime}, y^{\prime}) & : = \int_{-1}^0 \int_{s}^0 \sum_{\mu}  \left(\tau_j \partial^2_{\tau_j} m\right) ((2\mu + \vec{a}(s, s^{\prime}) + \tilde{1}) |\lambda| , - \lambda) \Phi_{\mu}^{\lambda}(x^{\prime}) \Phi_{\mu}^{\lambda}(y^{\prime}) \, g(s, s^\prime) \, ds^\prime \, ds \\ 
\nonumber \mathcal{J}_2^{\lambda} (x^{\prime}, y^{\prime}) & : = \int_{-1}^0 \int_{s}^0 \sum_{\mu} |\lambda| \left(\partial^2_{\tau_j} m\right) ((2\mu + \vec{b}(s, s^{\prime}) + \tilde{1}) |\lambda| , - \lambda) \, \Phi_{\mu}^{\lambda}(x^{\prime}) \Phi_{\mu}^{\lambda}(y^{\prime}) \, g(s, s^\prime) \, ds^\prime \, ds \\ 
\nonumber \mathcal{J}_3^{\lambda} (x^{\prime}, y^{\prime}) & : = (x_j^{\prime} y_j^{\prime}) |\lambda| \int_{0}^1 \sum_\mu \left( \partial_{\tau_j} m \right) ((2\mu + \vec{c}(s) + \tilde{1}) |\lambda| , - \lambda)  \Phi_{\mu}^{\lambda}(x^{\prime}) \Phi_{\mu}^{\lambda}(y^{\prime}) \, g(s) \, ds. 
\end{align}

Now, note that we already have two $\tau$-derivatives on the symbol function $m$ in $\mathcal{J}_1^{\lambda} (x^{\prime}, y^{\prime})$ and $\mathcal{J}_2^{\lambda} (x^{\prime}, y^{\prime})$. So, we do note perform the next $|\lambda|$-derivative on these terms. We do not take $|\lambda|$-derivative of $\mathcal{J}_0^{\lambda} (x^{\prime}, y^{\prime})$ too. That is, we keep the three terms $\left( x_k^{\prime \prime} - y_k^{\prime \prime} \right) \mathcal{J}_0^{\lambda} (x^{\prime}, y^{\prime})$, $\left( x_k^{\prime \prime} - y_k^{\prime \prime} \right) \mathcal{J}_1^{\lambda} (x^{\prime}, y^{\prime})$, and $\left( x_k^{\prime \prime} - y_k^{\prime \prime} \right) \mathcal{J}_2^{\lambda} (x^{\prime}, y^{\prime})$ as it is. Similarly, we do keep the term in  \eqref{prop:sharp-dim-der-joint-calc} involving $\partial_{\kappa}^{e_k} m$ also as it is. In this way, these four are of the claimed form with 
\begin{itemize}
\item $\left( x_k^{\prime \prime} - y_k^{\prime \prime} \right) \partial_{\kappa}^{e_k} m$ : $|\beta_1| = 1, |\beta_2| = 1$;

\item $\left( x_k^{\prime \prime} - y_k^{\prime \prime} \right) \mathcal{J}_0^{\lambda} (x^{\prime}, y^{\prime})$ : $|\beta_1| = 1, |\theta_2| = 1$;

\item $\left( x_k^{\prime \prime} - y_k^{\prime \prime} \right) \mathcal{J}_1^{\lambda} (x^{\prime}, y^{\prime})$ : $|\beta_1| = 1, |\theta_2| = 2, |\theta_1| = 1$;

\item $\left( x_k^{\prime \prime} - y_k^{\prime \prime} \right) \mathcal{J}_2^{\lambda} (x^{\prime}, y^{\prime})$ : $|\beta_1| = 1, |\theta_2| = 2, \nu = 1$.
\end{itemize}

For computational purposes, we introduce the notation 
$$ \mathcal{J}_3^{\lambda, \kappa} (x^{\prime}, y^{\prime}) := (x_j^{\prime} y_j^{\prime}) |\lambda| \int_{0}^1 \sum_\mu \left( \partial_{\tau_j} m \right) ((2\mu + \vec{c}(s) + \tilde{1}) |\lambda| , \kappa)  \Phi_{\mu}^{\lambda}(x^{\prime}) \Phi_{\mu}^{\lambda}(y^{\prime}) \, g(s) \, ds, $$
so that $\mathcal{J}_3^{\lambda} = \mathcal{J}_3^{\lambda, - \lambda} $. 

Now, 
\begin{align*}
& \left( x_k^{\prime \prime} - y_k^{\prime \prime} \right) \int_{\mathbb{R}^{n_2}} \frac{\lambda_k}{|\lambda|} \mathcal{J}_3^{\lambda, - \lambda} (x^{\prime}, y^{\prime}) e^{- i \lambda \cdot (x^{\prime \prime} - y^{\prime \prime})} \, d\lambda \\ 
& = i \int_{\mathbb{R}^{n_2}} \frac{\partial}{\partial \lambda_k} \left( \frac{\lambda_k}{|\lambda|} \mathcal{J}_3^{\lambda, - \lambda} (x^{\prime}, y^{\prime}) \right) e^{- i \lambda \cdot (x^{\prime \prime} - y^{\prime \prime})} \, d \lambda \\ 
& = i \int_{\mathbb{R}^{n_2}} \left. \left\{ \frac{\lambda_k}{|\lambda|} \partial_{ \kappa}^{e_k} \mathcal{J}_3^{\lambda, \kappa} (x^{\prime}, y^{\prime}) + \left( \frac{1}{|\lambda|} - \frac{\lambda_k^2}{|\lambda|^3} \right) \mathcal{J}_3^{\lambda, \kappa} (x^{\prime}, y^{\prime}) + \frac{\lambda_k^2}{|\lambda|^2} \frac{\partial}{\partial |\lambda|} \mathcal{J}_3^{\lambda, \kappa} (x^{\prime}, y^{\prime}) \right\} \right|_{\kappa = - \lambda} \\ 
& \quad \quad \quad \quad e^{- i \lambda \cdot (x^{\prime \prime} - y^{\prime \prime})} \, d \lambda. 
\end{align*}
So, we are left with analysing $\frac{\partial}{\partial |\lambda|} \mathcal{J}_3^{\lambda, \kappa} (x^{\prime}, y^{\prime})$ as the other terms in the above expression are of the claimed form. Now,

\begin{align*}
& \frac{\partial}{\partial |\lambda|} \mathcal{J}_3^{\lambda, \kappa} (x^{\prime}, y^{\prime}) \\ 
& = (x_j^{\prime} y_j^{\prime}) \int_{0}^1 \sum_\mu \left( \partial_{\tau_j} m \right) ((2\mu + \vec{c}(s) + \tilde{1}) |\lambda|, \kappa)  \Phi_{\mu}^{\lambda}(x^{\prime}) \Phi_{\mu}^{\lambda}(y^{\prime}) \, g(s) \, ds \\ 
& \quad + (x_j^{\prime} y_j^{\prime}) |\lambda| \int_{0}^1 \frac{\partial}{\partial |\lambda|} \left\{ \sum_\mu \left( \partial_{\tau_j} m \right) ((2\mu + \vec{c}(s) + \tilde{1}) |\lambda|, \kappa)  \Phi_{\mu}^{\lambda}(x^{\prime}) \Phi_{\mu}^{\lambda}(y^{\prime}) \right\} \, g(s) \, ds.
\end{align*}

Clearly, the first of the two terms in the sum on the right hand side of the above is of the claimed form with $|\alpha_1| =1$ and $|\theta_2| = 1$. For the second term, following the calculations of the proof of Proposition \ref{prop:lambda-diff-theorem}, it is enough to consider the following prototypes terms: 
$(x_j^{\prime} y_j^{\prime}) |\lambda| I_1$, $(x_j^{\prime} y_j^{\prime}) |\lambda| I^2_{3,4,+}$, and $(x_j^{\prime} y_j^{\prime}) |\lambda| (I^1_{3,4,+} + I^1_{2,5,+})$ with $m$ replaced by $\partial_{\tau_j} m$. The outcome of these is as follows: 

\begin{itemize}
\item $(x_j^{\prime} y_j^{\prime}) |\lambda| I_1$ : $|\alpha_1| = 1, |\theta_1| = 1, |\theta_2| = 2$; 

\item $(x_j^{\prime} y_j^{\prime}) |\lambda| I^2_{3,4,+}$ : $|\alpha_1| = 1, |\theta_1| = 1, |\theta_2| = 2$;

\item $(x_j^{\prime} y_j^{\prime}) |\lambda| (I^1_{3,4,+} + I^1_{2,5,+})$ : $|\alpha_2| = 2, |\theta_2| = 2$. 
\end{itemize} 
This completes the proof of the claim of the lemma for $N=1$. 

It can now be easily verified that the Lemma could be proved for any $N \geq 1$, using mathematical induction with repeating the idea of the proof of $N=1$. We leave the details. 
\end{proof}

%%%%%%%%%%%%%%%%%%%
%%%%%%%%%%%%%%%%%%%

We are now in a position to prove Theorem \ref{thm:weighted-Plancherel-L2}. As earlier, let us choose and fix $\psi_1 \in C_c^\infty((1/2,2))$ such that $0 \leq \psi_1 \leq 1$, and $\sum_{j=-\infty}^\infty \psi_1(2^j \eta) = 1$ for all $\eta \geq 0$. Let us decompose the symbol $m$ in pieces as follows. For $S \in \mathbb{N}$ we define 
\begin{align*}
m^S(\tau, \kappa) := m(\tau, \kappa) \sum_{j = - \infty}^S \psi_1 (2^j |\kappa|).
\end{align*}
Then, $m = \lim_{S \to \infty} m^S$. 

\begin{proof}[Proof of Theorem \ref{thm:weighted-Plancherel-L2}] 
We shall be done if we could establish the theorem for each $m^S$, with the bound being uniform in $S \in \mathbb N$. The sole purpose of introducing these cut-offs is to ensure the convergence of various integrals that we shall estimate. Furthermore, in view of the homogeneity of the distance function $d$ and the Grushin operator, it suffices to establish the theorem for $R=1$. So, let us assume that $\supp(m) \subseteq [-1, 1]^{n_1 + n_2}$. 

\medskip \noindent \underline{\bf Step 1:} When $p=2$. 

As mentioned earlier, the theorem is true in the case of $r=0$ from Theorem 6.1 (i) of \cite{MartiniJointFunctionalCalculiMathZ}. We shall prove it for any $0 < r \leq 4 \lfloor{ \frac{r_0}{4} \rfloor}$ with the help of the $r=0$ case. It suffices to consider the case of $r = 4N$ where $ N = \lfloor{ \frac{r_0}{4} \rfloor}$, as the result would follow in the intermediate range from interpolation. Recall that $d(x,y) \sim \min \{\varrho_1(x,y), \varrho_2(x,y) \}$, where $\varrho_1(x,y)$ and $\varrho_2(x,y)$ are given by \eqref{def:distance-2}. So, with $x \in \mathbb{R}^{n_1 + n_2}$ fixed, if we write 
\begin{align*}
\mathcal{A} & = \{ y \in \mathbb{R}^{n_1 + n_2} : |x^\prime| + |y^\prime| \leq 1 \}, \quad \mathcal{B} = \{ y \in \mathbb{R}^{n_1 + n_2} : |x^\prime| + |y^\prime| > 1 \} \\ 
\quad \textup{and} \quad \mathcal{C} & = \{ y \in \mathbb{R}^{n_1 + n_2} : \left|x^{\prime \prime} - y^{\prime \prime}\right|^{1/2} \leq \left|x^{\prime} \right| + \left|y^\prime \right| \}, 
\end{align*}
then the theorem will be established if we could show that under the given assumption 
\begin{align} \label{ineq1:weighted-Plancherel-L2} 
& \left( \int_{\mathcal{A}} \left| \varrho_1(x,y)^{4N} k_{m^S( \boldsymbol{L}, \boldsymbol{U} )} (x, y) \right|^2 \, dy \right)^{1/2} \lesssim_{4N} \left| B(x, 1) \right|^{-1/2} \left\| m \right\|_{W^{4N}_{\infty}}, 
\end{align}
and 
\begin{align} \label{ineq2:weighted-Plancherel-L2} 
\left( \int_{\mathcal{B}} \left| \mathbbm{1}_{\mathcal{C}}(y) \varrho_2(x,y)^{4N} k_{m^S( \boldsymbol{L}, \boldsymbol{U} )} (x, y) \right|^2 \, dy \right)^{1/2} \lesssim_{N} \left| B(x, 1) \right|^{-1/2} \left\| m  \right\|_{W^{4N}_{\infty}}. 
\end{align}

Note that 
$$ \varrho_1(x,y)^{4N} \lesssim_N \left|x^\prime - y^\prime \right|^{4N} + \left|x^{\prime \prime} - y^{\prime \prime} \right|^{2N}\quad \text{and} \quad \varrho_2(x,y)^{4N} \lesssim_N \left|x^\prime - y^\prime \right|^{4N} + \frac{\left|x^{\prime \prime} - y^{\prime \prime} \right|^{4N}}{\left|x^\prime \right| + \left|y^\prime \right|^{4N}}.$$ 

To establish \eqref{ineq1:weighted-Plancherel-L2}, we make use of Remark \ref{rem2-indices-clarity} with either $(N_1, N_2) = (N,0)$ or $(N_1, N_2) = (0,N)$, together with the condition of the set $\mathcal{A}$ and the  orthogonality of scaled Hermite functions to get that $\left( \int_{\mathcal{A}} \left| \varrho_1(x,y)^{4N} k_{m( \boldsymbol{L}, \boldsymbol{U} )} (x, y) \right|^2 \, dy \right)^{1/2}$ is dominated by a finite linear combination of terms of the form 
$$ \int_{\Lambda} \int_{\mathbb{R}^{n_2}} \left| \Theta_l(\lambda) \right|^2 \sum_{\mu} \left| \left(\tau^{\frac{1}{2} \theta_1} \partial_\tau^{\theta_2}\partial_{\kappa}^{\beta} m^S\right) ((2\mu + \tilde{a} + \vec{a}(\omega)) |\lambda|, - \lambda) \right|^2 \left| \Phi_{\mu + \tilde{\mu}}^{\lambda}(x^{\prime}) \right|^2 \, d \lambda \, d\omega, $$ 
where the conditions on parameters and the compact set $\Lambda = [0,1]^{\nu_1} \times \Omega^{\nu_2} \times [0,1]^{4 N_1}$ are same as in Remark \ref{rem2-indices-clarity}. 

From now on wards we drop the integration over the compact set $\Lambda$ as it will become clear from the arguments below that the estimates that we obtain are uniform in $\omega$ over $\Lambda$. 

In view of Leibniz formula in $\kappa$-derivatives, for all $\beta_1 \leq \beta$ it suffices to estimate 
\begin{align} \label{ineq:appli-reduc-1}
\int_{\mathbb{R}^{n_2}} \left| \Theta_l(\lambda) \right|^2 \left| \zeta^{\left( \beta-\beta_1\right)}_S (\lambda) \right|^2 \sum_{\mu} \left| \left(\tau^{\frac{1}{2} \theta_1} \partial_\tau^{\theta_2} \partial_{\kappa}^{\beta_1} m\right) ((2\mu + \tilde{a} + \vec{a}(\omega)) |\lambda|, - \lambda) \right|^2 \left| \Phi_{\mu + \tilde{\mu}}^{\lambda}(x^{\prime}) \right|^2 \, d \lambda. 
\end{align}

We make use of Taylor's expansion of $m$ in $\kappa$-variable in order to control the growth of $\zeta^{\beta-\beta_1}_S (\lambda)$ and $\Theta_l (\lambda)$ for $\lambda$ near zero. For this, note that using the cancellation condition $\lim_{\kappa \to 0} \partial_\kappa^{\beta} m(\tau, \kappa) = 0$ for all $|\beta| \leq r_0$, we get as an application of Taylor's theorem that 
$$ \partial_\tau^{\theta_2} \partial_{\kappa}^{\beta_1} m (\tau,\kappa) = \sum_{\tilde{\beta} \geq \beta_1 : |\tilde{\beta}| = |\beta| + l} \frac{ \kappa^{\tilde{\beta} - \beta_1} }{\left( \tilde{\beta} - \beta_1 \right)!} \int_0^1 \partial_\tau^{\theta_2} \partial_{\kappa}^{\tilde{\beta}} m (\tau, t\kappa) \, dt,$$ 
which implies that 
$$ \left| \partial_\tau^{\theta_2} \partial_{\kappa}^{\beta_1} m (\tau,\kappa) \right|^2 \lesssim_{r_0} \left( |\kappa|^{|\beta| + l - |\beta_1|} \right)^2 \sum_{\tilde{\beta} \geq \beta_1 : |\tilde{\beta}| = |\beta| + l} \int_0^1 \left| \partial_\tau^{\theta_2} \partial_{\kappa}^{\tilde{\beta}} m (\tau, t\kappa) \right|^2 \, dt.$$ 

The above estimate along with the fact that 
$$\sup_{S \in \mathbb{N}} \sup_{\lambda \in \mathbb{R}^{n_2}} |\lambda|^{|\beta| + l - |\beta_1|} \left| \Theta_l(\lambda) \right| \left| \zeta^{\left( \beta-\beta_1\right)}_S (\lambda) \right| \lesssim C_{r_0} < \infty $$
implies that \eqref{ineq:appli-reduc-1} is dominated (uniformly in $S \in \mathbb{N}$) by a finite sum of terms of the form 
\begin{align} \label{ineq:appli-reduc-2}
\nonumber & \int_0^1 \int_{\mathbb{R}^{n_2}} \sum_{\mu} \left| \left(\tau^{\frac{1}{2} \theta_1} \partial_\tau^{\theta_2}\partial_{\kappa}^{\tilde{\beta}} m\right) ((2\mu + \tilde{a} + \vec{a}(\omega)) |\lambda|, - t \lambda) \right|^2 \left| \Phi_{\mu + \tilde{\mu}}^{\lambda}(x^{\prime}) \right|^2 \, d \lambda \, dt \\ 
\nonumber & \lesssim \int_{\mathbb{R}^{n_2}} \sum_{\mu} \sup_{t \in [0,1]} \left| \left(\tau^{\frac{1}{2} \theta_1} \partial_\tau^{\theta_2}\partial_{\kappa}^{\tilde{\beta}} m\right) ((2\mu + \tilde{a} + \vec{a}(\omega)) |\lambda|, - t \lambda) \right|^2 \left| \Phi_{\mu + \tilde{\mu}}^{\lambda}(x^{\prime}) \right|^2 \, d \lambda \\ 
& \lesssim \int_{\mathbb{R}^{n_2}} \sum_{\mu} \sup_{\kappa \in \mathbb{R}^{n_2}} \left| \left(\tau^{\frac{1}{2} \theta_1} \partial_\tau^{\theta_2}\partial_{\kappa}^{\tilde{\beta}} m\right) ((2\mu + \tilde{a} + \vec{a}(\omega)) |\lambda|, \kappa) \right|^2 \left| \Phi_{\mu + \tilde{\mu}}^{\lambda}(x^{\prime}) \right|^2 \, d \lambda. 
\end{align}

We can now invoke the methodology developed in Lemmas 10 and 11 in \cite{MartiniMullerGrushinRevistaMath}. Note that in Lemma 10 of \cite{MartiniMullerGrushinRevistaMath} the authors make pointwise estimations of Hermite functions with shifts involved, and making use of it, they established Proposition 12. While doing so, in the proof of Lemma 11 of \cite{MartiniMullerGrushinRevistaMath}, in the final estimates at the end of Page 1276, they deal with shifts in the estimates with the help of their Lemma 10. These calculations are immediately applicable in our case too, and in fact, in much more simplicity, and with it we can conclude that \eqref{ineq:appli-reduc-2} can be dominated by 
\begin{align*}
\left| B(x, 1) \right|^{-1} \left\| \sup_{\kappa} \left| \tau^{\frac{1}{2} \theta_1} \partial_\tau^{\theta_2} \partial_\kappa^{\beta} m (\tau, \kappa) \right| \right\|^2_{L^\infty (d \tau)} & \lesssim_{N} \left| B(x, 1) \right|^{-1} \left\| m \right\|^2_{_{W^{4N}_{\infty}}}. 
\end{align*} 

Next, we can prove \eqref{ineq2:weighted-Plancherel-L2} repeating the calculations as above, with the only difference being that this time we make use of the estimates from Lemma \ref{first-layer-lem} and Proposition \ref{prop:lambda-diff-lem-half-derivative-full-dim} in place of the estimate of Remark \ref{rem2-indices-clarity}. While doing so, we utilize the fact that the conditions of the sets $\mathcal{B}$ and $\mathcal{C}$ imply that 
\begin{align*} 
\frac{|x^{\prime \prime} - y^{\prime \prime}|^L \left| \left({x^{\prime}}^{\alpha_{1}} {y^{\prime}}^{\alpha_{1}} \right) {x^{\prime}}^{\alpha_{2}} \right|}{\left( \left|x^\prime \right| + \left|y^\prime \right| \right)^{4N}} 
& = \frac{|x^{\prime \prime} - y^{\prime \prime}|^L}{\left( \left|x^\prime \right| + \left|y^\prime \right| \right)^{2L}} 
\frac{ \left| \left({x^{\prime}}^{\alpha_{1}} {y^{\prime}}^{\alpha_{1}} \right) {x^{\prime}}^{\alpha_{2}} \right|}{\left( \left|x^\prime \right| + \left|y^\prime \right| \right)^{4N-2L}} \\ 
& \leq \frac{ \left| \left({x^{\prime}}^{\alpha_{1}} {y^{\prime}}^{\alpha_{1}} \right) {x^{\prime}}^{\alpha_{2}} \right|}{\left( \left|x^\prime \right| + \left|y^\prime \right| \right)^{4N-2L}} \leq \left( \left|x^\prime \right| + \left|y^\prime \right| \right)^{-4N + 2L + (|2 \alpha_1 + \alpha_5|)} \leq 1,
\end{align*}
because $|2\alpha_1+\alpha_2|\leq 4N-2L$ from Proposition \ref{prop:lambda-diff-lem-half-derivative-full-dim}. Rest of the proof then follows similar to that of \eqref{ineq1:weighted-Plancherel-L2}. 

\medskip \noindent \underline{\bf Step 2:} When $p=\infty$. 

We can essentially repeat the proof of Step 1, with the help of Cauchy-Schwartz inequality on the pointwise estimates. For the same note that 
\begin{align*} 
& \left| \int_{\Lambda} \int_{\mathbb{R}^{n_2}} \Theta_l(\lambda) \sum_{\mu} \left(\tau^{\frac{1}{2} \theta_1} \partial_\tau^{\theta_2}\partial_{\kappa}^{\beta} m^S\right) ((2\mu + \tilde{a} + \vec{a}(\omega)) |\lambda|, - \lambda) \Phi_{\mu + \tilde{\mu}}^{\lambda}(x^{\prime}) \Phi_{\mu}^{\lambda}(y^{\prime}) e^{- i \lambda \cdot (x^{\prime \prime} - y^{\prime \prime})} \, d \lambda \, d\omega \right| \\ 
& \lesssim \left( \int_{\Lambda} \int_{\mathbb{R}^{n_2}} \left| \Theta_l(\lambda) \right| \sum_{\mu} \left| \left(\tau^{\frac{1}{2} \theta_1} \partial_\tau^{\theta_2}\partial_{\kappa}^{\beta} m^S\right) ((2\mu + \tilde{a} + \vec{a}(\omega)) |\lambda|, - \lambda) \right| \left| \Phi_{\mu + \tilde{\mu}}^{\lambda}(x^{\prime}) \right|^2  \, d \lambda \, d\omega \right)^{1/2} \\ 
& \quad \times \left( \int_{\Lambda} \int_{\mathbb{R}^{n_2}} \left| \Theta_l(\lambda) \right| \sum_{\mu} \left| \left(\tau^{\frac{1}{2} \theta_1} \partial_\tau^{\theta_2}\partial_{\kappa}^{\beta} m^S\right) ((2\mu + \tilde{a} + \vec{a}(\omega)) |\lambda|, - \lambda) \right| \left| \Phi_{\mu}^{\lambda}(y^{\prime}) \right|^2  \, d \lambda \, d\omega \right)^{1/2}. 
\end{align*} 

\medskip \noindent \underline{\bf Step 3:} When $2 < p < \infty$. 

The claimed estimate in this case follows from the interpolation of the estimates for $p=2$ and $p = \infty$.

This completes the proof of Theorem \ref{thm:weighted-Plancherel-L2}. 
\end{proof} 

%%%%%%%%%%%%%%%%%%%%%%%%%%%%%%%%%%%%%%%%
%%%%%%%%%%%%%%%%%%%%%%%%%%%%%%%%%%%%%%%%
%%%%%%%%%%%%%%%%%%%%%%%%%%%%%%%%%%%%%%%%
%%%%%%%%%%%%%%%%%%%%%%%%%%%%%%%%%%%%%%%%
%%%%%%%%%%%%%%%%%%%%%%%%%%%%%%%%%%%%%%%%
%%%%%%%%%%%%%%%%%%%%%%%%%%%%%%%%%%%%%%%%
%%%%%%%%%%%%%%%%%%%%%%%%%%%%%%%%%%%%%%%%
%%%%%%%%%%%%%%%%%%%%%%%%%%%%%%%%%%%%%%%%
%%%%%%%%%%%%%%%%%%%%%%%%%%%%%%%%%%%%%%%%
%%%%%%%%%%%%%%%%%%%%%%%%%%%%%%%%%%%%%%%%
%%%%%%%%%%%%%%%%%%%%%%%%%%%%%%%%%%%%%%%%
%%%%%%%%%%%%%%%%%%%%%%%%%%%%%%%%%%%%%%%%
%%%%%%%%%%%%%%%%%%%%%%%%%%%%%%%%%%%%%%%%
%%%%%%%%%%%%%%%%%%%%%%%%%%%%%%%%%%%%%%%%

\section{Proof of Theorem \ref{thm:old-joint-calc} (2) in the case of \texorpdfstring{$S^0_{0, 0}(\boldsymbol{L}, \boldsymbol{U})$}.} \label{Sec-S00-proof}
In this section, we start the proof of Theorem \ref{thm:old-joint-calc} (2) in the case of $S^0_{0, 0}(\boldsymbol{L}, \boldsymbol{U})$ using Cotlar-Stein Lemma (see, for example, Section 2 of Chapter VII in \cite{SteinHarmonicBook93}). Given a Hilbert space $\mathcal{H}$, let $\mathcal{B}(\mathcal{H})$ stands for the Banach space of all bounded linear operators on $\mathcal{H}$ with the operator norm denoted by $\| \cdot \|_{\mathcal{B}(\mathcal{H})}$. 

\begin{thm}[Cotlar-Stein lemma] \label{Cotlar-Stein-lemma}
Let $\mathcal{H}$ be a Hilbert space. Let $\{T_j\}_{j \in \mathbb{Z}} \subset \mathcal{B}(\mathcal{H})$. If there exists a sequence of positive constants $\{c(j)\}_{j \in \mathbb{Z}}$ with 
$$A = \sum_{j \in \mathbb{Z}} c(j) < \infty,$$ 
such that the operators $T_j$ satisfy the estimate 
$$ \max \left\{ \left\| T^*_j T_k \right\|_{\mathcal{B}(\mathcal{H})}, \left\| T_j T^*_k \right\|_{\mathcal{B}(\mathcal{H})} \right\} \leq \left( c(j - k) \right)^2, \textup{ for all } j, k \in \mathbb{Z},$$
then $\sum_{j \in \mathbb{Z}} T_j$ converges in the strong operator topology to some $T \in \mathcal{B}(\mathcal{H})$ and $\| T \|_{\mathcal{B}(\mathcal{H})} \leq A.$
\end{thm}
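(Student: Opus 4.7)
The plan is to reduce everything to uniform operator-norm control on the partial sums $S_N := \sum_{|j| \le N} T_j$, and then deduce strong convergence from that bound combined with a Cauchy argument on a dense set. The key point is that once we prove $\|S_N\|_{\mathcal{B}(\mathcal{H})} \le A$ for every $N$, the limit operator $T = \lim_N S_N$ (in the strong operator topology) will automatically inherit the bound by lower semicontinuity of the operator norm under strong limits.

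The heart of the argument is the uniform bound, which I would obtain by the standard trick of exploiting the $C^*$-identity $\|S_N\|^{2m} = \|(S_N^* S_N)^m\|_{\mathcal{B}(\mathcal{H})}$ for every positive integer $m$. Expanding,
\begin{equation*}
(S_N^* S_N)^m \;=\; \sum_{|j_1|,\ldots,|j_{2m}|\le N} T_{j_1}^* T_{j_2} T_{j_3}^* T_{j_4} \cdots T_{j_{2m-1}}^* T_{j_{2m}},
\end{equation*}
and each summand admits two different estimates. Grouping adjacent factors as $(T_{j_1}^* T_{j_2})(T_{j_3}^* T_{j_4}) \cdots$ and applying the hypothesis gives a bound $\prod_{k \text{ odd}} c(j_k - j_{k+1})^2$. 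Shifting the grouping to $T_{j_1}^*(T_{j_2} T_{j_3}^*)(T_{j_4} T_{j_5}^*)\cdots T_{j_{2m}}$ and using $\|T_j\| \le c(0)$ at the endpoints gives a bound $c(0)^2 \prod_{k \text{ even}} c(j_k - j_{k+1})^2$. Taking the geometric mean collapses the two partial products into a full chain, yielding
\begin{equation*}
\|T_{j_1}^* T_{j_2} \cdots T_{j_{2m}}\|_{\mathcal{B}(\mathcal{H})} \;\le\; c(0)\,\prod_{k=1}^{2m-1} c(j_k - j_{k+1}).
\end{equation*}

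Summing this chain estimate by fixing $j_1$ (at most $2N+1$ choices) and successively summing over $j_2, j_3, \ldots, j_{2m}$ (each sum bounded by $A$) gives
\begin{equation*}
\|S_N\|^{2m} \;\le\; c(0)\,(2N+1)\,A^{2m-1},
\end{equation*}
so $\|S_N\| \le (c(0)(2N+1))^{1/(2m)} A^{1 - 1/(2m)}$. Letting $m \to \infty$ yields $\|S_N\| \le A$, uniformly in $N$. To upgrade this to strong convergence, I would show that for fixed $f \in \mathcal{H}$ the sequence $S_N f$ is Cauchy: by the same telescoping/pairing, $\|(S_M - S_N)f\|^2 = \langle (S_M - S_N)^*(S_M-S_N)f, f\rangle$ can be bounded by tail sums of $c(j-k)^2$, which vanish as $\min(|M|,|N|) \to \infty$ because $\sum_j c(j) < \infty$.

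The main obstacle, as usual in Cotlar--Stein, is the combinatorial bookkeeping in the interpolation step: one must carefully verify that the two groupings together cover every adjacent pair in the chain $j_1, j_2, \ldots, j_{2m}$ (which is why both the $\|T_j\| \le c(0)$ endpoint estimate and the $C^*$-identity are needed). Once the chain estimate is in place, the rest is a direct summation together with the classical trick of sending $m \to \infty$ to absorb the unwanted factor $(2N+1)$.
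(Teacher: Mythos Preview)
Your proof is the standard Cotlar--Stein argument and is correct; note, however, that the paper does not actually supply a proof of this statement but merely quotes it as a known result, referring the reader to Section~2 of Chapter~VII in Stein's \emph{Harmonic Analysis}. The argument you outline is precisely the one given there, so there is nothing to compare.
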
 

Choose and fix $\psi_0 \in C_c^\infty((-2,2))$ and $\psi_1 \in C_c^\infty((1/2,2))$ such that $0 \leq \psi_0, \psi_1 \leq 1$, and for all $\eta \geq 0$, 
$$\sum_{l=0}^\infty \psi_l(\eta) = 1 \quad \textup{and} \sum_{j=-\infty}^\infty \psi_1(2^j \eta) = 1,$$ 
where $\psi_l (\eta) = \psi_1(2^{-(l-1)}\eta),$ for $l \geq 2.$ 

Let $T = m(x, \boldsymbol{L}, \boldsymbol{U})$ be the operator corresponding to the symbol $m$. We decompose $T$ into a countable sum of operators with the help of the approximate identity chosen above. 
\begin{rem} \label{rem:cut-off-spectral-variable-comparison}
We shall see below that we decompose the symbol function $m$ in the spectral-variables with the help of $\psi_l(|\tau|_1)$. One can also do the analysis by making use of $\psi_l(|(\tau, \kappa)|_1)$. The changes in the proofs that follow, would only be minor, and we leave the details. 
\end{rem} 

For each $J, l \in \mathbb{N}$ and $S = 1, 2, 3, \ldots$, define 
\begin{align} \label{0-symbol-decompose}
m^0_{J}(x, \tau, \kappa) & := \chi_J(x) m(x, \tau, \kappa) \sum_{j=-\infty}^0 \psi_1(2^j |\kappa|) \\ 
\nonumber m^0_{l}(x, \tau, \kappa) & := m(x, \tau, \kappa) \psi_l(|\tau|_1) \sum_{j=-\infty}^0 \psi_1(2^j |\kappa|), \\ 
\nonumber m^0_{J,l}(x, \tau, \kappa) & := \chi_J(x) m(x, \tau, \kappa) \psi_l(|\tau|_1) \sum_{j=-\infty}^0 \psi_1(2^j |\kappa|), \\ 
\nonumber m^S_{J}(x, \tau, \kappa) & := \chi_J(x) m(x, \tau, \kappa) \sum_{j = 1}^S \psi_1 (2^j |\kappa|), \\ 
\nonumber m^S_{l}(x, \tau, \kappa) & := m(x, \tau, \kappa) \psi_l(|\tau|_1) \sum_{j = 1}^S \psi_1 (2^j |\kappa|), \\ 
\nonumber m^S_{J,l}(x, \tau, \kappa) & := \chi_J(x) m(x, \tau, \kappa) \psi_l(|\tau|_1) \sum_{j = 1}^S \psi_1 (2^j |\kappa|), 
\end{align}
where $\chi_J$ are given by Lemma \ref{lem-grushin-partition-part-fix}. 

For $S \in \mathbb{N}$, let us denote by $T^S_{J}$, $T^S_{l}$, and $T^S_{J,l}$ the operators $m^S_{J}(x, \boldsymbol{L}, \boldsymbol{U})$, $m^S_{l}(x, \boldsymbol{L}, \boldsymbol{U})$, and $m^S_{J,l}(x, \boldsymbol{L}, \boldsymbol{U})$ respectively, then (formally) we have 
\begin{align*} 
& T^S_J = \sum_{l} T^S_{J,l}, \quad T^S_l = \sum_{J} T^S_{J,l}, \quad T^S = \sum_{J,l} T^S_{J,l}, \quad \text{and} \quad T = T^0 + \lim_{S \to \infty} T^S.
\end{align*} 

We shall be done if we could show that 
$$ \sup_{S \in \mathbb{N}} \|T^S\|_{op} < \infty. $$ 

Making use of the support condition of $\chi_J$, we begin with proving that $T^S_J$ (and hence $T^S_{J,l}$) are bounded operators on $L^2(\mathbb{R}^{n_1 + n_2})$ with their operator norms being uniform in $J$ and $S$ (respectively, in $J$, $l$, and $S$). 

\begin{lem} \label{freeze-lem}
There exists a constant $C>0$ such that  
\begin{equation} \label{freeze-estimate}
\|T^S_J\|_{op} \leq C \sup_{|\Gamma| \leq 2 \left( 1 + \lfloor\frac{Q}{4}\rfloor \right)} \sup_{x, \tau, \kappa} \left|X^{\Gamma} m(x, \tau, \kappa)\right| \quad \textup{for all } J, S \in \mathbb{N}.
\end{equation}
\end{lem}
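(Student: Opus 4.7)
The plan is to freeze the $x$-dependence in the symbol and recover it through a Sobolev embedding in an auxiliary variable, crucially exploiting that $\chi_J(x)$ localizes the output of $T_J^S$ to the ball $B(x_J,2)$. For each $z \in \mathbb{R}^{n_1+n_2}$, consider the frozen symbol $M_z(\tau,\kappa) := m(z,\tau,\kappa)\sum_{j=1}^{S}\psi_1(2^j|\kappa|)$. Since $\sum_{j=1}^{S}\psi_1 \le 1$, one has $\|M_z\|_\infty \le \|m\|_\infty$ uniformly in $z$ and $S$, so by the spectral theorem $M_z(\boldsymbol{L},\boldsymbol{U})$ is bounded on $L^2(\mathbb{R}^{n_1+n_2})$ with operator norm at most $\|m\|_\infty$. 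Because $\chi_J(x)$ pulls out of the spectral integral defining $T_J^S$, one can write $T_J^S f(x) = F_x(x)$, where $F_x(z) := \chi_J(z)\,(M_z(\boldsymbol{L},\boldsymbol{U})f)(x)$.

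The first step is to apply Lemma \ref{grushin-Sobolev-embed} in the $z$-variable to $F_x$ with the choice $s = 2(1+\lfloor Q/4\rfloor)$. Since $s$ is an even integer exceeding $Q/2$, and $G$ is a sum of squares of the vector fields $X_j, X_{j,k}$, the Sobolev norm $\|(I+G)^{s/2}F_x\|_{L^2_z}$ is controlled by $\sum_{|\Gamma|\le s}\|X^\Gamma_z F_x\|_{L^2_z}$. Evaluating the embedding at $z=x$ produces
\begin{equation*}
|T_J^S f(x)|^2 \lesssim |B(x,1)|^{-1}\sum_{|\Gamma|\le 2(1+\lfloor Q/4\rfloor)} \|X^\Gamma_z F_x\|_{L^2_z}^2.
\end{equation*}
By the Leibniz rule, $X^\Gamma_z F_x(z)$ is a finite linear combination of terms $(X^{\Gamma_1}\chi_J)(z)\,(N_{z,\Gamma_2}(\boldsymbol{L},\boldsymbol{U})f)(x)$, where the frozen symbol $N_{z,\Gamma_2}(\tau,\kappa) := (X^{\Gamma_2}m)(z,\tau,\kappa)\sum_{j=1}^S\psi_1(2^j|\kappa|)$ has $L^\infty$-norm at most $\sup_{x,\tau,\kappa}|X^{\Gamma_2}m|$.

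The second step is to integrate this pointwise bound in $x$. Since $T_J^S f$ is supported in $B(x_J,2)$ thanks to $\chi_J$, the $x$-integration reduces to that ball; on this region, the ball-growth estimate \eqref{grushin-ball-growth} yields $|B(x,1)|\sim |B(x_J,1)|$ uniformly, which removes the awkward weight $|B(x,1)|^{-1}$. Likewise, $X^{\Gamma_1}\chi_J$ is supported in $B(x_J,2)$ and $\|X^{\Gamma_1}\chi_J\|_\infty$ is bounded uniformly in $J$ by Lemma \ref{lem-grushin-partition-part-fix}(iv). Swapping the $x$- and $z$-integrations by Fubini and invoking the spectral-theorem estimate $\|N_{z,\Gamma_2}(\boldsymbol{L},\boldsymbol{U})f\|_2 \le \|N_{z,\Gamma_2}\|_\infty \|f\|_2$ for each fixed $z$, the remaining $z$-integration runs over $B(x_J,2)$ and contributes a factor $|B(x_J,2)|\sim |B(x_J,1)|$ that cancels the earlier $|B(x_J,1)|^{-1}$.

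The only technical obstacle is the weight $|B(x,1)|^{-1}$ produced by the Sobolev embedding and the accompanying Fubini swap; both are resolved precisely by the spatial localization of $\chi_J$, which confines every relevant point to $B(x_J,2)$, where the unit-ball volume function is essentially constant. Uniformity in $J$ and $S$ is automatic, since the final estimate depends only on $\sup_{x,\tau,\kappa}|X^{\Gamma_2}m|$ for $|\Gamma_2|\le 2(1+\lfloor Q/4\rfloor)$ together with the $J$-uniform bounds on $\chi_J$ and its $X$-derivatives.
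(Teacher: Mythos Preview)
Your proposal is correct and follows essentially the same strategy as the paper's proof: both freeze the spatial variable in the symbol, apply the Sobolev embedding (Lemma~\ref{grushin-Sobolev-embed}) in the auxiliary variable with exponent $s=2(1+\lfloor Q/4\rfloor)$, distribute the resulting $X^\Gamma$-derivatives via Leibniz onto $\chi_J$ and $m$, exploit the localization of $\chi_J$ to $B(x_J,2)$ to replace $|B(x,1)|$ by $|B(x_J,1)|$, swap the $x$- and $z$-integrations, and finish with Plancherel for the frozen spectral multiplier. The paper works through the integral-kernel representation $k_{J,l}^{z,S}(x,y)$ and the partial sums $\sum_{l=0}^{L}T_{J,l}^{S}$ explicitly, whereas you phrase the argument directly at the operator level via $F_x(z)=\chi_J(z)(M_z(\boldsymbol{L},\boldsymbol{U})f)(x)$; these are cosmetic differences only.
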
 
\begin{proof} 
It suffices to show that 
$$ \left\| \sum_{l = 0}^L T^S_{J,l} \right\|_{op} \lesssim \sup_{|\Gamma| \leq 2 \left( 1 + \lfloor\frac{Q}{4}\rfloor \right)} \sup_{x, \tau, \kappa} \left|X^{\Gamma} m(x, \tau, \kappa)\right|,$$ 
with the implicit bound uniform in $J, L, S \in \mathbb{N}$. 

Note that $ T^S_{J,l}$ is an integral operator. Let us denote it's kernel by $ k^{x,S}_{J,l}(x,y),$ where 
$$ k_{J,l}^{z,S} (x,y) = (2\pi)^{-n_2} \int_{\mathbb{R}^{n_2}} \sum_{\mu} m^S_{J,l} \left( z, (2 \mu + \tilde{1})|\lambda|, - \lambda \right) E_{\mu, \lambda} (x, y) \, d\lambda. $$

For any $f \in L^2(\mathbb{R}^{n_1 + n_2})$ we have $T^S_{J} f (x) = 0$ for all $x \in B(x_J, 2)^c$. Also note that $|B(x,1)| \sim |B(x_J,1)|$ for all $x \in B(x_J, 2)$, with the implicit bounds independent of $J$. Now,   
\begin{align*}
\left| \sum_{l = 0}^L T^S_{J, l} f (x) \right|^2 &= \left|\int_{\mathbb{R}^{n_1 + n_2}} \sum_{l = 0}^L k^{x,S}_{J,l}(x,y) f(y) \, dy \right|^2 \\ 
& \lesssim |B(x_J,1)|^{-2} \left|\int_{\mathbb{R}^{n_1 + n_2}} |B(x,1)| \sum_{l = 0}^L k^{x,S}_{J,l} (x,y) f(y) \, dy \right|^2 \\ 
& \leq |B(x_J,1)|^{-2} \left( \sup_{x_0 \in B(x_J, 2)} \left|\int_{\mathbb{R}^{n_1 + n_2}} |B(x_0,1)| \sum_{l = 0}^L k^{x_0,S}_{J,l} (x,y) f(y) \, dy \right| \right)^2 \\ 
& \lesssim |B(x_J,1)|^{-2} \sum_{|\Gamma| \leq 2 \left( 1 + \lfloor\frac{Q}{4}\rfloor \right)} \left\|\int_{\mathbb{R}^{n_1 + n_2}} X^{\Gamma}_{x_0} \sum_{l = 0}^L k^{x_0,S}_{J,l} (x,y) f(y) \, dy \right\|^2_{L^2(d x_0)},
\end{align*}
where the last inequality follows from Lemma \ref{grushin-Sobolev-embed}. 

Thus, 
\begin{align} \label{freeze-lem-ineq1}
\|T^S_{J} f\|^2_{L^2} & \lesssim |B(x_J,1)|^{-2} \sum_{|\Gamma| \leq 2 \left( 1 + \lfloor\frac{Q}{4}\rfloor \right)} \left\| \left\|\int_{\mathbb{R}^{n_1 + n_2}} X^{\Gamma}_{x_0} \left( \sum_{l = 0}^L k^{x_0,S}_{J,l}(x,y) \right) f(y) \, dy \right\|^2_{L^2(d x_0)} \right\|^2_{L^2(dx)} \\ 
\nonumber & = |B(x_J,1)|^{-2} \sum_{|\Gamma| \leq 2 \left( 1 + \lfloor\frac{Q}{4}\rfloor \right)} \left\| \left\|\int_{\mathbb{R}^{n_1 + n_2}} X^{\Gamma}_{x_0} \left( \sum_{l = 0}^L k^{x_0,S}_{J,l}(x,y) \right) f(y) \, dy \right\|^2_{L^2(dx)} \right\|^2_{L^2(d x_0)} \\
\nonumber & \lesssim \sum_{|\Gamma| \leq 2 \left( 1 + \lfloor\frac{Q}{4}\rfloor \right)} \sup_{x_0 \in B(x_J, 2)} \left\|\int_{\mathbb{R}^{n_1 + n_2}} X^{\Gamma}_{x_0} \left( \sum_{l = 0}^L k^{x_0,S}_{J,l}(x,y) \right) f(y) \, dy \right\|^2_{L^2(dx)}. 
\end{align}

Now, by Plancherel's theorem for the Grushin operator, 
\begin{align*} 
\left\|\int_{\mathbb{R}^{n_1 + n_2}} X^{\Gamma}_{x_0} \left( \sum_{l = 0}^L k^{x_0,S}_{J,l}(x,y) \right) f(y) \, dy \right\|_{L^2(dx)} & \leq \|f\|_2 \sup_{x_0, \tau, \kappa} \left|X^{\Gamma}_{x_0} \left( \sum_{l = 0}^L m^S_{J,l} (x_0, \tau, \kappa) \right) \right| \\
& = \|f\|_2 \sup_{x_0, \tau, \kappa} \left|X^{\Gamma}_{x_0} m^S_J (x_0, \tau, \kappa) \sum_{l = 0}^L \psi_l (|\tau|_1) \right| \\ 
& \lesssim \|f\|_2 \sup_{x_0, \tau, \kappa} \left|X^{\Gamma}_{x_0} m(x_0, \tau, \kappa)\right|, 
\end{align*}
where the last inequality follows from the uniform boundedness of partition of unities involving $\psi_j$'s, and of derivatives of functions $\chi_J$ (as shown in Lemma \ref{lem-grushin-partition-part-fix}).

Putting the above estimate in \eqref{freeze-lem-ineq1}, we get 
\begin{align*} 
\left\| \sum_{l = 0}^L T^S_{J,l} f \right\|_{L^2} & \lesssim \left( \sum_{|\Gamma| \leq 2 \left( 1 + \lfloor\frac{Q}{4}\rfloor \right)} \left(\sup_{x_0, \tau, \kappa} \left|X^{\Gamma}_{x_0} m(x_0, \tau, \kappa)\right|\right)^2 \right)^{1/2} \|f\|_2.
\end{align*}
This completes the proof of Lemma \ref{freeze-lem}.
\end{proof}

We claim the following $L^2$-operator norm estimates for $\left( T^S_{J,l} \right)^* T^S_{J^\prime,l^\prime}$ and $T^S_{J,l} \left( T^S_{J^\prime,l^\prime}\right)^*$. 

\begin{prop} \label{TT*-and-T*T-main-prop} 
We have 
\begin{align*}
\left\| \left( T^S_{J,l} \right)^* T^S_{J^\prime,l^\prime}\right\|_{op} & \lesssim_N \mathbbm{1}_{d(x_J, x_{J^{\prime}}) \leq 4 C_0} \, \,  2^{-N |l - l^{\prime}|} \|m\|^2_{S^0_{0,0}}, \, \text{for every} \, N \in \mathbb{N}, \\ 
\left \|T^S_{J,l} \left( T^S_{J^\prime,l^\prime} \right)^* \right\|_{op} & \lesssim_{N_0} \mathbbm{1}_{|l-l^{\prime}| \leq 2} \, \left(1 + d\left(x_J, x_{J^{\prime}}\right) \right)^{-4 N_0} \|m\|^2_{S^0_{0,0}}.
\end{align*}
with the implicit constant being independent of $J, J^{\prime}, l, l^{\prime}, S \in \mathbb{N}$. Here $C_0$ is the constant same as in \eqref{quasi-metric-constant}, and $N_0 = \left( \lfloor\frac{Q}{4}\rfloor + 1 \right)$. 
\end{prop}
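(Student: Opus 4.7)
The plan is to prove the two estimates by different techniques, both exploiting the spectral localization $\psi_l(|\tau|_1)$ in the symbol of $T_{J,l}^S$ and the spatial localization $\chi_J$.

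For the $TT^*$ estimate, I begin by extracting the sharp spectral cutoff. Choose $\tilde\psi_l \in C_c^\infty(\mathbb{R})$ adapted to scale $2^l$ so that $\tilde\psi_l\,\psi_l = \psi_l$ and $\tilde\psi_l\,\tilde\psi_{l'} = 0$ for $|l-l'| \geq 3$, and set $P_l = \tilde\psi_l(G) = \tilde\psi_l(L_1+\cdots+L_{n_1})$. Since $L_1+\cdots+L_{n_1}$ acts on $\Phi_\mu^\lambda e^{-i\lambda \cdot x''}$ by the eigenvalue $(2|\mu|+n_1)|\lambda| = |(2\mu+\tilde 1)|\lambda||_1$, the factor $\psi_l(|\tau|_1)$ in the symbol of $T_{J,l}^S$ forces $T_{J,l}^S P_l = T_{J,l}^S$, and taking adjoints gives $P_l (T_{J,l}^S)^* = (T_{J,l}^S)^*$. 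Hence
\[ T_{J,l}^S (T_{J',l'}^S)^* \;=\; T_{J,l}^S\, P_l P_{l'}\, (T_{J',l'}^S)^*, \]
which vanishes for $|l-l'|\geq 3$ and yields the indicator $\mathbbm{1}_{|l-l'|\leq 2}$. For $|l-l'|\leq 2$ the Schwartz kernel of $T_{J,l}^S(T_{J',l'}^S)^*$ equals
\[ \tilde K(x,z) \;=\; \chi_J(x)\,\chi_{J'}(z)\int \tilde k_l^{x,S}(x,y)\,\overline{\tilde k_{l'}^{z,S}(z,y)}\,dy, \]
and for $x\in B(x_J,2)$, $z\in B(x_{J'},2)$ the quasi-triangle inequality gives $\max\{1+d(x,y),\,1+d(z,y)\} \gtrsim 1+d(x_J,x_{J'})$ for every $y$. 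Applying Theorem \ref{thm:weighted-Plancherel-L2} with $p=2$ and $r=4N_0$---available thanks to the cancellation condition \eqref{def:grushin-symb-vanishing-0-condition}---to both kernels, splitting the $y$-integral according to which of $d(x,y)$, $d(z,y)$ is larger, and using Cauchy--Schwarz in $y$, produces a pointwise bound on $\tilde K$. A freezing step modeled on Lemma \ref{freeze-lem}, to handle the dependence of the kernels on the frozen variables $x$ and $z$, then yields the desired operator norm bound by Schur's test.

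For the $T^*T$ estimate, I use the factorization $T_{J,l}^S = M_{\chi_J} N_l$, where $N_l$ is the pseudo-multiplier with the $\chi_J$-free symbol; then
\[ (T_{J,l}^S)^* T_{J',l'}^S \;=\; N_l^*\, M_{\chi_J \chi_{J'}}\, N_{l'} \;=\; P_l\, N_l^*\, M_{\chi_J \chi_{J'}}\, N_{l'}\, P_{l'}, \]
using $P_l N_l^* = N_l^*$ and $N_{l'} = N_{l'} P_{l'}$. The support of $\chi_J \chi_{J'}$ is nonempty only if $B(x_J,2)\cap B(x_{J'},2)\neq\emptyset$, i.e.\ $d(x_J,x_{J'})\leq 4C_0$, giving the claimed spatial indicator. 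To obtain the decay $2^{-N|l-l'|}$ one observes that the composed operator has leading-order ``symbol'' $\chi_J\chi_{J'}\,\overline{m_l^S}\,m_{l'}^S$, which vanishes identically since $\psi_l(|\tau|_1)\psi_{l'}(|\tau|_1)=0$ for $|l-l'|\geq 3$. The plan is to extract this vanishing quantitatively by expanding the composition of the two pseudo-multipliers using the kernel identities of Lemma \ref{first-layer-lem} and Proposition \ref{prop:lambda-diff-theorem}. Each expansion step trades an application of the Grushin vector fields $X^\Gamma$ to $\chi_J\chi_{J'}$---uniformly bounded in $J,J'$ by Lemma \ref{lem-grushin-partition-part-fix}---for an additional $\tau$-derivative of the symbol of $m$ combined with a dyadic gain of $2^{-\min(l,l')/2}$ from the scaling of $\psi_l$ via the creation--annihilation operators on $\Phi_\mu^\lambda$. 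Iterating $2N$ times produces a composed pseudo-multiplier whose symbol still lies in $S^0_{0,0}$ with seminorm controlled by $\|m\|^2_{S^0_{0,0}}$ and an overall factor of $2^{-N|l-l'|}$; applying Lemma \ref{freeze-lem} to the resulting pseudo-multiplier then yields the operator norm bound.

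The hard step is the $T^*T$ decay. In the $S^0_{0,0}$ class no decay comes from $\tau$-derivatives of $m$ alone, so the factor $2^{-N|l-l'|}$ must be extracted entirely from the spectral separation of $\psi_l$ and $\psi_{l'}$ interacting with the iterated commutators against $\chi_J\chi_{J'}$. At the same time, each commutator step produces inverse powers of $|\lambda|$ via the kernel identities of Section \ref{sec-kernel-estimates}; these singularities at $\lambda=0$ can only be absorbed uniformly in $N$ by invoking the cancellation condition \eqref{def:grushin-symb-vanishing-0-condition} through Taylor expansion in $\kappa$, in exactly the manner used in the proof of Theorem \ref{thm:weighted-Plancherel-L2}.
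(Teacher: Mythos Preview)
Your plan diverges from the paper in both halves, and in each case there is a concrete gap.

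\textbf{The $(T^S_{J,l})^*T^S_{J',l'}$ estimate.} The paper does \emph{not} use a commutator expansion or the kernel identities of Section~\ref{sec-kernel-estimates} here, and in particular does \emph{not} invoke the cancellation condition \eqref{def:grushin-symb-vanishing-0-condition}. Instead it writes
\[
(T^S_{J,l})^*T^S_{J',l'}
=\bigl\{\psi_l(G)(I+G)^{-N}\bigr\}\bigl\{(I+G)^N(T^S_J\chi_l(G))^*(I+G)^{-N}\bigr\}\bigl\{(I+G)^N T^S_{J'}\chi_{l'}(G)(I+G)^{-N}\bigr\}\bigl\{(I+G)^N\psi_{l'}(G)\bigr\},
\]
bounds the outer factors by $2^{-Nl}$ and $2^{Nl'}$ trivially, and shows the two middle factors are bounded by $\|m\|_{S^0_{0,0}}$ via Leibniz in the $z$-variable and Lemma~\ref{freeze-lem}. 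The decay then follows by symmetry in $(J,l)\leftrightarrow(J',l')$. Your proposed iteration of creation--annihilation commutators would at best recover this by a longer route, and your claim that it produces $|\lambda|^{-1}$ singularities requiring \eqref{def:grushin-symb-vanishing-0-condition} is incorrect for this half: no $\lambda$-derivatives occur. Moreover, your heuristic that the ``leading symbol vanishes for $|l-l'|\ge 3$'' does not by itself yield the quantitative rate $2^{-N|l-l'|}$; you would still need a mechanism producing an arbitrary power, which is exactly what the $(I+G)^{\pm N}$ sandwich supplies.

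\textbf{The $T^S_{J,l}(T^S_{J',l'})^*$ estimate.} Your idea of extracting $(1+d(x_J,x_{J'}))^{-4N_0}$ from a weighted $L^2$ bound on the kernels is in the right spirit, but the direct appeal to Theorem~\ref{thm:weighted-Plancherel-L2} is not valid: that theorem concerns spectral multipliers $m(\boldsymbol L,\boldsymbol U)$, whereas $\tilde k_l^{x,S}(x,y)$ carries an $x$-dependence in the symbol slot as well. The paper therefore proceeds differently. It works at the level of the operator rather than the pointwise kernel: one multiplies and divides the integrand by $(1+d_i(x,y)^4)^{N_0}$, where $d_i$ is one of the two auxiliary distances $d_1,d_2$ of \eqref{case-by-distances} chosen according to whether $|x'|$ is large or small (this case split is essential and absent from your sketch), then uses the Sobolev embedding of Lemma~\ref{grushin-Sobolev-embed} to freeze the $x$-variable in the symbol. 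The remaining task is the $L^2\to L^2$ bound~\eqref{TT*-kernel-claim1}, proved in Proposition~\ref{prop:weighted-kernel-estimate-1} by applying Lemma~\ref{weighted-kernel-estimate-3} and Remark~\ref{rem2-indices-clarity} directly to the composed kernel, splitting it as a product of two Grushin multipliers, and invoking \eqref{def:grushin-symb-vanishing-0-condition} only in the case $S\ge 1$ to absorb the growth of $\partial_\kappa^\beta\psi_1(2^j|\kappa|)$. A Schur-test argument along your lines could in principle be assembled, but it would require essentially the same freezing and case analysis, not a black-box call to Theorem~\ref{thm:weighted-Plancherel-L2}.
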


Since we have from Lemma \ref{lem-grushin-partition-part-fix} that $ \displaystyle \sup_{x, J} \sum_{J \in \mathbb{N}} (1 + d \left( x, x_J \right) )^{-N} < \infty,$ whenever $N > Q$, the proof of $S^0_{0, 0}$ case in Theorem \ref{thm:old-joint-calc} (2) follows from Theorem \ref{Cotlar-Stein-lemma} via estimates of Lemma \ref{freeze-lem} and Proposition \ref{TT*-and-T*T-main-prop} combined together. 

We shall prove Proposition \ref{TT*-and-T*T-main-prop} over the next two sections. 

%%%%%%%%%%%%%%%%%%%%%%%%%%%%%%%%%%%%%%%%
%%%%%%%%%%%%%%%%%%%%%%%%%%%%%%%%%%%%%%%%
%%%%%%%%%%%%%%%%%%%%%%%%%%%%%%%%%%%%%%%%
%%%%%%%%%%%%%%%%%%%%%%%%%%%%%%%%%%%%%%%%
%%%%%%%%%%%%%%%%%%%%%%%%%%%%%%%%%%%%%%%%
%%%%%%%%%%%%%%%%%%%%%%%%%%%%%%%%%%%%%%%%
%%%%%%%%%%%%%%%%%%%%%%%%%%%%%%%%%%%%%%%%
%%%%%%%%%%%%%%%%%%%%%%%%%%%%%%%%%%%%%%%%
%%%%%%%%%%%%%%%%%%%%%%%%%%%%%%%%%%%%%%%%
%%%%%%%%%%%%%%%%%%%%%%%%%%%%%%%%%%%%%%%%
%%%%%%%%%%%%%%%%%%%%%%%%%%%%%%%%%%%%%%%%
%%%%%%%%%%%%%%%%%%%%%%%%%%%%%%%%%%%%%%%%
%%%%%%%%%%%%%%%%%%%%%%%%%%%%%%%%%%%%%%%%
%%%%%%%%%%%%%%%%%%%%%%%%%%%%%%%%%%%%%%%%

\section{Operator norm of \texorpdfstring{$\left( T^S_{J,l} \right)^* T^S_{J^\prime,l^\prime}$}.} \label{Sec-S00-proof-1} 
\begin{proof}[Proof of the first part of Proposition \ref{TT*-and-T*T-main-prop} (operator norm of $\left( T^S_{J,l} \right)^* T^S_{J^\prime,l^\prime}$)]

Let $\chi_l$ be the characteristic function on the real line of the interval $(0, 2^{l+2})$. For any $N \in \mathbb{N}$, we can write 
\begin{align*} 
\left( T^S_{J,l} \right)^* T^S_{J^\prime,l^\prime} & = \left( T^S_{J} \psi_l (G) \right)^* T^S_{J^\prime} \psi_{l^\prime} (G) = \left( T^S_{J} \chi_l (G) \psi_l (G) \right)^* T^S_{J^\prime} \chi_{l^\prime} (G) \psi_{l^\prime} (G) \\ 
& = \left\{\psi_{l} (G) (I+G)^{-N}\right\} \left\{(I+G)^{N} \left( T^S_{J} \chi_l (G) \right)^* (I+G)^{-N}\right\} \\ 
& \quad \quad \left\{(I+G)^{N} \left( T^S_{J^\prime} \chi_{l^\prime} (G) \right) (I+G)^{-N}\right\} \left\{(I+G)^{N} \psi_{l^{\prime}} (G) \right\}.
\end{align*} 

We claim the following estimates on each one of the four operators on the right hand side of the above expression: 
\begin{align} 
\label{decay-1} \left\| \psi_{l} (G) (I+G)^{-N} \right\|_{op} & \lesssim_N 2^{- N l}. \\ 
\label{uniform-1} \left\| (I+G)^{N} \left( T^S_{J} \chi_l (G) \right)^* (I+G)^{-N} \right\|_{op} & \lesssim_N \|m\|_{S^0_{0, 0}}. \\
\label{uniform-2}\left\|(I+G)^{N} \left( T^S_{J^\prime} \chi_{l^{\prime}} (G) \right) (I+G)^{-N} \right\|_{op} & \lesssim_N \|m\|_{S^0_{0, 0}} \\
\label{growth-1}\left\|(I+G)^{N} \psi_{l^{\prime}} (G) \right\|_{op} & \lesssim_N 2^{N l^{\prime}}.
\end{align}
 
Estimates (\ref{decay-1}) and (\ref{growth-1}) are straightforward. We shall prove estimates (\ref{uniform-1}) and (\ref{uniform-2}). In order to prove (\ref{uniform-1}), note first that 
\begin{align*} 
& \left( \left( T^S_{J} \psi_l (G) \right)^* (I+G)^{-N} f \right) (x) \\ 
& = (2\pi)^{-n_2} \int_{\mathbb{R}^{n_2}} \sum_{\mu} \left(\int_{\mathbb{R}^{n_1 + n_2}} \overline{ \left( \chi_l m^S_{J} \right) (z, (2 \mu + \tilde{1}) |\lambda|, - \lambda) } \left( (I+G)^{-N}f \right) (z) \Phi_{\mu}^\lambda(z^\prime) e^{i \lambda \cdot z^{\prime \prime}} \, dz \right) \\ 
& \quad \quad \quad \quad \quad \Phi_{\mu}^\lambda(x^\prime) e^{-i \lambda \cdot x^{\prime \prime}} \, d\lambda.
\end{align*}

Therefore, 
\begin{align*} 
& \left( (I+G)^{N} \left( T^S_{J} \psi_{l^{\prime}} (G) \right)^* (I+G)^{-N}f \right) (x) \\ 
&= (2\pi)^{-n_2} \int_{\mathbb{R}^{n_2}} \sum_{\mu} \left(\int_{\mathbb{R}^{n_1 + n_2}} \overline{ \left( \chi_l m^S_{J} \right)(z, (2 \mu + \tilde{1}) |\lambda|, - \lambda) } \left( (I+G)^{-N}f \right) (z) \Phi_{\mu}^\lambda(z^\prime) e^{i \lambda \cdot z^{\prime \prime}} \, dz \right) \\
&\quad \quad \quad \quad \quad (I+G)^{N} \left(\Phi_{\mu}^\lambda(x^\prime) e^{-i \lambda \cdot x^{\prime \prime}}\right) \, d\lambda \\
&= (2\pi)^{-n_2} \int_{\mathbb{R}^{n_2}} \sum_{\mu} \left(\int_{\mathbb{R}^{n_1 + n_2}} \overline{ \left( \chi_l m^S_{J} \right)(z, (2 \mu + \tilde{1}) |\lambda|, - \lambda) }  \left( (I+G)^{-N}f \right) (z) \Phi_{\mu}^\lambda(z^\prime) e^{i \lambda \cdot z^{\prime \prime}} \, dz \right) \\
&\quad \quad \quad \quad \quad \left(1+(2|\mu|+n_1)|\lambda|\right)^N \left(\Phi_{\mu}^\lambda(x^\prime) e^{-i \lambda \cdot x^{\prime \prime}}\right) \, d\lambda \\
&= (2\pi)^{-n_2} \int_{\mathbb{R}^{n_2}} \sum_{\mu} \left( \int_{\mathbb{R}^{n_1 + n_2}} \overline{ \left( \chi_l m^S_{J} \right)(z, (2 \mu + \tilde{1}) |\lambda|, - \lambda) } \left( (I+G)^{-N}f \right) (z) \right. \\
&\quad \quad \quad \quad \quad \left. \left\{ (I+G)^N \left(\Phi_{\mu}^\lambda(z^\prime) e^{i \lambda \cdot z^{\prime \prime}} \right) \right\} \, dz \right) \Phi_{\mu}^\lambda(x^\prime) e^{-i \lambda \cdot x^{\prime \prime}} \, d\lambda \\
&= (2\pi)^{-n_2} \int_{\mathbb{R}^{n_2}} \sum_{\mu} \left( \int_{\mathbb{R}^{n_1 + n_2}} (I+G)^N \left\{\overline{ \left( \chi_l m^S_{J} \right)(z, (2 \mu + \tilde{1}) |\lambda|, - \lambda) } (I+G)^{-N}f\right\} (z) \right. \\
&\quad \quad \quad \quad \quad \left. \Phi_{\mu}^\lambda(z^\prime) e^{i \lambda \cdot z^{\prime \prime}} \, dz \right) \Phi_{\mu}^\lambda(x^\prime) e^{-i \lambda \cdot x^{\prime \prime}} \, d\lambda, 
\end{align*}
where the last equality follows from the integration by parts in $z$-variable. 

For fixed $N \in \mathbb{N}$ the above expression is a finite linear combination of terms of the following form: 
\begin{align*}
& \int_{\mathbb{R}^{n_2}} \sum_{\mu} \left( \int_{\mathbb{R}^{n_1 + n_2}} \overline{ \left( \chi_l \left(X_z^\Gamma m^S_{J} \right)\right)(z, (2 \mu + \tilde{1}) |\lambda|, - \lambda) } \left(X_z^{\tilde{\Gamma}} (I+G)^{-N} f\right) (z) \Phi_{\mu}^\lambda(z^\prime) e^{i \lambda \cdot z^{\prime \prime}} \, dz \right) \\
&\quad \quad \quad \quad \Phi_{\mu}^\lambda(x^\prime) e^{-i \lambda \cdot x^{\prime \prime}} \, d\lambda,
\end{align*}
with $|\Gamma| + |\tilde{\Gamma}| \leq 2N.$

Now, we can apply Lemma \ref{freeze-lem} for the operator corresponding to the symbol $\chi_l X_z^\Gamma m^S_{J}$ to conclude that $L^2$-norm of the above functions is bounded by \begin{align*}
\sup_{|\Gamma^{\prime}| \leq 2 \left( 1 + \lfloor\frac{Q}{4}\rfloor \right)} \sup_{x, \tau, \kappa} \left|X^{\Gamma^{\prime} + \Gamma}_x m(x, \tau, \kappa) \right| \left\|X_z^{\tilde{\Gamma}} (I+G)^{-N} f\right\|_2.
\end{align*}
But then, since $|\tilde{\Gamma}| \leq 2N$, we can invoke the Plancherel theorem for the Grushin operator to get that 
$\left\|X_z^{\tilde{\Gamma}} (I+G)^{-N} f\right\|_2 \leq C_N \left\|f\right\|_2$. Hence, the claimed estimate (\ref{uniform-1}) follows.

The proof of the claimed estimate (\ref{uniform-2}) is very similar to the above process used to prove (\ref{uniform-1}). For this note first that $\left( (I+G)^{-N} f \right)^{\lambda} = (I+H(\lambda))^{-N}f^{\lambda}.$ Now, 
\begin{align*} 
& \left( \chi_{l^\prime} (G) T^S_{J^\prime} (I+G)^{-N}f \right) (x) \\ 
&= (2\pi)^{-n_2} \int_{\mathbb{R}^{n_2}} \sum_{\mu} \left( \chi_{l^\prime} m^S_{J^\prime} \right) (x, (2 \mu + \tilde{1}) |\lambda|, -\lambda) \left((I+H(\lambda))^{-N}f^{\lambda}, \Phi_{\mu}^\lambda\right) \Phi_{\mu}^\lambda(x^\prime) e^{-i \lambda \cdot x^{\prime \prime}} \, d\lambda.
\end{align*}

Therefore, 
\begin{align*} 
& (2\pi)^{n_2} \left( (I+G)^N \left( \chi_{l^\prime} (G) T^S_{J^\prime} \right) (I+G)^{-N} f \right) (x) \\ 
&= \int_{\mathbb{R}^{n_2}} \sum_{\mu} (I+G)^N \left\{ \left( \chi_{l^\prime} m^S_{J^\prime} \right) (x, (2 \mu + \tilde{1}) |\lambda|, -\lambda) \Phi_{\mu}^\lambda(x^\prime) e^{-i  \lambda \cdot x^{\prime \prime}}\right\} \left((I+H(\lambda))^{-N}f^{\lambda}, \Phi_{\mu}^\lambda\right) \, d\lambda.
\end{align*}

Once again, one can make use of the Leibniz formula for multiplication while applying $(I+G)^N$ on the product of $\left( \chi_{l^\prime} m^S_{J^\prime} \right) (x, (2 \mu + \tilde{1}) |\lambda|, -\lambda)$ and $\Phi_{\mu}^\lambda(x^\prime) e^{-i \lambda \cdot x^{\prime \prime}}$, and thus one can write the above expression as a finite linear combination of terms of the following form: 
\begin{align*}
& \int_{\mathbb{R}^{n_2}} \sum_{\mu} \left( \chi_{l^\prime} X_x^{\Gamma} m^S_{J^\prime} \right) (x, (2 \mu + \tilde{1}) |\lambda|, -\lambda) X_x^{\tilde{\Gamma}}\left(\Phi_{\mu}^\lambda(x^\prime) e^{-i \lambda \cdot x^{\prime \prime}}\right) \left((I+H(\lambda))^{-N} f^{\lambda}, \Phi_{\mu}^\lambda\right) \, d\lambda, 
\end{align*}
where $|\Gamma| + |\tilde{\Gamma}| \leq 2N.$ 

The above term can be written as a finite linear combination of 
\begin{align*}
& \int_{\mathbb{R}^{n_2}} \sum_{\mu} \left( \chi_{l^\prime} X_x^{\Gamma} m^S_{J^\prime} \right)(x, (2 \mu + \tilde{1}) |\lambda|, -\lambda) \left\{ A(\lambda)^{\tilde{\alpha}} A^*(\lambda)^{\tilde{\alpha}^{\prime}} \Phi_{\mu}^\lambda \right\} (x^\prime) e^{-i \lambda \cdot x^{\prime \prime}} \\ 
& \quad \quad \quad \quad \left((I+H(\lambda))^{-N} f^{\lambda}, \Phi_{\mu}^\lambda \right) \, d\lambda \\
&= \int_{\mathbb{R}^{n_2}} \sum_{\mu} \left( \chi_{l^\prime} X_x^{\Gamma} m^S_{J^\prime} \right)(x, (2 (\mu + \tilde{\alpha} - \tilde{\alpha}^{\prime}) + \tilde{1}) |\lambda|, -\lambda) \, \Phi_{\mu}^\lambda(x^\prime) e^{-i  \lambda \cdot x^{\prime \prime}} \\
& \quad \quad \quad \quad \left((I+H(\lambda))^{-N}f^{\lambda}, A(\lambda)^{ \tilde{\alpha}^{\prime}} A^*(\lambda)^{\tilde{\alpha}} \Phi_{\mu}^\lambda\right) \, d\lambda \\
&= \int_{\mathbb{R}^{n_2}} \sum_{\mu} \left( \chi_{l^\prime} X_x^{\Gamma} m^S_{J^\prime} \right)(x, (2 (\mu + \tilde{\alpha} - \tilde{\alpha}^{\prime}) + \tilde{1}) |\lambda|, -\lambda) \, \Phi_{\mu}^\lambda(x^\prime) e^{-i  \lambda \cdot x^{\prime \prime}} \\
&\quad \quad \quad \quad \left(A(\lambda)^{\tilde{\alpha}} A^* (\lambda)^{\tilde{\alpha}^{\prime}} (I+H(\lambda))^{-N}f^{\lambda}, \Phi_{\mu}^\lambda\right) \, d\lambda, \\ 
& = \left( \widetilde{T} \left( R^N_{\tilde{\alpha},\tilde{\alpha}^{\prime}} f \right) \right) (x), 
\end{align*}
where $|\tilde{\alpha}| + |\tilde{\alpha}^{\prime}| \leq |\tilde{\Gamma}|.$ In the above expression, the operator $\widetilde{T}$ is corresponding to the symbol $ \left( \chi_{l^\prime} X_x^{\Gamma} m^S_{J^\prime} \right) (x, (2 (\mu + \tilde{\alpha} - \tilde{\alpha}^{\prime}) + \tilde{1}) |\lambda|, -\lambda) $, and the operator $R^N_{\tilde{\alpha},\tilde{\alpha}^{\prime}}$ is defined by 
$$ \left( R^N_{\tilde{\alpha},\tilde{\alpha}^{\prime}} f \right) (x) := (2\pi)^{n_2} \int_{\mathbb{R}^{n_2}} e^{-i  \lambda \cdot x^{\prime \prime}} \left( A(\lambda)^{\tilde{\alpha}} A^* (\lambda)^{\tilde{\alpha}^{\prime}} (I+H(\lambda))^{-N} \right) f^{\lambda} (x^\prime) \, d\lambda.$$ 

Now, the boundedness of $R^N_{\tilde{\alpha}, \tilde{\alpha}^{\prime}}$ on $L^2(\mathbb{R}^{n_1 + n_2})$ follows from the Plancherel theorem for the Grushin operator (since $|\tilde{\alpha}| + |\tilde{\alpha}^{\prime}| \leq 2N$), with bound depending on $N$. On the other hand, it follows from Lemma \ref{freeze-lem} that for each fixed $\tilde{\alpha}$ and $\tilde{\alpha}^{\prime}$, the operator  $\widetilde{T}$ is bounded on $L^2(\mathbb{R}^{n_1 + n_2})$. This completes the proof of the estimate (\ref{uniform-2}). 

Putting all four of the estimates (\ref{decay-1}), (\ref{uniform-1}), (\ref{uniform-2}), and (\ref{growth-1}) together, we see that 
\begin{equation} \label{TT*-final-1}
\left\| \left( T^S_{J,l} \right)^* T^S_{J^\prime,l^\prime} \right\|_{op} \lesssim_N \|m\|^2_{S^0_{0, 0}} 2^{- N l} 2^{N l^{\prime}}.
\end{equation}

Since $\left\| \left( T^S_{J,l} \right)^* T^S_{J^\prime,l^\prime}\right\|_{op} = \left\| \left( T^S_{J^\prime,l^\prime} \right)^* T^S_{J,l} \right\|_{op},$ one can interchange the role of $(J,l)$ and $(J^\prime,l^\prime)$ in the estimate (\ref{TT*-final-1}) to conclude that 
\begin{equation} \label{TT*-final-2}
\left\| \left( T^S_{J,l} \right)^* T^S_{J^\prime,l^\prime} \right\|_{op} \lesssim_N \|m\|^2_{S^0_{0, 0}} 2^{- N |l - l^{\prime}|}.
\end{equation}

Finally, observe from the integral kernel representation of $\left( T^S_{J,l} \right)^* T^S_{J^\prime,l^\prime}$ that this operator vanishes unless $B(x_J, 2) \cap B(x_{J^\prime}, 2) \neq \emptyset$, that is, unless $d(x_J, x_{J^\prime}) \leq 4 C_0$, with $C_0$ same as in \eqref{quasi-metric-constant}. This completes the proof of the claimed operator norm of $\left( T^S_{J,l} \right)^* T^S_{J^\prime,l^\prime}$ in Proposition \ref{TT*-and-T*T-main-prop}.
\end{proof}

%%%%%%%%%%%%%%%%%%%%%%%%%%%%%%%%%%%%%%%%
%%%%%%%%%%%%%%%%%%%%%%%%%%%%%%%%%%%%%%%%
%%%%%%%%%%%%%%%%%%%%%%%%%%%%%%%%%%%%%%%%
%%%%%%%%%%%%%%%%%%%%%%%%%%%%%%%%%%%%%%%%
%%%%%%%%%%%%%%%%%%%%%%%%%%%%%%%%%%%%%%%%
%%%%%%%%%%%%%%%%%%%%%%%%%%%%%%%%%%%%%%%%
%%%%%%%%%%%%%%%%%%%%%%%%%%%%%%%%%%%%%%%%
%%%%%%%%%%%%%%%%%%%%%%%%%%%%%%%%%%%%%%%%
%%%%%%%%%%%%%%%%%%%%%%%%%%%%%%%%%%%%%%%%
%%%%%%%%%%%%%%%%%%%%%%%%%%%%%%%%%%%%%%%%
%%%%%%%%%%%%%%%%%%%%%%%%%%%%%%%%%%%%%%%%
%%%%%%%%%%%%%%%%%%%%%%%%%%%%%%%%%%%%%%%%
%%%%%%%%%%%%%%%%%%%%%%%%%%%%%%%%%%%%%%%%
%%%%%%%%%%%%%%%%%%%%%%%%%%%%%%%%%%%%%%%%

\section{Operator norm of \texorpdfstring{$T^S_{J,l} \left( T^S_{J^\prime,l^\prime} \right)^*$}.} \label{Sec-S00-proof-2} 
Recall the metric $d$ defined in (\ref{def:distance-1}): 
\begin{align*} 
d(x,y) := \left(\left|x^\prime - y^\prime \right|^4 + \frac{\left|x^{\prime \prime} - y^{\prime \prime}\right|^4}{\left|x^{\prime \prime} - y^{\prime \prime}\right|^2 + \left(\left|x^\prime \right|^2 + \left|y^\prime \right|^2\right)^2}\right)^{1/4}. 
\end{align*}

In view of the nature of the metric $d$, the kernel analysis needs to be performed carefully for points $x$ with $x^{\prime}$ near $0$ versus those with $x^{\prime}$ away from $0$. 

In what follows, we make use of the following functions: 

\begin{alignat}{2} \label{case-by-distances} 
d_1(x,y) &= \left(\left|x^\prime - y^\prime \right|^4 + \frac{\left|x^{\prime \prime} - y^{\prime \prime}\right|^4}{\left|x^\prime \right|^4}\right)^{1/4}, & \quad \textup{when } \left|x^\prime\right| > 3, \\
\nonumber d_2(x,y) &= \left(\left|x^\prime - y^\prime \right|^4 + \left|x^{\prime \prime} - y^{\prime \prime}\right|^2\right)^{1/4}, & \quad \textup{when } \left|x^\prime\right| < 5.
\end{alignat}

Let us explain the purpose of introducing the functions $d_1$ and $d_2$.
Note first that $d$ is bounded from above by any of the $d_i$'s in their respective domains. Now, when we apply powers of the distance function $(1 + d(x,y))^N$ on the kernel of $T^S_{J,l} \left( T^S_{J^\prime,l^\prime} \right)^*$, we get to estimate the action of $(x^{\prime} - y^{\prime})$, $(x^{\prime \prime} - y^{\prime \prime})$, and their various powers with the help of Lemma \ref{weighted-kernel-estimate-3}. Now, what we get is a finite linear combination of terms which are products of kernels of Grushin multipliers and monomials in $x^{\prime}$. When $x^{\prime}$ is near zero, being bounded it does not create any issue while we estimate $L^2$-norms of the associated operators. But, we have no control on powers of $x^{\prime}$ that are in the denominator in the definition on $d$. So, when $x^{\prime}$ is near zero, it is important not to take it in the denominator of the distance function. On the other hand, when $x^{\prime}$ is away from $0$, it is important to keep the denominator in the definition of $d$ which in fact dominates the above mentioned monomials. In addition, note that we have also discarded $(x^{\prime \prime} - y^{\prime \prime})$ from the denominator in $d_1$. This along with the advantage of taking $\left|x^\prime \right|^4 $ in place of $\left( \left|x^\prime \right|^2 + \left|y^\prime \right|^2 \right)^2$ in $d_1$ can best be understood in the proof of Proposition \ref{prop:weighted-kernel-estimate-1}. 

Let us fix a $C^\infty$ smooth function $\mathfrak{R}$ on $\R^{n_1}$ which is supported on the set $|x^\prime| < 5$ and $\mathfrak{R}\equiv 1$ on $|x^\prime|\leq 3$. Following the idea of the proof of Lemma \ref{lem-grushin-partition-part-fix}, one can verify that for every $\Gamma \in \mathbb{N}^{n_1 + n_1 n_2}$ and $N \in \mathbb{N}$ there exists a constant $C_{\Gamma, N} > 0$ such that for any $J, J^{\prime} \in \mathbb{N}$, we have 
\begin{align} \label{four-dist-der}
\sup_{\substack{x \in B(x_J, 2) \\ y \in B(x_{J^{\prime}}, 2)}} \left| X^{{\Gamma}}_{x} \left\{ \left( 1 - \mathfrak{R}(x^{\prime}) \right) \left( 1 + \left( d_1(x,  y) \right)^4 \right)^{-N} \right\} \right| 
\leq C_{\Gamma, N} \left( 1 + \left( d(x_J, x_{J^{\prime}}) \right)^4 \right)^{-N}; \\ 
\nonumber \sup_{\substack{x \in B(x_J, 2) \\ y \in B(x_{J^{\prime}}, 2)}} \left| X^{{\Gamma}}_{x} \left\{ \mathfrak{R}(x^{\prime}) \left( 1 + \left( d_2(x,  y) \right)^4 \right)^{-N} \right\} \right| 
\leq C_{\Gamma, N} \left( 1 + \left( d(x_J, x_{J^{\prime}}) \right)^4 \right)^{-N}. 
\end{align}

\begin{lem} \label{ker-TT*-pieces}
We have 
\begin{align*}
T^S_{J,l} \left( T^S_{J^\prime,l^\prime} \right)^* f(x) = \int_{\mathbb{R}^{n_1 + n_2}} f(y) k_{J,l,J^{\prime}, l^{\prime}}^{x, y, S} (x,y) \, dy, 
\end{align*} 
with the integral kernel $\displaystyle k_{J,l,J^{\prime}, l^{\prime}}^{u,v,S} (x,y)$ given by 
\begin{align*}
(2\pi)^{-n_2} \int_{\mathbb{R}} \sum_{\mu} m^S_{J,l} \left( u, (2 \mu + \tilde{1}) |\lambda|, - \lambda \right) \overline{m^S_{J^{\prime}, l^{\prime}} \left(v, (2 \mu + \tilde{1}) |\lambda|, - \lambda \right)} E_{\mu, \lambda} (x, y) \, d\lambda. 
\end{align*}
\end{lem}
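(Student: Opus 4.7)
The plan is to write each factor $T^S_{J,l}$ and $(T^S_{J^\prime,l^\prime})^*$ in integral-kernel form, compose them, and then collapse the resulting expression by using Fourier inversion in the $x^{\prime\prime}$-variable together with the orthogonality relation $\int_{\mathbb{R}^{n_1}} \Phi_\mu^\lambda(z^\prime) \Phi_\nu^\lambda(z^\prime)\, dz^\prime = \delta_{\mu\nu}$ for the scaled Hermite functions.

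The first observation is that each symbol $m^S_{J,l}$ has compact support: in $x$ by the cutoff $\chi_J$, in $\tau$ by $\psi_l(|\tau|_1)$, and in $\kappa$ (bounded away from $0$ and from $\infty$) by $\sum_{j=1}^S \psi_1(2^j|\kappa|)$. Consequently the sum over $\mu$ and the integral over $\lambda$ in \eqref{def:joint-Gru-pseudo} converge absolutely and uniformly in $x$, so $T^S_{J,l}$ admits the ordinary integral-kernel representation
\[
T^S_{J,l} h(x) = \int_{\mathbb{R}^{n_1+n_2}} k^{x,S}_{J,l}(x,z)\, h(z)\, dz,
\]
where for each fixed $u$,
\[
k^{u,S}_{J,l}(x,z) = (2\pi)^{-n_2} \int_{\mathbb{R}^{n_2}} \sum_{\mu} m^S_{J,l}\!\left(u, (2\mu+\tilde{1})|\lambda|, -\lambda\right) E_{\mu,\lambda}(x,z)\, d\lambda.
\]
Taking the formal adjoint of $T^S_{J^\prime,l^\prime}$ (and using that $\Phi^\lambda_\mu$ is real-valued) then gives
\[
(T^S_{J^\prime,l^\prime})^* g(z) = \int_{\mathbb{R}^{n_1+n_2}} \overline{k^{y,S}_{J^\prime,l^\prime}(y,z)}\, g(y)\, dy.
\]

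Next I would compose the two operators and interchange the order of integration (justified by the compact support of the symbols, hence absolute convergence of the iterated integrals) to obtain
\[
T^S_{J,l}(T^S_{J^\prime,l^\prime})^* f(x) = \int_{\mathbb{R}^{n_1+n_2}} f(y) \left[ \int_{\mathbb{R}^{n_1+n_2}} k^{x,S}_{J,l}(x,z)\, \overline{k^{y,S}_{J^\prime,l^\prime}(y,z)}\, dz \right] dy.
\]
Thus it remains to evaluate the inner bracket. Substituting the explicit formulas for $k^{x,S}_{J,l}$ and $k^{y,S}_{J^\prime,l^\prime}$, the bracket becomes a fourfold object with integrations in $\lambda, \lambda^\prime \in \mathbb{R}^{n_2}$ and sums in $\mu,\nu\in\mathbb{N}^{n_1}$, together with an integration in $z = (z^\prime, z^{\prime\prime})$.

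The final step is a double collapse. Performing the $z^{\prime\prime}$-integral first produces
\[
\int_{\mathbb{R}^{n_2}} e^{i(\lambda-\lambda^\prime)\cdot z^{\prime\prime}}\, dz^{\prime\prime} = (2\pi)^{n_2}\,\delta(\lambda-\lambda^\prime),
\]
which eliminates the $\lambda^\prime$-integration and forces $\lambda^\prime = \lambda$. Performing the $z^\prime$-integral then produces, via the orthogonality of the scaled Hermite functions, a Kronecker delta $\delta_{\mu\nu}$ that eliminates one of the two $\mu$-sums. What remains is exactly
\[
(2\pi)^{-n_2}\int_{\mathbb{R}^{n_2}} \sum_{\mu} m^S_{J,l}\!\left(x, (2\mu+\tilde{1})|\lambda|, -\lambda\right) \overline{m^S_{J^\prime,l^\prime}\!\left(y, (2\mu+\tilde{1})|\lambda|, -\lambda\right)} E_{\mu,\lambda}(x,y)\, d\lambda,
\]
which is the claimed formula for $k^{x,y,S}_{J,l,J^\prime,l^\prime}(x,y)$.

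The only delicate point in the argument is justifying the interchanges of integration and summation leading to the delta-function collapse; however, this is routine here because each $m^S_{J,l}$ has compact support in all three slots, making every step absolutely convergent. There are no deeper obstacles, as the lemma is essentially a bookkeeping identity assembling the adjoint pairing with the orthogonality of $\{\Phi_\mu^\lambda\}_{\mu}$ and Fourier inversion in $x^{\prime\prime}$.
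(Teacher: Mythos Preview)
Your proposal is correct and follows exactly the approach indicated in the paper, which simply remarks that the lemma follows from the Euclidean Parseval theorem on $\mathbb{R}^{n_2}$ together with the orthonormality relations for the $\Phi_\mu^\lambda$'s applied to the composed integral kernels. Your ``double collapse'' via the $z^{\prime\prime}$-integration (Fourier inversion) and the $z^{\prime}$-integration (Hermite orthogonality) is precisely that argument spelled out in detail.
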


Proof of Lemma \ref{ker-TT*-pieces} follows easily as an application of the Euclidean Parseval's theorem on $\mathbb{R}^{n_2}$ and the orthonormality relations for $\Phi_{\mu}^\lambda$'s on $\mathbb{R}^{n_1}$, when applied to the integral kernel in terms of those of operators $T^S_{J,l}$ and $\left( T^S_{J^\prime,l^\prime} \right)^*$. 

\begin{proof}[Proof of the second part of Proposition \ref{TT*-and-T*T-main-prop} (operator norm of $T^S_{J,l} \left( T^S_{J^\prime,l^\prime} \right)^*$)] \

Note first that if $|l-l^{\prime}| > 2$ then $T^S_{J,l} \left( T^S_{J^\prime,l^\prime} \right)^* = 0$. So, let us assume that $|l-l^{\prime}| \leq 2.$ Take and fix $f \in \mathcal{S} (\mathbb{R}^{n_1 + n_2})$. Let us denote the kernel of the operator $T_{J,l} T^*_{J^\prime,l^\prime}$ by $k_{J,l,J^{\prime}, l^{\prime}}^{x, y, S}(x,y)$ (see Lemma \ref{ker-TT*-pieces}). We will also make use of the fact that $k_{J,l,J^{\prime}, l^{\prime}}^{x, y, S}(x,y) = 0$ for any $x \notin B(x_J, 2)$ or $y \notin B(x_{J^{\prime}}, 2)$. Then, we can write 
\begin{align} \label{TT*-ineq1}
\left\| T^S_{J,l} \left( T^S_{J^\prime,l^\prime} \right)^* f\right\|_{L^2} &  \leq Term(1) + Term(2), 
\end{align}
where 
\begin{align} \label{TT*-ineq1-decompose}
Term(1) &= \left\|(1-\mathfrak{R}(x^\prime)) \int_{\mathbb{R}^{n_1 + n_2}} k_{J,l,J^{\prime}, l^{\prime}}^{x, y, S}(x,y) f(y) \, dy \,  \right\|_{L^2(dx)}, \\ 
\nonumber Term(2) &= \left\|\mathfrak{R}(x^\prime) \int_{\mathbb{R}^{n_1 + n_2}} k_{J,l,J^{\prime}, l^{\prime}}^{x, y, S}(x,y) f(y) \, dy \,  \right\|_{L^2(dx)}.
\end{align}

Let us also write 
\begin{align} \label{TT*-kernel-1}
k_{J,l,J^{\prime}, l^{\prime}}^{x_0, v, S, \Gamma}(x,y) := X^{\Gamma}_{x_0} k_{J,l,J^{\prime}, l^{\prime}}^{x_0, v, S}(x,y). 
\end{align}

\medskip \noindent \underline{\textbf{Analysis of $\bf Term(1)$}} : 

For any $N \in \mathbb{N}$ and $x \in B(x_J, 2)$ with $|x^{\prime}| > 3$, we have 
\begin{align} \label{integrand-mult-div} 
& |B(x_J,1)| \left|\int_{\mathbb{R}^{n_1 + n_2}} (1-\mathfrak{R}(x^\prime))k_{J,l,J^{\prime}, l^{\prime}}^{x, y, S}(x,y) f(y) \, dy \right| \\
\nonumber = & |B(x_J,1)| \left|\int_{\mathbb{R}^{n_1 + n_2}}(1-\mathfrak{R}(x^\prime)) \frac{\left( 1 + \left( d_1(x, y) \right)^4 \right)^N}{\left( 1 + \left( d_1(x, y) \right)^4 \right)^N} k_{J,l,J^{\prime}, l^{\prime}}^{x, y, S} (x,y) f(y) \, dy \right| \\
\nonumber \leq & |B(x_J,1)| \sup_{x_0 \in B(x_J, 2)} \left|\int_{\mathbb{R}^{n_1 + n_2}} \left( 1 + \left( d_1(x, y) \right)^4 \right)^N \frac{(1-\mathfrak{R}(x_0^\prime)) k_{J,l,J^{\prime}, l^{\prime}}^{x_0, y, S} (x,y)}{\left( 1 + \left( d_1(x_0, y) \right)^4 \right)^N} f(y) \, dy \right| \\ 
\nonumber \lesssim & \sum_{|\Gamma| \leq  2 \left( 1 + \lfloor\frac{Q}{4}\rfloor \right)} \left\| \int_{\mathbb{R}^{n_1 + n_2}} \left( 1 + \left( d_1(x, y) \right)^4 \right)^N X^{\Gamma}_{x_0} \left\{ \frac{(1-\mathfrak{R}(x_0^\prime)) k_{J,l,J^{\prime}, l^{\prime}}^{x_0, y, S}(x,y)}{\left( 1 + \left( d_1(x_0, y) \right)^4 \right)^N} \right\} f (y) \, dy \right\|_{L^2(dx_0)} \\ 
\nonumber \lesssim & \sum_{|\Gamma + \tilde{\Gamma}| \leq 2 \left( 1 + \lfloor\frac{Q}{4}\rfloor \right)} \left\|\int_{\mathbb{R}^{n_1 + n_2}} \left( 1 + \left( d_1(x, y) \right)^4 \right)^N k_{J,l,J^{\prime}, l^{\prime}}^{x_0, y, S, \Gamma } (x,y) f_{\tilde{\Gamma}}(x_0, y) \, dy \right\|_{L^2(dx_0)}, 
\end{align}
where the second last inequality follows from Lemma \ref{grushin-Sobolev-embed}. Here we have used the notation $f_{\tilde{\Gamma}}(x_0, y) := f(y) X^{\tilde{\Gamma}}_{x_0} \left\{ (1-\mathfrak{R}(x_0^\prime)) \left( 1 + \left( d_1(x_0, y) \right)^4 \right)^{-N} \right\}.$

We now claim that for $N_0 = \lfloor\frac{Q}{4}\rfloor+1$ and $\phi\in L^2(\R^{n_1+n_2})$,
\begin{align} \label{TT*-kernel-claim1}
& \left\| \mathbbm{1}_{\{|x^{\prime}| \geq 3\}}(x) \int_{\mathbb{R}^{n_1 + n_2}} \left( 1 + \left( d_1(x, y) \right)^4 \right)^{N_0} k_{J,l,J^{\prime}, l^{\prime}}^{x_0, y, S, \Gamma}(x,y) \phi(y) \, dy \right\|_{L^2(dx)} \\ 
\nonumber & \quad \lesssim_{N_0} \mathbbm{1}_{|l-l^{\prime}| \leq 2} \|m\|^2_{S^0_{0, 0}} \| \phi \|_{L^2}. 
\end{align}

Assuming the claimed estimate (\ref{TT*-kernel-claim1}) for now, we get from (\ref{integrand-mult-div}) that 
\begin{align*}
Term(1) \lesssim_{N_0} \mathbbm{1}_{|l-l^{\prime}| \leq 2} \|m\|^2_{S^0_{0, 0}} \left\| f_{\tilde{\Gamma}} (x_0, y) \right\|_{L^2(dx_0 \, dy)}.
\end{align*}

But, we know from \eqref{four-dist-der} that  
\begin{align*}
\sup_{\substack{x_0 \in B(x_J, 2) \\ y \in B(x_{J^{\prime}}, 2)}} \left| X^{\tilde{\Gamma}}_{x_0} \left\{ \left( 1 - \mathfrak{R}(x^{\prime}) \right) \left( 1 + \left( d_1(x_0, y) \right)^4 \right)^{-N_0} \right\} \right| \lesssim_{\tilde{\Gamma}, N_0} \left( 1 + \left( d( x_J, x_{J^{\prime}} ) \right)^4 \right)^{-N_0},
\end{align*}
and therefore 
\begin{align*}
Term(1) &\lesssim_{N_0} \mathbbm{1}_{|l-l^{\prime}| \leq 2} \|m\|^2_{S^0_{0, 0}} \left( 1 + \left( d( x_J, x_{J^{\prime}} ) \right)^4 \right)^{-N_0} \left\|f\right\|_{L^2}.
\end{align*}

$Term (2)$ can be analysed in a similar manner. We shall prove (\ref{TT*-kernel-claim1}) in Proposition \ref{prop:weighted-kernel-estimate-1}. This completes the proof of the claimed operator norm of $T^S_{J,l} \left( T^S_{J^\prime,l^\prime} \right)^*$ of Proposition \ref{TT*-and-T*T-main-prop}.
\end{proof} 

%%%%%%%%%%%%%%%
%%%%%%%%%%%%%%%

\begin{prop} \label{prop:weighted-kernel-estimate-1}
Estimate (\ref{TT*-kernel-claim1}) holds true uniformly in $S \in \mathbb{N}$. 
\end{prop}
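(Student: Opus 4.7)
The plan is to decompose $(1+d_1(x,y)^4)^{N_0}$ via the binomial theorem into a finite sum of monomials $|x'-y'|^{4N_1}|x''-y''|^{4(N_0-N_1)}/|x'|^{4(N_0-N_1)}$ for $0\leq N_1\leq N_0$, and to apply the product-symbol version of Lemma \ref{weighted-kernel-estimate-3} (Remark \ref{weighted-kernel-estimate-cor}) with $N_2 = 2(N_0-N_1)$ to each resulting piece of the kernel $k^{x_0,y,S,\Gamma}_{J,l,J',l'}(x,y)$. By Remark \ref{rem2-indices-clarity}, the output is a finite linear combination of terms carrying a prefactor $x'^{\alpha_1}$ with $|\alpha_1|\leq 4(N_0-N_1)-(|\beta|+l)$, a singular weight $\Theta_l(\lambda)$ of homogeneity $-l$, Hermite-product factors $\Phi^\lambda_{\mu+\tilde\mu}(x')\Phi^\lambda_\mu(y')$, and mixed derivatives $\partial_\tau^{\bullet}\partial_\kappa^{\beta}$ distributed via Leibniz rule between the two symbol factors $[\chi_J X^\Gamma_x m\,\psi_l \sum_{j=1}^S\psi_1(2^j|\kappa|)](x_0,\tau,-\lambda)$ and $\overline{[\chi_{J'} m\,\psi_{l'} \sum_{j=1}^S\psi_1(2^j|\kappa|)](y,\tau,-\lambda)}$. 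On the set $\{|x'|\geq 3\}$, the index bound immediately yields $|x'^{\alpha_1}|/|x'|^{4(N_0-N_1)}\leq 3^{-(|\beta|+l)}\leq 1$, so the monomial prefactor is harmless.

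The main obstacle will be the singularity of $\Theta_l(\lambda)$ at the origin, compounded by the fact that derivatives of the $|\kappa|$-cutoff satisfy only $|\zeta^{(\beta-\beta_1)}_S(\lambda)|\lesssim |\lambda|^{-(|\beta|-|\beta_1|)}$ uniformly in $S$. To overcome this I will adapt the strategy of the proof of Theorem \ref{thm:weighted-Plancherel-L2}: Taylor-expand the surviving $\partial_\tau^{\theta_2}\partial_\kappa^{\beta_1}m$ in the $\kappa$-variable using the cancellation hypothesis \eqref{def:grushin-symb-vanishing-0-condition},
\[
\partial_\tau^{\theta_2}\partial_\kappa^{\beta_1}m(x,\tau,\kappa) = \sum_{\substack{\tilde\beta\geq\beta_1 \\ |\tilde\beta|=|\beta|+l}}\frac{\kappa^{\tilde\beta-\beta_1}}{(\tilde\beta-\beta_1)!}\int_0^1 \partial_\tau^{\theta_2}\partial_\kappa^{\tilde\beta}m(x,\tau,t\kappa)\,dt.
\]
Since $\kappa = -\lambda$, the factor $|\kappa|^{|\beta|+l-|\beta_1|} = |\lambda|^{|\beta|+l-|\beta_1|}$ exactly absorbs the combined singularity $|\Theta_l(\lambda)\zeta^{(\beta-\beta_1)}_S(\lambda)|\lesssim |\lambda|^{-l-(|\beta|-|\beta_1|)}$, yielding a symbol bounded uniformly in $\lambda$ and $S$ by a seminorm of $m$ controlled by $\|m\|_{S^0_{0,0}}$.

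After these reductions, each surviving summand of $T_{x_0}$ takes the form (modulo the harmless multiplicative prefactor and integrations over compact parameter sets $\omega\in\Lambda$ and $t\in[0,1]$) of a composition $A\circ B$, where $A$ sends $(c_\mu(\lambda))$ to $\int \sum_\mu \tilde m_1(x_0,\mu,\lambda)c_\mu(\lambda) \Phi^\lambda_{\mu+\tilde\mu}(x') e^{-i\lambda x''}\,d\lambda$ with the $x$-independent symbol $\tilde m_1$ obtained by freezing the derivatives of the first symbol factor at $x_0$, and $B$ sends $\phi$ to $\bigl(\int \overline{\tilde m_2(y,\mu,\lambda,t,\omega)} \Phi^\lambda_\mu(y') e^{i\lambda y''} \phi(y)\,dy\bigr)_{\mu,\lambda}$, the adjoint of a pseudo-multiplier whose symbol $\tilde m_2$ is smooth in $y$, supported in $B(x_{J'},2)$ through $\chi_{J'}$, and has $X_y$-derivatives bounded by $\|m\|_{S^0_{0,0}}$. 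Plancherel's theorem for the Grushin operator (absorbing the $\tilde\mu$-shift as a unitary on $\ell^2(\mathbb{N}^{n_1})$) will give $\|A\|_{op}\lesssim \sup_{\mu,\lambda}|\tilde m_1(x_0,\cdot,\cdot)|\lesssim \|m\|_{S^0_{0,0}}$ uniformly in $x_0$, and Lemma \ref{freeze-lem} applied to the pseudo-multiplier $B^*$ will give $\|B\|_{op}\lesssim \|m\|_{S^0_{0,0}}$ uniformly in $t,\omega,S$. Composing yields the desired bound $\|T_{x_0}\|_{op}\lesssim \|m\|^2_{S^0_{0,0}}$, uniform in $S,J,J',l,l',x_0,\Gamma$. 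The crucial role of the cancellation condition \eqref{def:grushin-symb-vanishing-0-condition} — matching the restriction in Remark \ref{rem:discussion-pho-neq-delta-issue} — is to kill the origin singularity of $\Theta_l$, without which this approach would break down.
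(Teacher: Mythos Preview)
Your proposal is correct and follows essentially the same route as the paper's proof: binomial reduction of $(1+d_1^4)^{N_0}$ to monomial weights $|x'-y'|^{4N_1}|x''-y''|^{4N_2}/|x'|^{4N_2}$, application of the product-symbol kernel lemma (Remarks~\ref{rem2-indices-clarity} and~\ref{weighted-kernel-estimate-cor}), Leibniz splitting into a composition of two operators bounded respectively by Plancherel and by Lemma~\ref{freeze-lem}, and---crucially---the Taylor expansion of the symbol in $\kappa$ via the cancellation hypothesis~\eqref{def:grushin-symb-vanishing-0-condition} (exactly the mechanism from the proof of Theorem~\ref{thm:weighted-Plancherel-L2}) to neutralise the origin singularities coming from $\Theta_l$ and the derivatives of the cutoff~$\zeta_S$. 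The only organisational difference is that the paper first disposes of the case $S=0$ separately---there the cutoff $\sum_{j\le 0}\psi_1(2^j|\kappa|)$ already forces $|\lambda|\gtrsim 1$, so $\Theta_l$ is bounded and no Taylor expansion is needed---before invoking the cancellation condition for $S\ge 1$; this split is purely expository and your uniform treatment is equally valid.
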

\begin{proof} It is clear from the definition of $d_1$ that it suffices to show that 
\begin{align} \label{est-1-for-TT*-kernel-claim1}
& \left\| \mathbbm{1}_{\{|x^{\prime}| \geq 3\}}(x) \int_{\mathbb{R}^{n_1 + n_2}} \left|x^\prime - y^\prime \right|^{4 N_1} \frac{\left|x^{\prime \prime} - y^{\prime \prime}\right|^{4 N_2}}{\left|x^\prime \right|^{4 N_2}} k^{x_0, y, S, \Gamma}_{J,l,J^{\prime},l^{\prime}}(x,y) \phi(y) \, dy \right\|_{L^2(dx)} \\ 
\nonumber & \quad \lesssim_N \mathbbm{1}_{|l-l^{\prime}| \leq 2} \|m\|^2_{S^0_{0, 0}} \|\phi\|_{L^2}, 
\end{align}
for any choice of $N_1, N_2 \in \mathbb{N}$ such that $N_1 + N_2 \leq N_0.$ 

With $k^{x_0, y, S, \Gamma}_{J, l, J^{\prime}, l^{\prime}}$ as in \eqref{TT*-kernel-1} and $m^{x_0, y, S, \Gamma}_{J,l,J^{\prime}, l^{\prime}}$ denoting the corresponding product of symbols, we can express $\left|x^\prime - y^\prime \right|^{4 N_1} \frac{\left|x^{\prime \prime} - y^{\prime \prime}\right|^{4 N_2}}{\left|x^\prime \right|^{4 N_2}} k^{x_0, y, S, \Gamma}_{J, l, J^{\prime}, l^{\prime}}(x,y)$ (with the help of Remark \ref{rem2-indices-clarity}, using $4 N_2$ in place of $2 N_2$) as a finite linear combination of terms of the form 
\begin{align*} 
& \frac{{x^{\prime}}^{\alpha_{1}}}{\left|x^\prime \right|^{4 N_2}} \int_{[0,1]^{\nu_1} \times \Omega^{\nu_2} \times [0,1]^{4 N_1}} \int_{\mathbb{R}^{n_2}} \Theta_l(\lambda) \sum_{\mu} C_{\mu, \tilde{a}, \vec{a}} \left(\tau^{\frac{1}{2} \theta_1} \partial_\tau^{\theta_2}\partial_{\kappa}^{\beta} m^{x_0, y, S, \Gamma}_{J,l,J^{\prime}, l^{\prime}} \right) ((2\mu + \tilde{a} + \vec{a}(\omega)) |\lambda|, - \lambda) \\ 
\nonumber & \quad \quad \quad \quad \quad \quad \quad \quad \quad \quad \quad \quad \quad \quad \quad \quad \quad \Phi_{\mu + \tilde{\mu}}^{\lambda}(x^{\prime}) \Phi_{\mu}^{\lambda}(y^{\prime}) e^{- i \lambda \cdot (x^{\prime \prime} - y^{\prime \prime})} \, g(\omega) \, d \lambda \, d \omega,  
\end{align*}
where $\nu_1, \nu_2 \leq 4 N_2 - (|\beta| + l)$, $| \alpha_1 | \leq 4 N_2 - (|\beta|+l)$, $|\mathcal{\theta}_1| \leq |\mathcal{\theta}_2| \leq 4 N_1+ 8 N_2 - 2  (|\beta|+l)$, $|\mathcal{\theta}_2| - \frac{1}{2} |\mathcal{\theta}_1| = 2 N_1 + 4 N_2 - (|\beta|+l)- \frac{|\alpha_1|}{2} \geq 2 N_1 + 2 N_2 - \frac{1}{2}  (|\beta|+l)$, $C_{\mu, \tilde{a}, \vec{a}}$ is a bounded function of $\mu$ and $\vec{a}$, and $\Theta_l$ is a continuous function on $\R^{n_2} \setminus \{0\}$ which is homogeneous of degree $-l$. 

In view of the Leibniz rule we have 
\begin{align*}
& \left( \tau^{\frac{1}{2} \theta_1} \partial_\tau^{\theta_2} \partial_{\kappa}^{\beta}m^{x_0, y, S, \Gamma}_{J,l,J^{\prime}, l^{\prime}}\right) ((2\mu + \tilde{a} + \vec{a}(\omega)) |\lambda|, - \lambda)  \\ 
& = \sum_{\tilde{\theta}_2 \leq \theta_2} \sum_{\beta_1+\beta_2=\beta} C_{\tilde{\theta}_2, \beta_1, \beta_2} \left(\tau^{\frac{1}{2} \theta_1} \partial_\tau^{{\theta}_2 - \tilde{\theta}_2}  \partial_{\kappa}^{\beta_1} X^{\Gamma}_{x_0} m^S_{J,l}\right) (x_0, (2\mu + \tilde{a} + \vec{a}(\omega)) |\lambda|, - \lambda) \\ 
& \quad \quad \quad \quad \quad \quad \quad \overline{\left(\partial_\tau^{\tilde{\theta}_2} \partial_{\kappa}^{\beta_2}m^S_{J^{\prime},l^{\prime}} \right)(y, (2\mu + \tilde{a} + \vec{a}(\omega)) |\lambda|, - \lambda)}. 
\end{align*}
Now, as $m^S_{J,l}(u, \tau,\kappa) = \chi_J(u) m^S(u,\tau,\kappa)\psi_l(\tau)$, the above will vanish if $\left| l-l^{\prime} \right| > 2$. Also, we know that if $m\in S^0_{0, 0}$, then $m_{J,l}$ will also be in $S^0_{0, 0}$, for all $J$ and $l$, with bound uniform in $J$ and $l$ (see Lemma \ref{freeze-lem}).

Thus we can write each piece in the above finite sum as the kernel of the composition of two operators $A^S_1(x_0, \alpha_1, \beta_1, \theta_1, \theta_2, \tilde{\theta}_2)$ and $A^S_2(x, \beta_2, \theta_2, \tilde{\theta}_2)^*$, where $A^S_1(x_0, \alpha_1, \beta_1, \theta_1, \theta_2, \tilde{\theta}_2)$ corresponds to the kernel 
\begin{align*}
& \frac{{x^{\prime}}^{\alpha_{1}}}{\left|x^\prime\right|^{4 N_2}} \int_{[0,1]^{\nu_1} \times \Omega^{\nu_2} \times [0,1]^{4 N_1}} \int_{\mathbb{R}} \sum_{\mu} C_{\mu, \tilde{a}, \vec{a}} \left(\tau^{\frac{1}{2} (\theta_1 - \tilde{\theta}_2)} \partial_{\tau}^{\theta_2 - \tilde{\theta}_2} \partial_{\kappa}^{\beta_1} X^{\Gamma}_{x_0} m^S_{J,l}\right) (x_0,(2\mu + \tilde{a} + \vec{a}(\omega)) |\lambda|, - \lambda) \\ 
\nonumber & \quad \quad \quad \quad \quad \quad \quad \quad \quad \quad \quad \quad \quad \Phi_{\mu + \tilde{\mu}}^{\lambda}(x^{\prime}) \Phi_{\mu}^{\lambda}(y^{\prime}) g(\omega) e^{- i \lambda \cdot (x^{\prime \prime} - y^{\prime \prime})} \, d \lambda \, d\omega,
\end{align*}
and $A^S_2(x, \beta_2, \theta_2, \tilde{\theta}_2)$ corresponds to the kernel
\begin{align*}
& \int_{[0,1]^{\nu_1} \times \Omega^{\nu_2} \times [0,1]^{4 N_1}} \int_{\mathbb{R}} \sum_{\mu} C_{\mu, \tilde{a}, \vec{a}} \left(\tau^{\frac{1}{2} \tilde{\theta}_2}  \partial_{\tau}^{\tilde{\theta}_2} \partial_{\kappa}^{\beta_2} m^S_{J^\prime,l^\prime}\right) (x_0,(2\mu + \tilde{a} + \vec{a}(\omega)) |\lambda|, - \lambda) \\ 
\nonumber & \quad \quad \quad \quad \quad \quad \quad \quad \quad \quad \quad \Phi_{\mu}^{\lambda}(x^{\prime}) \Phi_{\mu}^{\lambda}(y^{\prime}) g(\omega) e^{- i \lambda \cdot (x^{\prime \prime} - y^{\prime \prime})} \, d \lambda \, d\omega. 
\end{align*}

\medskip \noindent \underline{\textbf{Case $\boldsymbol{1}$ (when $\boldsymbol{S = 0}$)}} : For fixed $x_0\in \R^{n_1+n_2}$, $A^0_1(x_0, \alpha_1, \beta_1, \theta_1, \theta_2, \tilde{\theta}_2)$ can be viewed as a composition of the multiplication operator associated to the function $\frac{{x^{\prime}}^{\alpha_{1}}}{\left|x^\prime\right|^{4 N_2}}$ and a Grushin multiplier where the multiplier function is $\tau^{\frac{1}{2} \left( \theta_1 - \tilde{\theta}_2 \right)}  \partial_{\tau}^{\theta_2 - \tilde{\theta}_2} \partial_{\kappa}^{\beta_1}X^{\Gamma}_{x_0} m^0_{J,l}$. 

Since $|\alpha_1| \leq 4 N_2 - (|\beta|+l)$, we have  $\frac{\left| {x^{\prime}}^{\alpha_{1}} \right|}{\left|x^\prime\right|^{4 N_2}} \leq 1$ whenever $|x^\prime|>3$. Therefore, by Plancherel's theorem for the Grushin operator
\begin{align*} 
\| A^0_1(x_0, \alpha_1, \beta_1, \theta_1, \theta_2, \tilde{\theta}_2) \|_{op} & \lesssim \sup_{\tau,\kappa} \left| \left( \tau^{\frac{1}{2} \left( \theta_1 - \tilde{\theta}_2 \right)} \partial_{\tau}^{\theta_2 - \tilde{\theta}_2} X^{\Gamma}_{x_0} m^0_{J,l} \right) (x_0,\tau,\kappa) \right| \\ 
& \lesssim 2^{l | \theta_1 - \tilde{\theta}_2| /2} 2^{- l |{\theta}_2 - \tilde{\theta}_2| / 2} \|m\|^2_{S^0_{0, 0}} \\ 
& \leq \|m\|^2_{S^0_{0, 0}}, 
\end{align*} 
where we have used the condition $|\mathcal{\theta}_1| \leq |\mathcal{\theta}_2|$. 

On the other hand, note that $ \| \left( A^0_2(x, \beta_2, {\theta}_2, \tilde{\theta}_2) \right)^* \|_{op} = \| A^0_2(x, \beta_2, {\theta}_2, \tilde{\theta}_2) \|_{op},$ and the operator norm $ \| A^0_2(x, \beta_2, \tilde{\theta}, \tilde{\tilde{\theta}}) \|_{op}$ could be computed same as $ \| A^0_1(x_0, \alpha_1, \beta_1, \theta_1, \tilde{\theta},\tilde{\tilde{\theta}}) \|_{op}$ but this time using Lemma \ref{freeze-lem} instead of the Plancherel theorem. In this case we would get 
\begin{align*}
\| A^0_2(x, \beta_2, \theta_2, \tilde{\theta}_2) \|_{op} \lesssim \sup_{x, \tau,\kappa} \left| \tau^{\frac{1}{2} \tilde{\theta}_2} \left(\partial_{\tau}^{\tilde{\theta}_2} \partial_{\kappa}^{\beta_2} m^0_{J^\prime, l^\prime}\right)(x,\tau,\kappa)\right| \lesssim \|m\|^2_{S^0_{0, 0}}. 
\end{align*}
This completes the proof of the claimed estimate (\ref{TT*-kernel-claim1}) for $S=0$.

\medskip \noindent \underline{\textbf{Case $\boldsymbol{2}$ (when $\boldsymbol{S \geq 1}$)}} : 
Here we make use of the cancellation condition \eqref{def:grushin-symb-vanishing-0-condition} to obtain the claimed operator norm boundedness. Note that similar to how we did in the proof of Theorem \ref{thm:weighted-Plancherel-L2}, we can write 
\begin{align*} 
& \partial_{\tau}^{\theta_2 - \tilde{\theta}_2} \partial_{\kappa}^{\beta_1} X^{\Gamma}_{x_0} m^S_{J,l} (x_0,\tau,\kappa) \\ 
& = \sum_{\beta^\prime_1 \leq \beta_1} C_{\beta^\prime_1} \left\{ \sum_{j=0}^S \partial_{\kappa}^{\beta^\prime_1} \left( \psi_1 (2^j|\cdot|) \right) (\kappa) \right\} \partial_{\tau}^{\theta_2 - \tilde{\theta}_2} \partial_{\kappa}^{\beta_1 - \beta^\prime_1} X^{\Gamma}_{x_0} m_{J,l} (x_0,\tau,\kappa) \\ 
& = \sum_{\beta^\prime_1 \leq \beta_1} C_{\beta^\prime_1} \sum_{|\beta| = |\beta^\prime_1|} \frac{1}{\beta!} \left\{ \kappa^{\beta^\prime} \sum_{j=0}^S \partial_{\kappa}^{\beta^\prime_1} \left( \psi_1 (2^j|\cdot|) \right) (\kappa) \right\} \int_0^1 \partial_{\tau}^{\theta_2 - \tilde{\theta}_2} \partial_{\kappa}^{\beta_1 + \beta - \beta^\prime_1 } X^{\Gamma}_{x_0} m_{J,l} (x_0, \tau, t \kappa) \, dt, 
\end{align*}
where the last equality follows from the application of Taylor's theorem in $\kappa$-variable and then using the condition \eqref{def:grushin-symb-vanishing-0-condition}. 

It follows from the definition of $\psi_1$ that each one of $\kappa^{\beta^\prime_1} \sum_{j=0}^S \partial_{\kappa}^{\beta^\prime_1} \left( \psi_1 (2^j|\cdot|) \right) (\kappa)$ is a bounded function of $\kappa$-variable with bounds uniform in $S$. Now, using the above identity, we can essentially repeat the process of Case $1$ to estimate $L^2$-norms of $A^S_2(x, \beta_2, \theta_2, \tilde{\theta}_2)^*$ and $A^S_1(x_0, \alpha_1, \beta_1, \theta_1, \theta_2, \tilde{\theta}_2)$, and this completes the proof of Proposition \ref{prop:weighted-kernel-estimate-1}. 
\end{proof}

Before moving further, let us see how the proof of Corollary \ref{cor:Grushin-CV-compact-support} follows from the analysis developed so far. 

\begin{rem} \label{rem:proof-rem-Grushin-CV-compact-support}
Let $\mathcal{U} \subseteq \mathbb{R}^{n_2}$ be a closed set such that $0 \notin \mathcal{U}$, and recall that $L^2_{\mathcal{U}} (\mathbb{R}^{n_1 + n_2})$ denotes the collection of all functions $f \in L^2(\mathbb{R}^{n_1 + n_2})$ such that $f^\lambda(x')$ is supported in $\mathbb{R}^{n_1} \times \mathcal{U}$ as a function of $(x', \lambda)$. In the just concluded proof of Proposition \ref{prop:weighted-kernel-estimate-1}, while working out Case 2, if we are restricting functions $f$ to only come from the class $L^2_{\mathcal{U}} (\mathbb{R}^{n_1 + n_2})$, then the growth of the derivatives of various dilates $\psi_1(2^j \kappa)$ near $\kappa = 0$ can be controlled with the help of the assumption that $0 \notin \mathcal{U}$. That way we do need to invoke Taylor's theorem, and therefore no cancellation condition is required.
\end{rem}
%%%%%%%%%%%%%%%%%%%%%%%%%%%%%%%%%%%%%%%%
%%%%%%%%%%%%%%%%%%%%%%%%%%%%%%%%%%%%%%%%
%%%%%%%%%%%%%%%%%%%%%%%%%%%%%%%%%%%%%%%%
%%%%%%%%%%%%%%%%%%%%%%%%%%%%%%%%%%%%%%%%
%%%%%%%%%%%%%%%%%%%%%%%%%%%%%%%%%%%%%%%%
%%%%%%%%%%%%%%%%%%%%%%%%%%%%%%%%%%%%%%%%
%%%%%%%%%%%%%%%%%%%%%%%%%%%%%%%%%%%%%%%%
%%%%%%%%%%%%%%%%%%%%%%%%%%%%%%%%%%%%%%%%
%%%%%%%%%%%%%%%%%%%%%%%%%%%%%%%%%%%%%%%%
%%%%%%%%%%%%%%%%%%%%%%%%%%%%%%%%%%%%%%%%
%%%%%%%%%%%%%%%%%%%%%%%%%%%%%%%%%%%%%%%%
%%%%%%%%%%%%%%%%%%%%%%%%%%%%%%%%%%%%%%%%
%%%%%%%%%%%%%%%%%%%%%%%%%%%%%%%%%%%%%%%%
%%%%%%%%%%%%%%%%%%%%%%%%%%%%%%%%%%%%%%%%

\section{On \texorpdfstring{$L^2$}--boundedness in the case of \texorpdfstring{$S^0_{\rho, \delta}(\boldsymbol{L}, \boldsymbol{U}), \, 0 \leq \delta \leq \rho \leq 1, \, \delta \neq 1$}.} 
\label{sec:rho-delta} 

Following theorem is the main result of this section. As mentioned in the introduction, this theorem along with the results proved in the previous sections would complete the proof of Theorem \ref{thm:old-joint-calc}. 

\begin{thm} \label{thm:rho-delta} 
Let $0 \leq \tilde{\rho} \leq 1$ be such that $\left\| \tilde{m}(x, \boldsymbol{L}, \boldsymbol{U}) \right\|_{op} \lesssim_{\tilde{\rho}} \left\| \tilde{m} \right\|_{S^0_{\tilde{\rho}, 0}}$ for all $\tilde{m} \in S^0_{\tilde{\rho}, 0}(\boldsymbol{L}, \boldsymbol{U})$. Then for any pair $(\rho, \delta)$ such that $0 \leq \delta \leq \rho \leq 1$, $\delta \neq 1$, and $\tilde{\rho} = (\rho - \delta) / (1 - \delta)$, we have that $\left\| m(x, \boldsymbol{L}, \boldsymbol{U}) \right\|_{op} \lesssim_{\rho, \delta} \left\| m \right\|_{S^0_{\rho, \delta}}$ for all $m \in S^0_{\rho, \delta}(\boldsymbol{L}, \boldsymbol{U})$. 
\end{thm}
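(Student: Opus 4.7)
The plan is to reduce $L^2$-boundedness for $S^0_{\rho,\delta}(\boldsymbol{L},\boldsymbol{U})$ to the hypothesized bound for $S^0_{\tilde{\rho}, 0}(\boldsymbol{L},\boldsymbol{U})$ through a dyadic decomposition in the spectral variable, a spatial localization at the $\delta$-dependent scale $r_l = 2^{-l\delta/2}$, an intrinsic Grushin rescaling, and finally an almost-orthogonality argument via Cotlar--Stein.

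First, I would write $m = m_{\mathrm{loc}} + \sum_{l \geq 1} m_l$ using the partition $\psi_l$ from Section~\ref{Sec-S00-proof}, so that each $m_l$ is supported where $|(\tau,\kappa)| \sim 2^l$. The low-frequency piece is handled directly by Lemma~\ref{lem:general-pseudo-multiplier-compact-support-L2-bounded}. For each $l \geq 1$ I would construct a partition of unity $\{\chi_{l,J}\}_J$ in $\mathbb{R}^{n_1+n_2}$ subordinate to a maximal disjoint family of Grushin balls of radius $r_l/(2C_0)$, exactly as in the proof of Lemma~\ref{lem-grushin-partition-part-fix} but at scale $r_l$; the doubling property~\eqref{grushin-ball-growth} and the quasi-metric structure ensure all conclusions of that lemma continue to hold with constants independent of $l$, provided one tracks the scale carefully. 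Setting $m_{l,J} = \chi_{l,J}\,m_l$ and $T_{l,J} = m_{l,J}(x,\boldsymbol{L},\boldsymbol{U})$, we obtain the formal decomposition $T = T_{\mathrm{loc}} + \sum_{l,J} T_{l,J}$.

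Next, the heart of the proof is to show that each $T_{l,J}$ satisfies $\|T_{l,J}\|_{op} \lesssim \|m\|_{S^0_{\rho,\delta}}$ uniformly in $l$ and $J$. I would do this by an intrinsic Grushin rescaling centred at $x_{l,J}$ with scale $r_l$: the dilation $(x',x'') \mapsto (r_l^{-1}(x'-x'_{l,J}),\, r_l^{-2}(x''-x''_{l,J}))$ turns $B(x_{l,J}, r_l)$ into the unit ball, transforms $\boldsymbol{L}$ and $\boldsymbol{U}$ into the analogous operators in the new coordinates, and induces the spectral rescaling $(\tau,\kappa) \mapsto (r_l^2 \tau,\, r_l^2 \kappa)$. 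The rescaled symbol lives at spectral scale $r_l^2 \cdot 2^l = 2^{l(1-\delta)}$; each $X$-derivative of $\chi_{l,J}$ or of $m_l$ costs exactly one factor of $r_l^{-1} = 2^{l\delta/2}$ in the original variables, which becomes a dimensionless factor after rescaling, while each $\tau$- or $\kappa$-derivative of $m_l$ gains $2^{-l(1+\rho)/2}$. The algebraic identity $\tilde{\rho}(1-\delta) = \rho - \delta$, equivalent to $(1-\delta)(1+\tilde{\rho}) = 1+\rho$, guarantees that $2^{-l(1+\rho)/2} = (2^{l(1-\delta)})^{-(1+\tilde{\rho})/2}$, which is precisely the spectral-derivative decay required by $S^0_{\tilde{\rho},0}$ at scale $2^{l(1-\delta)}$. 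Hence the rescaled $m_{l,J}$ lies in $S^0_{\tilde{\rho},0}$ with seminorm controlled by $\|m\|_{S^0_{\rho,\delta}}$, and the hypothesis delivers the claimed uniform $L^2$-bound.

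Finally, I would sum over $(l,J)$ using Cotlar--Stein. Spectral disjointness of the $m_l$'s forces $T_{l,J}^* T_{l',J'} = 0$ whenever $|l-l'| \geq 3$, while for $|l-l'| \leq 2$ one obtains off-diagonal decay in $d(x_{l,J}, x_{l',J'})/r_l$ from the kernel estimates of Section~\ref{sec-kernel-estimates}, in complete analogy with Proposition~\ref{TT*-and-T*T-main-prop}. Part~(iii) of Lemma~\ref{lem-grushin-partition-part-fix}, applied at scale $r_l$, then shows that the row and column sums are summable and bounded independently of $l$, completing the Cotlar--Stein estimate. The main obstacle will be the rescaling step: one must verify that the Grushin dilation really does transform the joint spectral calculus of $(\boldsymbol{L},\boldsymbol{U})$ as claimed, including the Hermite functions $\Phi_\mu^\lambda$ and the projections $P_k(\lambda)$ appearing in~\eqref{def:Gru-pseudo}, and that the identity $\tilde{\rho}(1-\delta) = \rho - \delta$ aligns the derivative gains exactly so as to land in the target symbol class; a secondary difficulty is obtaining the partition-of-unity lemma at arbitrary scale $r_l$ with constants uniform in $l$, which however follows from the Grushin dilation invariance of the geometry.
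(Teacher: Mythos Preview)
Your rescaling step contains a fundamental error: the Grushin operator $G = -\Delta_{x'} - |x'|^2\Delta_{x''}$ is \emph{not} translation-invariant in $x'$, so the affine map $(x',x'') \mapsto (r_l^{-1}(x'-x'_{l,J}),\, r_l^{-2}(x''-x''_{l,J}))$ does not transform $(\boldsymbol{L},\boldsymbol{U})$ into themselves. In the new coordinates you obtain a \emph{shifted} Grushin operator $-\Delta_{y'} - |y' + r_l^{-1}x'_{l,J}|^2\Delta_{y''}$, whose spectral decomposition involves translated Hermite functions, and the hypothesis of the theorem (stated for the standard $(\boldsymbol{L},\boldsymbol{U})$) does not apply. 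The paper uses the pure dilation $\delta_t(x',x'') = (tx', t^2x'')$ with $t = 2^{l\delta/2}$ and \emph{no} translation and \emph{no} spatial localization (Lemma~\ref{rho-rho-case-unif-bound}); this already gives $\|T^S_l\|_{op} \lesssim \|m\|_{S^0_{\rho,\delta}}$ uniformly in $l$, so your $\chi_{l,J}$ decomposition is unnecessary for that step.

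Your Cotlar--Stein outline is also incomplete and swapped. Since $T_{l,J} = M_{\chi_{l,J}}\, m(x,\boldsymbol{L},\boldsymbol{U})\, \psi_l$, it is $T_{l,J}T_{l',J'}^*$ (not $T_{l,J}^*T_{l',J'}$) that vanishes by spectral disjointness. The product $T_{l,J}^*T_{l',J'} = \psi_l\, T^* M_{\chi_{l,J}\chi_{l',J'}} T\, \psi_{l'}$ vanishes only when the \emph{spatial} supports are disjoint; for overlapping $J,J'$ and distant $l,l'$ you still need decay in $|l-l'|$, and nothing in your plan provides it. This is precisely the hard part of the paper's proof, which takes a different route: it drops the spatial index entirely, passes to residue classes $l \equiv I \pmod{\mathfrak{z}}$ so that $T^S_l(T^S_{l'})^* = 0$ for $l\neq l'$ in the same class, and then handles $\|(T^S_l)^*T^S_{l'}\|$ via an integration-by-parts identity exploiting $(2|\mu|+n_1)|\lambda| - (2|\mu'|+n_1)|\lambda'|$ (Lemma~\ref{lem:rho-delta-frequency-decay}), followed by a Taylor expansion in the $\mu'$-shift and the weighted Plancherel estimates of Lemma~\ref{lem:weighted-plancherel-joint-functional} and Lemma~\ref{weighted-kernel-estimate-3}. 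That argument yields $\|(T^S_l)^*T^S_{l'}\| \lesssim 2^{-\max(l,l')}$ after choosing $N$ large (using $\delta < 1$), and then the crude Cotlar--Stein lemma (Theorem~\ref{Cotlar-Stein-lemma-2}) concludes.
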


We decompose the operator $T = m(x, \boldsymbol{L}, \boldsymbol{U})$ into pieces similar to how we did in \eqref{0-symbol-decompose}. But, there are two notable differences. One is that we do not decompose the pieces in the space variable (that is, no index in $J \in \mathbb{N}$). Secondly, we make the decomposing in the joint spectrum, that is, jointly in $(\tau, \kappa)$ (see remark \ref{rem:cut-off-spectral-variable-comparison}). 

With $\psi_0$ as earlier (just above \eqref{0-symbol-decompose}), we define for all $S \in \mathbb{N}$,  
\begin{align} \label{0-symbol-decompose-joint-final}
m^S_{l}(x, \tau, \kappa) := m(x, \tau, \kappa) \psi_l(|(\tau, \kappa)|_1) \sum_{j = - \infty}^S \psi_1 (2^j |\kappa|).
\end{align}

With $T^S = \sum_{l \in \mathbb{N}} m_l^S(x, \boldsymbol{L}, \boldsymbol{U})$, it suffices to show that 
$$ \sup_{S \in \mathbb{N}} \|T^S\|_{op} < \infty. $$ 
In order to do so, we need to first check the boundedness of each of the operators $T^S_l$. 

\begin{lem}\label{rho-rho-case-unif-bound}
Let $\rho, \delta$ and $\tilde{\rho}$ satisfy the conditions stated in Theorem \ref{thm:rho-delta}. Then corresponding to every $m \in S^0_{\rho, \delta}(G)$, the operators $T^S_l = m^S_l (x,G)$ are bounded on $L^2(\mathbb{R}^{n_1 + n_2})$ with their operator norms being uniformly bounded in $l$ and $S$. 
\end{lem}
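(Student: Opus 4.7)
The plan is a Hörmander-style rescaling argument that trades the $x$-derivative loss $\delta$ for a weaker spectral localisation exponent $\tilde\rho$, via conjugation with a Grushin dilation.

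First I would reduce to the case without the low-frequency cutoff. Since the factor $\chi_S(\kappa) := \sum_{j \le S} \psi_1(2^j|\kappa|)$ depends only on $\kappa$, writing $m_l(x,\tau,\kappa) := m(x,\tau,\kappa)\,\psi_l(|(\tau,\kappa)|_1)$ one obtains the factorisation $T^S_l = T_l \circ \chi_S(\boldsymbol{U})$, where $T_l$ is the pseudo-multiplier with symbol $m_l$. Because $\|\chi_S\|_{\infty}\le 1$, the spectral multiplier $\chi_S(\boldsymbol{U})$ is a contraction on $L^2$, so $\|T^S_l\|_{op} \le \|T_l\|_{op}$. It therefore suffices to bound $\|T_l\|_{op}$ uniformly in $l$.

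Next, I would set up the Grushin-dilation conjugation. For $r>0$ let $\delta_r(x',x'') = (rx', r^2 x'')$ and $U_r f(x) := r^{Q/2} f(\delta_r x)$, a unitary on $L^2(\mathbb{R}^{n_1+n_2})$. Since $L_j(f\circ\delta_r) = r^2(L_j f)\circ\delta_r$ and similarly for $U_k$, one obtains the conjugation identity
\begin{equation*}
U_r^{-1}\, m(x,\boldsymbol{L},\boldsymbol{U})\, U_r \;=\; \tilde m(x,\boldsymbol{L},\boldsymbol{U}), \qquad \tilde m(x,\tau,\kappa) := m\bigl(\delta_{1/r} x,\, r^2\tau,\, r^2\kappa\bigr).
\end{equation*}
Applying this to $T_l$ with the choice $r_l := 2^{l\delta/2}$, the rescaled symbol $\tilde m_l$ is supported in $|(\tau,\kappa)|_1 \sim 2^{l(1-\delta)}$, and by the Grushin chain rule $X^\Gamma(f\circ\delta_{1/r}) = r^{-|\Gamma|}(X^\Gamma f)\circ\delta_{1/r}$ satisfies
\begin{equation*}
|X^\Gamma \partial_\tau^\theta \partial_\kappa^\beta \tilde m_l(x,\tau,\kappa)| \;\lesssim\; \|m\|_{S^0_{\rho,\delta}} \cdot r_l^{-|\Gamma|+2(|\theta|+|\beta|)} \cdot 2^{l[\delta|\Gamma|/2 \,-\,(1+\rho)(|\theta|+|\beta|)/2]}.
\end{equation*}
Substituting $r_l = 2^{l\delta/2}$, the $\Gamma$-dependence collapses; and using the algebraic identity $2\delta-1-\rho = -(1-\delta)(1+\tilde\rho)$ (immediate from $\tilde\rho = (\rho-\delta)/(1-\delta)$), the $(\theta,\beta)$-exponent matches exactly the $S^0_{\tilde\rho,0}$-bound on the support $|(\tau,\kappa)|_1\sim 2^{l(1-\delta)}$, yielding $\|\tilde m_l\|_{S^0_{\tilde\rho,0}} \lesssim \|m\|_{S^0_{\rho,\delta}}$ uniformly in $l$.

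The conclusion is then immediate from the hypothesis together with unitarity of $U_{r_l}$:
\begin{equation*}
\|T_l\|_{op} = \|U_{r_l}^{-1} T_l U_{r_l}\|_{op} = \|\tilde m_l(x,\boldsymbol{L},\boldsymbol{U})\|_{op} \lesssim_{\tilde\rho} \|\tilde m_l\|_{S^0_{\tilde\rho,0}} \lesssim \|m\|_{S^0_{\rho,\delta}}.
\end{equation*}
The main technical hurdle is verifying the conjugation identity rigorously from the spectral formula \eqref{def:joint-Gru-pseudo}; this requires tracking how the dilation $U_r$ interacts with the scaled Hermite functions $\Phi_\mu^\lambda$ under the change of variable $\lambda = r^{-2}\tilde\lambda$. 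The boundary cases $\delta=0$ (where $r_l=1$ and no rescaling is needed) and $\rho=\delta$ (where $\tilde\rho=0$, and the hypothesis reduces to the $S^0_{0,0}$-boundedness already established in the preceding sections) are subsumed by the same argument, while $\delta=1$ is excluded in the hypothesis.
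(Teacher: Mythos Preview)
Your proposal is correct and follows essentially the same route as the paper: both arguments conjugate $T_l$ by the Grushin dilation with parameter $r_l = 2^{l\delta/2}$, verify (via the spectral formula \eqref{def:joint-Gru-pseudo} and the chain rule for $X^\Gamma$) that the rescaled symbol lands in $S^0_{\tilde\rho,0}$ with norm controlled by $\|m\|_{S^0_{\rho,\delta}}$, and invoke the hypothesis of Theorem~\ref{thm:rho-delta}. The only cosmetic differences are that you factor off $\chi_S(\boldsymbol{U})$ \emph{before} dilating (the paper dilates first and then separates $\zeta^S(t^2\boldsymbol{U})$) and use the unitary normalisation of the dilation; the paper also carries out explicitly the ``main technical hurdle'' you flag, namely the computation of $\delta_{t^{-1}} T_l \delta_t$ through the scaled Hermite functions.
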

\begin{proof}
For $t>0$, consider the dilates of functions $\delta_t f(x) := f(\delta_t x)$ with respect to the non-isotropic dilations $\delta_t (x^{\prime}, x^{\prime \prime}) = (t x^{\prime}, t^2 x^{\prime \prime})$, and define the family of operators $T_{l, t}$ and $T^S_{l, t}$ respectively by 
$$T_{l, t} := \delta_{t^{-1}} T_l \delta_t \quad \text{and} \quad T^S_{l, t} := \delta_{t^{-1}} T^S_l \delta_t.$$ 

Note that 
\begin{align*} 
T^S_l \delta_t f(x) &= \int_{\mathbb{R}^{n_2}} \sum_{\mu} m^S_l(x, (2 \mu + \tilde{1}) |\lambda|, -\lambda) \left((\delta_t f)^\lambda, \Phi_{\mu}^\lambda \right) \Phi_{\mu}^\lambda(x^\prime) e^{-i \lambda \cdot x^{\prime \prime}} \, d\lambda \\
&= (2\pi)^{n_2} t^{-2 n_2} \int_{\mathbb{R}^{n_2}} \sum_{\mu} m^S_l(x, (2 \mu + \tilde{1}) |\lambda|, -\lambda) \left(f^{t^{-2} \lambda}(s \cdot), \Phi_{\mu}^\lambda \right) \Phi_{\mu}^\lambda(x^\prime) e^{-i \lambda \cdot x^{\prime \prime}} \, d\lambda, 
\end{align*}
and therefore 
\begin{align*} 
& (2\pi)^{n_2} T^S_{l, t} = (2\pi)^{n_2} (2\pi)^{n_2} \delta_{t^{-1}} T^S_l \delta_t f(x) = (2\pi)^{n_2} T^S_l \delta_t f (t^{-1} x^{\prime}, t^{-2} x^{\prime \prime}) \\
&= t^{-2 n_2} \int_{\mathbb{R}^{n_2}} \sum_{\mu} m^S_l(t^{-1} x^{\prime}, t^{-2} x^{\prime \prime}, (2 \mu + \tilde{1}) |\lambda|, -\lambda) \left(f^{t^{-2} \lambda}(s \cdot), \Phi_{\mu}^\lambda \right) \Phi_{\mu}^\lambda (t^{-1} x^\prime) e^{-i t^{-2} \lambda \cdot x^{\prime \prime}} \, d\lambda \\
&= \int_{\mathbb{R}^{n_2}} \sum_{\mu} m^S_l(\delta_{t^{-1}} x, t^2 (2 \mu + \tilde{1}) |\lambda|, -t^2\lambda)  \left(f^{\lambda}(s \cdot), \Phi_{\mu}^{t^2 \lambda} \right) \Phi_{\mu}^{t^2 \lambda} (t^{-1} x^\prime) e^{-i \lambda \cdot x^{\prime \prime}} \, d\lambda \\
&= \int_{\mathbb{R}^{n_2}} \sum_{\mu} m^S_l(\delta_{t^{-1}} x, t^2 (2 \mu + \tilde{1}) |\lambda|, -t^2\lambda)  \left(f^{\lambda}, \Phi_{\mu}^{\lambda} \right) \Phi_{\mu}^{\lambda}(x^\prime) e^{-i \lambda \cdot x^{\prime \prime}} \, d\lambda \\ 
&= \int_{\mathbb{R}^{n_2}} \sum_{\mu} m_l(\delta_{t^{-1}} x, t^2 (2 \mu + \tilde{1}) |\lambda|, -t^2\lambda) \, \zeta^S (t^2 \lambda) \, \left(f^{\lambda}, \Phi_{\mu}^{\lambda} \right) \Phi_{\mu}^{\lambda}(x^\prime) e^{-i \lambda \cdot x^{\prime \prime}} \, d\lambda.
\end{align*}

From the above expression we get that $T^S_{l, t} = m_{l, t} (x, \boldsymbol{L}, \boldsymbol{U}) \circ \zeta^S (t^2 \boldsymbol{U})$, where 
$$m_{l, t} (x, \tau, \kappa) := m_l(\delta_{t^{-1}} x, t^2 \tau, t^2 \kappa) = m_l(t^{-1} x^{\prime}, t^{-2} x^{\prime \prime}, t^2\tau, t^2\kappa).$$

Since $\|\zeta^S (\cdot)\|_{\infty}$ is uniform in $S$, it follows from the Plancherel's theorem that the operator norm $\|\zeta^S (t^2 \boldsymbol{U})\|_{op}$ is uniform in $S$ and $t$. We are therefore left with analysing the operator norm  $\|m_{l, t} (x, \boldsymbol{L}, \boldsymbol{U})\|_{op}$.

Choosing $t=2^{l \delta/2}$, one can verify that 
$$\|m_{l, 2^{l \delta/2}} \|_{\mathcal{S}^0_{ \frac{\rho - \delta}{1 - \delta}, 0}} \lesssim \|m\|_{\mathcal{S}^0_{\rho, \delta}}$$
uniformly in $l  \in \mathbb{N}$. To see this, let us just verify one derivative each in space and frequency variable as the computations for the higher order derivatives are analogous. 

Let $l \geq 1$. In view of the support condition of $m_l$ we have that $m_{l, 2^{l \delta/2}}$ (and hence it's derivatives) will be non-zero only if $t^2 |(\tau, \kappa)| \sim t^2 |(\tau, \kappa)|_1 = 2^{l \delta} |(\tau, \kappa)|_1 \sim 2^l$, which is equivalent to $|(\tau, \kappa)| \sim 2^{l (1 - \delta)}$ or $2^l \sim |(\tau, \kappa)|^{1/(1 - \delta)}$. 

Now, 
\begin{align*}
\left| \partial_{x^\prime_k} m_{l, 2^{l \delta/2}} (x, \tau, \kappa) \right| 
& = 2^{-l \delta/2} \left| \left( \partial_{x^\prime_k} m_{l} \right) (2^{-l \delta/2} x^{\prime}, 2^{-l \delta} x^{\prime \prime}, 2^{l \delta} \tau, 2^{l \delta} \kappa) \right| \\  
& \lesssim 2^{-l \delta/2} \left( 2^{l \delta} |(\tau, \kappa)| \right)^{ \delta / 2} \sim 2^{-l \delta/2} \left( 2^l \right)^{ \delta / 2} = 1. 
\end{align*} 

Next, 
\begin{align*}
\left| \partial_{\tau_j} m_{l, 2^{l \delta/2}} (x, \tau, \kappa) \right| &= 2^{l \delta} \left| \left( \partial_{\tau_j} m_{l} \right) (2^{-l \delta/2} x^{\prime}, 2^{-l \delta} x^{\prime \prime}, 2^{l \delta} \tau, 2^{l \delta} \kappa) \right| \\ 
& \lesssim 2^{l \delta} \left( 2^{l \delta} |(\tau, \kappa)| \right)^{- (1 + \rho) / 2} \\ 
& = \left( 2^l \right)^{ \delta (1 - \rho) / 2} |(\tau, \kappa)|^{- (1 + \rho) / 2} \\ 
& \sim \left( |(\tau, \kappa)|^{\frac{1}{(1 - \delta)}} \right)^{ \delta (1 - \rho) / 2} |(\tau, \kappa)|^{- (1 + \rho) / 2} \\ 
& = |(\tau, \kappa)|^{- \left( 1 + \frac{\rho - \delta}{1 - \delta} \right) / 2} \sim (1 + |(\tau, \kappa)|)^{- \left( 1 + \frac{\rho - \delta}{1 - \delta} \right) / 2}. 
\end{align*}
Similarly, one can show that $\left| \partial_{\tau_j} m_{l, 2^{l \delta/2}} (x, \tau, \kappa) \right| \lesssim (1 + |(\tau, \kappa)|)^{- \left( 1 + \frac{\rho - \delta}{1 - \delta} \right) / 2}$. 

This completes the proof of the claim that $\|m_{l, 2^{l \delta/2}} \|_{\mathcal{S}^0_{ \frac{\rho - \delta}{1 - \delta}, 0}} \lesssim \|m\|_{\mathcal{S}^0_{\rho, \delta}}$ uniformly in $l \in \mathbb{N}$, $l \geq 1$. The proof for $l=0$ can be proved analogously, utilizing the fact that the support condition for $m_0$ implies that $1 \leq 1 + |(\tau, \kappa)| \leq 3$. In this case, $s=1$ is sufficient.

Hence the lemma follows from the assumption that the operators corresponding to symbols from the class $\mathcal{S}^0_{ \frac{\rho - \delta}{1 - \delta}, 0}$ are $L^2$-bounded. 
\end{proof}

We now return to the proof of Theorem \ref{thm:rho-delta}. For a large enough natural number $\mathfrak{z}$ that will be chosen at the very end of this section, we write $T^S = \sum\limits_{I = 1}^{\mathfrak{z}} T^{S, I},$ where 
$$T^{S, I} = \sum\limits_{l \equiv I (mod \, \mathfrak{z})} T^S_l.$$ 
It suffices to show that for each $1 \leq I \leq \mathfrak{z}$, 
$$ \sup_{S \in \mathbb{N}} \| T^{S, I} \|_{op} < \infty. $$ 

Clearly, for each fixed $I$, for any distinct $l, l^{\prime} \equiv I (mod \, \mathfrak{z})$ we have 
$$T^S_l \left( T^S_{l^{\prime}} \right)^* = T \circ \psi_l (G) \circ \psi_{l^{\prime}} (G) \circ \left( T^S \right)^* = 0.$$  

For the sake of convenience, we drop the notation $I$ from now on wards. We prove the result using the following crude version of the Cotlar-Stein Lemma (see Section 2.3 of Chapter VII in \cite{SteinHarmonicBook93})): 

\begin{thm}[Cotlar-Stein lemma-2] \label{Cotlar-Stein-lemma-2}
Let $\mathcal{H}$ be a Hilbert space. Let $\{T_j\}_{j \in \mathbb{Z}} \subset \mathcal{B}(\mathcal{H})$ be such that 
$T_j T^*_k = 0$ for all $j \neq k$, and $\sup_{j} \left\| T_j \right\|_{\mathcal{B}(\mathcal{H})} = B < \infty$. If there exists a sequence of positive constants $\{c(j)\}_{j \in \mathbb{Z}}$ with 
$$A = \sum_{j \in \mathbb{Z}} c(j) < \infty,$$ 
such that the operators $T_j$ satisfy the estimate 
$$ \left\| T^*_j T_k \right\|_{\mathcal{B}(\mathcal{H})} \leq c(j) c(k), \textup{ for all } j \neq k, $$
then $\sum_{j \in \mathbb{Z}} T_j$ converges in the strong operator topology to some $T \in \mathcal{B}(\mathcal{H})$ and 
$$\| T \|_{\mathcal{B}(\mathcal{H})} \leq 2^{1/2} \max \left\{A, B \right\}.$$
\end{thm}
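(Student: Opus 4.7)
The plan is to prove the uniform bound $\|S_F\|_{\mathcal{B}(\mathcal{H})} \leq \sqrt{2} \max\{A, B\}$ for every finite partial sum $S_F = \sum_{j \in F} T_j$, and then to upgrade this to strong operator convergence. The key observation is that the hypothesis $T_j T_k^* = 0$ for $j \neq k$ produces the clean identity $S_F S_F^* = \sum_{j \in F} T_j T_j^*$, from which, for any $\phi \in \mathcal{H}$,
$$ \sum_{j \in F} \|T_j^* \phi\|^2 = \langle S_F S_F^* \phi, \phi \rangle \leq \|S_F\|^2 \|\phi\|^2.$$

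Next, I would use the self-adjoint identity $\|S_F\|^2 = \|S_F S_F^*\|$ and expand
$$\|S_F S_F^* \phi\|^2 = \sum_{j, k \in F} \langle T_k^* T_j T_j^* \phi,\, T_k^* \phi \rangle.$$
Writing $a_j = \|T_j^* \phi\|$, the diagonal terms ($j = k$) are bounded by $\|T_k\|^2 a_k^2 \leq B^2 a_k^2$, while the off-diagonal terms ($j \neq k$) are bounded by $\|T_k^* T_j\|\, a_j a_k \leq c(j) c(k)\, a_j a_k$, where we use $\|T_k^* T_j\| = \|T_j^* T_k\| \leq c(j) c(k)$. Summing and applying Cauchy--Schwarz in the form $\sum_{j \neq k} c(j) c(k) a_j a_k \leq \bigl(\sum_j c(j) a_j\bigr)^2 \leq \bigl(\sum_j c(j)^2\bigr)\bigl(\sum_j a_j^2\bigr) \leq A^2 \sum_j a_j^2$ (with $\sum c(j)^2 \leq (\sum c(j))^2 = A^2$), I obtain $\|S_F S_F^* \phi\|^2 \leq (A^2 + B^2) \sum_j a_j^2 \leq (A^2 + B^2) \|S_F\|^2 \|\phi\|^2$. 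Taking the supremum over unit $\phi$ and invoking $\|S_F\|^2 = \|S_F S_F^*\|$ once more gives $\|S_F\|^4 \leq (A^2 + B^2)\|S_F\|^2$, i.e. $\|S_F\| \leq \sqrt{A^2 + B^2} \leq \sqrt{2}\max\{A,B\}$.

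For the strong operator convergence, expanding $\|S_F \phi\|^2 = \sum_{j \in F} \|T_j \phi\|^2 + \sum_{j \neq k,\, j, k \in F} \langle T_k^* T_j \phi, \phi \rangle$ and bounding the cross term by $A^2 \|\phi\|^2$ via the almost-orthogonality hypothesis, together with the uniform norm bound just established, forces $\sum_{j \in \mathbb{Z}} \|T_j \phi\|^2 < \infty$ for every $\phi \in \mathcal{H}$. The tail estimate
$$\|(S_F - S_{F_0}) \phi\|^2 \leq \sum_{j \in F \setminus F_0} \|T_j \phi\|^2 + \Bigl(\sum_{j \notin F_0} c(j)\Bigr)^2 \|\phi\|^2,$$
in which both summands vanish as $F_0$ exhausts $\mathbb{Z}$, shows that $(S_F \phi)_F$ is Cauchy; the limit operator $T$ then inherits the bound.

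The crux is the single-step bootstrap for $\|S_F S_F^*\|$: the exact orthogonality of $\{T_j^* \phi\}_{j}$ (furnished by $T_j T_k^* = 0$) controls $\sum_j a_j^2$ by $\|S_F\|^2 \|\phi\|^2$, while the almost-orthogonality bound $\|T_j^* T_k\| \leq c(j) c(k)$ controls the off-diagonal Cauchy sum by $A^2 \sum_j a_j^2$. Unlike the classical Cotlar--Stein lemma, which requires analyzing arbitrarily high powers $\|(S_F^* S_F)^n\|$ and letting $n \to \infty$ to kill a combinatorial $|F|^{1/(2n)}$ factor, here the exact orthogonality hypothesis closes the bootstrap in a single iteration, so no serious technical obstacle is anticipated.
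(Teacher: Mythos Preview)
The paper does not supply its own proof of this statement; it simply quotes the result and cites Section~2.3 of Chapter~VII in Stein's \emph{Harmonic Analysis} for the argument. Your proof is correct and is essentially the standard one-step bootstrap that the reference contains: the hypothesis $T_j T_k^* = 0$ collapses $S_F S_F^*$ to a diagonal sum, which both gives the orthogonality $\sum_j \|T_j^*\phi\|^2 \le \|S_F\|^2\|\phi\|^2$ and allows the estimate $\|S_F S_F^*\|^2 \le (A^2+B^2)\|S_F\|^2$ to close without iterating higher powers. Your treatment of strong convergence via the tail estimate is also sound. There is nothing to compare against in the paper itself, and no gap in your argument.
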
 

In view of Theorem \ref{Cotlar-Stein-lemma-2}, we shall be done if we could show that 
$$ \| \left( T^S_l \right)^* T^S_{l^{\prime}} \|_{op} \lesssim c(l) c(l^{\prime}), \textup{ for all } l \neq l^{\prime},$$
for some $c(l) \geq 0$ satisfying $\sum_l c(l) < \infty.$

Since $\| \left( T^S_l \right)^* T^S_{l^{\prime}} \|_{op} = \| \left( T^S_{l^{\prime}} \right)^* T^S_l \|_{op}$, we can assume without loss of generality that $l > l^{\prime}$. We shall analyse the kernel $K^S_{l, l^{\prime}}$ of $T_l^* T_{l^{\prime}}$. In order to do so, we need the following estimate. 

\begin{lem} \label{lem:rho-delta-frequency-decay}
Let $l > l^{\prime}$. For each $N \in \mathbb{N}$ such that $N \leq 2^{\mathfrak{z}-5}$, we can write the kernel $K^S_{l, l^\prime}(x,y)$ of the operator $\left( T^S_l \right)^* T^S_{l^{\prime}}$ as a finite linear combination of terms which are of the form 
\begin{align} \label{eq:1-p1}
& \int_{\mathbb{R}^{n_1 + n_2} \times \mathbb{R}^{n_2} \times \mathbb{R}^{n_2}} \Theta_0(\lambda^{\prime}) \sum_{\mu, {\mu^{\prime}}} \mathcal{A}(\mu^{\prime}) m^{S, \Gamma_1, \Gamma_2, L}_{l, l^{\prime}}(z, \mu, {\mu^{\prime}} + \vec{c}, \lambda, \lambda^{\prime}) \Phi_\mu^{\lambda}(z^{\prime}) \Phi_{{\mu^{\prime}}}^{\lambda^{\prime}}(z^{\prime}) \\ 
\nonumber & \quad \quad \quad \quad \quad \quad \quad \quad \quad \quad \quad \Phi_\mu^{\lambda}(x^{\prime}) \Phi_{{\mu^{\prime}} + \vec{c}}^{\lambda^{\prime}}(y^{\prime}) e^{i \lambda \cdot (z^{\prime \prime} - x^{\prime \prime})} e^{- i \lambda^{\prime} \cdot (z^{\prime \prime} - y^{\prime \prime})} \, d\lambda \, d\lambda^{\prime} \, dz,
\end{align} 
where 
$$ m^{S, \Gamma_1, \Gamma_2, L}_{l, l^{\prime}}(z, \mu, {\mu^{\prime}}, \lambda, \lambda^{\prime}) = \frac{\overline{X^{\Gamma_1} m^S_l(z, (2 \mu + \tilde{1})|\lambda|, - \lambda)} \, X^{\Gamma_2} m^S_{l^\prime}(z, (2 \mu^{\prime} + \tilde{1}) |\lambda^{\prime}|, - \lambda^\prime) P_{L}({\mu^{\prime}}, \lambda^{\prime})} {\prod_{q=1}^N \left\{(2|\mu|+n_1)|\lambda| - (2|{\mu^{\prime}}| + C_q + n_1)|\lambda^{\prime}| \right\} } $$
with $|\Gamma_1| + |\Gamma_2| + L = 2N$ and $L \leq N$. Here, $P_{L}({\mu^{\prime}}, \lambda^{\prime}) = ((2|{\mu^{\prime}}| + n_1)|\lambda^{\prime}|)^{L/2}$, with $C_q$'s being integers bounded by $2q$, the vectors $\vec{c} \in \mathbb{Z}^{n_1}$ may depend only on $\Gamma_1, \Gamma_2$, and $N$. Finally, $\Theta_0$ is a continuous function on $\mathbb{R}^{n_2} \setminus \{0\}$ which is homogeneous of degree zero, and $\mathcal{A}(\mu^{\prime})$ is bounded function of $\mu^{\prime}$, with both functions $\Theta_0$ and $\mathcal{A}$ depending only on $N$. 
\end{lem}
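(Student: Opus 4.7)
The plan is to start from the explicit composition formula for $K^S_{l,l'}$ and iterate an $N$-fold Parseval-type integration by parts driven by the self-adjointness of $G$. Plugging the explicit expressions for $k_{(T^S_l)^*}(x,z)$ and $k_{T^S_{l'}}(z,y)$ into $K^S_{l,l'}(x,y) = \int k_{(T^S_l)^*}(x,z)\,k_{T^S_{l'}}(z,y)\,dz$, and writing
\[
\phi_1(z) := \Phi_\mu^\lambda(z')\,e^{i\lambda\cdot (z''-x'')},\qquad \phi_2(z) := \Phi_{\mu'}^{\lambda'}(z')\,e^{-i\lambda'\cdot (z''-y'')},
\]
one verifies directly that $G\phi_i = E_i\phi_i$ with $E_1 = (2|\mu|+n_1)|\lambda|$ and $E_2 = (2|\mu'|+n_1)|\lambda'|$, giving the base-case form \eqref{eq:1-p1} at $N=0$ (with $\vec c=0$, $L=0$, $\Theta_0 = \mathcal A \equiv 1$).

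The recursion step is a single integration by parts in $z$. Letting $F(z)$ denote the product of the two symbol factors, and using the identity $|z'|^2(\partial_{z''_k}F)(\partial_{z''_k}\phi_2)=\sum_j (X_{j,k}F)(X_{j,k}\phi_2)$, the Leibniz rule for $G$ reads
\[
G(F\phi_2) = (GF)\phi_2 + F(G\phi_2) - 2\sum_j(\partial_{z'_j}F)(\partial_{z'_j}\phi_2) - 2\sum_{j,k}(X_{j,k}F)(X_{j,k}\phi_2).
\]
Combined with self-adjointness of $G$ and with $E_2\int F\phi_1\phi_2\,dz = \int F\phi_1(G\phi_2)\,dz$, this yields
\[
(E_1-E_2)\int F\phi_1\phi_2\,dz = \int\phi_1\Bigl[(GF)\phi_2 - 2\sum_j(\partial_{z'_j}F)(\partial_{z'_j}\phi_2) - 2\sum_{j,k}(X_{j,k}F)(X_{j,k}\phi_2)\Bigr]dz.
\]
The right-hand side is then evaluated explicitly: derivatives on $F$ become $X^\Gamma F$-terms (since $G = -\sum X_j^2 - \sum X_{j,k}^2$), splitting by Leibniz into $X^{\Gamma_1}\overline{m^S_l}\cdot X^{\Gamma_2}m^S_{l'}$; derivatives on $\phi_2$ are expanded through \eqref{def-ann-cre-op}--\eqref{annihi-creation} and the recursion $z'_j\Phi_{\mu'}^{\lambda'} = (2|\lambda'|)^{-1/2}\bigl(\sqrt{\mu'_j}\,\Phi_{\mu'-e_j}^{\lambda'} + \sqrt{\mu'_j+1}\,\Phi_{\mu'+e_j}^{\lambda'}\bigr)$, producing a shift $\mu'\mapsto\mu'\pm e_j$ in the $z'$-Hermite factor, a half-integer coefficient $\sqrt{(\mu'_j+O(1))|\lambda'|}$ (to be absorbed into $P_L$), and---only in the $X_{j,k}$ terms---a zero-homogeneous factor $\lambda'_k/|\lambda'|$ (to be absorbed into $\Theta_0$). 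A re-indexing $\mu'\mapsto\mu'+\vec c$ then transports all accumulated shifts onto the symbol argument and onto the $y'$-Hermite function, leaving $\Phi_\mu^\lambda(z')\Phi_{\mu'}^{\lambda'}(z')$ unshifted as required by \eqref{eq:1-p1}.

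Iterating this identity $N$ times collects $X^\Gamma$ derivatives distributed between the two symbols, a bounded multiplier $\mathcal A(\mu')$ coming from ratios $\sqrt{\mu'_j/(|\mu'|+1)}$ and combinatorial factors, the overall factor $P_L(\mu',\lambda') = ((2|\mu'|+n_1)|\lambda'|)^{L/2}$ from the product of half-integer coefficients, and the zero-homogeneous $\Theta_0(\lambda')$. Since $G$ is second order, each iteration adds exactly $2$ to $|\Gamma_1|+|\Gamma_2|+L$, giving $|\Gamma_1|+|\Gamma_2|+L = 2N$; since at each step at most one of those two orders becomes a creation/annihilation factor, $L\leq N$. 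The denominator becomes $\prod_{q=1}^N(E_1 - E_2^{(q)})$ with $E_2^{(q)} = (2|\mu'|+C_q+n_1)|\lambda'|$, where $C_q$ is the cumulative integer shift of $|\mu'|$ up to step $q$, so $|C_q|\leq 2q$. The main obstacle is the combinatorial bookkeeping---organising the successive shifts, half-integer eigenvalue factors, and zero-homogeneous $\lambda'$-factors into the precise form \eqref{eq:1-p1} with the correct index constraints---rather than any deep analytic difficulty. The hypothesis $N\leq 2^{\mathfrak z - 5}$, combined with $l-l'\geq\mathfrak z$ (forced by $l\neq l'$ being congruent mod $\mathfrak z$) and the spectral localisation $E_i\sim 2^{l_i}$, guarantees $|E_1 - E_2^{(q)}|\gtrsim 2^l$ for every $q\leq N$, so no denominator vanishes and all integrations by parts are rigorous. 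Vanishing of boundary terms is automatic from the Schwartz decay of the Hermite factors in $z'$ together with the compactly supported cut-off $\zeta^S$ in the $\kappa$-variable.
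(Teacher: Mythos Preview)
Your proposal is correct and follows essentially the same approach as the paper: an $N$-fold integration by parts in $z$ exploiting that $\Phi_\mu^\lambda(z')e^{\pm i\lambda\cdot z''}$ are Grushin eigenfunctions, followed by creation/annihilation expansion of the $\phi_2$-derivatives and a $\mu'$-reindexing. Your packaging via the single Leibniz identity for $G(F\phi_2)$ is slightly cleaner than the paper's, which splits $H(\lambda)=-\Delta_{z'}+|\lambda|^2|z'|^2$, treats the two pieces separately (integration by parts in $z'$ for the first, in $z''$ for the second via $|\lambda|^2 e^{i\lambda\cdot z''}=-\Delta_{z''}e^{i\lambda\cdot z''}$), and then observes an explicit cancellation of the resulting $E_3$ term; the four surviving terms in the paper match exactly your $(GF)\phi_2$, $\partial_{z'_j}F\cdot\partial_{z'_j}\phi_2$, and $X_{j,k}F\cdot X_{j,k}\phi_2$ contributions.
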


\begin{proof} Note that $l - l^\prime \geq \mathfrak{z}$. Now, if $ (2|\mu^{\prime}| + n_1) |\lambda^{\prime}| + |\lambda^\prime|_1 \in [2^{l^{\prime} - 1}, 2^{l^{\prime} + 1}]$, then $|\lambda^{\prime}| \leq 2^{l^{\prime} + 1}$, which implies that $ (2 | {\mu^{\prime}}| + 2 N + n_1) |\lambda^{\prime}| \leq (2N + 1) 2^{l^{\prime} + 1}$. Also, observe that $ (2|\mu|+n_1)|\lambda| + |\lambda|_1 \in [2^{l-1}, 2^{l+1}]$ implies that $(2|\mu|+n_1)|\lambda| \geq C_{n_2} 2^{l-1}$. 

 Therefore, 
\begin{align*}
\left| (2|\mu|+n_1)|\lambda| - (2 | {\mu^{\prime}}| \pm 2 N + n)|\lambda^{\prime}| \right| & \geq C_{n_2} 2^{l-1} - (2N + 1) 2^{l^{\prime} + 1} \\ 
& \geq 2^{l-1} \left[ C_{n_2} - (2N + 1) 2^{2 - \mathfrak{z}} \right] \gtrsim 2^l
\end{align*}
provided that $N \leq C_{n_2} 2^{\mathfrak{z}-5}$. 

Writing $m^S_l(z, \mu, \lambda) = m^S_l(z, (2 \mu + \tilde{1}) |\lambda|, - \lambda)$ we have
\begin{align*}
& (2\pi)^{2 n_2} K^S_{l, l^\prime}(x,y) \\ 
& = \int_{\mathbb{R}^{n_1 + n_2} \times \mathbb{R}^{n_2} \times \mathbb{R}^{n_2}} \sum_{\mu, {\mu^{\prime}}} \overline{m^S_l(z, \mu, \lambda)} \, m^S_{l^\prime}(z, {\mu^{\prime}}, \lambda^{\prime}) \overline{E_{\mu, \lambda} (z, x)} E_{\mu^{\prime}, \lambda^{\prime}} (z, y) \, d\lambda \, d\lambda^{\prime} \, dz \\ 
& = \int_{\mathbb{R}^{n_1 + n_2} \times \mathbb{R}^{n_2} \times \mathbb{R}^{n_2}} \sum_{\mu, {\mu^{\prime}}} \frac{\left( (2|\mu|+n_1)|\lambda| - (2|{\mu^{\prime}}| + n_1)|\lambda^{\prime}| \right)}{\left( (2|\mu|+n_1)|\lambda| - (2|{\mu^{\prime}}| + n_1)|\lambda^{\prime}| \right)} \overline{m^S_l(z, \mu, \lambda)} \, m^S_{l^\prime}(z, {\mu^{\prime}}, \lambda^{\prime}) \\ 
& \quad \quad \quad \quad \quad \quad \quad \quad \quad \quad \overline{E_{\mu, \lambda} (z, x)} E_{\mu^{\prime}, \lambda^{\prime}} (z, y) \, d\lambda \, d\lambda^{\prime} \, dz\\
&= \int_{\mathbb{R}^{n_1 + n_2} \times \mathbb{R}^{n_2} \times \mathbb{R}^{n_2}} \sum_{\mu, {\mu^{\prime}}} \frac{\overline{m^S_l(z, \mu, \lambda)} \, m^S_{l^\prime}(z, {\mu^{\prime}}, \lambda^{\prime})}{\left( (2|\mu|+n_1)|\lambda| - (2|{\mu^{\prime}}| + n_1)|\lambda^{\prime}| \right)}  \left(H(\lambda) \Phi_\mu^{\lambda}(z^{\prime})\right) \Phi_{\mu^{\prime}}^{\lambda^{\prime}}(z^{\prime}) \\
& \quad \quad \quad \quad \quad \quad \quad \quad \quad \quad \Phi_\mu^{\lambda}(x^{\prime}) \Phi_{\mu^{\prime}}^{\lambda^{\prime}}(y^{\prime}) e^{i \lambda \cdot (z^{\prime \prime} - x^{\prime \prime})} e^{- i \lambda^{\prime} \cdot (z^{\prime \prime} - y^{\prime \prime})} \, d\lambda \, d\lambda^{\prime} \, dz \\
& \quad- \int_{\mathbb{R}^{n_1 + n_2} \times \mathbb{R}^{n_2} \times \mathbb{R}^{n_2}} \sum_{\mu, {\mu^{\prime}}} \frac{\overline{m^S_l(z, \mu, \lambda)} \, m^S_{l^\prime}(z, {\mu^{\prime}}, \lambda^{\prime})}{\left( (2|\mu|+n_1)|\lambda| - (2|{\mu^{\prime}}| + n_1)|\lambda^{\prime}| \right)} \Phi_\mu^{\lambda}(z^{\prime}) \left(H(\lambda^{\prime}) \Phi_{\mu^{\prime}}^{\lambda^{\prime}}(z^{\prime})\right) \\
& \quad \quad \quad \quad \quad \quad \quad \quad \quad \quad \quad \Phi_\mu^{\lambda}(x^{\prime}) \Phi_{\mu^{\prime}}^{\lambda^{\prime}}(y^{\prime}) e^{i \lambda \cdot (z^{\prime \prime} - x^{\prime \prime})} e^{- i \lambda^{\prime} \cdot (z^{\prime \prime} - y^{\prime \prime})} \, d\lambda \, d\lambda^{\prime} \, dz \\
&= - \int_{\mathbb{R}^{n_1 + n_2} \times \mathbb{R}^{n_2} \times \mathbb{R}^{n_2}} \sum_{\mu, {\mu^{\prime}}} \frac{\overline{m^S_l(z, \mu, \lambda)} \, m^S_{l^\prime}(z, {\mu^{\prime}}, \lambda^{\prime})}{\left( (2|\mu|+n_1)|\lambda| - (2|{\mu^{\prime}}| + n_1)|\lambda^{\prime}| \right)} \left( \Delta_{z^{\prime}} \Phi_\mu^{\lambda}(z^{\prime})\right) \Phi_{\mu^{\prime}}^{\lambda^{\prime}}(z^{\prime}) \\
& \quad \quad \quad \quad \quad \quad \quad \quad \quad \quad \Phi_\mu^{\lambda}(x^{\prime}) \Phi_{\mu^{\prime}}^{\lambda^{\prime}}(y^{\prime}) e^{i \lambda \cdot (z^{\prime \prime} - x^{\prime \prime})} e^{- i \lambda^{\prime} \cdot (z^{\prime \prime} - y^{\prime \prime})} \, d\lambda \, d\lambda^{\prime} \, dz \\
& \quad + \int_{\mathbb{R}^{n_1 + n_2} \times \mathbb{R}^{n_2} \times \mathbb{R}^{n_2}} \sum_{\mu, {\mu^{\prime}}} \frac{\overline{m^S_l(z, \mu, \lambda)} \, m^S_{l^\prime}(z, {\mu^{\prime}}, \lambda^{\prime})}{\left( (2|\mu|+n_1)|\lambda| - (2|{\mu^{\prime}}| + n_1)|\lambda^{\prime}| \right)} |\lambda|^2 |z^{\prime}|^2 \Phi_\mu^{\lambda}(z^{\prime}) \Phi_{\mu^{\prime}}^{\lambda^{\prime}}(z^{\prime}) \\
& \quad \quad \quad \quad \quad \quad \quad \quad \quad \quad \Phi_\mu^{\lambda}(x^{\prime}) \Phi_{\mu^{\prime}}^{\lambda^{\prime}}(y^{\prime}) e^{i \lambda \cdot (z^{\prime \prime} - x^{\prime \prime})} e^{- i \lambda^{\prime} \cdot (z^{\prime \prime} - y^{\prime \prime})} \, d\lambda \, d\lambda^{\prime} \, dz \\
& \quad- \int_{\mathbb{R}^{n_1 + n_2} \times \mathbb{R}^{n_2} \times \mathbb{R}^{n_2}} \sum_{\mu, {\mu^{\prime}}} \frac{\overline{m^S_l(z, \mu, \lambda)} \, m^S_{l^\prime}(z, {\mu^{\prime}}, \lambda^{\prime})}{\left( (2|\mu|+n_1)|\lambda| - (2|{\mu^{\prime}}| + n_1)|\lambda^{\prime}| \right)} \Phi_\mu^{\lambda}(z^{\prime}) \\
& \quad \quad \quad \quad \quad \left\{ \left( - \Delta_{z^{\prime}} + |\lambda^{\prime}|^2 |z^{\prime}|^2 \right) \Phi_{\mu^{\prime}}^{\lambda^{\prime}}(z^{\prime}) \right\} \Phi_\mu^{\lambda}(x^{\prime}) \Phi_{\mu^{\prime}}^{\lambda^{\prime}}(y^{\prime}) e^{i \lambda \cdot (z^{\prime \prime} - x^{\prime \prime})} e^{- i \lambda^{\prime} \cdot (z^{\prime \prime} - y^{\prime \prime})} \, d\lambda \, d\lambda^{\prime} \, dz \\
&=: E_1 + E_2 + E_3.
\end{align*}

We shall perform integration by parts in $z^{\prime}$-variable in $E_1$ and in $z^{\prime \prime}$-variable in $E_2$ and that would lead to the cancellation of $E_3$. We start with the term $E_1$. 
\begin{align} \label{eq:E1E2E3-calc1}
E_1 & = - \int_{\mathbb{R}^{n_1 + n_2} \times \mathbb{R}^{n_2} \times \mathbb{R}^{n_2}} \sum_{\mu, {\mu^{\prime}}} \frac{\Delta_{z^{\prime}} \left(\overline{m^S_l(z, \mu, \lambda)} \, m^S_{l^\prime}(z, {\mu^{\prime}}, \lambda^{\prime})\right)}{\left( (2|\mu|+n_1)|\lambda| - (2|{\mu^{\prime}}| + n_1)|\lambda^{\prime}| \right)} \Phi_\mu^{\lambda}(z^{\prime}) \Phi_{\mu^{\prime}}^{\lambda^{\prime}}(z^{\prime})\\
\nonumber & \quad \quad \quad \quad \quad \quad \quad \quad \quad \quad \quad \Phi_\mu^{\lambda}(x^{\prime}) \Phi_{\mu^{\prime}}^{\lambda^{\prime}}(y^{\prime}) e^{i \lambda \cdot (z^{\prime \prime} - x^{\prime \prime})} e^{- i \lambda^{\prime} \cdot (z^{\prime \prime} - y^{\prime \prime})} \, d\lambda \, d\lambda^{\prime} \, dz \\
\nonumber & \quad - 2 \int_{\mathbb{R}^{n_1 + n_2} \times \mathbb{R}^{n_2} \times \mathbb{R}^{n_2}} \sum_{\mu, {\mu^{\prime}}} \frac{\nabla_{z^{\prime}} \left( \overline{m^S_l(z, \mu, \lambda)} \, m^S_{l^\prime}(z, {\mu^{\prime}}, \lambda^{\prime})\right) \cdot  \nabla_{z^{\prime}} \left( \Phi_{\mu^{\prime}}^{\lambda^{\prime}}(z^{\prime}) \right)} {\left( (2|\mu|+n_1)|\lambda| - (2|{\mu^{\prime}}| + n_1)|\lambda^{\prime}| \right)} \Phi_\mu^{\lambda}(z^{\prime}) \\
\nonumber &  \quad \quad \quad \quad \quad \quad \quad \quad \quad \quad \quad \quad \Phi_\mu^{\lambda}(x^{\prime}) \Phi_{\mu^{\prime}}^{\lambda^{\prime}}(y^{\prime}) e^{i \lambda \cdot (z^{\prime \prime} - x^{\prime \prime})} e^{- i \lambda^{\prime} \cdot (z^{\prime \prime} - y^{\prime \prime})} \, d\lambda \, d\lambda^{\prime} \, dz \\
\nonumber &  \quad - \int_{\mathbb{R}^{n_1 + n_2} \times \mathbb{R}^{n_2} \times \mathbb{R}^{n_2}} \sum_{\mu, {\mu^{\prime}}} \frac{\overline{m^S_l(z, \mu, \lambda)} \, m^S_{l^\prime}(z, {\mu^{\prime}}, \lambda^{\prime})}{\left( (2|\mu|+n_1)|\lambda| - (2|{\mu^{\prime}}| + n_1)|\lambda^{\prime}| \right)} \Phi_\mu^{\lambda}(z^{\prime}) \left(\Delta_{z^{\prime}} \Phi_{\mu^{\prime}}^{\lambda^{\prime}}(z^{\prime})\right) \\
\nonumber &  \quad \quad \quad \quad \quad \quad \quad \quad \quad \quad \quad \Phi_\mu^{\lambda}(x^{\prime}) \Phi_{\mu^{\prime}}^{\lambda^{\prime}}(y^{\prime}) e^{i \lambda \cdot (z^{\prime \prime} - x^{\prime \prime})} e^{- i \lambda^{\prime} \cdot (z^{\prime \prime} - y^{\prime \prime})} \, d\lambda \, d\lambda^{\prime} \, dz.
\end{align}
 
Next, 
\begin{align} \label{eq:E1E2E3-calc2}
E_2 & = - \int_{\mathbb{R}^{n_1 + n_2} \times \mathbb{R}^{n_2} \times \mathbb{R}^{n_2}} \sum_{\mu, {\mu^{\prime}}} \frac{\overline{m^S_l(z, \mu, \lambda)} \, m^S_{l^\prime}(z, {\mu^{\prime}}, \lambda^{\prime})}{\left( (2|\mu|+n_1)|\lambda| - (2|{\mu^{\prime}}| + n_1)|\lambda^{\prime}| \right)} |z^{\prime}|^2 \Phi_\mu^{\lambda}(z^{\prime}) \Phi_{\mu^{\prime}}^{\lambda^{\prime}}(z^{\prime}) \\
\nonumber &  \quad \quad \quad \quad \quad \quad \quad \quad \quad \quad \Phi_\mu^{\lambda}(x^{\prime}) \Phi_{\mu^{\prime}}^{\lambda^{\prime}}(y^{\prime}) \Delta_{z^{\prime \prime}} \left( e^{i \lambda \cdot (z^{\prime \prime} - x^{\prime \prime})} \right) e^{- i \lambda^{\prime} \cdot (z^{\prime \prime} - y^{\prime \prime})} \, d\lambda \, d\lambda^{\prime} \, dz \\ 
\nonumber &  = - \int_{\mathbb{R}^{n_1 + n_2} \times \mathbb{R}^{n_2} \times \mathbb{R}^{n_2}} \sum_{\mu, {\mu^{\prime}}} \frac{\Delta_{z^{\prime \prime}} \left(\overline{m^S_l(z, \mu, \lambda)} \, m^S_{l^\prime}(z, {\mu^{\prime}}, \lambda^{\prime})\right)}{\left( (2|\mu|+n_1)|\lambda| - (2|{\mu^{\prime}}| + n_1)|\lambda^{\prime}| \right)} |z^{\prime}|^2 \Phi_\mu^{\lambda}(z^{\prime}) \Phi_{\mu^{\prime}}^{\lambda^{\prime}}(z^{\prime}) \\
\nonumber &  \quad \quad \quad \quad \quad \quad \quad \quad \quad \quad \Phi_\mu^{\lambda}(x^{\prime}) \Phi_{\mu^{\prime}}^{\lambda^{\prime}}(y^{\prime}) e^{i \lambda \cdot (z^{\prime \prime} - x^{\prime \prime})} e^{- i \lambda^{\prime} \cdot (z^{\prime \prime} - y^{\prime \prime})} \, d\lambda \, d\lambda^{\prime} \, dz \\ 
\nonumber &  \quad - 2 \int_{\mathbb{R}^{n_1 + n_2} \times \mathbb{R}^{n_2} \times \mathbb{R}^{n_2}} \sum_{\mu, {\mu^{\prime}}} \frac{\nabla_{z^{\prime \prime}} \left( \overline{m^S_l(z, \mu, \lambda)} \, m^S_{l^\prime}(z, {\mu^{\prime}}, \lambda^{\prime})\right) \cdot \nabla_{z^{\prime \prime}}  \left( e^{i z^{\prime \prime} \cdot \lambda^{\prime}} \right)} {\left( (2|\mu|+n_1)|\lambda| - (2|{\mu^{\prime}}| + n_1)|\lambda^{\prime}| \right)} \\
\nonumber &  \quad \quad \quad \quad \quad \quad \quad \quad \quad \quad \quad |z^{\prime}|^2 \Phi_\mu^{\lambda}(z^{\prime}) \Phi_{\mu^{\prime}}^{\lambda^{\prime}}(z^{\prime})  \Phi_\mu^{\lambda}(x^{\prime}) \Phi_{\mu^{\prime}}^{\lambda^{\prime}}(y^{\prime}) e^{i \lambda \cdot (z^{\prime \prime} - x^{\prime \prime})} e^{- i \lambda^{\prime} \cdot (z^{\prime \prime} - y^{\prime \prime})} \, d\lambda \, d\lambda^{\prime} \, dz \\
\nonumber &  \quad - \int_{\mathbb{R}^{n_1 + n_2} \times \mathbb{R}^{n_2} \times \mathbb{R}^{n_2}} \sum_{\mu, {\mu^{\prime}}} \frac{\overline{m^S_l(z, \mu, \lambda)} \, m^S_{l^\prime}(z, {\mu^{\prime}}, \lambda^{\prime})}{\left( (2|\mu|+n_1)|\lambda| - (2|{\mu^{\prime}}| + n_1)|\lambda^{\prime}| \right)} |z^{\prime}|^2 \Phi_\mu^{\lambda}(z^{\prime}) \Phi_{\mu^{\prime}}^{\lambda^{\prime}}(z^{\prime}) \\
\nonumber &  \quad \quad \quad \quad \quad \quad \quad \quad \quad \quad \Phi_\mu^{\lambda}(x^{\prime}) \Phi_{\mu^{\prime}}^{\lambda^{\prime}}(y^{\prime}) e^{i \lambda \cdot (z^{\prime \prime} - x^{\prime \prime})} \Delta_{z^{\prime \prime}} \left( e^{- i \lambda^{\prime} \cdot (z^{\prime \prime} - y^{\prime \prime})} \right) d\lambda \, d\lambda^{\prime} \, d\lambda \, d\lambda^{\prime} \, dz. 
\end{align}

Note that the sum of the final terms from the right hand side of \eqref{eq:E1E2E3-calc1} and \eqref{eq:E1E2E3-calc2} is equal to $- E_3$. Therefore,  
\begin{align*} 
& (2\pi)^{2 n_2} K_{l, l^\prime}(x,y) = E_1 + E_2 + E_3 \\ 
& = - \int_{\mathbb{R}^{n_1 + n_2} \times \mathbb{R}^{n_2} \times \mathbb{R}^{n_2}} \sum_{\mu, {\mu^{\prime}}} \frac{\Delta_{z^{\prime}} \left(\overline{m^S_l(z, \mu, \lambda)} \, m^S_{l^\prime}(z, {\mu^{\prime}}, \lambda^{\prime})\right)}{\left( (2|\mu|+n_1)|\lambda| - (2|{\mu^{\prime}}| + n_1)|\lambda^{\prime}| \right)} \Phi_\mu^{\lambda}(z^{\prime}) \Phi_{\mu^{\prime}}^{\lambda^{\prime}}(z^{\prime})\\
& \quad \quad \quad \quad \quad \quad \quad \quad \quad \quad \quad \Phi_\mu^{\lambda}(x^{\prime}) \Phi_{\mu^{\prime}}^{\lambda^{\prime}}(y^{\prime}) e^{i \lambda \cdot (z^{\prime \prime} - x^{\prime \prime})} e^{- i \lambda^{\prime} \cdot (z^{\prime \prime} - y^{\prime \prime})} \, d\lambda \, d\lambda^{\prime} \, dz \\
&\quad - 2 \int_{\mathbb{R}^{n_1 + n_2} \times \mathbb{R}^{n_2} \times \mathbb{R}^{n_2}} \sum_{\mu, {\mu^{\prime}}} \frac{\nabla_{z^{\prime}} \left(\overline{m^S_l(z, \mu, \lambda)} \, m^S_{l^\prime}(z, {\mu^{\prime}}, \lambda^{\prime})\right) \cdot \nabla_{z^{\prime}} \left( \Phi_{\mu^{\prime}}^{\lambda^{\prime}}(z^{\prime}) \right)} {\left( (2|\mu|+n_1)|\lambda| - (2|{\mu^{\prime}}| + n_1)|\lambda^{\prime}| \right)} \Phi_\mu^{\lambda}(z^{\prime}) \\
& \quad \quad \quad \quad \quad \quad \quad \quad \quad \quad \quad \Phi_\mu^{\lambda}(x^{\prime}) \Phi_{\mu^{\prime}}^{\lambda^{\prime}}(y^{\prime}) e^{i \lambda \cdot (z^{\prime \prime} - x^{\prime \prime})} e^{- i \lambda^{\prime} \cdot (z^{\prime \prime} - y^{\prime \prime})} \, d\lambda \, d\lambda^{\prime} \, dz \\ 
& \quad - \int_{\mathbb{R}^{n_1 + n_2} \times \mathbb{R}^{n_2} \times \mathbb{R}^{n_2}} \sum_{\mu, {\mu^{\prime}}} \frac{|z^{\prime}|^2 \Delta_{z^{\prime \prime}} \left(\overline{m^S_l(z, \mu, \lambda)} \, m^S_{l^\prime}(z, {\mu^{\prime}}, \lambda^{\prime})\right)}{\left( (2|\mu|+n_1)|\lambda| - (2|{\mu^{\prime}}| + n_1)|\lambda^{\prime}| \right)} \Phi_\mu^{\lambda}(z^{\prime}) \Phi_{\mu^{\prime}}^{\lambda^{\prime}}(z^{\prime}) \\
& \quad \quad \quad \quad \quad \quad \quad \quad \quad \quad \quad \Phi_\mu^{\lambda}(x^{\prime}) \Phi_{\mu^{\prime}}^{\lambda^{\prime}}(y^{\prime}) e^{i \lambda \cdot (z^{\prime \prime} - x^{\prime \prime})} e^{- i \lambda^{\prime} \cdot (z^{\prime \prime} - y^{\prime \prime})} \, d\lambda \, d\lambda^{\prime} \, dz \\ 
& \quad + 2 i \sum_{j_1=1}^{n_1} \sum_{j_2=1}^{n_2} \int_{\mathbb{R}^{n_1 + n_2} \times \mathbb{R}^{n_2} \times \mathbb{R}^{n_2}} \frac{\lambda_{j_2}^{\prime}}{|\lambda^{\prime}|} \sum_{\mu, {\mu^{\prime}}} \frac{z_{j_1}^{\prime} \partial_{z_{j_2}^{\prime \prime}} \left( \overline{m^S_l(z, \mu, \lambda)} \, m^S_{l^\prime}(z, {\mu^{\prime}}, \lambda^{\prime}) \right)} {\left( (2|\mu|+n_1)|\lambda| - (2|{\mu^{\prime}}| + n_1)|\lambda^{\prime}| \right)} \Phi_\mu^{\lambda}(z^{\prime}) \\
& \quad \quad \quad \quad \quad \quad \quad \quad \quad \quad \left\{|\lambda^{\prime}| z_{j_1}^{\prime} \Phi_{\mu^{\prime}}^{\lambda^{\prime}}(z^{\prime})\right\} \Phi_\mu^{\lambda}(x^{\prime}) \Phi_{\mu^{\prime}}^{\lambda^{\prime}}(y^{\prime}) e^{i \lambda \cdot (z^{\prime \prime} - x^{\prime \prime})} e^{- i \lambda^{\prime} \cdot (z^{\prime \prime} - y^{\prime \prime})} \, d\lambda \, d\lambda^{\prime} \, dz.
\end{align*} 

Clearly, all the the terms in the right hand side of the above expression are of the form 
\begin{align*} 
& \int_{\mathbb{R}^{n_1 + n_2} \times \mathbb{R}^{n_2} \times \mathbb{R}^{n_2}} \Theta_0(\lambda^{\prime}) \sum_{\mu, {\mu^{\prime}}} m^{S, \Gamma_1, \Gamma_2, L}_{l, l^{\prime}}(z, \mu, {\mu^{\prime}}, \lambda, \lambda^{\prime}) \Phi_\mu^{\lambda}(z^{\prime}) \Phi_{{\mu}^{\prime} + \vec{c}}^{\lambda^{\prime}}(z^{\prime}) \Phi_\mu^{\lambda}(x^{\prime}) \Phi_{\mu^{\prime}}^{\lambda^{\prime}}(y^{\prime}) \\ 
& \quad \quad \quad \quad \quad \quad \quad \quad \quad \quad \quad e^{i \lambda \cdot (z^{\prime \prime} - x^{\prime \prime})} e^{- i \lambda^{\prime} \cdot (z^{\prime \prime} - y^{\prime \prime})} \, d\lambda \, d\lambda^{\prime} \, dz,
\end{align*} 
where 
$$m^{S, \Gamma_1, \Gamma_2, L}_{l, l^{\prime}}(z, \mu, {\mu^{\prime}}, \lambda, \lambda^{\prime}) = \frac{\overline{X^{\Gamma_1} m^S_l(z, \mu, \lambda)} X^{\Gamma_2} \, m^S_{l^{\prime}}(z, {\mu^{\prime}}, \lambda^{\prime}) P_{L}({\mu^{\prime}}, \lambda^{\prime})} {\left( (2|\mu|+n_1)|\lambda| - (2|{\mu^{\prime}}| + n_1)|\lambda^{\prime}| \right)}$$
with $|\Gamma_1| + |\Gamma_2| + L = 2$ and $L \leq 1$. Here $P_0({\mu^{\prime}}, \lambda^{\prime})$ is a constant and $P_1({\mu^{\prime}}, \lambda^{\prime})$ is either $((2\mu^{\prime}_{j_1}) |\lambda^{\prime}|)^{1/2}$ or $((2\mu^{\prime}_{j_1} + 2) |\lambda^{\prime}|)^{1/2}$. Also, $\vec{c} = \vec{0}$ if $L = 0$, whereas $\vec{c} = \pm e_{j_1} $ for $L = 1$ for some $1 \leq j_1 \leq n_1$, and $\Theta_0 (\lambda^{\prime})$ is either a constant function or $ \lambda_{j_2}^{\prime} / |\lambda^{\prime}|$. 

One can repeat the above process on each term by multiplying and dividing in the integrand by $(2|\mu| + n_1) |\lambda| - (2|{\mu}^{\prime} + \vec{c}| + n_1) |\lambda^{\prime}|$, and then it is obvious that $P_{L}({\mu^{\prime}}, \lambda^{\prime})$ can be expressed as $\mathcal{A}(\mu^{\prime}) \left( (2|\mu^{\prime}| + n_1) |\lambda^{\prime}| \right)^{L/2}$. Continuing this way, one could verify that $K^S_{l, l^\prime}(x,y)$ can be written as a finite linear sum of terms as claimed in the statement of Lemma \ref{lem:rho-delta-frequency-decay}. 
\end{proof}

%%%%%%%%%%%%%%%%%%
%%%%%%%%%%%%%%%%%%
%%%%%%%%%%%%%%%%%%
%%%%%%%%%%%%%%%%%%

Now, it suffices to estimate an operator corresponding to one such term appearing in Lemma \ref{lem:rho-delta-frequency-decay}. Let us denote its kernel by $\widetilde{K}^{S, \Gamma_1, \Gamma_2, L, \Theta_0, \mathcal{A}}_{l, l^{\prime}}$. 

Define a family of operators $S_{\vec{c}}^{\lambda^{\prime}}$ by $S_{\vec{c}}^{\lambda^{\prime}} \Phi_{{\mu^{\prime}}}^{\lambda^{\prime}} = \Phi_{{\mu^{\prime}} + \vec{c}}^{\lambda^{\prime}}$, and  consider the operator $f \to \widetilde{f}$ defined by 
\begin{align*}
\left(\widetilde{f}_{\Theta_0}^{\lambda^{\prime}}, \Phi_{{\mu^{\prime}}}^{\lambda^{\prime}}\right) & = \Theta_0(\lambda^{\prime}) \mathcal{A}(\mu^{\prime}) \left(\left(S_{\vec{c}}^{\lambda^{\prime}}\right)^* f^{\lambda^{\prime}}, \Phi_{{\mu^{\prime}}}^{\lambda^{\prime}}\right). 
\end{align*} 
It follows from the Plancherel theorem that the operator $f \to \widetilde{f}$ is $L^2$-bounded with bound depending only on $\Theta_0$ and $\mathcal{A}$. Notice also that  
$$\int_{\mathbb{R}^{n_1 + n_2}} \widetilde{K}^{S, \Gamma_1, \Gamma_2, L, \Theta_0, \mathcal{A}}_{l, l^{\prime}} (x,y) f(y) \, dy = \int_{\mathbb{R}^{n_1 + n_2}} K^{S, \Gamma_1, \Gamma_2, L}_{l, l^{\prime}} (x,y) \widetilde{f} (y) \, dy,$$
where 
\begin{align*} 
& K^{S, \Gamma_1, \Gamma_2, L}_{l, l^{\prime}}(x,y) \\ 
& = (2 \pi)^{-n_2} \int_{\mathbb{R}^{n_1 + n_2} \times \mathbb{R}^{n_2} \times \mathbb{R}^{n_2}} \sum_{\mu, {\mu^{\prime}}} m^{S, \Gamma_1, \Gamma_2, L}_{l, l^{\prime}}(z, \mu, {\mu^{\prime}} + \vec{c}, \lambda, \lambda^{\prime}) \overline{E_{\mu, \lambda} (z, x)} E_{\mu^{\prime}, \lambda^{\prime}} (z, y) \, d\lambda \, d\lambda^{\prime} \, dz, 
\end{align*} 
and therefore in view of Schur's lemma it is sufficient to estimate 
\begin{align} \label{final-schur} 
& \int_{\mathbb{R}^{n_1 + n_2} \times \mathbb{R}^{n_1 + n_2}} \left| \int_{\mathbb{R}^{n_2} \times \mathbb{R}^{n_2}} \sum_{\mu, {\mu^{\prime}}} m^{S, \Gamma_1, \Gamma_2, L}_{l, l^{\prime}}(z, \mu, {\mu^{\prime}} + \vec{c}, \lambda, \lambda^{\prime}) \overline{E_{\mu, \lambda} (z, x)} E_{\mu^{\prime}, \lambda^{\prime}} (z, y) \, d\lambda \, d\lambda^{\prime} \right| dz \, dx, \\ 
\nonumber & \textup{and} \quad \\ 
\nonumber & \int_{\mathbb{R}^{n_1 + n_2} \times \mathbb{R}^{n_1 + n_2}} \left| \int_{\mathbb{R}^{n_2} \times \mathbb{R}^{n_2}} \sum_{\mu, {\mu^{\prime}}} m^{S, \Gamma_1, \Gamma_2, L}_{l, l^{\prime}}(z, \mu, {\mu^{\prime}} + \vec{c}, \lambda, \lambda^{\prime}) \overline{E_{\mu, \lambda} (z, x)} E_{\mu^{\prime}, \lambda^{\prime}} (z, y) \, d\lambda \, d\lambda^{\prime} \right| dz \, dy. 
\end{align} 

A usual way to achieve this is via estimating 
\begin{align} \label{eq:pointwise-weighted-1}
& (1 + d(x,z))^{Q+1} (1 + d(y,z))^{Q+1} \left|\int_{\mathbb{R}^{n_2} \times \mathbb{R}^{n_2}} \sum_{\mu, {\mu^{\prime}}} m^{S, \Gamma_1, \Gamma_2, L}_{l, l^{\prime}}(z, \mu, {\mu^{\prime}}+\vec{c}, \lambda, \lambda^{\prime}) \right. \\ 
\nonumber & \quad \quad \quad \quad \quad \quad \quad \quad \quad \quad \quad \quad \quad \quad \quad \quad \quad \quad \quad \quad \left. \overline{E_{\mu, \lambda} (z, x)} E_{\mu^{\prime}, \lambda^{\prime}} (z, y) \, d\lambda \, d\lambda^{\prime}\right|. 
\end{align}

Now, to do so, one would like to use Lemma \ref{lem:weighted-plancherel-joint-functional}. While this is applicable in $\tau = (2 \mu + \tilde{1}) |\lambda|$ variable, we encounter the obstacle of shifts occurring in $\mu^\prime$ variable and we don't know if we can still apply Lemma \ref{lem:weighted-plancherel-joint-functional} in the presence of shifts (remember that Lemma \ref{lem:weighted-plancherel-joint-functional} is applicable on spectral multipliers). On the other hand if we try to use Proposition \ref{prop:lambda-diff-theorem} directly then we get singularities of $\lambda^\prime$-variable in the form of high negative inverse powers of $|\lambda^\prime|$, and this is something we do not know how to avoid. However, we find that we can make use of the mix of both of the ideas by taking a Taylor series expansion of the multipliers and that way we can get our desired result. We explain it below. 

For computational convenience, we will assume $C_q=0$ for all $q$ in \eqref{eq:1-p1} and prove the estimates on $L^2$-norm of the operator associated to that kernel. Shortly we shall explain how one can handle the case of $C_q \neq 0$. For fixed $\mu$ and $\lambda$, by abuse of notation, let us define 

$$ F(\mu^\prime, \lambda^\prime) =  X^{\Gamma_2} \, m^S_{l^{\prime}}(z, (2 \mu^{\prime} + \tilde{1})| \lambda^{\prime}|, - \lambda^{\prime}).$$

Then using the Taylor series expansion we have
\begin{align} \label{eq:Taylor-expansion-discrete-variable}
F(\mu^\prime + \vec{c}, \lambda^\prime) = \sum_{|\alpha| \leq k} D_{\mu}^\alpha F(\mu^\prime, \lambda^\prime) \frac{\vec{c}^{\alpha}}{\alpha!} + \sum_{|\alpha| = k+1} \frac{k+1}{\alpha!} \int_0^1 (1-t)^{k} D_{\mu}^\alpha F(\mu^\prime + t \vec{c}, \lambda^\prime) \, dt, 
\end{align} 
with an appropriate $k \in \mathbb{N}$ that we shall choose later. 

Note that for any admissible $\vec{c}\in \Z^{n}$ (so that $\mu^\prime + \vec{c} \in \mathbb{N}^{n_1}$),
\begin{align*}
D^{\alpha}_{\mu} F(\mu^\prime + \vec{c}, \lambda^\prime) =& |\lambda^{\prime}|^{|\alpha|} \left( X^{\Gamma_2} \, \partial^{\alpha}_{\tau} m^S_{l^{\prime}}\right) (z, (2({\mu^{\prime}}+\vec{c}) + \tilde{1})| \lambda^{\prime}|, - \lambda^\prime). 
\end{align*}

Therefore, applying the above expansion of $F$ on $m^{S, \Gamma_1, \Gamma_2, L}_{l, l^{\prime}}(z, \mu, {\mu^{\prime}}, \lambda, \lambda^{\prime})$ we have the two terms which are
\begin{align*}
\widetilde{m^{S, \Gamma_1, \Gamma_2, L}_{l, l^{\prime}}}(z, \mu, {\mu^{\prime}}, \lambda, \lambda^{\prime}) 
& = \frac{\overline{X^{\Gamma_1} m^S_l(z, (2 \mu + \tilde{1}) |\lambda|, - \lambda)} \left( (2|{\mu^{\prime}}| + n_1)|\lambda^{\prime}| \right)^{L/2} } {\left( (2|\mu|+n_1)|\lambda| - (2|{\mu^{\prime}}| + n_1)|\lambda^{\prime}| \right)^N} \\ 
& \quad \sum_{|\alpha| \leq k} \frac{\vec{c}^{\alpha}}{\alpha!} |\lambda^{\prime}|^{|\alpha|} \left( X^{\Gamma_2} \, \partial_{\tau}^{\alpha} m^S_{l^{\prime}}\right) (z, (2 \mu^{\prime} + \tilde{1}) |\lambda^{\prime}|, - \lambda^{\prime}), 
\end{align*}
and 
\begin{align*}
Err^{S, \Gamma_1, \Gamma_2, L}_{l, l^{\prime}}(z, \mu, {\mu^{\prime}}, \lambda, \lambda^{\prime}) & =  \frac{\overline{X^{\Gamma_1} m^S_l(z, (2 \mu^{\prime} + \tilde{1}) |\lambda^{\prime}|, - \lambda^{\prime})} \left( (2|{\mu^{\prime}}| + n_1)|\lambda^{\prime}| \right)^{L/2}} {\left( (2|\mu|+n_1)|\lambda| - (2|{\mu^{\prime}}| + n_1)|\lambda^{\prime}| \right)^N} \\ 
& \quad \quad \sum_{|\alpha| = k+1} \frac{k+1}{\alpha!} \int_0^1 (1-t)^{|\alpha|-1}|\lambda^{\prime}|^{|\alpha|}\left( X^{\Gamma_2} \, \partial_{\tau}^{\alpha} m^S_{l^{\prime}}\right) (\mu^\prime + t \vec{c}, \lambda^\prime)dt.
\end{align*}

Let us remark that when $C_q \neq 0$, we can again apply Taylor's expansion on $C_q$ variable of each such term $\widetilde{m^{S, \Gamma_1, \Gamma_2, L}_{l, l^{\prime}}}(z, \mu, {\mu^{\prime}}, \lambda, \lambda^{\prime})$ so that the power of $|\lambda^{\prime}|$ in the $Err$ terms coming form each of these expansions become $k+1$. 

\medskip \noindent \underline{\textbf{Step 1 (Analysis of the operator associated to $\widetilde{m^{S, \Gamma_1, \Gamma_2, L}_{l, l^{\prime}}}(z, \mu, {\mu^{\prime}}, \lambda, \lambda^{\prime})$)}} : 

Note that the associated operator in this case can be written as a finite linear combination (on $|\alpha| \leq k$) of the composition of three operators $\zeta_S (\boldsymbol{U}) \circ Op(\widetilde{m^{\Gamma_1, \Gamma_2, L}_{l, l^{\prime}}}(z, \mu, {\mu^{\prime}}, \lambda, \lambda^{\prime})) \circ \zeta_S (\boldsymbol{U})$, with the kernel of $Op(\widetilde{m^{\Gamma_1, \Gamma_2, L}_{l, l^{\prime}}}(z, \mu, {\mu^{\prime}}, \lambda, \lambda^{\prime}))$ being 
\begin{align*}
& \int_{\mathbb{R}^{n_1 + n_2} \times \mathbb{R}^{n_2} \times \mathbb{R}^{n_2}} \sum_{\mu, {\mu^{\prime}}} \frac{\overline{X^{\Gamma_1} m_l(z, (2 \mu + \tilde{1}) |\lambda|, - \lambda)} \left( (2|{\mu^{\prime}}| + n_1)|\lambda^{\prime}| \right)^{L/2} } {\left( (2|\mu|+n_1)|\lambda| - (2|{\mu^{\prime}}| + n_1)|\lambda^{\prime}| \right)^N} \\ 
& \quad |\lambda^{\prime}|^{|\alpha|} \left( X^{\Gamma_2} \, \partial_{\tau}^{\alpha} m_{l^{\prime}}\right) (z, (2 \mu^{\prime} + \tilde{1}) |\lambda^{\prime}|, - \lambda^\prime) \overline{E_{\mu, \lambda} (z, x)} E_{\mu^{\prime}, \lambda^{\prime}} (z, y) \, d\lambda \, d\lambda^{\prime} \, dz. 
\end{align*}

Since $\zeta_S(\boldsymbol{U}) $ is $L^2$-bounded by Plancherel's theorem with bound uniform in $S$, we are left with analysing the kernel associated to $\widetilde{m^{\Gamma_1, \Gamma_2, L}_{l, l^{\prime}}}(z, \mu, {\mu^{\prime}}, \lambda, \lambda^{\prime})$. In this case we will use the weighted Plancherel estimate. First we rewrite the associated kernel 
$$ \int_{\mathbb{R}^{n_1 + n_2} \times \mathbb{R}^{n_2} \times \mathbb{R}^{n_2}}\sum_{\mu, {\mu^{\prime}}} \widetilde{m^{\Gamma_1, \Gamma_2, L}_{l, l^{\prime}}}(z, \mu, {\mu^{\prime}}, \lambda, \lambda^{\prime}) \overline{E_{\mu, \lambda} (z, x)} E_{\mu^{\prime}, \lambda^{\prime}} (z, y) \, d\lambda \, d\lambda^{\prime} \, dz $$
as a finite linear sum over $\sum_{|\alpha| \leq k}$, of the following terms: 
\begin{align*}
& \int_{\mathbb{R}^{n_1 + n_2} \times \mathbb{R}^{n_2} \times \mathbb{R}^{n_2}} \sum_{\mu, {\mu^{\prime}}} \mathcal{A}_{\alpha}(\mu^{\prime}) \overline{X^{\Gamma_1} m_l(z, (2 \mu + \tilde{1}) |\lambda|, - \lambda)} \left( X^{\Gamma_2} \, \partial_{\tau}^{\alpha} m_{l^{\prime}}\right) (z, (2 \mu^{\prime} + \tilde{1}) |\lambda^{\prime}|, - \lambda^{\prime}) \\ 
& \quad \left( (2|{\mu^{\prime}}| + n_1)|\lambda^{\prime}| \right)^{|\alpha| + \frac{L}{2} } \left\{ (2|\mu|+n_1)|\lambda| - (2|{\mu^{\prime}}| + n_1)|\lambda^{\prime}| \right\}^{-N} \overline{E_{\mu, \lambda} (z, x)} E_{\mu^{\prime}, \lambda^{\prime}} (z, y) \, d\lambda \, d\lambda^{\prime} \, dz, 
\end{align*}
where $\mathcal{A}_{\alpha}(\mu^{\prime}) = (2|\mu^{\prime}|+n_1)^{-|\alpha|}$.

We estimate each such term separately. As discussed earlier, we can get rid of $\mathcal{A}_{\alpha}(\mu^{\prime})$ by realising the corresponding operator as a composition of two operators. Therefore, it suffices to analyse the kernel
\begin{align} \label{eq:pointwise-weighted-2}
& \left( (1 + d(x,z)) (1 + d(y,z)) \right)^{Q+1} \\ 
& \nonumber \quad \left| \int_{\mathbb{R}^{n_2}} \sum_{\mu} \left\{ \overline{X^{\Gamma_1} m_l(z, (2 \mu + \tilde{1}) |\lambda|, -\lambda)} M^{\Gamma_2, L}_{l^{\prime}} (z, y, \mu, \mu^{\prime}, \lambda, \lambda^{\prime}) \right\} \overline{E_{\mu, \lambda} (z, x)} \, d\lambda \right|, 
\end{align}
where 
\begin{align*}
M^{\Gamma_2, L}_{l^{\prime}}(z, y, \mu, \mu^{\prime}, \lambda, \lambda^{\prime}) & = \int_{\mathbb{R}^{n_2}} \sum_{\mu^{\prime}} \left\{ \left( X^{\Gamma_2} \, \partial_{\tau}^{\alpha} m_{l^{\prime}}\right) (z, (2 \mu^{\prime} + \tilde{1}) |\lambda^{\prime}|, - \lambda^{\prime}) \right\} \left( (2|{\mu^{\prime}}| + n_1)|\lambda^{\prime}| \right)^{|\alpha| + \frac{L}{2}} \\ 
& \quad \quad \quad \quad \quad \quad \left\{ (2|\mu|+n_1)|\lambda| - (2|{\mu^{\prime}}| + n_1)|\lambda^{\prime}| \right\}^{-N} E_{\mu^{\prime}, \lambda^{\prime}} (z, y) \, d\lambda^{\prime}.  
\end{align*}

We can now make use of Lemma \ref{lem:weighted-plancherel-joint-functional} (with $R = 2^{l/2}, r = Q+1,$ and $s = Q+2$) to estimate \eqref{eq:pointwise-weighted-2}. In doing so, note once again that 
\begin{align*} 
& \left( (1 + d(x,z)) (1 + d(y,z)) \right)^{Q+1} \\ 
& \quad \left| \int_{\mathbb{R}^{n_2}} \sum_{\mu} \left\{ \overline{X^{\Gamma_1} m_l(z, (2 \mu + \tilde{1}) |\lambda|, -\lambda)} M^{\Gamma_2, L}_{l^{\prime}}(z, y, \mu, \mu^{\prime}, \lambda, \lambda^{\prime}) \right\} \overline{E_{\mu, \lambda} (z, x)} \, d\lambda \right| \\ 
& \leq \sup_{z_0 \in \mathbb{R}^{n_1 + n_2}} (1 +2^{l/2} d(x,z))^{Q+1} (1 + 2^{l^{\prime}/2}d(y,z))^{Q+1} \\ 
& \quad \quad \quad \quad \left| \int_{\mathbb{R}^{n_2}} \sum_{\mu} \left\{ \overline{X^{\Gamma_1} m_l(z_0, (2 \mu + \tilde{1})|\lambda|, -\lambda)} M^{\Gamma_2, L}_{l^{\prime}}(z_0, y, \mu, \mu^{\prime}, \lambda, \lambda^{\prime}) \right\} \overline{E_{\mu, \lambda} (z, x)} \, d\lambda \right|, 
\end{align*}
which justifies the applicability of Lemma \ref{lem:weighted-plancherel-joint-functional}. Now, we remark that since $\rho \leq 1$, the situation where $\tau$-derivatives falls on $\left\{ (2|\mu| + n_1)|\lambda| - (2|{\mu^{\prime}}| + n_1)|\lambda^{\prime}| \right\}^{-N} $ is better compared to the one where derivatives fall on the symbol functions $X^{\Gamma_1} m_l$. So, in the follow up calculations, we just take the case when the $\tau$-derivatives fall on the symbol functions $X^{\Gamma_1} m_l$. 

Now, on $(1 + 2^{l^{\prime}/2}d(y,z))^{Q+1} \left| M^{\Gamma_2, L}_{l^{\prime}}(z_0, y, \mu, \mu^{\prime}, \lambda, \lambda^{\prime}) \right| $, we apply Lemma \ref{lem:weighted-plancherel-joint-functional} with $R = 2^{l^{\prime}/2}, r = Q+1, s = Q+2$, and once again we ignore the case of $\tau$-derivatives falling on $P_L ({\mu^{\prime}}, \lambda^{\prime})$ or $\left\{ (2|\mu|+n_1)|\lambda| - (2|{\mu^{\prime}}| + n_1)|\lambda^{\prime}| \right\}^{-N}$. Furthermore, as the worst possible case, take $L = N$ in the order of $P_L ({\mu^{\prime}}, \lambda^{\prime})$. Overall, we get that \eqref{eq:pointwise-weighted-1} is dominated by 
\begin{align*}
& |B(x, 2^{-l/2})|^{-1/2} |B(z, 2^{-l/2})|^{-1/2} |B(y, 2^{-l^{\prime}/2})|^{-1/2} |B(z, 2^{-l^{\prime}/2})|^{-1/2} \\ 
& \quad \times 2^{l^{\prime} \left( |\alpha| + \frac{N}{2} \right)} 2^{- N \max \{l, l^{\prime}\}} \left\| \left( X^{\Gamma_1} m \right) (z, 2^l \cdot) \right\|_{W^\infty_{Q+2}} \left\| \left( X^{\Gamma_2} \, \partial_{\tau}^{\alpha} m \right) (z, 2^{l^{\prime}} \cdot)\right\|_{W^\infty_{Q+2}} \\ 
\lesssim & |B(x, 2^{-l/2})|^{-1/2} |B(z, 2^{-l/2})|^{-1/2} |B(y, 2^{-l^{\prime}/2})|^{-1/2} |B(z, 2^{-l^{\prime}/2})|^{-1/2} \\ 
& \quad \times 2^{l^{\prime} \left( |\alpha| + \frac{N}{2} \right)} 2^{- N \max \{l, l^{\prime}\}} \left\{ \left( 2^l \right)^{\delta |\Gamma_1|/2}  2^{l(Q+2)} \right\} \left\{ \left( 2^{l^{\prime}} \right)^{- (1 + \rho) \frac{|\alpha|}{2} + \delta \frac{|\Gamma_2|}{2}}  2^{l^{\prime} (Q+2)} \right\} \\ 
\lesssim & |B(x, 1)|^{-1/2} |B(y, 1)|^{-1/2} |B(z, 1)|^{-1} 2^{-N(1 - \delta) \max \{l, l^{\prime}\} /2} \, 2^{(1 - \rho) k l^{\prime}/2} \, 2^{(3Q+4) \left(l + l^{\prime} \right)/2},  
\end{align*}
where we have used the conditions $|\alpha| \leq k$  and $|\Gamma_1| + |\Gamma_2| = N$ (since $|\Gamma_1| + |\Gamma_2| + L = 2N$ and we have chosen the worst possible scenario of $L=N$). 

Since $\delta < 1$ and $l > l^{\prime}$, once $k$ is fixed one can choose $N$ large enough such that above quantity is bounded by $|B(x, 1)|^{-1/2} |B(y, 1)|^{-1/2} |B(z, 1)|^{-1} 2^{- \max \{l, l^{\prime}\}} $. 

\medskip \noindent \underline{\textbf{Step 2 (Analysis of the operator associated to $Err^{S, \Gamma_1, \Gamma_2, L}_{l, l^{\prime}}(z, \mu, {\mu^{\prime}}, \lambda, \lambda^{\prime})$)}} : 

For technical convenience, we do not carry the integration in $t$-variable over the interval $[0,1]$ as we shall see that estimates that we get are uniform over $[0,1]$. So, we have to analyse 
\begin{align} \label{eq:pointwise-weighted-3}
& \left( (1 + d(x,z)) (1 + d(y,z)) \right)^{Q+1} \\ 
\nonumber & \quad \left| \int_{\mathbb{R}^{n_2}} \sum_{\mu} \left\{ \overline{X^{\Gamma_1} m^S_l(z, \mu, \lambda)} Err^{S, \Gamma_2, L}_{l^{\prime}}(z, y, \mu, \mu^{\prime}, \lambda, \lambda^{\prime}) \right\} \overline{E_{\mu, \lambda} (z, x)} \, d\lambda \right|, 
\end{align}
where 
\begin{align*} 
Err^{S, \Gamma_2, L}_{l^{\prime}}(z, y, \mu, \mu^{\prime}, \lambda, \lambda^{\prime}) & =  \int_{\mathbb{R}^{n_2}} |\lambda^{\prime}|^{k+1} \sum_{\mu^{\prime}} \left( X^{\Gamma_2} \, \partial_{\tau}^{\alpha} m^S_{l^{\prime}} \right) (\mu^\prime + t \vec{c}, - \lambda^\prime) \left\{ (2|{\mu^{\prime}}| + n_1)|\lambda^{\prime}| \right\}^{L/2} \\ 
\nonumber & \quad \quad \quad \quad \quad \quad \left\{ (2|\mu|+n_1)|\lambda| - (2|{\mu^{\prime}}| + n_1)|\lambda^{\prime}| \right\}^{-N} E_{\mu^{\prime}, \lambda^{\prime}} (z, y) \, d\lambda^{\prime}, 
\end{align*}
with $|\alpha| = k+1$. 

We can estimate \eqref{eq:pointwise-weighted-3} same as that done for \eqref{eq:pointwise-weighted-2}, with the only exception being that for estimating the kernel $Err^{S, \Gamma_2, L}_{l^{\prime}}$ (in $\tau^{\prime}$-variable), we make use of Lemma \ref{weighted-kernel-estimate-3} instead of directly applying Lemma \ref{lem:weighted-plancherel-joint-functional}. For this, we need $k \geq Q$ to be such that $k+1$ is a multiple of 4. With $N_0 = \lfloor\frac{Q}{4}\rfloor + 1$, take and fix $k = 4 N_0 - 1$ in \eqref{eq:Taylor-expansion-discrete-variable}. With this choice of $k$, we can essentially repeat the steps of the proof of Theorem \ref{thm:weighted-Plancherel-L2} (for $p = \infty$ and $R = 2^{l^{\prime}/2}$) with the help of estimates of Lemma \ref{weighted-kernel-estimate-3} to get that \eqref{eq:pointwise-weighted-3} is dominated by 
\begin{align*} 
& |B(x, 2^{-l/2})|^{-1/2} |B(z, 2^{-l/2})|^{-1/2} |B(y, 2^{-l^{\prime}/2})|^{-1/2} |B(z, 2^{-l^{\prime}/2})|^{-1/2} \\ 
& \quad \times 2^{l^{\prime} \left( 4 N_0 + \frac{N}{2} \right)} 2^{- N \max \{l, l^{\prime}\}} \left\| \left( X^{\Gamma_1} m \right) (z, 2^l \cdot) \right\|_{W^2_{4N_0+1}} \left\| \left( X^{\Gamma_2} \, \partial_{\tau}^{\beta} m \right) (z, 2^{l^{\prime}} \cdot)\right\|_{W^2_{4N_0+1}} \\ 
\lesssim & |B(x, 1)|^{-1/2} |B(y, 1)|^{-1/2} |B(z, 1)|^{-1} 2^{-N(1 - \delta) \max \{l, l^{\prime}\} /2} \, 2^{2(1 - \rho) N_0 l^{\prime}} \, 2^{(Q+8N_0+4) \left(l + l^{\prime} \right)/2}. 
\end{align*}

As earlier, since $\delta < 1$ and $l > l^{\prime}$, with $N_0$ already fixed, we can choose $N$ large enough such that above quantity is bounded by $|B(x, 1)|^{-1/2} |B(y, 1)|^{-1/2} |B(z, 1)|^{-1} 2^{- \max \{l, l^{\prime}\}} $. 

For these estimations to work, once we know the choice of $N$ that works for us in the above estimations, we fix such an $N$ and then we fix a natural number $\mathfrak{z}$ such that it satisfies $N \leq C_{n_2} 2^{\mathfrak{z}-5}$ so that Lemma \ref{lem:rho-delta-frequency-decay} is applicable. With the above estimates, using Lemma \ref{lem-integrability-ball-growth} the operator norm in question can be shown to be bounded by $2^{- \max \{l, l^{\prime}\}}$. Hence, Theorem \ref{thm:rho-delta} follows from Theorem \ref{Cotlar-Stein-lemma-2}. 

%%%%%%%%%%%%%%%%%%%%%%%%%%%%%%%%%%%%%%%%
%%%%%%%%%%%%%%%%%%%%%%%%%%%%%%%%%%%%%%%%
%%%%%%%%%%%%%%%%%%%%%%%%%%%%%%%%%%%%%%%%
%%%%%%%%%%%%%%%%%%%%%%%%%%%%%%%%%%%%%%%%
%%%%%%%%%%%%%%%%%%%%%%%%%%%%%%%%%%%%%%%%
%%%%%%%%%%%%%%%%%%%%%%%%%%%%%%%%%%%%%%%%
%%%%%%%%%%%%%%%%%%%%%%%%%%%%%%%%%%%%%%%%
%%%%%%%%%%%%%%%%%%%%%%%%%%%%%%%%%%%%%%%%
%%%%%%%%%%%%%%%%%%%%%%%%%%%%%%%%%%%%%%%%
%%%%%%%%%%%%%%%%%%%%%%%%%%%%%%%%%%%%%%%%
%%%%%%%%%%%%%%%%%%%%%%%%%%%%%%%%%%%%%%%%
%%%%%%%%%%%%%%%%%%%%%%%%%%%%%%%%%%%%%%%%
%%%%%%%%%%%%%%%%%%%%%%%%%%%%%%%%%%%%%%%%
%%%%%%%%%%%%%%%%%%%%%%%%%%%%%%%%%%%%%%%%

%\appendix
%\setcounter{secnumdepth}{0}

\section*{Acknowledgements}
This work was supported in part by the individual INSPIRE Faculty Awards of both of the authors from the Department of Science and Technology (DST), Government of India. 

We are thankful to Sundaram Thangavelu for several discussions and his suggestions on Sobolev embedding theorem, and Loukas Grafakos for sharing his insight on homogeneous symbols. We are indebted to Alessio Martini for guiding us on our query on the weighted Plancherel estimates for the joint functional calculus. In the later half of the development of this project, we had several wonderful discussions with Abhishek Ghosh and Riju Basak. These discussions provided us a lot of insight resulting in the write-up in its present form. We are very thankful to Abhishek and Riju for the same. 

\providecommand{\bysame}{\leavevmode\hbox to3em{\hrulefill}\thinspace}
\providecommand{\MR}{\relax\ifhmode\unskip\space\fi MR }
% \MRhref is called by the amsart/book/proc definition of \MR.
\providecommand{\MRhref}[2]{%
  \href{http://www.ams.org/mathscinet-getitem?mr=#1}{#2}
}
\providecommand{\href}[2]{#2}

\end{document}